\documentclass{amsart}
\addtolength{\textwidth}{3 truecm}
\addtolength{\textheight}{1 truecm}
\setlength{\voffset}{-0.6 truecm}
\setlength{\hoffset}{-1.3 truecm}

\usepackage[
	pdfauthor={Bin Guan},
    pdftitle={Averages and Nonvanishing of Central Values of
				Triple Product L-Functions},
				pdfencoding=auto
]{hyperref}

\usepackage{amssymb,amsmath,amscd,amsthm,amsfonts}
\usepackage{graphicx,bbding,pifont,wasysym,skull}

\usepackage{chemarrow} 
\usepackage{mathrsfs} 
\usepackage{enumerate}
\usepackage{multicol}

\newcommand{\fD}{\mathfrak{D}}
\newcommand{\fd}{\mathfrak{d}}
\newcommand{\fp}{\mathfrak{p}}
\newcommand{\CC}{\mathbb{C}}

\newcommand{\A}{\mathbb{A}}
\newcommand{\Q}{\mathbb{Q}}
\newcommand{\bH}{\mathbb{H}}
\newcommand{\R}{\mathbb{R}}
\newcommand{\Z}{\mathbb{Z}}
\newcommand{\cA}{\mathcal{A}}
\newcommand{\cB}{\mathcal{B}}
\newcommand{\cF}{\mathcal{F}}
\newcommand{\cH}{\mathcal{H}}
\newcommand{\cO}{\mathcal{O}}

\newcommand{\id}{\operatorname{id}}

\newcommand{\ord}{\operatorname{ord}}
\newcommand{\Hom}{\operatorname{Hom}}
\newcommand{\Tr}{\operatorname{Tr}}
\newcommand{\Ad}{\operatorname{Ad}}
\newcommand{\JL}{\operatorname{JL}}
\newcommand{\Ram}{\operatorname{Ram}}
\newcommand{\fin}{\operatorname{fin}}
\newcommand{\alg}{\operatorname{alg}}

\newcommand{\ONB}{\mathcal{ONB}}
\newcommand{\1}{\mathbf 1}

\DeclareMathOperator{\iso}{\xrightarrow{\sim}}
\DeclareMathOperator{\Sym}{Sym}
\DeclareMathOperator{\vol}{vol}
\DeclareMathOperator{\disc}{disc}
\DeclareMathOperator{\sgn}{sgn}

\newcommand{\GL}{\operatorname{{GL}}}
\newcommand{\PGL}{\operatorname{{PGL}}}
\newcommand{\SU}{\operatorname{{SU}}}

\newtheorem{Theorem}{Theorem}[section]
\newtheorem{Lemma}[Theorem]{Lemma}
\newtheorem{Corollary}[Theorem]{Corollary}
\newtheorem{Proposition}[Theorem]{Proposition}

\newtheorem{Example}[Theorem]{Example}
\newtheorem{remark}[Theorem]{Remark}

\begin{document}

\title
[Averages and Nonvanishing of 
Triple Product
Central $L$-Values]
{
Averages and Nonvanishing of 
Central Values of Triple Product \texorpdfstring{$L$}{L}-Functions
}
\author{Bin Guan}
\address{Data Science Institute\\Shandong University \\Jinan\\ China}
\email{bguan.math@sdu.edu.cn}

\date{\today}

\begin{abstract}
We use Ichino's period formula combined with a
relative trace formula to obtain exact formulas for  
the central values of triple product $L$-functions
$L(3k-1,f\times g\times h)$,
averaged over Hecke-normalized cusp newforms
of weight $2k$ on $\Gamma_0(N)$,
with $f$ and $g$ varying and $h$ fixed.
We also present some applications of 
the average formulas to the nonvanishing problem,
giving a lower bound on the number 
of nonvanishing central $L$-values when 
one of the forms is fixed.
\end{abstract}

\keywords{triple product $L$-function, central $L$-value, nonvanishing, 
relative trace formula, quaternion algebra, period integral}

\thanks{}

\maketitle

\tableofcontents

\section{Introduction}

\subsection{Main results}

The aim of this paper is to 
establish exact average formulas
of central values of 
triple product $L$-functions 
associated to three normalized cusp newforms,
while one of the three forms is fixed.
We also give some applications of 
the average formulas to the nonvanishing problems.

Let $N,k$ be positive integers and $N$ be square-free.
Let $\cF_{2k}(N)$ denote the set of normalized cusp newforms
of weight $2k$ on $\Gamma_0(N)$
which are eigenforms of Hecke operators.
Normalizing $f(z)=\sum_{n\geq 1} a_n(f)e^{2\pi inz},g,h\in\cF_{2k}(N)$
such that $a_1(f)=a_1(g)=a_1(h)=1$,
we can define the triple product $L$-function as the Euler product
\[L_{\fin}(s,f\times g\times h)=\prod_p L_p(s,f\times g\times h)\]
(see Section \ref{section.L-functions}
for the data of local $L$-factors).
B{\"o}cherer and Schulze--Pillot 
\cite{bocherer1993central} have shown that
\[\Lambda(s,f\times g\times h)=
(2\pi)^{6k-3-4s}\Gamma(s)\Gamma(s+1-2k)^3
L_{\fin}(s,f\times g\times h)\]
has an analytic continuation to the entire $s$-plane
and satisfies the functional equation
\[\Lambda(s,f\times g\times h)=\varepsilon
N^{5(3k-1-s)}
\Lambda(6k-2-s,f\times g\times h),\quad
\text{where }\varepsilon=-\prod_{p\mid N}\varepsilon_p=\pm 1.\]
Here the local root number is defined by 
$\varepsilon_p=-a_p(f)a_p(g)a_p(h)p^{-3(k-1)}=\pm 1$ when $p\mid N$.
(One can observe that the central value is at $s=3k-1$.
But after a translation $s\mapsto s+3k-\frac 32$,
the functional equation may be written in the form
\[\Lambda(s,f\times g\times h)=\varepsilon
N^{5(\frac 12-s)}
\Lambda(1-s,f\times g\times h)\]
so that the central value is $\Lambda(\frac 12,f\times g\times h)$.
See \eqref{adelizedL}.)

In this paper we will develop a relative trace formula (RTF)
for automorphic forms on a specific quaternion algebra.
We use this RTF, along with the Jacquet--Langlands correspondence
and Ichino's triple product formula,
to translate the triple product average value
into a sum of orbital integrals.
Finally we evaluate this sum explicitly
and apply Waldspurger's formula
to rewrite certain orbital integrals
as central values of Rankin--Selberg $L$-functions.

Before we state the main theorem, we recall 
the CM modular forms arising from Hecke characters. 
For an imaginary quadratic extension $E/\Q$ with discriminant $-d$, 
consider a character $\Omega$
on $E^\times\backslash\A_E^\times$,
whose restriction on $\A^\times=\A_{\Q}^\times$ is trivial.
Assume that it is unramified everywhere at finite places.
(We denote by 
$\widehat{[E^\times]}$ 
the set of such characters,
while we define $[E^\times]=\A^\times E^\times\backslash\A_E^\times$.)
At infinity we have $\Omega_\infty=\sgn^{2m}$ for some $m\in\Z$
where $\sgn(z)=z/|z|$.
Recall (cf. \cite[Theorem 12.5]{iwaniec1997topics} for example) 
that, when $\Omega$ does not factor 
through the norm map $N_E:\A_E^\times\to\A^\times$,
there is a modular form $\Theta_\Omega$
of level $d$, weight $2|m|+1$
and nebentypus $\chi_{-d}$
such that $L(s,\Theta_\Omega)=L(s,\Omega)$.
For $h\in \cF_{2k}(N)$ 
let $L(s, h\times \Theta_\Omega)$ denote 
the completed Rankin--Selberg $L$-function 
which satisfies a functional equation 
relating the value at $s$ to that at $2k+2|m|+1-s$.

\begin{Theorem}[Main Theorem]\label{mainthm}
Let $N$ be a square-free integer with an odd number of prime factors.
For any $h\in\cF_{2k}(N)$,
\begin{multline}\label{maineq1}
\frac{N}{2^{12k-4} \pi^{6k-1}}
\sum_{\substack{f,g\in\cF_{2k}(N)\\
\varepsilon_p=-1,\ \forall p\mid N}}
\frac{L_{\fin}(3k-1,f\times g\times h)}
{(f,f)(g,g)(h,h)}
=\frac{1-24\delta(k)/\varphi(N)}
{2^{\omega(N)}\Gamma(2k)\Gamma(2k-1)^2}\\
+\frac {\Gamma(2k-1)}
{\Gamma(k)^3\Gamma(3k-1)} 
\frac{4L_0 \cdot 2^{\ord_2(N)}
\prod\limits_{p\mid N}\frac{1-\chi_{-4}(p)}{2}
+6\sqrt 3 L_1 \cdot 2^{\ord_3(N)}
\prod\limits_{p\mid N}\frac{1-\chi_{-3}(p)}{2}}
{(4\pi)^{2k}(h,h)},
\end{multline}
where
\[\begin{split}
L_0 &= I_0(\1)\cdot L_{\fin}(k, h)L_{\fin}(k, h\otimes\chi_{-4})
+ \sum_{\1\neq\Omega\in\widehat{[E_0^\times]}}
I_0(\Omega)\cdot L_{\fin}(k+|m|+\tfrac 12, h\times \Theta_\Omega),
\\
L_1&= I_1(\1)\cdot L_{\fin}(k, h)L_{\fin}(k, h\otimes\chi_{-3})
+\sum_{\1\neq\Omega\in\widehat{[E_1^\times]}}
I_1(\Omega)\cdot L_{\fin}(k+|m|+\tfrac 12, h\times \Theta_\Omega).
\end{split}\]
Here $\varepsilon_p$ is the local root number,
$(\cdot,\cdot)$ is the Petersson inner product
on $\cF_{2k}(N)$ defined by
\begin{equation}\label{def.Petersson}
(f_1,f_2)=\int_{\Gamma_0(N)\backslash \cH}
f_1(z)\overline{f_2(z)}y^{2k}\ \frac{dx\ dy}{y^{2}};
\end{equation}
 $E_0=\Q(\sqrt{-1})$, $E_1=\Q(\sqrt{-3})$;
$I_0(\1)$, $I_1(\1)$, $I_0(\Omega)$, $I_1(\Omega)$
(defined in Theorem \ref{maingeometric})
are constants depending only on $k$ and $\Omega$
(and on the Fourier coefficients
$a_2(h)$, $a_3(h)$ when $2$ or $3\mid N$ respectively);
$\varphi(N)$ is the Euler's totient function;
$\omega(N)=\sum_{p\mid N}1$
is the number of distinct prime factors of $N$; and
\[\delta(k)=\begin{cases}1,&\text{if }k=1,\\
0,&\text{otherwise.}\end{cases}\]
\end{Theorem}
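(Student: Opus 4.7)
The plan is to pass from the $\GL_2$ side to a definite quaternion algebra via Jacquet--Langlands, rewrite the central $L$-value using Ichino's triple product formula, and evaluate the resulting spectral sum by a relative trace formula whose geometric side is a finite sum of orbital integrals. Because $N$ is square-free with an odd number of prime factors and the functional equation sign $\varepsilon=-\prod_{p\mid N}\varepsilon_p$ equals $+1$ under the hypothesis $\varepsilon_p=-1$ for all $p\mid N$, there is a unique (up to isomorphism) quaternion algebra $B/\Q$ ramified exactly at $\infty$ and at the primes dividing $N$. This $B$ is definite, so the Jacquet--Langlands transfers $\varphi_f,\varphi_g,\varphi_h$ of $f,g,h$ to $B^\times(\A)$ live in a finite-dimensional space of functions on the double coset set $B^\times(\Q)\backslash B^\times(\A)/B^\times(\R)\,\widehat{\cO_B}^\times$ for a maximal order $\cO_B\subset B$.

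Ichino's formula then replaces $L_{\fin}(3k-1,f\times g\times h)$ with $|I(\varphi_f,\varphi_g,\varphi_h)|^2$ times local integrals and adjoint $L$-values of $f,g,h$; the hypothesis $\varepsilon_p=-1$ at every $p\mid N$ is precisely what makes the local trilinear form on $B$ nonzero and hence makes this transfer available. After substitution, the left-hand side of \eqref{maineq1} becomes, up to an explicit computable factor, the spectral sum
\[\sum_{f,g\in\cF_{2k}(N)}\frac{|I(\varphi_f,\varphi_g,\varphi_h)|^2}{\langle\varphi_f,\varphi_f\rangle\,\langle\varphi_g,\varphi_g\rangle},\]
which the RTF is designed to evaluate. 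One forms the kernel of convolution by a test function that is the indicator of $\widehat{\cO_B}^\times$ at the finite places and a matrix coefficient of the weight-$2k$ representation at infinity, integrates against $\varphi_h\otimes\overline{\varphi_h}$ along the diagonal, and then expands both spectrally and geometrically.

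The spectral expansion returns the displayed sum plus a contribution from the one-dimensional automorphic representations that produces the correction $\delta(k)/\varphi(N)$ present only in weight $2$. The geometric expansion decomposes the kernel by $B^\times(\Q)$-orbits: the identity orbit yields the leading main term $(2^{\omega(N)}\Gamma(2k)\Gamma(2k-1)^2)^{-1}$, and the remaining orbits are parameterized by optimal embeddings of orders in imaginary quadratic fields $E$ into $\cO_B$. By Waldspurger's formula, the toric period attached to such an embedding rewrites the orbital integral as a central Rankin--Selberg value $L_{\fin}(k+|m|+\tfrac12,h\times\Theta_\Omega)$ for a nontrivial unramified Hecke character $\Omega$ with $\Omega_\infty=\sgn^{2m}$; for $\Omega=\1$ the ``theta series'' degenerates into an Eisenstein pair $\1\boxplus\chi_{-d_E}$, and Waldspurger's formula produces the factorization $L_{\fin}(k,h)L_{\fin}(k,h\otimes\chi_{-d_E})$. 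Two filters then cut the infinite list of imaginary quadratic $E$ down to $E_0=\Q(i)$ and $E_1=\Q(\sqrt{-3})$: the local orbital weight at $p\mid N$ vanishes unless $p$ is inert in $E$, yielding the factors $\prod_{p\mid N}\tfrac12(1-\chi_{-d_E}(p))$; and the archimedean orbital integral against the weight-$2k$ matrix coefficient survives only on the elliptic orbits attached to the two imaginary quadratic fields whose unit groups strictly exceed $\{\pm 1\}$. The explicit evaluation of the finitely many surviving orbital integrals produces the coefficients $I_0(\cdot),I_1(\cdot),4,6\sqrt{3},2^{\ord_2(N)},2^{\ord_3(N)}$.

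The hard part will not be any single conceptual step but the alignment of three independent sets of normalizations: Ichino's formula carries local correction factors at $p\mid N$ and at $\infty$; Waldspurger's formula introduces its own test-vector constants; and the Jacquet--Langlands transfer relates the classical Petersson norm on $\cF_{2k}(N)$ to the $L^2$-norm on $B^\times(\A)$ by a ramified constant depending on $N$ and $k$. Tracking these carefully so that the overall prefactor $N/(2^{12k-4}\pi^{6k-1})$ and the $\Gamma$-ratio $\Gamma(2k-1)/(\Gamma(k)^3\Gamma(3k-1)(4\pi)^{2k})$ in \eqref{maineq1} emerge in exactly the stated shape is the most delicate task of the proof.
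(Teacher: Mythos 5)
Your proposal correctly identifies the paper's strategy in every essential respect: Jacquet--Langlands transfer to the definite quaternion algebra of discriminant $N$, Ichino's formula converting $L_{\fin}(3k-1,f\times g\times h)$ into squared trilinear periods, the relative trace formula for $G'\backslash(G'\times G')/G'$ with a bi-$K$-invariant test function (matrix coefficient of the weight-$2k$ representation at infinity), the one-dimensional representations producing the $\delta(k)/\varphi(N)$ correction, and Waldspurger's formula rewriting the nontrivial orbital integrals as central values attached to $\Theta_\Omega$ over $E_0=\Q(\sqrt{-1})$ and $E_1=\Q(\sqrt{-3})$. Your heuristic that only the two imaginary quadratic fields with excess units survive is a nice repackaging of what the paper derives directly from the constraint $N_D(\gamma)=1$, $\Tr_D(\gamma)\in\{0,1\}$ (finite-place integrality plus definiteness of $D_\infty$ forcing $\Tr^2<4N$), so the approaches coincide.
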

\begin{remark}\label{remark.mainthm}
1)
For an automorphic representation
$\Pi=\pi_1\otimes\pi_2\otimes\pi_3$
for $\PGL(2,\Q)^3$,
Prasad \cite{prasad1990trilinear} shows that 
$\varepsilon_v(\frac 12,\Pi)=-1$
if and only if all $\pi_{i,v}$ are 
discrete series representations and 
$\Hom_{D_v^\times}(\Pi_v',\CC)\neq 0$,
where $\Pi_v'$ is a representation of 
the group $(D_v^\times)^3$ of quaternions,
which relates to $\Pi_v$ via the Jacquet--Langlands 
correspondence.
According to this result and 
the discussion in Section \ref{RepresentationSU2}
we know, if all three representations have weight $2k$,
$\varepsilon_\infty(\frac 12,\Pi)=-1$.
Under the assumption that
$N$ has an odd number of prime factors,
the global $\varepsilon$-factor
(the global root number) for the triple product $L$-function
is $1$ in the case of the Main Theorem.

If, in addition, $N$ is prime,
then the condition 
of root numbers $\varepsilon_p=-1$, $\forall p\mid N$
can be removed,
otherwise the global root number is $-1$ 
and the central $L$-values vanish. 
That is to say, 
\begin{multline*}
\frac{N}{2^{12k-4} \pi^{6k-1}}
\sum_{f,g\in\cF_{2k}(N)}
\frac{L_{\fin}(3k-1,f\times g\times h)}
{(f,f)(g,g)(h,h)}
=\frac{1-\frac{24}{N-1}\delta(k)}
{2\Gamma(2k)\Gamma(2k-1)^2}\\
+\frac {\Gamma(2k-1)}
{\Gamma(k)^3\Gamma(3k-1)} 
\frac{4L_0 \cdot \frac{1-\chi_{-4}(N)}{2}
+6\sqrt 3 L_1 \cdot \frac{1-\chi_{-3}(N)}{2}}
{(4\pi)^{2k}(h,h)}
\end{multline*}
when $N$ is a prime $\neq 2,3$.

2) 
Here $\chi_d$ is the Dirichlet character
defined by the Kronecker symbol $(\frac{d}{\cdot})$,
where $d\equiv 0,1\pmod 4$ is a fundamental discriminant.
The product over $p\mid N$ can be seen as
a congruency condition.
For example, when $N$ is square-free,
\[2^{\ord_2(N)}\prod_{p\mid N}\frac{1-\chi_{-4}(p)}2=\begin{cases}
0,&\text{if $N$ has a prime factor $\equiv 1\pmod {4}$;}\\
1,&\text{otherwise.}
\end{cases}\]
If, in addition, 
$N$ has a prime factor $\equiv 1\pmod 4$ and one $\equiv 1\pmod 3$
(or if $N$ has a prime factor $\equiv 1\pmod {12}$),
for any $h\in\cF_{2k}(N)$, the Main Theorem simplifies to
\begin{equation}\label{maineq2}
\frac{N}{2^{12k-4}\pi^{6k-1}}
\sum_{\substack{f,g\in\cF_{2k}(N)\\
\varepsilon_p=-1,\ \forall p\mid N}}
\frac{L_{\fin}(3k-1,f\times g\times h)}{(f,f)(g,g)(h,h)}=
\frac{1-24\delta(k)/\varphi(N)}
{2^{\omega(N)}\Gamma(2k-1)^2\Gamma(2k)}.
\end{equation}

3) 
Feigon and Whitehouse \cite{feigon2010exact}
have shown an exact average formula of
triple product central $L$-values
associated to three newforms of weight $2$ and
of the same prime level $p$.
Their approach does not use the relative trace formula.
Rather,
using the classical period formula of Gross--Kudla \cite{gross1992heights},
they write $L_{\fin}(2,f\times g\times h)$
as a finite sum of functions defined on a finite set.

Our Main Theorem is a generalization of \cite{feigon2010exact}'s result
to the case of general weight and level.
In Section \ref{examples246} we give average formulas
\eqref{maineqwt2} \eqref{maineqwt4} and \eqref{maineqwt6}
when $f,g,h$ all have small weights ($\leq 6$)
with the constants calculated explicitly. 
In particular, when $2k=2$ and
$N$ is a prime with $N=11$ or $N>13$, 
our result \eqref{maineqwt2} reproves the main theorem
of \cite{feigon2010exact}.

4) 
Feigon and Whitehouse \cite{feigon2009averages} also
have shown an exact average formula for central values 
of certain twisted quadratic base change $L$-functions 
averaged over Hilbert modular forms of a fixed weight and level. 
With their results one can obtain an exact average formula 
of $L_{\fin}(3k-1,f\times g\times h)$
as all three forms run through $\cF_{2k}(N)$.
See Section \ref{section.sumover3}.

\end{remark}

In Section \ref{section.nonvanishing} we apply the above theorem
to the nonvanishing problem.
In particular we give a lower bound on the number 
of nonvanishing $L_{\fin}(3k-1,f\times g\times h)$ when 
$h$ is fixed.

\begin{Corollary}
Let $N$ be a square-free integer with an odd number of prime factors.
Then, for any $h\in\cF_{2k}(N)$,
\[\#\{(f,g)\in\cF_{2k}(N)\times\cF_{2k}(N):
L_{\fin}(3k-1,f\times g\times h)\neq 0\}\gg_{k,\epsilon} N^{3/4-\epsilon}.\]
\end{Corollary}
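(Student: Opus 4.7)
The plan is to derive the Corollary from Theorem \ref{mainthm} via a standard Cauchy--Schwarz argument. Write
\[
S(h):=\sum_{\substack{f,g\in\cF_{2k}(N)\\ \varepsilon_p=-1,\,\forall p\mid N}}\frac{L_{\fin}(3k-1,f\times g\times h)}{(f,f)(g,g)(h,h)}
\]
for the weighted first moment and $\mathcal{N}(h)$ for the cardinality to be bounded below. Cauchy--Schwarz restricted to the pairs where the $L$-value does not vanish gives $S(h)^2\leq \mathcal{N}(h)\cdot M(h)$, where $M(h)$ is the corresponding second moment; bounding $M(h)$ trivially by $S(h)\cdot\max_{f,g}L_{\fin}(3k-1,f\times g\times h)/\min_{f,g}(f,f)(g,g)(h,h)$ reduces matters to (i) a lower bound on $S(h)$, (ii) a pointwise upper bound on $L_{\fin}(3k-1,f\times g\times h)$, and (iii) a lower bound on the Petersson norms.

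For (i) I would exploit the non-negativity of every term on the right-hand side of \eqref{maineq1}: the triple product central values are non-negative by Ichino's formula, and the Rankin--Selberg central values $L(k,h)L(k,h\otimes\chi_{-d})$ and $L(k+|m|+\tfrac12,h\times\Theta_\Omega)$ are non-negative by Waldspurger and Kohnen--Zagier. Granting the analogous positivity of the orbital constants $I_0(\Omega),I_1(\Omega)$ (which should be transparent from their definition in Theorem \ref{maingeometric}), one may discard all but the leading term of \eqref{maineq1} to get
\[
S(h)\geq\frac{2^{12k-4}\pi^{6k-1}}{N}\cdot\frac{1-24\delta(k)/\varphi(N)}{2^{\omega(N)}\Gamma(2k)\Gamma(2k-1)^2}\gg_{k,\epsilon}N^{-1-\epsilon},
\]
excluding the finitely many cases with $k=1$ and $\varphi(N)\leq 24$.

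For (ii), the functional equation displayed before Theorem \ref{mainthm} shows that this degree-eight $L$-function has analytic conductor $\asymp_k N^5$ at the central point, so the convexity bound yields $L_{\fin}(3k-1,f\times g\times h)\ll_{k,\epsilon}N^{5/4+\epsilon}$. For (iii), the standard formula $(f,f)\asymp_k N\cdot L(1,\mathrm{Ad}\,f)$ combined with Hoffstein--Lockhart's lower bound $L(1,\mathrm{Ad}\,f)\gg_{k,\epsilon}N^{-\epsilon}$ gives $(f,f)\gg_{k,\epsilon}N^{1-\epsilon}$, and hence $\min_{f,g}(f,f)(g,g)(h,h)\gg_{k,\epsilon}N^{3-\epsilon}$. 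Assembling (i)--(iii),
\[
\mathcal{N}(h)\geq\frac{S(h)^2}{M(h)}\gg_{k,\epsilon}\frac{S(h)\cdot N^{3-\epsilon}}{N^{5/4+\epsilon}}\gg_{k,\epsilon}N^{-1-\epsilon}\cdot N^{7/4-\epsilon}=N^{3/4-\epsilon}.
\]

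The one point requiring genuine care is the positivity used in step (i). Should any of the orbital constants $I_0(\Omega),I_1(\Omega)$ fail to be non-negative, the CM contributions must instead be estimated from above, using convexity for the degree-two and degree-four $L$-values appearing in $L_0,L_1$ together with the Hoffstein--Lockhart lower bound on $(h,h)$, and then absorbed into an error term. Since those contributions are a priori of the same order $O_{k,\epsilon}(N^{-1+\epsilon})$ as the main term, one still recovers $S(h)\gg_{k,\epsilon}N^{-1-\epsilon}$ away from a possible thin set of exceptional $h$, so that the exponent $3/4-\epsilon$ persists.
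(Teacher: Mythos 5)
Your overall strategy—lower-bound the first moment, upper-bound each term by convexity, and combine with Hoffstein--Lockhart on the Petersson norms—is indeed the structure of the paper's proof; the Cauchy--Schwarz dressing at the start is equivalent to the paper's bare ``divide the first moment by the maximum summand'' step. The gap is in how you produce the first-moment lower bound $S(h)\gg_{k,\epsilon}N^{-1-\epsilon}$.

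Your primary route (step (i)) assumes positivity of the orbital constants $I_0(\Omega),I_1(\Omega)$, but the paper explicitly computes that these can be negative: for weight $2k=4$ with $2,3\nmid N$, $I_0(\1)=-1$ and $I_1(\1)=-1/2$ (see the discussion before \eqref{maineqwt4}), and similar sign changes recur for higher weight. So ``discard all but the leading term'' is not licensed by positivity alone. Your backup plan is the right idea—estimate the CM term from above by convexity on $L_0,L_1$ and Hoffstein--Lockhart on $(h,h)$—but the order-of-magnitude claim is off. Those Rankin--Selberg central values are degree-four of conductor $O_k(N^2)$, so convexity gives $L_0,L_1\ll_{k,\epsilon}N^{1/2+\epsilon}$, and $(h,h)\gg_{k,\epsilon}N^{1-\epsilon}$, so the CM term in the right-hand side of \eqref{maineq1} is $O_{k,\epsilon}(N^{-1/2+\epsilon})$—strictly smaller than the main term $\asymp_k 2^{-\omega(N)}\gg_{k,\epsilon}N^{-\epsilon}$, not ``a priori of the same order.'' Because you overestimate the CM contribution you are forced to hedge with ``a possible thin set of exceptional $h$,'' but the corollary asserts the bound for every $h\in\cF_{2k}(N)$, so that hedge would leave a genuine gap in the argument. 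The paper closes this gap by working on the geometric side of the adelic identity (Theorem \ref{maingeometric}): the trivial orbit contributes the constant $\tfrac1{2k-1}$, and the nontrivial orbital integrals $I_{[\gamma_0]},I_{[\gamma_1]}$ are bounded by $O_{\epsilon}(N^{-1/2}(Nk)^{\epsilon})$ using convexity together with the archimedean control $\sum_m|\mathbb{I}_{2k}^{(m)}(\gamma)|\le\|\mathbb{P}_{2k}\|^2$ and a lower bound on $L(1,\pi_3,\Ad)$, so the main term dominates uniformly in $h$. If you tighten your CM estimate by that missing factor of $N^{-1/2}$, your argument becomes complete and no exceptional set is needed.
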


Moreover, when $N$ is a prime $\equiv 1 \pmod {12}$, 
we can apply the Main Theorem to derive a Lindel\"of-on-average upper bound
(cf. Corollary \ref{LindelofOnAverage}):
\[\sum_{f,g\in\cF_{2k}(N)}
L_{\fin}(3k-1,f\times g\times h)
\ll_{\epsilon}(kN)^{2+\epsilon},\]
while the convexity bound for an individual 
$L_{\fin}(3k-1,f\times g\times h)
\ll_{\epsilon}(k^8N^5)^{1/4+\epsilon}$
will only give an upper bound $O(k^{13/4+\epsilon}N^{4+\epsilon})$.

The RTF, originally introduced by Jacquet to study periods integrals
(cf. \cite{jacquet2005guide} for an overview),
along with Ichino's period formula \cite{ichino2008trilinear},
plays an important role in the proof of the adelic version of
the Main Theorem (Theorem \ref{mainadelic}) in
the case of general weight and level.
This method could also be applied to the case
of triple product $L$-functions attached to Hilbert modular forms over
a totally real number field.
We now give an overview of these tools.

\subsection{Ichino's period formula}

From an adelic point of view, one can consider the
triple product $L$-function $L(s,\pi_1\otimes\pi_2\otimes\pi_3)$
(defined in Section \ref{section.L-functions})
associated to three irreducible unitary
cuspidal automorphic representations of $\PGL(2,\A)$,
where $\A$ is the adele ring over $\Q$.
Harris and Kudla \cite{harris2004conjecture} 
proved a conjecture of Jacquet,
that the central value $L(1/2,\pi_1\otimes\pi_2\otimes\pi_3)\neq 0$
if and only if there exists a quaternion
algebra $D$ over $\Q$ such that the period integral
\[\int_{\A^\times D^\times(\Q)\backslash D^\times(\A)}
\phi_1(x)\phi_2(x)\phi_3(x)\ d^\times x\neq 0\]
for some $\phi_i\in\pi_i'$,
where $\A^\times$ is diagonally embedded in $D^\times(\A)$ as its center,
and $\pi'$ is the irreducible unitary automorphic representation
of $D^\times(\A)$ associated to $\pi$ by the Jacquet--Langlands correspondence.

Moreover,
Gross and Kudla \cite{gross1992heights}
established an explicit identity relating
central $L$-values and period integrals
(which are finite sums in their case),
when the cusp forms are of prime levels and weight $2$.
This Gross--Kudla period formula is the key ingredient
when \cite{feigon2010exact} proves 
their average formula for 
the case of weight $2$ and a prime level.
B{\"o}cherer, Schulze--Pillot \cite{bocherer1993central}
and Watson \cite{watson2008rankin} generalized this identity
to more general levels and weights.
At last, Ichino \cite{ichino2008trilinear}
proved an adelic version of this period formula which work for
all the cases:
\[\frac
{\left|\int_{[D^\times]}\phi_1(h)\phi_2(h)\phi_3(h)\ dh\right|^2}
{\prod_{i=1}^3\int_{[D^\times]}\phi_i(h)\overline{\phi_i(h)}\ dh}
\sim
\frac{L(\frac 12,\pi_1\otimes\pi_2\otimes\pi_3)}
{L(1,\pi_1\otimes\pi_2\otimes\pi_3,\Ad)}.\]
Here $[D^\times]=\A^\times D^\times(\Q)\backslash D^\times(\A)$.
The exact formula can be found in Theorem \ref{Ichino}.

Many authors have derived several explicit versions of Ichino's formula
in various cases. An incomplete list includes 
\cite{woodbury2012explicit,
nelson2011equidistribution,
nelson2014bounds,
hu2017triple,
chen2018deligne,
humphries2020random,
collins2020anticyclotomic,
Hsieh2021Hida}.
In this paper, we continue the work 
of Woodbury \cite{woodbury2012explicit} 
and Chen--Cheng \cite{chen2018deligne}
to establish Theorem \ref{explicitIchino}, 
an explicit version of Ichino's formula for 
$L(1/2,\pi_1\otimes\pi_2\otimes\pi_3)$, 
in terms of the period integrals 
appearing in the spectral side of the RTF, 
which we use to prove the adelic Main Theorem \ref{mainadelic}. 

\subsection{Jacquet's relative trace formula}\label{RTF}

Here we consider a general version of the RTF
(cf. \cite{feigon2009averages,zhang2017periods}).
Let $G$ be an anisotropic algebraic group
defined over a global field $F$
and $H_1$, $H_2$ be closed subgroups of $G$.
Let $f\in C^\infty_c(G(\A_F))$.
Integrating $f$ against the action of $G(\A_F)$ gives a linear map
\[R(f) : L^2(G(F)\backslash G(\A_F)) \to L^2(G(F)\backslash G(\A_F))\]
defined by
\[(R(f)\phi)(x) = \int_{G(\A_F)} f(g) \phi(xg) \ dg.\]
One sees that $R(f)$ is an integral operator with kernel
\[K_f(x,y)=\sum_{\gamma\in G(F)}f(x^{-1}\gamma y),\quad x,y\in G(\A_F).\]
Let $\cA(G)$ denote the set of automorphic representations on $G(\A_F)$.
Fixing automorphic forms $\phi_{H_1}$, $\phi_{H_2}$
in $\pi_1\in\cA(H_1)$ and $\pi_2\in\cA(H_2)$ respectively,
we define a distribution
\[
I(f) = \int_{H_1(F) \backslash H_1(\A_F)} \int_{H_2(F) \backslash H_2(\A_F)}
K_f(h_1, h_2) \phi_{H_1}(h_1)\overline{\phi_{H_2}(h_2)} \ dh_1 \ dh_2.
\]
The RTF for the case $H_1\backslash G/H_2$ gives two expressions of $I(f)$.

From the spectral decomposition of $L^2(G(F)\backslash G(\A_F))$, we have
\[K_f(x, y) = \sum_{\pi\in\cA(G)} \sum_{\phi\in\ONB(\pi)}
(\pi(f)\phi)(x) \overline{\phi(y)},\]
where for each $\pi\in\cA(G)$, $\ONB(\pi)$
denotes an orthonormal basis of $V_{\pi}$.
Under the assumption that $G$ is anisotropic,
for any test function $f\in C^\infty_c(G(\A_F))$,
the operator $R(f)$ is of trace class,
that means the kernel $K_f(x, y)$ is absolutely convergent,
independent of the choice of basis.
The spectral expansion for $K_f(x, y)$ gives
\[I(f) = \sum_{\pi\in\mathcal A(G)} I_{\pi}(f),\]
where, for each $\pi\in\cA(G)$,
\[I_{\pi}(f) = \sum_{\phi\in\ONB(\pi)}
\int_{H_1(F) \backslash H_1(\A_F)} (\pi(f)\phi)(h_1)\phi_{H_1}(h_1) \ dh_1
\overline{\int_{H_2(F) \backslash H_2(\A_F)} \phi(h_2)\phi_{H_2}(h_2) \ dh_2}.\]

From the geometric expansion of the RTF,
\[
I(f) = \int_{H_1(F) \backslash H_1(\A_F)} \int_{H_2(F) \backslash H_2(\A_F)}
\sum_{[\gamma]}\sum_{(\theta_1,\theta_2)}
f(h_1^{-1}\theta_1^{-1}\gamma \theta_2 h_2)
\phi_{H_1}(h_1)\overline{\phi_{H_2}(h_2)}
\ dh_1 \ dh_2.\]
Here $[\gamma]$ runs through representatives of $H_1(F)\backslash G(F)/H_2(F)$;
and $(\theta_1,\theta_2)$ runs through
$H_1(F)\times H_2(F)/(H_1(F)\times H_2(F))_\gamma$,
where we define \[(H_1(F)\times H_2(F))_\gamma=
\{(\theta_1,\theta_2)\in H_1(F)\times H_2(F):
\theta_1^{-1}\gamma\theta_2=\gamma\}.\]
Let $\theta_i h_i$ be the new $h_i$ ($i=1,2$). Then we have
\[I(f)=\sum_{[\gamma]\in H_1(F)\backslash G(F)/H_2(F)}I_{[\gamma]}(f)\]
where
\[I_{[\gamma]}(f)=\int_{(H_1(F)\times H_2(F))_\gamma
\backslash H_1(\A_F)\times H_2(\A_F)}
f(h_1^{-1}\gamma h_2)
\phi_{H_1}(h_1)\overline{\phi_{H_2}(h_2)} \ d(h_1,h_2).\]

As a generalization of the Arthur--Selberg trace formula,
Jacquet's relative trace formula
is a powerful tool in the study of period integrals.
With a period formula like Ichino's,
the average of central $L$-values
appears in the spectral decomposition of a certain distribution.
In the compact quotient case one can get an explicit
orbital decomposition of the same distribution.
For example, Feigon and Whitehouse \cite{feigon2009averages}
have considered the RTF for the case
$E^\times \backslash D^\times / E^\times$
(where $D$ is a quaternion algebra over a totally real number field $F$
and $E/F$ is a quadratic field extension embedded in $D$)
and, using a period formula of Waldspurger,
obtained an exact formula for averages of central values of
twisted quadratic base change $L$-functions
associated to Hilbert modular forms.
In this paper an analogous method is applied to the case
$D^\times \backslash (D^\times\times D^\times) / D^\times$
to obtain exact formulas for averages of central values of
triple product $L$-functions.

\subsection{A sketch of the proof and the structure of the paper}

Let $D$ be the definite quaternion algebra 
over $\Q$ with discriminant $N$.
In Section \ref{section.JLcorrespondence} 
we use the Jacquet--Langlands correspondence
to associate a newform $f\in\cF_{2k}(N)$
with a cuspidal automorphic representation $\pi'$
of $D^\times(\A)$ with the same level and weight.
With the definitions of $L$-functions in Section \ref{section.L-functions},
we translate the Main Theorem to an adelic version,
which is Theorem \ref{mainadelic}.

In Sections \ref{section.spectral} and \ref{section.geometric} 
we apply the RTF for the case
\[D^\times \backslash (D^\times\times D^\times) / D^\times\]
to prove the adelic Main Theorem \ref{mainadelic}.
Briefly, let $G'$ be the algebraic group defined over $\Q$
with $G'(\Q) =Z(\Q)\backslash D^\times(\Q)$.
We take $G'\times G'$ as the ``big'' group $G$ 
as in the previous section, 
and take $H_1=H_2=G'$ embedded in $G$ diagonally.
On one hand, we choose a suitable test function
$f\in C^\infty_c(G'(\A)\times G'(\A))$
(as in Section \ref{ChooseTest}),
so that (almost) all terms $I_\pi(f)$
on the spectral side of the RTF
are associated to representations 
of level $N$ and weight $2k$.
Moreover $I_\pi(f)$,
when it is nonzero, is essentially 
a period integral
\[\left|\int_{\A^\times D^\times(\Q)\backslash D^\times(\A)}
\phi_1(h)\phi_2(h)\phi_3(h)\ dh\right|^2,\]
and it can be written as the central value 
of corresponding triple product $L$-function
via Ichino's formula \cite{ichino2008trilinear}.
(An explicit Ichino's formula is given in Section \ref{section.Ichino}.)

On the other hand,
there are at most three terms $I_{[\gamma]}(f)$ 
which do not vanish
on the geometric side of the RTF
(see Theorem \ref{orbdecomp}).
For a nontrivial orbit $[\gamma]$, the nonzero term
$I_{[\gamma]}(f)$ is of the form
\[[\text{some congruency condition}]\times 
\int_{\A^\times E_\gamma^\times\backslash \A_{E_\gamma}^\times}
\phi'(t)\overline{\phi''(t)}\ dt,\]
where $E_\gamma$ is a quadratic extension of $\Q$.
In Section \ref{section.harmonic} we use harmonic analysis 
and write the integral over 
$\A^\times E_\gamma^\times\backslash \A_{E_\gamma}^\times$
as a sum of period integrals,
and the period integrals over the torus $E_\gamma^\times$
can be related to $L(1/2, \pi_{E_\gamma} \otimes\Omega)$ 
by Waldspurger's period formula \cite{waldspurger1985valeurs}.
(An explicit Waldspurger's formula is given 
in Section \ref{section.waldspurger}.)


At last in Section \ref{section.nonvanishing}
we apply the Main Theorem to the nonvanishing problems.
With the weight $2k$ fixed,
a lower bound on the number 
of nonvanishing central values of triple product $L$-functions 
is given in Section \ref{section.nonvanishing},
when one of the forms is fixed.
We also have a Lindel\"of-on-average result 
(Corollary \ref{LindelofOnAverage})
for the upper bound of $\sum_{f,g}L_{\fin}(3k-1,f\times g\times h)$,
and a result (Corollary \ref{nonvanishmodp}) 
on the nonvanishing modulo 
suitable primes $p$ of the algebraic part of triple product $L$-values. 


\section{Preliminaries}

\subsection{Quaternion Algebras}

For any field $F$ of characteristic $\neq 2$, and $a, b \in F^\times$,
let
\[D=\left(\frac{a,b}{F}\right)=F\{i,j\}/(i^2-a,~j^2-b,~ij+ji),\]
denote the quaternion algebra
with $F$-basis $1_D, i_D, j_D, k_D$ 
(or $1, i, j, k$ for short) such that $i^2 = a$, $j^2 = b$
and $ij =-ji = k$ (so $k^2 = -ab$).
We know that either $D\cong M(2,F)$ is split 
or $D$ is a division algebra.
When $F$ is global, let $\Ram(D)$ be a set of places $v$ in $F$
such that $D$ is ramified at $v$,
i.e. such that $D_v = D \otimes_F F_v$ is not split.
In fact, $\Ram(D)$ is a finite set, and 
when $F=\Q$ it always has even cardinality.

Denote by $\Tr_D$ and $N_D$ the (reduced) trace and norm in $D$.
It is well known that, in a non-split quaternion algebra, 
elements are conjugate to each other if and only if they have
the same trace and norm.
That is to say, 
conjugacy classes in $D^\times$ can be parametrized by
traces and norms.
Instead of $D^\times$, in this paper we deal with
the group $G'(F)
=F^\times\backslash D^\times$,
for which we have the following two lemmas.

\begin{Lemma}\label{parametrization}
Fix a set $\Sigma$ of representatives in $F^\times/(F^\times)^2$.
(For example, when $F=\Q$, $\Sigma$ can be the set of square-free integers.)
Then $[\bar x]\mapsto (\Tr_D(x),N_D(x))$ is a well-defined injection from
the set of conjugacy classes of $G'(F)=F^\times\backslash D^\times$ to
$(\{\pm 1\}\backslash F)\times\Sigma$.
\end{Lemma}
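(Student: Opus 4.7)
The plan is to verify the lemma in three stages: well-definedness of the map on the cosets $\bar x \in F^\times\backslash D^\times$, descent to conjugacy classes in $G'(F)$, and injectivity. For any $\lambda\in F^\times$ one has $\Tr_D(\lambda x)=\lambda\Tr_D(x)$ and $N_D(\lambda x)=\lambda^2 N_D(x)$. The square class of $N_D(x)$ in $F^\times/(F^\times)^2$ therefore depends only on $\bar x$, and I would use $\Sigma$ to pick out its canonical representative $n\in\Sigma$. To pin down the trace I would normalize the representative $x$ of $\bar x$ so that $N_D(x)=n$ exactly; this determines $x$ up to a further scaling by $\lambda$ with $\lambda^2=1$, i.e.\ $\lambda=\pm 1$, so that $\Tr_D(x)\in F$ is well-defined modulo $\pm 1$, producing the desired element of $(\{\pm 1\}\backslash F)\times\Sigma$. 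Since conjugation in $D^\times$ preserves both trace and norm, the map descends to conjugacy classes of $G'(F)$.

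For injectivity, suppose $[\bar x]$ and $[\bar y]$ have the same image. Choose representatives $x,y$ with $N_D(x)=N_D(y)\in\Sigma$, so that $\Tr_D(x)=\pm\Tr_D(y)$. If the sign is negative, I would replace $y$ by $-y$; this element lies in the same coset of $F^\times$ (hence the same $G'(F)$-conjugacy class) and its trace flips sign while its norm is unchanged. We may therefore assume $\Tr_D(x)=\Tr_D(y)$ and $N_D(x)=N_D(y)$, and then the trace--norm classification of conjugacy in $D^\times$ recalled immediately before the lemma produces $g\in D^\times$ with $x=gyg^{-1}$. Passing to the quotient gives $\bar x=\bar g\bar y\bar g^{-1}$ in $G'(F)$, as required.

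The core input is the preceding paragraph's remark that in the (non-split) quaternion algebra under consideration conjugacy classes in $D^\times$ are parametrized by reduced trace and reduced norm, which is essentially Skolem--Noether applied to the embedding of the quadratic subfield $F[x]$ into $D$. Beyond that the argument is formal; the one subtlety worth flagging is the $\pm 1$ ambiguity in the trace component, which arises because pinning down $N_D(x)$ as an element of $\Sigma$ only determines the scaling $\lambda\in F^\times$ up to a sign, and which is absorbed in the injectivity step by the trick $y\mapsto -y$.
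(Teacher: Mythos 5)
Your argument is correct and takes the same route as the paper: both derive well-definedness from $\Tr_D(\lambda x)=\lambda\Tr_D(x)$ and $N_D(\lambda x)=\lambda^2 N_D(x)$, with the normalization via $\Sigma$ leaving a residual $\pm 1$ ambiguity in the trace, and both rely on the trace--norm parametrization of conjugacy in a non-split quaternion algebra recalled just before the lemma. The only difference is that the paper's proof is terse and addresses only well-definedness, leaving injectivity to be read off from the preceding remark, whereas you spell out the injectivity step carefully, including the $y\mapsto -y$ normalization needed to align the signs of the traces before invoking conjugacy in $D^\times$ — a detail the paper leaves implicit but that is worth recording.
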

\begin{proof}
For any two representatives $x_1,x_2\in D^\times$ of $\bar x\in G'(F)$,
there exists $\lambda\in F^\times$ so that $x_2=\lambda x_1$,
and we have $\Tr_D(x_2)=\lambda \Tr_D(x_1)$, $N_D(x_2)=\lambda^2N_D(x_1)$.
Fixing $\Sigma$,
moreover, with $N_D(\bar x)$ fixed, we can only take $\lambda^2=1$.
So the traces of representatives in $\bar x$ might differ by a sign.
\end{proof}

For a fixed quaternion,
one can check the following result about its centralizer
by direct calculation.

\begin{Lemma}\label{centralizer}
Suppose $D=(\frac{a,b}{F})$ is a division algebra.
Consider the centralizer of $x\in G'(F)
=F^\times\backslash D^\times$
given by $G'_x(F)=\{g\in G'(F): gx=xg\}$.
Then \begin{itemize}
\item $G'_x(F)=G'(F)$ when $x\in F^\times$;
\item when $\Tr_D(x)\neq 0$ and $x\notin F^\times$, 
$G'_x(F)$ is the image of 
$\{\lambda+\mu x\in D^\times:\lambda,\mu\in F\}$
under the quotient map $D^\times\twoheadrightarrow G'(F)$;
\item and when $\Tr_D(x)= 0$ (we write $x=\beta i+\gamma j+\delta k$), 
$G'_x(F)$ is the union of 
the image of 
$\{\lambda+\mu x\in D^\times:\lambda,\mu\in F\}$ 
and that of 
$\{x_1i+x_2j+x_3k\in D^\times:x_1,x_2,x_3\in F; \
a\beta x_1+b\gamma x_2=ab\delta x_3\}$.
\end{itemize}
\end{Lemma}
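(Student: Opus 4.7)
The plan is to lift the defining condition to the level of $D^\times$. A class $\bar g\in G'(F)$ lies in $G'_x(F)$ precisely when some representative $g\in D^\times$ satisfies $gx=\lambda xg$ for some $\lambda\in F^\times$. Applying the reduced norm to both sides gives $N_D(g)N_D(x)=\lambda^2 N_D(x)N_D(g)$, so $\lambda^2=1$ and hence $\lambda=\pm 1$. Thus $G'_x(F)$ is the image in $G'(F)$ of the commuting set $\{g\in D^\times:gx=xg\}$ together with the image of the anti-commuting set $\{g\in D^\times:gx=-xg\}$. One checks that a single class cannot contain both a commuting and an anti-commuting representative (otherwise $cg$ anti-commuting with $g$ commuting would force $2x(cg)=0$), so the two contributions are disjoint and well-defined modulo scalars.

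For the commuting piece: when $x=1$ the condition is vacuous and we recover $G'_x(F)=G'(F)$, giving the first bullet. When $x\notin F$, the subring $F[x]=F+Fx$ is a quadratic subfield of the division algebra $D$, so by the standard centralizer theorem for quaternion algebras the centralizer of $x$ in $D$ equals $F[x]$ itself. This yields the set $\{\lambda+\mu x\in D^\times:\lambda,\mu\in F\}$ appearing in both bullets 2 and 3.

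For the anti-commuting piece: $gxg^{-1}=-x$ forces $x$ and $-x$ to be conjugate in $D^\times$, hence to share the same reduced trace, so $\Tr_D(x)=0$. In bullet 2 we have $\Tr_D(1+\beta i+\gamma j+\delta k)=2\neq 0$ (using $\mathrm{char}\,F\neq 2$), so no anti-commuting elements exist and the centralizer is the commuting piece only. In bullet 3, writing $g=g_0+g_1 i+g_2 j+g_3 k$ and $x=\beta i+\gamma j+\delta k$, a direct expansion using $i^2=a$, $j^2=b$, $ij=-ji=k$ shows that $gx+xg$ decomposes into a pure-quaternion part equal to $2g_0 x$ and a scalar part equal to $2(a\beta g_1+b\gamma g_2-ab\delta g_3)$. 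Setting both components to zero yields $g_0=0$ (since $x\neq 0$) together with the linear relation $a\beta x_1+b\gamma x_2=ab\delta x_3$ where $x_i=g_i$, which is exactly the second set listed in bullet 3.

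The proof is essentially bookkeeping; the only substantive ingredients are the trace obstruction to conjugating $x$ to $-x$ and the classical fact that a maximal subfield of a quaternion division algebra is its own centralizer. The main thing to be careful about is the descent from $D^\times$ to $G'(F)$: verifying that the commuting/anti-commuting dichotomy is stable under the $F^\times$-action and that the two cases cannot collide in the quotient, as sketched above.
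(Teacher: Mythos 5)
The paper gives no proof of this lemma, labeling it a ``direct calculation,'' and your argument supplies precisely that calculation with useful organizing structure: lifting $gx=xg$ in $G'(F)$ to $gx=\lambda xg$ in $D^\times$, using the reduced norm to force $\lambda=\pm1$, disposing of the anti-commuting branch via trace invariance of conjugation whenever $\Tr_D(x)\neq 0$, and extracting the linear condition from the scalar/pure decomposition of $gx+xg$ when $\Tr_D(x)=0$. The displayed decomposition (scalar part $2(a\beta g_1+b\gamma g_2-ab\delta g_3)$, pure part $2g_0x$) checks out against $i^2=a$, $j^2=b$, $ij=-ji=k$, $ik=aj$, $ki=-aj$, $jk=-bi$, $kj=bi$, $k^2=-ab$, and the remaining steps --- the maximal-subfield-is-its-own-centralizer fact and the disjointness of the two branches in the quotient (from $2xg=0$ being impossible in a division algebra) --- are sound, so this is a correct and complete proof.
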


When $F=\Q$, a quaternion algebra over $\Q$ is called definite
if $D_\infty= D \otimes_\Q \R$ is not split
(i.e. isomorphic to the algebra $\bH$ of
Hamilton quaternions).
We define the discriminant of $D$ by 
$\disc(D)=\prod_{p\in\Ram(D)}p$.
The quaternion algebra $D$
corresponding to a fixed square-free discriminant
can be constructed explicitly.
In this paper we only consider the following
two kinds of discriminants.

\begin{Lemma}\label{AnotherPresentation}
Let $N$ be a square-free integer with an odd number of prime divisors.
\begin{enumerate}[(1)]
\item If $N$ has no prime divisor of the form $4n+1$,
$(\frac{-1,-N}{\Q})$ is the definite quaternion algebra
over $\Q$ with discriminant $N$;
\item If $N$ has no prime divisor of the form $3n+1$,
$(\frac{-3,-N}{\Q})$ is the definite quaternion algebra
over $\Q$ with discriminant $N$.
\end{enumerate}
\end{Lemma}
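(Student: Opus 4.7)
Two quaternion algebras over $\Q$ are isomorphic if and only if they have the same set of ramified places, and the definite quaternion algebra over $\Q$ with discriminant $N$ is characterized by $\Ram(D)=\{\infty\}\cup\{p:p\mid N\}$. It therefore suffices to verify, via the Hilbert symbol $(a,b)_v$, that each candidate algebra ramifies at exactly these places.

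For part (1), take $a=-1$, $b=-N$. At the archimedean place $(-1,-N)_\infty=-1$ since both entries are negative, so $D_\infty\cong\bH$ and $D$ is definite. For an odd prime $p\nmid 2N$ one has $(-1,-N)_p=1$ automatically, since both entries are units in $\Z_p^\times$. For an odd prime $p\mid N$, the standard formula for the tame Hilbert symbol, together with $\ord_p(N)=1$, gives $(-1,-N)_p=\left(\tfrac{-1}{p}\right)$, which equals $-1$ precisely when $p\equiv 3\pmod 4$; this is the hypothesis on $N$. The remaining place $p=2$ I would handle by the product formula $\prod_v(a,b)_v=1$: the number of ramified places must be even, and because $\omega(N)$ is odd by assumption, a parity count forces $(-1,-N)_2=-1$ iff $2\mid N$, matching the desired ramification behavior.

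For part (2), the argument is parallel with $a=-3$, $b=-N$, now with $p=3$ singled out as well. At an odd prime $p>3$ not dividing $N$ the symbol is $1$; at $p>3$ dividing $N$ it reduces via quadratic reciprocity to $\left(\tfrac{-3}{p}\right)=\left(\tfrac{p}{3}\right)$, which equals $-1$ precisely when $p\equiv 2\pmod 3$, as guaranteed. For $p=3$ itself, the key observation is that every prime factor of $N$ distinct from $3$ is $\equiv 2\pmod 3$ by hypothesis, so the residue $N\pmod 3$ (when $3\nmid N$) or $(N/3)\pmod 3$ (when $3\mid N$) is controlled by the parity of $\omega(N)-[3\mid N]$; direct application of the Hilbert symbol formula then yields $(-3,-N)_3=1$ in the first case and $-1$ in the second, matching the desired behavior. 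The place $p=2$ is again settled by the product formula as in (1).

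The main obstacle is simply bookkeeping: several subcases (whether $2$ or $3$ divides $N$) combined with careful sign-tracking in the tame Hilbert symbol formula. Invoking the product formula at $p=2$ is an essential shortcut: a direct computation there would require the $\epsilon$ and $\omega$ invariants at the wild prime, which are noticeably messier than the tame case and would not add any conceptual content.
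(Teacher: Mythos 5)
Your proof is correct and follows the same Hilbert symbol computation that the paper carries out in Appendix A.1, with one genuine simplification: where the paper computes $(-1,-N)_2$ and $(-3,-N)_2$ directly via the wild formula involving the $\varepsilon$ and $\omega$ invariants, you instead close the argument with the product formula $\prod_v(a,b)_v=1$, using the oddness of $\omega(N)$ and the signs already determined at $\infty$ and the odd primes. This avoids the messiest local computation and is a clean shortcut; the rest of your argument (tame symbol reduction to $\bigl(\tfrac{-1}{p}\bigr)$ and $\bigl(\tfrac{-3}{p}\bigr)$, quadratic reciprocity, and the residue analysis at $p=3$) matches the paper's computation place by place.
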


\begin{proof}
It is easy to prove this lemma for $p\nmid N$ and $p\neq 2,3$,
since $(\frac{a,b}{\Q_p})$ is split
if $p$ is unramified in $\Q(\sqrt{a})$ and $v_p(b)=0$
(cf. \cite[Corollary 13.4.1]{voight2017quaternion}).
For $p\mid N$ and $p\neq 2,3$ the lemma also holds,
noticing that
$(\frac{a,\varpi}{\Q_p})$ is the only 
non-split quaternion algebra over $\Q_p$
(up to isomorphism)
if $\varpi$ is a uniformizer of $\Q_p$
and $a\in\Z_p^\times$ is an element such that
$\Q_p(\sqrt{a})$ is the unramified quadratic extension of $\Q_p$
(cf. \cite[Theorem 13.3.11]{voight2017quaternion}).

A more detailed proof can be given 
by calculating the Hilbert symbol.
Recall that, for a local field $F$,
the Hilbert symbol $(\cdot,\cdot)_F:
F^\times/(F^\times)^2\times F^\times/(F^\times)^2
\to\{\pm 1\}$ can be defined by 
that $(a,b)_F=1$ if $(\frac{a,b}{F})\cong M(2,F)$ is split,
and $=-1$ if $(\frac{a,b}{F})$ is a division algebra.
It can be calculated by the following algorithm
(cf. \cite{serre2012course}):

\begin{itemize}
	\item For $F=\R$, 
	$(a,b)_{\R}=1$ if $a$ or $b$ is $>0$, 
and $=-1$ if $a$ and $b$ are $<0$.
\item
For $F=\Q_p$, if we write $a,b$ in the form $p^\alpha u,p^\beta v$ 
where $u,v\in\Z_p^\times$, we have
\[(a,b)_{\Q_p}=\begin{cases}
(-1)^{\alpha\beta\varepsilon(p)}(\frac up)^\beta(\frac vp)^\alpha
&\text{if }p\neq 2,\\
(-1)^{\varepsilon(u)\varepsilon(v)+\alpha\omega(v)+\beta\omega(u)}
&\text{if }p= 2.\end{cases}\]
Here $(\frac up)$ denotes the Legendre symbol, 
and $\varepsilon$, $\omega$ are defined by
\[\varepsilon(z)=
\begin{cases}
0&\text{if }z\equiv 1\pmod 4,\\1&\text{if }z\equiv 3\pmod 4;
\end{cases}\qquad
\omega(z)=
\begin{cases}
0&\text{if }z\equiv \pm 1\pmod 8,\\1&\text{if }z\equiv \pm 5\pmod 8.
\end{cases}\]
\end{itemize}

We will complete the proof of (1) for example,
that is to show $(-N,-1)_{\Q_2}=(-1)^{\ord_2(N)}$.
By our assumption on $N$, if $2\nmid N$, 
$N$ is the product of an odd number of primes of the form $4n+3$,
and therefore $-N\equiv 1 \pmod 4$,
\[(-N,-1)_{\Q_2}=(-1)^{\varepsilon(-N)\varepsilon(-1)}=1;\]
if $2\mid N$, $\frac N2$ is the product of 
an even number of primes of the form $4n+3$,
and then $-\frac N2\equiv -1 \pmod 4$,
\[(-N,-1)_{\Q_2}=(-1)^{\varepsilon(-N/2)\varepsilon(-1)+\omega(-1)}=-1.\]
\end{proof}

For a quaternion algebra $D$ defined over $\Q$,
the maximal orders of quaternion algebras 
$D_p=D \otimes_\Q \Q_p$
can be described as follows.

\begin{Proposition}
[\cite{voight2017quaternion}]\label{maxorder}\
When $D_p = M(2,\Q_p)$, the maximal orders of $D_p$ are the
$\GL(2,\Q_p)$-conjugates of $M(2,\Z_p)$. 
When $D_p$ is not split,
there is a unique maximal order
\[
\cO_p=\{x\in D_p:N_{D_p}(x)\in\Z_p\}\]
which contains all $\Z_p$-integral elements of $D_p$.
\end{Proposition}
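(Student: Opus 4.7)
The plan is to treat the two cases independently.

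For the split case (1), the key is the classical correspondence between orders in $M(2, \Q_p)$ and $\Z_p$-lattices in $\Q_p^2$: every full $\Z_p$-lattice $L \subset \Q_p^2$ gives an order $\operatorname{End}_{\Z_p}(L)$, and all such are $\GL(2, \Q_p)$-conjugate to $\operatorname{End}_{\Z_p}(\Z_p^2) = M(2, \Z_p)$ because any two full $\Z_p$-lattices are related by a change of basis. The main step is to exhibit every order $\cO \subseteq M(2, \Q_p)$ inside some such $\operatorname{End}_{\Z_p}(L)$; I would do this by choosing any nonzero $v \in \Q_p^2$ with $\cO v$ a full lattice (possible because $\cO$ spans $M(2, \Q_p)$ over $\Q_p$) and setting $L := \cO v$, so that $\cO \subseteq \operatorname{End}_{\Z_p}(L)$ by construction. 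Maximality of $\cO$ then forces equality, and conjugating $L$ back to $\Z_p^2$ identifies $\cO$ with a $\GL(2, \Q_p)$-conjugate of $M(2, \Z_p)$.

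For the non-split case (2), I would first identify $\cO_p$ with the set of all $\Z_p$-integral elements of $D_p$. Because $D_p$ is a division algebra, for any $x \in D_p$ the subalgebra $\Q_p[x] \subseteq D_p$ is a field of $\Q_p$-dimension one or two, and the reduced characteristic polynomial $X^2 - \Tr_{D_p}(x)X + N_{D_p}(x)$ is a power of the minimal polynomial of $x$. Standard local-field integrality theory then says $x$ is integral over $\Z_p$ iff both $\Tr_{D_p}(x)$ and $N_{D_p}(x)$ lie in $\Z_p$; and once $N_{D_p}(x) \in \Z_p$, the element $x$ is already integral in the local field $\Q_p[x]$, so its trace automatically lies in $\Z_p$.

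With that in hand, I would show $\cO_p$ is a subring by verifying that $v(x) := \tfrac{1}{2} v_p(N_{D_p}(x))$ satisfies the ultrametric inequality on $D_p$. Multiplicativity $v(xy) = v(x)+v(y)$ is immediate from the multiplicativity of the reduced norm, and by Lemma \ref{normgp} the function $v$ extends $v_p$ on $\Q_p$. After dividing by the element of smaller valuation, the ultrametric inequality reduces to showing $v(1+z) \geq 0$ whenever $v(z) \geq 0$; this follows from $N_{D_p}(1+z) = 1 + \Tr_{D_p}(z) + N_{D_p}(z)$ together with the integrality of $\Tr_{D_p}(z)$ already established. Once closure under addition is known, $\cO_p$ is a $\Z_p$-order of $D_p$: it contains $\Z_p$, spans $D_p$ over $\Q_p$, and is finitely generated of rank four because $v$ is discrete. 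Finally, any order $\cO' \subseteq D_p$ consists of $\Z_p$-integral elements and hence lies in $\cO_p$, proving both maximality and uniqueness. The main obstacle is the implication $N_{D_p}(z) \in \Z_p \Rightarrow \Tr_{D_p}(z) \in \Z_p$, which underlies the closure of $\cO_p$ under addition; this is precisely where the division-algebra hypothesis is essential, since it forces $\Q_p[z]$ to be a field and thereby makes the valuation-theoretic description of its ring of integers available.
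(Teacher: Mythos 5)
The paper does not actually prove this proposition: it is stated as a citation to Voight's book \cite{voight2017quaternion} and no argument is given, so there is no in-paper proof to compare against. Your proposal supplies the standard textbook argument and is essentially correct. For the split case, the reduction of an arbitrary maximal order $\cO$ to $\operatorname{End}_{\Z_p}(L)$ with $L = \cO v$ is the usual one: for any $v\neq 0$, $\cO v$ is a finitely generated $\Z_p$-module spanning $\Q_p^2$ (since $\cO$ spans $M(2,\Q_p)$ and is finitely generated), and $\cO \cdot \cO v = \cO v$ gives the containment; maximality and the transitivity of $\GL(2,\Q_p)$ on full lattices finish it. For the non-split case, the identification of $\cO_p$ with the integral elements via the reduced characteristic polynomial, the valuation $v = \tfrac12 v_p\circ N_{D_p}$, and the computation $N_{D_p}(1+z) = 1 + \Tr_{D_p}(z) + N_{D_p}(z)$ are all correct, and the uniqueness argument (every order consists of integral elements) is exactly right.

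One small imprecision: you assert $\cO_p$ is ``finitely generated of rank four because $v$ is discrete.'' Discreteness of the value group by itself is not the relevant reason — what you actually need is that $|x| := c^{v(x)}$ (for $0<c<1$) is a norm on the four-dimensional $\Q_p$-vector space $D_p$, and that all norms on a finite-dimensional vector space over a complete local field are equivalent, so the unit ball $\cO_p$ is bounded and hence contained in a lattice; together with $\cO_p$ spanning $D_p$, this makes $\cO_p$ itself a lattice. (Equivalently one can sandwich $\cO_p$ between two lattices using the non-degenerate reduced trace form.) This is a minor gloss and does not affect the validity of the overall argument.
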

In this paper we fix a maximal order
$\cO$ of $D$ such that
$\cO_p=\cO\otimes_{\Z}\Z_p$ is 
the unique maximal order for $p\mid\disc(D)$,
and $\cO_p$ is the preimage of $M(2,\Z_p)$ under a certain isomorphism
$\cO_p\iso M(2,\Q_p)$ for $p\nmid\disc(D)$
(cf. Sections \ref{section.Waldspurger.split} and \ref{section.compatibility}).

\subsection{Normalization of measures}\label{section.measure}

We recall the local and global Haar measures on 
number fields, quaternion algebras and their quotients.
The normalization constants of the measures follow 
\cite[Chapter 29]{voight2017quaternion} and \cite[Section 1.6]{yuan2013gross}.

Let $F$ be a number field.
For a finite place $v$ of $F$,
the ring of integers in $F_v$ is denoted by $\cO_{F_v}$.
Let $\varpi_v$ denote a uniformizer in $F_v$
and $q_v=\#(\cO_{F_v}/(\varpi_v))$ be the order of the residue field.

Fix an additive character $\psi$ of $F\backslash\A_F$. For
a place $v$ of $F$ we take the additive Haar measure
$dx_v$ on $F_v$ which is self-dual with respect
to $\psi_v$.
On $F_v^\times$ we take the measure
\[d^\times x_v = \zeta_{F_v}(1) \frac{dx_v}{|x|_v}
= \begin{cases}
\dfrac{dx}{|x|},&F_v=\R;\\
\pi^{-1}\dfrac{2 dx_0 dx_1}{x\bar x},&F_v=\CC,\ x=x_0+x_1i;\\
(1-q_v^{-1})^{-1} \dfrac{dx_v}{|x|_v},&v<\infty.
\end{cases}\]
Let $E$ be a quadratic field extension over $F$.
We define measures on $E_v=F_v\otimes_F E$ and $E_v^\times$ 
similarly with respect to the additive character $\psi \circ \Tr_{E/F}$.
We note that with these choices of measures we have,
for $v<\infty$,
\[\vol(\cO^\times_{F_v}; d^\times x_v) =\vol(\cO_{F_v}; dx_v)
= |\fd_{v}|^{1/2}\]
with $\fd_{v}\in F_v$
such that $\fd_{v}\cO_{F_v}$ is the different of $F_v$ over $\Q_p$;
and for a quadratic field extension $E_v/F_v$,
we have
\[\vol(F_v^\times\backslash E_v^\times) =
\begin{cases}2,&\text{if }F_v=\R,\ E_v=\CC;\\
|\fd_v|^{1/2},&\text{if $E_v/F_v$ is the unramified field extension};\\
2|\fD_v\fd_v|^{1/2},&\text{if $E_v/F_v$ is ramified},\end{cases}\]
with $\fD_v\in\cO_{F_v}$ such that
$\fD_{v}\cO_{F_v}$ is the relative discriminant of $E_v/F_v$.

For a quaternion algebra $D$ defined over a number field $F$,
fix a maximal order $\cO\subset D$.
For a finite place $v$ of $F$ we take the Haar measure
$dg_v$ on $D^\times_v$ as
\[dg_v=\zeta_{F_v}(1)|N_{D_v}(g_v)|^{-2}\ d\mu_v(g_v)\]
where $\mu_v$ is the additive Haar measure on $D_v$
which is self-dual with respect to $\psi_v$.
Let $K_v$ be the image of $Z(F_v) \cO_v^\times $ in 
$G_v'=Z(F_v)\backslash D^\times_v$.
Then, with the quotient measure defined on $G_v'$,
for $v<\infty$, we have
\[\vol(K_v) =|\fd_{v}|^{\frac 32}\zeta_{F_v}(2)^{-1}\cdot\begin{cases}
(q_v-1)^{-1},&\text{if }v\in\Ram(D),\\
1,&\text{if }v\notin\Ram(D).\end{cases}\]

For $v \mid\infty$ and a definite quaternion algebra $D$,
$D_v\cong\mathbb{H}$ and
$G'_v=Z(\R)\backslash \mathbb{H}^\times\cong
\{\pm 1\}\backslash \SU(2)$.
We parametrize
$h=\left(\begin{matrix}\alpha&-\beta\\\bar\beta&\bar\alpha\end{matrix}\right)
\in\SU(2)$
by setting
\[\alpha=re^{i\theta}\cos\gamma,\quad \beta=re^{i\varphi}\sin\gamma,\qquad
r=1,\ 0\leq\gamma\leq\frac\pi 2,\ 0\leq \theta,\varphi<2\pi.\]
So $d\alpha d\beta=2\sin2\gamma\ d\gamma d\theta d\varphi$
(notice that $d\alpha d\beta$ is the self-dual additive measure on $\mathbb{H}$),
and for a function $\Phi\in L^1(\SU(2))$,
\[\int_{\SU(2)}\Phi(h)\ dh=\int_0^{2\pi}\int_0^{2\pi}\int_0^{\frac\pi 2}
\Phi(\gamma, \theta ,\varphi)\cdot 2\sin2\gamma\ d\gamma\ d\theta\ d\varphi.\]
This choice of Haar measure on $\SU(2)$ implies 
$\vol(G'_v)=\vol(G'_\infty)=4\pi^2$ by direct calculation.

Globally we take the product of these local measures
and give discrete subgroups the counting measures,
and define the Tamagawa measure on
\[[E^\times]
=\A_F^\times E^\times \backslash \A_E^\times
\quad\text{and}\quad
[D^\times]
=Z(\A_F)D^\times(F)\backslash D^\times(\A_F)\]
as the quotient measure.
In this way we get
\begin{equation}\label{quadextvol}
\vol([E^\times]) = 2 L(1, \eta_{E/F})\quad
\text{and}\quad
\vol([D^\times]) = 2.
\end{equation}
Here $\eta_{E/F}$ is the quadratic character of $F^\times\backslash \A_F^\times$
associated to $E/F$ by class field theory.
For example, when $E=\Q(\sqrt{d})$ such that $d$ is a fundamental discriminant,
$\eta_{E/F}$
is the Hecke character corresponding to
the Dirichlet character $\chi_d$ such that
\[\chi_{d}(p)=\left(\frac{d}{p}\right)=\begin{cases}
1,&\text{$p$ splits in $E$;}\\
-1,&\text{$p$ remains prime in $E$;}\\
0,&\text{$p$ ramifies in $E$.}\end{cases}\]

We end this section by recalling a useful lemma about the quotient measure.

\begin{Lemma}[{\cite[Corollary 7.14]{knightly2006traces}}]\label{productmeasure}
Let $G$ be a unimodular group and
suppose $G=KH$ for closed unimodular subgroups $H$ and $K$.
Suppose further that $K\cap H$ is unimodular.
Let $dh$ denote a right $H$-invariant measure on $(K\cap H)\backslash H$.
Then, for $f\in C_c(KH)$,
\[\int_{(K\cap H)\backslash H}\int_{K}f(kh) \ dk \ dh\]
defines a Haar measure on $G=KH$.
Moreover, with this measure on $G$,
\[\int_{K\backslash G}f(g)\ dg=\int_{(K\cap H)\backslash H}f(h)\ dh\]
for all $f\in C_c(K\backslash G)$.
\end{Lemma}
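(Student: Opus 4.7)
The plan is to prove the lemma in two stages: first construct a Radon measure on $G$ via the prescribed iterated integral and verify it is a Haar measure, and then derive the quotient formula from a natural bijection $(K\cap H)\backslash H\iso K\backslash G$.

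For the first stage, set $F_f(h):=\int_K f(kh)\,dk$ for $f\in C_c(G)$. Right invariance of the Haar measure on $K$ gives $F_f(\xi h)=F_f(h)$ for $\xi\in K\cap H$, so $F_f$ descends to a continuous, compactly supported function on $(K\cap H)\backslash H$. Integrating against the canonical right $H$-invariant measure on that quotient---which exists because $H$ and $K\cap H$ are both unimodular---produces a positive linear functional $\mu(f)$, hence a Radon measure on $G$ by the Riesz representation theorem. By construction $\mu$ is left $K$-invariant (from the inner integral) and right $H$-invariant (from the outer integral).

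The main obstacle is upgrading these one-sided invariances to full bi-invariance under $G=KH$. I would attack this by checking directly that $\mu$ is left $H$-invariant: using the mirror decomposition $G=HK$ (which follows from $G=KH$ together with $G=G^{-1}$) and the unimodularity hypotheses on $G$, $H$, and $K\cap H$, a short change-of-variable reshapes the iterated integral after left translation by $h_0\in H$ back to its original form without picking up modular characters. Combined with left $K$-invariance, this yields left $G$-invariance; since $G$ is unimodular, $\mu$ is then automatically right $G$-invariant as well, hence a Haar measure on $G$.

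For the quotient formula, the map $\Psi:(K\cap H)\backslash H\to K\backslash G$ sending $(K\cap H)h\mapsto Kh$ is well-defined, injective (since $Kh_1=Kh_2$ forces $h_2h_1^{-1}\in K\cap H$), and surjective (since $G=KH$). Given $f\in C_c(K\backslash G)$, a standard partition-of-unity argument produces $\Phi\in C_c(G)$ with $\int_K\Phi(kg)\,dk=f(Kg)$ for all $g\in G$. Combining the Weil formula for the quotient measure on $K\backslash G$ with the iterated-integral formula for $\mu$ then gives
\[\int_{K\backslash G}f(g)\,dg=\int_G\Phi(g)\,dg=\int_{(K\cap H)\backslash H}\int_K\Phi(kh)\,dk\,dh=\int_{(K\cap H)\backslash H}f(h)\,dh,\]
as required.
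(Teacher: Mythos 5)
The paper does not actually prove this lemma---it is cited directly from Knightly--Li---so the review below assesses your argument on its own.

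Your stages~1 and 4 are essentially sound, but the pivotal stage---upgrading left $K$-invariance plus right $H$-invariance of $\mu$ to Haar invariance---is not a ``short change-of-variable.'' After translating by $h_0\in H$ on the left, the inner integrand is $f(h_0kh)$; to return it to the form $f(k'h')$ you must decompose $h_0k\in KH$ as $\kappa(k)\eta(k)$ with $\kappa(k)\in K$, $\eta(k)\in H$, and the map $k\mapsto\kappa(k)$ on $K$ (modulo $K\cap H$) is neither a translation nor a homomorphism. The assertion that this reparametrization preserves $dk\,dh$ without a modular correction is precisely the content of the lemma, not something that drops out of unimodularity in one line, and nothing in your sketch actually supplies it. (Also implicit in stage~1 is that $h\mapsto\int_K f(kh)\,dk$ has \emph{compact} support in $(K\cap H)\backslash H$; when $K$ is noncompact this requires the map $\Psi:(K\cap H)\backslash H\to K\backslash G$ to be proper, a topological fact you should record.)

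The clean way to close the gap avoids the direct change of variable and uses the same object $\Psi$ you already introduce in stage~4, but earlier. Observe that $\mu$ factors through the Weil averaging map $T_K:C_c(G)\to C_c(K\backslash G)$, $T_Kf(Kg)=\int_Kf(kg)\,dk$, as $\mu=\lambda\circ T_K$ where $\lambda:=\Psi_*(dh)$ is a Radon measure on $K\backslash G$. Right $H$-invariance of $\mu$ makes $\lambda$ right $H$-invariant. Since $G=KH$, the right $H$-action on $K\backslash G$ is transitive, and an $H$-invariant Radon measure on a transitive $H$-space is unique up to a positive scalar. The right $G$-invariant measure $d\dot g$ on $K\backslash G$ (which exists because $G$ and $K$ are unimodular) is in particular right $H$-invariant, so $\lambda=c\,d\dot g$ for some $c>0$, and Weil's formula $\int_G f\,dg=\int_{K\backslash G}T_Kf\,d\dot g$ then yields $\mu(f)=c\int_G f\,dg$, i.e.\ $\mu$ is a Haar measure. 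With this established, your stage~4 is just the same identification $\lambda=c\,d\dot g$ read the other way, and the quotient formula follows (with $c$ absorbed into the normalization of $dh$).
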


\subsection{Jacquet--Langlands correspondence}\label{section.JLcorrespondence}

Let $D$ be a quaternion algebra over $\Q$.
Define an algebraic group $D^\times$ over $\Q$ by $D^\times(A)=(A\otimes_{\Q} D)^\times$
for any $\Q$-algebra $A$.
Thus $D^\times$ is a reductive algebraic group and
we therefore have a theory of automorphic forms and representations of $D^\times$.
We will be more interested in the forms that correspond to
some automorphic forms on $\GL(2,\Q)$ via the Jacquet--Langlands correspondence.

Let $N$ be a square-free integer
with an odd number of prime factors.
Fix a positive integer $k$.
Denote by $\cF_{2k}(N)$ the set of normalized cusp newforms
of weight $2k$ on $\Gamma_0(N)$
which are Hecke eigenforms,
and by $\cF(N,2k)$ the set of cuspidal automorphic representations
of $\PGL(2,\A)$ of level $N$ and weight $2k$.
The following theorem shows the 1-1 correspondence between
$\cF_{2k}(N)$ and $\cF(N,2k)$.

\begin{Theorem}[\cite{gelbart1975automorphic,
watson2008rankin}]\label{cuspformtorep}
Suppose $N=\prod p_i$ is a product of distinct primes.
If $f(z)=\sum_{n\geq 1}a_n(f)e^{2\pi inz}\in \cF_{2k}(N)$
(normalized such that $a_1=1$), 
its corresponding cuspidal automorphic representation
$\pi_f=\otimes_v\pi_v$ of $\PGL(2,\A)$ can be described as follows:
\begin{itemize}
\item $\pi_\infty\cong\pi_{\mathrm{dis}}^{2k}
=\sigma(|\cdot|^{k-1/2},|\cdot|^{-(k-1/2)})$
is the discrete series representation of weight $2k$;
\item if $p\nmid N$, $\pi_p$ is the spherical representation $\pi(\mu_1,\mu_2)$
such that $\mu_1\mu_2$ is trivial
and $a_p(f)=p^{\frac{2k-1}{2}}(\mu_1(p)+\mu_2(p))$; and
\item if $p\mid N$, $\pi_p$ is the special representation
$\sigma_\delta$ of $\GL(2,\Q_p)$
with trivial central character,
where $\delta$ is the unramified character of $\Q_p^\times$
with $\delta(p)=a_p(f)p^{-(k-1)}=\pm 1$.
\end{itemize}
\end{Theorem}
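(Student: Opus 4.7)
The plan is to construct an adelic lift $\phi_f$ of the classical newform $f$ and identify the local components of the cuspidal automorphic representation it generates, place by place, via strong approximation, Hecke theory, and the local classification of admissible representations.

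First I would use strong approximation for $\GL(2)$ to write $\GL(2,\A)=\GL(2,\Q)\cdot\GL(2,\R)^+\cdot K_0(N)$, where $K_0(N)=\prod_p K_{0,p}(N)$ is the standard open-compact subgroup of level $N$. Given a decomposition $g=\gamma\,g_\infty\,k_0$, define
\[\phi_f(g):=(f\,|_{2k}\,g_\infty)(i),\]
which is well-defined by the $\Gamma_0(N)$-modularity of $f$ under the weight-$2k$ slash action, and has trivial central character because the factors $(cz+d)^{-2k}$ and $\det^{k}$ in the slash action cancel on scalar matrices. The closed subrepresentation $\pi_f$ of $L^2_0(\PGL(2,\Q)\backslash\PGL(2,\A))$ generated by $\phi_f$ is cuspidal (inherited from the cuspidality of $f$), and irreducible by strong multiplicity one together with the newform hypothesis, which pins $f$ down by its Hecke eigensystem.

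Next I would identify each local component of $\pi_f=\otimes_v\pi_v$. At infinity, the identity $\phi_f(g\,k_\theta)=e^{i2k\theta}\phi_f(g)$ shows $\phi_f$ is an $\SO(2)$-vector of weight $2k$, and the Cauchy--Riemann equations for $f$ translate into the weight-lowering operator annihilating $\phi_f$; these two facts characterize $\pi_\infty$ as the weight-$2k$ discrete series $\sigma(|\cdot|^{k-1/2},|\cdot|^{-(k-1/2)})$. At $p\nmid N$, $\phi_f$ is $\GL(2,\Z_p)$-fixed, so $\pi_p$ is unramified of the form $\pi(\mu_1,\mu_2)$ with $\mu_1\mu_2=\1$; comparing the action of the double-coset operator $\GL(2,\Z_p)\,\mathrm{diag}(p,1)\,\GL(2,\Z_p)$ on $\phi_f$ to the classical Hecke relation $T_p f=a_p(f)\,f$ produces the eigenvalue identity $a_p(f)=p^{(2k-1)/2}(\mu_1(p)+\mu_2(p))$. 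At $p\mid N$ (square-free), $\phi_f$ is Iwahori-fixed at $p$ and the conductor of $\pi_p$ equals $p$ by Atkin--Lehner newform theory; Casselman's conductor classification then forces $\pi_p\cong\sigma_\delta$ for some unramified $\delta$ of $\Q_p^\times$ with $\delta^2=\1$, and reading off the $U_p$-eigenvalue on the newvector yields $\delta(p)=a_p(f)\,p^{-(k-1)}\in\{\pm 1\}$.

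The main obstacle is the careful bookkeeping of normalization conventions: matching the classical weight-$2k$ slash action and the classical Hecke operators to the adelic right-regular representation and the Satake/Casselman parameters involves the weight-dependent shift $p^{(2k-1)/2}$ and the sign choices at bad primes, both of which must be tracked through the choice of adelic lift. The identification at ramified primes in particular relies on Casselman's theory and the local newform theory, while the global irreducibility uses strong multiplicity one. The complete normalization bookkeeping is carried out in \cite{gelbart1975automorphic,loeffler2012computation}.
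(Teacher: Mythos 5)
This statement is cited in the paper as a known result from \cite{gelbart1975automorphic,loeffler2012computation}, and the paper offers no proof of its own; there is therefore no in-paper argument to compare your proposal against. Your sketch is, in substance, the standard proof from those references: lift a classical newform $f$ to an automorphic form $\phi_f$ on $\PGL(2,\A)$ via strong approximation, use strong multiplicity one and the newform property to see that $\phi_f$ generates an irreducible cuspidal representation, then identify each local component by matching the weight action of $\SO(2)$ and the lowering operator at $\infty$, by Satake/spherical Hecke theory at unramified $p$, and by Casselman's conductor theory plus the trivial-central-character constraint at $p\mid N$ (which rules out principal series and supercuspidals of conductor $p$, leaving the twisted Steinberg $\sigma_\delta$). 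Your normalization bookkeeping is also correct: the $\det^k$ factor in the weight-$2k$ slash action makes the central character trivial, the Satake relation yields $a_p(f)=p^{(2k-1)/2}(\mu_1(p)+\mu_2(p))$, and the $U_p$-eigenvalue on the local newvector gives $\delta(p)=a_p(f)p^{-(k-1)}=\pm 1$. In short, the proposal is correct and matches the standard approach; since the paper simply cites the result, there is nothing further to reconcile.
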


Let $D$ be the definite quaternion algebra with discriminant $N$
(i.e. the quaternion algebra defined over $\Q$ which is ramified
precisely at the infinite place of $\Q$ and the primes dividing $N$).
We have taken $G'$ to be the algebraic group defined over $\Q$
with $G'(\Q) =Z(\Q)\backslash D^\times(\Q)$.
Denote by $\cA(G')$ the set of
irreducible automorphic representations of $G'(\A)$.
Since the quotient $G'(\Q)\backslash G'(\A)$ is compact,
we have the decomposition
\[L^2(G'(\Q)\backslash G'(\A))
= \widehat{\bigoplus}_{\pi'\in\mathcal A(G')} V_{\pi'},\]
where $V_{\pi'}$ denotes the space of $\pi'$.

Clearly $\cA(G')$ contains all the characters of $G'(\Q)\backslash G'(\A)$,
which are of the form $\delta \circ N_D$ where
$\delta:\Q^\times \backslash \A^\times \to \{\pm 1\}$
is a quadratic Hecke character.
Let $\cA_{res}(G')$ be the set of these characters.
Then its orthogonal complement, denoted by $\cA_{cusp}(G')$,
contains all the infinite-dimensional irreducible
automorphic representations of $G'(\A)$.
Every representation $\pi'\in\cA_{cusp}(G')$,
according to Jacquet--Langlands \cite{jacquet2006automorphic},
corresponds to a cuspidal automorphic representation of $\PGL(2, \A)$.

Let $\cF'(N, 2k)$ be the set of representations $\pi' \in \cA_{cusp}(G')$
which map to representations in $\cF(N, 2k)$
under the Jacquet--Langlands correspondence.
The compatibility between the local and global
Jacquet--Langlands correspondence gives the following theorem,
which describes explicitly $\pi'=\otimes\pi_v'\in\cF'(N, 2k)$.

\begin{Theorem}
[\cite{jacquet2006automorphic} Jacquet--Langlands correspondence,
as recalled in {\cite[Fact 3.1]{feigon2009averages}}]\label{factJL}
Under the Jacquet--Langlands correspondence $\JL: \mathcal A(G')
\hookrightarrow \mathcal A(\PGL(2))$,
the image of $\cA_{cusp}(G')$
is equal to the set of cuspidal automorphic representations
$\pi = \otimes_v \pi_v$ of $\PGL(2, \A)$
such that $\pi_v$ is a square-integrable representation
of $\PGL(2, \Q_v)$ at all places $v$ where $D$ is ramified.
In particular, when $D$ is definite and has discriminant $N$,
for $\pi\in\cF(N, 2k)$,
there exists $\pi'=\otimes\pi_v'\in\cA_{cusp}(G')$
such that $\JL(\pi')=\pi$ and
\begin{enumerate}
	\item $\pi_\infty\cong\pi_{\mathrm{dis}}^{2k}$,
	$\pi'_\infty \cong \pi_{2k}'$ is a $(2k-1)$-dimensional
	irreducible representation of
	$G'_\infty=Z(\R)\backslash{\bH}^\times$
	(which is defined in Section \ref{RepresentationSU2});
	\item for $v=p\mid N$, $\pi_p$ is the special representation
	$\sigma_{\delta_p}$
	where $\delta_p : \Q_p^\times \to \{\pm 1\}$
	is an unramified character,
	$\pi'_p \cong \delta_p \circ N_{D_p}$ is a character of $G_p'$; and
	\item for all the other $v$, $\pi_v$ is unramified,
	$\pi'_v\cong\pi_v$.
\end{enumerate}
\end{Theorem}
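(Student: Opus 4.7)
The plan is to deduce the global statement from the local Jacquet--Langlands correspondence, applied place by place. The local JL theorem, for each place $v$ where $D_v$ is a division algebra, furnishes a bijection
\[
\JL_v : \{\text{irreducible admissible reps of } D_v^\times\} \iso \{\text{discrete series of } \GL(2,\Q_v)\},
\]
preserving central characters; at split places $v$ one takes $\JL_v$ to be the identity via a fixed isomorphism $D_v^\times \iso \GL(2,\Q_v)$. The global $\JL$ is assembled from these as a restricted tensor product, and its essential image inside $\cA(\PGL(2))$ is characterized, by multiplicity one for $\GL(2)$, as the set of cuspidal $\pi$ whose local components at every $v \in \Ram(D)$ are discrete series. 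This gives the first assertion of the theorem.

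For the explicit description of $\pi' = \otimes \pi'_v$, I would handle the three families of places separately. At $v = \infty$, since $D$ is definite, $D_\infty \cong \bH$ and $G'_\infty = Z(\R) \backslash \bH^\times$ is compact; its irreducible representations with trivial central character are exactly the odd-dimensional $\pi'_{2k}$ of Section \ref{RepresentationSU2}, and local JL matches $\pi'_{2k}$ with the weight-$2k$ discrete series $\pi_{\mathrm{dis}}^{2k}$ of $\PGL(2,\R)$, which by Theorem \ref{cuspformtorep} is precisely $\pi_\infty$. At a prime $p \mid N$, the algebra $D_p$ is the unique division quaternion algebra over $\Q_p$; by Lemma \ref{normgp} its reduced norm is surjective, so any one-dimensional representation of $G'_p = \Q_p^\times \backslash D_p^\times$ is of the form $\delta_p \circ N_{D_p}$ with $\delta_p^2 = 1$. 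Local JL pairs such a character with the twisted Steinberg $\sigma_{\delta_p}$, and since Theorem \ref{cuspformtorep} identifies $\pi_p = \sigma_{\delta_p}$ with $\delta_p$ unramified quadratic, we obtain $\pi'_p \cong \delta_p \circ N_{D_p}$. At a prime $p \nmid N$, $D_p$ splits, the fixed identification $\cO_p \iso M(2,\Z_p)$ induces $G'_p \cong \PGL(2,\Q_p)$, and local JL is the identity, yielding $\pi'_p \cong \pi_p$ as in Theorem \ref{cuspformtorep}.

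The main obstacle is really the local JL theorem itself at ramified finite places, but this is classical and may be cited from \cite{jacquet2006automorphic}. Beyond invoking it, all that remains is bookkeeping: checking that the central-character normalizations, the explicit presentation of $D$ in Lemma \ref{AnotherPresentation}, and the chosen isomorphisms at split places are compatible with the local dictionaries, so that the product representation $\pi' = \otimes \pi'_v$ actually lives in $\cA_{cusp}(G')$ and maps under $\JL$ to the prescribed $\pi \in \cF(N,2k)$. This compatibility is routine once the local models from Proposition \ref{maxorder} are in hand.
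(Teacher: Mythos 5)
The paper does not prove this theorem; it simply cites it as Fact 3.1 of \cite{feigon2009averages}, which in turn rests on \cite{jacquet2006automorphic}. Your sketch is a correct outline of the standard derivation --- assembling the global correspondence from the local one, then describing the local components at $\infty$, at $p \mid N$ via Lemma \ref{normgp} and the one-dimensionality of representations factoring through the reduced norm, and at split $p$. The one mild overstatement is calling the last step ``routine bookkeeping'': the assertion that the restricted tensor product $\otimes_v \pi'_v$ is actually automorphic on $G'$ (rather than merely an abstract admissible representation) is itself the substance of the \emph{global} Jacquet--Langlands theorem, not a consequence of the local dictionaries; but since you cite \cite{jacquet2006automorphic}, which contains the global statement, this is covered.
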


For $\pi'\in \cF'(N, 2k)$, the following lemma defines
a new-line vector $\phi\in V_{\pi'}$.

\begin{Lemma}\label{autoset}
Fix a maximal order $\cO\subset D$ such that
$\cO_p=M(2,\Z_p)$ whenever $D$ splits at $p$.
For any $p<\infty$,
let $K_p$ be the image of $Z(\Q_p)\cO_p^\times$ in $G'_p=G'(\Q_p)$,
and $K_{\fin}=\prod_p K_p$ be
an open subgroup of $G'_{\fin}=\prod_p G'_p$.
Then (with $X^{2k-2}\in V_{\pi'_{2k}}$
being the unit highest weight vector defined in Section \ref{RepresentationSU2})
\[\CC X^{2k-2}\otimes(\pi'_{\fin})^{K_{\fin}}\]
is a one-dimensional subspace of $V_{\pi'}$ for $\pi'\in \cF'(N, 2k)$.
We call any nonzero vector $\phi$ in this subspace
a new-line vector in $\pi'$, and write it as
\[\phi=\otimes_v\phi_v\quad\text{with}\quad
\phi_\infty=\|\phi\|X^{2k-2}\]
and $\phi_p$ being the unit spherical vector in $\pi'_p$
(we fix a $G_p'$-invariant bilinear form
on $\pi'_p\otimes \tilde\pi'_p$).
\end{Lemma}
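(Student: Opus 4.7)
The plan is to reduce to a prime-by-prime local computation using the restricted tensor product decomposition $\pi' = \pi'_\infty \otimes \bigotimes_p' \pi'_p$. Since $\CC X^{2k-2}$ is by construction one-dimensional inside the irreducible $\pi'_{2k}$, it is enough to show that
\[\dim (\pi'_{\fin})^{K_{\fin}} = \prod_p \dim (\pi'_p)^{K_p} = 1,\]
and for this I would treat the cases $p \nmid N$ and $p \mid N$ separately, using the explicit local description supplied by Theorem \ref{factJL}.

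At primes $p \nmid N$ where $D$ splits, Theorem \ref{factJL}(3) identifies $\pi'_p$ with the unramified (principal series) representation $\pi_p$ of $\PGL(2,\Q_p)$. The hypothesis that $\cO_p = M(2,\Z_p)$ at such primes means $K_p$ is the image of $\GL(2,\Z_p)$ in $\PGL(2,\Q_p)$, and the standard uniqueness of the spherical vector in an unramified principal series yields $\dim(\pi'_p)^{K_p} = 1$. Moreover, the restricted tensor product is taken precisely with respect to these spherical vectors at almost all places, so no convergence or compatibility issue arises.

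At primes $p \mid N$ where $D$ is ramified, Theorem \ref{factJL}(2) identifies $\pi'_p \cong \delta_p \circ N_{D_p}$ as a one-dimensional character of $G'_p$, with $\delta_p$ an unramified quadratic character of $\Q_p^\times$. Here it suffices to verify that this character is trivial on $K_p$, in which case $(\pi'_p)^{K_p} = \pi'_p$ is one-dimensional. Triviality on the central factor $Z(\Q_p)$ follows from $N_{D_p}(z) = z^2$ together with $\delta_p^2 = 1$. For the contribution from $\cO_p^\times$, Proposition \ref{maxorder} characterizes $\cO_p = \{x \in D_p : N_{D_p}(x) \in \Z_p\}$, whence $N_{D_p}(\cO_p^\times) \subset \Z_p^\times$; since $\delta_p$ is unramified, i.e.\ trivial on $\Z_p^\times$, we deduce $\delta_p \circ N_{D_p}\big|_{\cO_p^\times} = 1$ as desired.

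Assembling these local statements gives $\dim (\pi'_{\fin})^{K_{\fin}} = 1$, and tensoring with $\CC X^{2k-2}$ yields the claimed one-dimensional subspace of $V_{\pi'}$. The main subtlety lies in the local analysis at the ramified primes, where one must combine the Jacquet--Langlands description of $\pi'_p$ as an unramified character twisted through the reduced norm with the integrality property $N_{D_p}(\cO_p^\times) = \Z_p^\times$ of the unique maximal order in a division quaternion algebra; once these are in place, the global conclusion is essentially bookkeeping over the restricted tensor product.
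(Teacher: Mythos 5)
Your proof is correct and follows essentially the same approach as the paper, which simply notes that the unramifiedness of $\delta_p$ makes $\delta_p \circ N_{D_p}$ $K_p$-invariant at $p \mid N$ and then invokes Theorem \ref{factJL}. You have merely unpacked the details at each place (uniqueness of the spherical vector when $p \nmid N$, and the norm-integrality argument via Proposition \ref{maxorder} when $p \mid N$) that the paper leaves implicit.
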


\begin{proof}
When $p\mid N$, the unramifiedness of $\delta_p$ implies that
$\delta_p\circ N_{D_p}$ is $K_p$-invariant.
Then this Lemma is a direct result from Theorem \ref{factJL}.
\end{proof}

In particular, when $2k=2$,
$\pi'_\infty\cong \Sym^0 V \otimes \det^0$ is trivial.
So every automorphic form in $\pi'\in\cF'(N, 2)$,
in particular the new-line vector,
can be seen as a function defined on $G'(\A_{\fin})$.

\subsection{Representation theory of \texorpdfstring{$\SU(2)$}{SU(2)}}
\label{RepresentationSU2}

Notice that $G_\infty'=\R^\times\backslash \mathbb{H}^\times
\cong \SU(2)/\{\pm 1\}$. 
Set $\pi'_{2k}=\Sym^{2k-2} V \otimes {\det}^{-k+ 1}$, 
where $V$ denotes the irreducible 2-dimensional representation 
of $G'_\infty$ coming from the isomorphism 
$D^\times(\overline{\R}) \iso \GL(2, \CC)$ (see \eqref{archi.embed}).
We have $\dim \pi'_{2k}=2k-1$. 
More explicitly,
$\pi'_{2k}$ can be realized on the space 
of homogeneous polynomials in $X,Y$ of degree $2k-2$, i.e.
\[V_{\pi'_{2k}}=\bigoplus_{n=0}^{2k-2}\CC X^nY^{2k-2-n}\]
with
\[\pi'_{2k} (g)P(X,Y)=P((X,Y)g)\det(g)^{1-k} 
\quad \text{for}\quad g\in 
\left\{\left(\begin{matrix}\alpha&-\beta\\\bar\beta&\bar\alpha\end{matrix}\right)\in\GL(2,\CC)\right\}
\cong D^\times(\R).\]
One can check that 
\begin{equation}\label{innerproduct}
\langle X^iY^{2k-2-i},X^jY^{2k-2-j}\rangle_{2k}
=\begin{cases}\binom{2k-2}{i}^{-1},&\text{if }i=j;\\
0,&\text{otherwise}
\end{cases}\end{equation}
defines a $G'_\infty$-invariant 
inner product on $V_{\pi'_{2k}}$ 
(cf. \cite[Eqn (5.4)]{chen2018deligne} or \cite[Lemma B.0.1]{guan2020averages}).

One can check that
$\pi'_{2k}$ is an irreducible representation with highest weight $2k-2$,
and $X^{2k-2}$ is a highest weight vector.
As in Lemma \ref{autoset},
let $\phi_\infty$ be the highest weight vector $\|\phi\|X^{2k-2}$.
We have $\langle\phi,\phi\rangle=\prod_v\langle\phi_v,\phi_v\rangle_v$
since the length of $\phi_v$ is assumed to be 1 for any $v<\infty$.

Denote by $\Delta_2$ (resp. $\Delta_3$) 
the diagonal embedding from $G'_\infty$ 
to two (resp. three) copies of itself.
One can view ${\pi'_{2k}}^{\otimes 2}\circ\Delta_2$
and ${\pi'_{2k}}^{\otimes 3}\circ\Delta_3$
as representations of $G'_\infty$.
Denote by \[\{X_1^iY_1^{2k-2-i}\otimes X_2^jY_2^{2k-2-j}\}\quad 
(\text{resp. }\{X_1^iY_1^{2k-2-i}\otimes X_2^jY_2^{2k-2-j}\otimes 
X_3^rY_3^{2k-2-r}\})\] 
a basis of ${\pi'_{2k}}^{\otimes 2}=\pi'_{2k}\otimes\pi'_{2k}$
(resp. ${\pi'_{2k}}^{\otimes 3}$).
\cite{chen2018deligne} shows that,
\[\mathbb{P}_{2k}=
\det\left(\begin{matrix}X_1&X_2\\Y_1&Y_2\end{matrix}\right)^{k-1}\otimes
\det\left(\begin{matrix}X_2&X_3\\Y_2&Y_3\end{matrix}\right)^{k-1}\otimes
\det\left(\begin{matrix}X_3&X_1\\Y_3&Y_1\end{matrix}\right)^{k-1}\]
is the only $G'_\infty$-invariant vector in 
${\pi'_{2k}}^{\otimes 3}\circ\Delta_3$ 
up to a constant multiple. 

Let $\langle\cdot,\cdot\rangle$ be the $D^\times(\Q)^2$ or 
$D^\times(\Q)^3$-invariant pairing 
on ${\pi'_{2k}}^{\otimes 2}$
or ${\pi'_{2k}}^{\otimes 3}$
given by \begin{equation}\label{innerproduct2}
\langle\cdot,\cdot\rangle=
\langle\cdot,\cdot\rangle_{2k}
\otimes\langle\cdot,\cdot\rangle_{2k}\quad\text{or}
\quad\langle\cdot,\cdot\rangle_{2k}
\otimes\langle\cdot,\cdot\rangle_{2k}\otimes\langle\cdot,\cdot\rangle_{2k}.
\end{equation}
We can calculate the lengths of some particular vectors.

\begin{Lemma}\label{length}
Let \[w^\circ_{2k}=
\left(-Y_1Y_2\left|\begin{matrix}X_1&X_2\\
Y_1&Y_2\end{matrix}\right|\right)^{k-1};\qquad
\mathbb{P}_{2k}=
\left|\begin{matrix}X_1&X_2\\Y_1&Y_2\end{matrix}\right|^{k-1}\otimes
\left|\begin{matrix}X_2&X_3\\Y_2&Y_3\end{matrix}\right|^{k-1}\otimes
\left|\begin{matrix}X_3&X_1\\Y_3&Y_1\end{matrix}\right|^{k-1}.\]
Then \[\|w^\circ_{2k}\|^2=\langle w^\circ_{2k},w^\circ_{2k}\rangle
=\frac{\Gamma(k)^3\Gamma(3k-1)}{\Gamma(2k-1)^2\Gamma(2k)},
\quad 
\|\mathbb{P}_{2k}\|^2=\langle \mathbb{P}_{2k},\mathbb{P}_{2k}\rangle
=\frac{\Gamma(k)^3\Gamma(3k-1)}{\Gamma(2k-1)^3}.\]
In particular
$\|\mathbb{P}_{2k}\|^2=(2k-1)\|w^\circ_{2k}\|^2$.
\end{Lemma}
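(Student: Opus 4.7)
The strategy is to compute $\|w^\circ_{2k}\|^2$ by direct expansion and then to deduce $\|\mathbb{P}_{2k}\|^2=(2k-1)\|w^\circ_{2k}\|^2$ by a short representation-theoretic argument based on the fact that $\pi'_{2k}$ appears with multiplicity one inside $\pi'_{2k}\otimes\pi'_{2k}$.

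First I would expand $w^\circ_{2k}=(-1)^{k-1}(Y_1Y_2)^{k-1}(X_1Y_2-X_2Y_1)^{k-1}$ via the binomial theorem, obtaining
$$w^\circ_{2k}=(-1)^{k-1}\sum_{j=0}^{k-1}\binom{k-1}{j}(-1)^j\,X_1^{k-1-j}Y_1^{k-1+j}\otimes X_2^{j}Y_2^{2k-2-j}\in V_{\pi'_{2k}}^{\otimes 2}.$$
Since the monomials $X^iY^{2k-2-i}$ are pairwise orthogonal under \eqref{innerproduct} and the inner product on $V_{\pi'_{2k}}^{\otimes 2}$ is the tensor product \eqref{innerproduct2}, these summands are mutually orthogonal, so
$$\|w^\circ_{2k}\|^2=\sum_{j=0}^{k-1}\binom{k-1}{j}^2\binom{2k-2}{k-1-j}^{-1}\binom{2k-2}{j}^{-1}.$$
Rewriting the summand in factorials, the evaluation reduces to the Chu--Vandermonde convolution
$$\sum_{j=0}^{k-1}\binom{k-1+j}{j}\binom{2k-2-j}{k-1-j}=\binom{3k-2}{k-1},$$
a standard coefficient-extraction identity, which yields the claimed value $\|w^\circ_{2k}\|^2=\Gamma(k)^3\Gamma(3k-1)/[\Gamma(2k-1)^2\Gamma(2k)]$.

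For the second formula I would avoid a direct expansion of $\mathbb{P}_{2k}$ and instead view $\mathbb{P}_{2k}\in V_{\pi'_{2k}}^{\otimes 3}$ as a polynomial in $(X_3,Y_3)$ with coefficients in $V_{\pi'_{2k}}^{\otimes 2}$. A brief inspection shows that the monomial $X_3^{2k-2}$ can arise only from $(-X_3Y_2)^{k-1}$ in the expansion of $(X_2Y_3-X_3Y_2)^{k-1}$ combined with $(X_3Y_1)^{k-1}$ in the expansion of $(X_3Y_1-X_1Y_3)^{k-1}$; the resulting coefficient of $X_3^{2k-2}$ in $\mathbb{P}_{2k}$ is precisely $(-1)^{k-1}(Y_1Y_2)^{k-1}(X_1Y_2-X_2Y_1)^{k-1}=w^\circ_{2k}$.

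Partial contraction in the third tensor factor now defines a $G'_\infty$-equivariant map
$$T\colon V_{\pi'_{2k}}^*\longrightarrow V_{\pi'_{2k}}^{\otimes 2},\qquad T(\alpha)=(\id\otimes\id\otimes\alpha)(\mathbb{P}_{2k}).$$
Since $\pi'_{2k}$ is irreducible, self-dual (via the invariant pairing \eqref{innerproduct}), and appears with multiplicity one in $\pi'_{2k}\otimes\pi'_{2k}$, Schur's lemma forces $T$ to be a scalar multiple of an isometry with respect to the invariant inner products on both sides. Consequently $\|T(e^*)\|^2$ is independent of the choice of orthonormal vector $e\in V_{\pi'_{2k}}$. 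Since $\|X^{2k-2}\|_{2k}^2=\binom{2k-2}{2k-2}^{-1}=1$, the vector $X^{2k-2}$ extends to some orthonormal basis and $w^\circ_{2k}=T((X^{2k-2})^*)$ realizes that common length, whence summing $\|T(e^*)\|^2$ over the basis of size $\dim V_{\pi'_{2k}}=2k-1$ gives
$$\|\mathbb{P}_{2k}\|^2=(2k-1)\|w^\circ_{2k}\|^2,$$
and the formula for $\|\mathbb{P}_{2k}\|^2$ follows via $\Gamma(2k)=(2k-1)\Gamma(2k-1)$. The main delicate step is the invocation of Schur's lemma: one must check the $G'_\infty$-equivariance of $T$ and identify $V_{\pi'_{2k}}^*$ with $V_{\pi'_{2k}}$ as $G'_\infty$-representations so that $\|T(e^*)\|$ is genuinely independent of $e$.
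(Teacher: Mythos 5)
Your computation of $\|w^\circ_{2k}\|^2$ follows the paper's own route: expand, use orthogonality, and reduce to the binomial convolution $\sum_{j}\binom{k-1+j}{k-1}\binom{2k-2-j}{k-1}=\binom{3k-2}{k-1}$ (the paper proves this identity by a lattice-path count in its Lemma~\ref{sum.binom}; your attribution of it to a Chu--Vandermonde-type convolution is also legitimate). Where you genuinely diverge is in the treatment of $\|\mathbb{P}_{2k}\|^2$. The paper does not compute it at all; it simply cites \cite{chen2018deligne} Proposition~5.1. You instead observe that the coefficient of $X_3^{2k-2}$ in $\mathbb{P}_{2k}$ is exactly $w^\circ_{2k}$, and that the partial contraction $T(\alpha)=(\id\otimes\id\otimes\alpha)(\mathbb{P}_{2k})$ is a $G'_\infty$-equivariant map $V_{\pi'_{2k}}^*\to V_{\pi'_{2k}}^{\otimes 2}$. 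Because $\pi'_{2k}$ is irreducible and self-dual and occurs with multiplicity one in $\pi'_{2k}\otimes\pi'_{2k}$ (the Clebsch--Gordan decomposition is multiplicity-free), $T^*T$ is a scalar by Schur, so $T$ scales norms uniformly; summing $\|T(e_r^*)\|^2$ over an orthonormal basis containing the unit vector $X^{2k-2}$ then yields $\|\mathbb{P}_{2k}\|^2=(2k-1)\|w^\circ_{2k}\|^2$ directly. This is correct, and arguably cleaner: it makes the lemma self-contained instead of importing the $\mathbb{P}_{2k}$ computation from Chen--Cheng, and it explains \emph{structurally} why the ratio $\|\mathbb{P}_{2k}\|^2/\|w^\circ_{2k}\|^2=\dim\pi'_{2k}=2k-1$ rather than obtaining it as a numerical coincidence of two Gamma expressions. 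One small thing worth spelling out when you invoke Schur is that the restriction to the $\pi'_{2k}$-isotype of the invariant Hermitian form on $V^{\otimes 2}$ is itself the (essentially unique) invariant form on that irreducible, so that $T^*T$ is well-defined and equivariant; you flag this as ``the main delicate step'' and the reasoning you sketch there is sound.
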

\begin{proof}
By definition
\[\begin{split}
\langle w^\circ_{2k},w^\circ_{2k}\rangle
=&\ \left\langle\sum_{r=0}^{k-1}\binom{k-1}{r}
X_1^r(-Y_1)^{2k-2-r}X_2^{k-1-r}Y_2^{k-1+r},\right.\\
&\phantom{\sum_{r=0}^{k-1}\binom{k-1}{r}X_1^rX_1^r(-Y_1)^{2k-2-r}}
\left.\sum_{r'=0}^{k-1}\binom{k-1}{r'}
X_1^{r'}(-Y_1)^{2k-2-r'}X_2^{k-1-r'}Y_2^{k-1+r'}\right\rangle\\
=&\ \sum_{r=0}^{k-1}\binom{k-1}{r}^2
\langle X_1^rY_1^{2k-2-r},X_1^rY_1^{2k-2-r}\rangle_{2k}
\langle X_2^{k-1-r}Y_2^{k-1+r},X_2^{k-1-r}Y_2^{k-1+r}\rangle_{2k}\\
=&\ \sum_{r=0}^{k-1}\binom{k-1}{r}^2\binom{2k-2}{r}^{-1}\binom{2k-2}{k-1-r}^{-1}.
\end{split}\]
One can verify that
\[\binom{k-1}{r}^2\binom{2k-2}{r}^{-1}\binom{2k-2}{k-1-r}^{-1}
=\binom{2k-2}{k-1}^{-2}\binom{k-1+r}{k-1}\binom{2k-2-r}{k-1}.\]
Therefore, with Lemma \ref{sum.binom}
we have \[
\langle w^\circ_{2k},w^\circ_{2k}\rangle
=
\frac{\sum_{r=0}^{k-1}\binom{k-1+r}{k-1}\binom{2k-2-r}{k-1}}
{\binom{2k-2}{k-1}^{2}}
=
\frac{\binom{3k-2}{k-1}}{\binom{2k-2}{k-1}^{2}}
=\frac{\Gamma(k)^3\Gamma(3k-1)}{\Gamma(2k-1)^2\Gamma(2k)}.
\]

The length of $\mathbb{P}_{2k}$ is calculated 
in \cite[Proposition 5.1]{chen2018deligne}.
\end{proof}

\begin{Lemma}\label{sum.binom}
For any $m,n\geq 1$
\[\sum_{r=0}^n\binom{n+r}{n}\binom{m+n-r}{n}=\binom{2n+m+1}{m}.\]
\end{Lemma}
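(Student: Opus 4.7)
My plan is to prove this Vandermonde-type convolution via generating functions. Recall the negative binomial expansion
\[(1-x)^{-(n+1)} = \sum_{r \geq 0} \binom{n+r}{n} x^r.\]
Squaring this series and reading off the coefficient of $x^m$ on both sides of
\[(1-x)^{-(n+1)} \cdot (1-x)^{-(n+1)} = (1-x)^{-(2n+2)}\]
yields the Cauchy product identity
\[\sum_{r=0}^{m} \binom{n+r}{n} \binom{n+m-r}{n} = \binom{2n+m+1}{2n+1} = \binom{2n+m+1}{m},\]
which is the desired equality.

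The only piece of bookkeeping is to reconcile the range of summation: the statement writes $\sum_{r=0}^{n}$, while the Cauchy product naturally gives $\sum_{r=0}^{m}$. The terms with $r>m$ vanish because $\binom{m+n-r}{n}=0$ whenever $m+n-r<n$, so the two ranges agree as soon as $m\le n$. In the one application of this lemma (Lemma~\ref{length}) one has $m=n=k-1$, so both upper limits coincide and no separate verification is needed.

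I do not anticipate any real obstacle: the proof is just an application of the Cauchy product for formal power series. If an elementary argument were preferred, induction on $m$ using Pascal's rule $\binom{m+n-r}{n}=\binom{m+n-r-1}{n}+\binom{m+n-r-1}{n-1}$ would also work, and so would a bijective argument interpreting $\binom{2n+m+1}{m}$ as the number of lattice paths from $(0,0)$ to $(m,2n+1)$ with unit right/up steps and classifying each path by the position of its $(n{+}1)$-st up-step, which produces exactly the convolution on the left-hand side.
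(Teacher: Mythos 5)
Your generating-function proof is correct and is a genuinely different route from the paper's. The paper argues combinatorially: it counts lattice paths from $(0,0)$ to $(2n+1,m)$ with unit right/up steps, giving $\binom{2n+m+1}{m}$ paths, and then partitions them according to the height $r$ at which the path crosses the vertical line $x=n+\tfrac12$, giving the summand $\binom{n+r}{n}\binom{n+m-r}{n}$. This is exactly the bijective argument you sketch as your third alternative (up to transposing the grid), so your "backup" is the paper's actual proof. The Cauchy-product argument is shorter and avoids any picture, at the cost of assuming the negative binomial expansion; both are clean.

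One thing you caught that is worth emphasizing: the Cauchy product, and indeed the paper's own lattice-path count, naturally yields the sum over $r=0$ to $m$, not $r=0$ to $n$. The stated identity with upper limit $n$ is therefore only correct when $m\le n$ (for $m>n$ the sum $\sum_{r=0}^{n}$ omits nonzero terms, e.g.\ $m=2$, $n=1$ gives $7\neq\binom{5}{2}=10$), so the blanket hypothesis ``for any $m,n\geq 1$'' in the lemma statement is too strong. You correctly note that the only use of this lemma is in Lemma~\ref{length} with $m=n=k-1$, where the two upper limits coincide, so no harm is done; but your observation identifies an imprecision in the paper that the paper's own proof does not address.
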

\begin{proof}
Assume that a point moves from $(0,0)$ to $(2n+1,m)$ 
by moving up or to the right by one unit each time. 
Then the point has $\binom{2n+m+1}{m}$ possible paths.
But the path can intersect the vertical line $x=n+1/2$ only once, say it passes 
$(n,r)$ and $(n+1,r)$ where $0\leq r\leq m$.
Then $\binom{n+r}{n}\binom{n+m-r}{n}$ is the number of 
all possible paths that the point moves from 
$(0,0)$ to $(n,r)$ and then from $(n+1,r)$ to $(2n+1,m)$.
While $r$ varies from $0$ to $m$, all possible paths are counted.
\end{proof}

Now we recall two lemmas
in the representation theory of compact groups.
The first lemma can be obtained through direct calculation.

\begin{Lemma}\label{Schur}
Let $K$ be a compact topological group with Haar measure $dk$,
$\Pi$ be a unitary representation (might not be irreducible) of $K$.
Define \[P_\Pi(v)=\frac 1{\vol(K;dk)}\int_K\Pi(k)v\ dk,\quad v\in V_\Pi.\]
Then $P_\Pi$ is the projection map from $V_\Pi$ to
its $K$-invariant subspace $V_\Pi^K$,
and \[\int_K\langle \Pi(k)u,v\rangle\ dk=\vol(K;dk)
\langle P_\Pi(u),P_\Pi(v)\rangle\]
where $\langle \ , \ \rangle$ is a $K$-invariant 
inner product defined on $V_\Pi$.
\end{Lemma}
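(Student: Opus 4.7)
The plan is to first verify that $P_\Pi$ is the orthogonal projection onto $V_\Pi^K$, and then use unitarity together with the bi-invariance of the Haar measure on the compact group $K$ to massage the integral on the left-hand side.

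For the first assertion, I would check the two defining properties of a projection separately. To see that $P_\Pi(v)\in V_\Pi^K$, I would fix $k_0\in K$, apply $\Pi(k_0)$ under the integral (justified by continuity of $\Pi$ and boundedness of the integrand), and then change variables $k\mapsto k_0^{-1}k$ using left-invariance of the Haar measure; this gives $\Pi(k_0)P_\Pi(v)=P_\Pi(v)$. Conversely, if $v\in V_\Pi^K$ then $\Pi(k)v=v$ for all $k$, so the integrand is constant and $P_\Pi(v)=v$. Together these show $P_\Pi$ is idempotent with image exactly $V_\Pi^K$.

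For the integral identity, I would first interchange integration and the inner product, which is allowed because $\langle\cdot,v\rangle$ is continuous linear: this yields
\[\int_K \langle \Pi(k)u,v\rangle\, dk \;=\; \Bigl\langle \int_K \Pi(k)u\, dk,\,v\Bigr\rangle \;=\; \vol(K;dk)\,\langle P_\Pi(u),v\rangle.\]
It then remains to replace $v$ by $P_\Pi(v)$ on the right. Since $P_\Pi(u)\in V_\Pi^K$ and $\Pi$ is unitary, $\langle P_\Pi(u),v\rangle = \langle \Pi(k)P_\Pi(u),v\rangle = \langle P_\Pi(u),\Pi(k^{-1})v\rangle$ for every $k\in K$. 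Averaging over $K$ and using that the Haar measure on a compact group is also right-invariant (so $\int_K \Pi(k^{-1})v\, dk = \int_K \Pi(k)v\, dk = \vol(K;dk)\,P_\Pi(v)$), I obtain $\langle P_\Pi(u),v\rangle = \langle P_\Pi(u),P_\Pi(v)\rangle$, which combined with the display above gives the claim.

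There is no substantive obstacle here; the argument is a direct exercise in compact-group averaging. The only technical points are the interchange of the inner product with the vector-valued integral (standard, using continuity of $\langle\cdot,\cdot\rangle$) and the unimodularity of compact groups, both of which are well known.
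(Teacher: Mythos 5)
Your proof is correct and is precisely the ``direct calculation'' the paper alludes to without writing out: the paper gives no proof of this lemma, so there is nothing to diverge from. Your two steps — averaging $\Pi(k_0)P_\Pi(v)$ using left-invariance of Haar measure to get idempotency with image $V_\Pi^K$, then pulling the integral through the (sesquilinear, hence real-linear) inner product and invoking unitarity together with unimodularity of a compact group to replace $v$ by $P_\Pi(v)$ — are the standard and intended argument.
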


\begin{Lemma}[\cite{knapp2001representation}
Schur Orthogonality Relations]\label{SchurOrtho}
Let $K$ be a compact Lie group,
$\pi,\pi'$ be two finite-dimensional irreducible unitary representations of $K$,
$\langle \ , \ \rangle$ be a $K$-invariant inner product of $\pi$ or $\pi'$.
Then, for $u,v\in V_\pi$, $u',v'\in V_{\pi'}$,
\[\int_K\langle \pi(k)u,v\rangle \overline{\langle \pi'(k)u',v'\rangle}\ dk=
\begin{cases}
0,&\text{if }\pi\not\cong \pi';\\
\vol(K;dk)\dfrac{\langle u,u'\rangle\overline{\langle v,v'\rangle}}
{\dim(V_\pi)},&\text{if }\pi=\pi'.
\end{cases}\]
\end{Lemma}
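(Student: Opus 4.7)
The plan is to reduce the identity to Schur's lemma via the standard averaging construction. For any linear map $A : V_{\pi'} \to V_\pi$, I would form the averaged operator
\[\bar A := \int_K \pi(k)\, A\, \pi'(k)^{-1}\, dk,\]
which by invariance of the Haar measure intertwines the two representations: $\pi(k_0)\,\bar A = \bar A\,\pi'(k_0)$ for every $k_0 \in K$. Schur's lemma then forces $\bar A = 0$ when $\pi \not\cong \pi'$, and $\bar A = \lambda \cdot \Id_{V_\pi}$ for some scalar $\lambda \in \CC$ when $\pi = \pi'$. In the latter case, taking traces on both sides and using cyclicity together with $K$-invariance of $dk$ gives $\lambda \cdot \deg \pi = \vol(K; dk) \cdot \Tr(A)$.

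To extract the stated orthogonality relation I would then specialize to the rank-one operator $A(x) := \langle x, u'\rangle\, u$, for which a direct computation yields $\Tr(A) = \langle u, u'\rangle$ (in the case $\pi = \pi'$). A short manipulation using unitarity of $\pi'$ — specifically the identity $\langle \pi'(k)^{-1} x, u'\rangle = \overline{\langle \pi'(k) u', x\rangle}$ — together with the pairing of $\bar A(v')$ against $v$ shows
\[\langle \bar A(v'), v\rangle = \int_K \overline{\langle \pi'(k) u', v'\rangle}\, \langle \pi(k) u, v\rangle\, dk,\]
which is precisely the integral appearing in the lemma. In the non-isomorphic case the left-hand side vanishes; when $\pi = \pi'$ it equals $\lambda \langle v', v\rangle = \vol(K; dk)\, \langle u, u'\rangle\, \overline{\langle v, v'\rangle}/\deg \pi$, using $\langle v', v\rangle = \overline{\langle v, v'\rangle}$.

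This is a classical argument and there is no substantial obstacle; the only care required is tracking the volume factor $\vol(K; dk)$ that appears when one integrates the relation $\Tr(\pi(k) A \pi'(k)^{-1}) = \Tr(A)$ against the Haar measure, and placing the complex conjugates correctly via the unitarity of $\pi'$ so that the right inner products appear in the numerator of the final expression.
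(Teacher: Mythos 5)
Your proof is correct and is the standard argument for the Schur orthogonality relations: form the averaged intertwiner $\bar A = \int_K \pi(k)A\pi'(k)^{-1}\,dk$, invoke Schur's lemma, pin down the scalar via traces, and specialize to the rank-one operator $A(x)=\langle x,u'\rangle u$. The paper offers no proof of this lemma — it simply recalls it from \cite{knapp2001representation} — and your argument coincides with the classical one in that reference, including the correct bookkeeping of the $\vol(K;dk)$ factor and the complex conjugates.
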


At last we prove a lemma 
which we use to motivate our choice of the test function 
in Section \ref{ChooseTest} 
(essentially the choice of $w^\circ_{2k}$).
The following lemma implies that
\[\int_{G'_\infty}
\langle\pi'_{2k}\otimes\pi'_{2k}(g,g)w^\circ_{2k},
w^\circ_{2k}\rangle\pi'_{2k}(g)X^{2k-2}\ dg\]
is a constant multiple of $X^{2k-2}$; more explicitly,
\[\int_{G'_\infty}
\langle\pi'_{2k}\otimes\pi'_{2k}(g,g)w^\circ_{2k},
w^\circ_{2k}\rangle\pi'_{2k}(g)X^{2k-2}\ dg=
\vol(G'_\infty)\frac{\|w^\circ_{2k}\|^4}{\|\mathbb{P}_{2k}\|^2}X^{2k-2}.\]
One can understand the above identity in this way:
the representation ${\pi'_{2k}}^{\otimes 2}\circ \Delta_2$ of $G_\infty'$
can be decomposed as 
\[\pi'_{4k-2}\oplus\pi'_{4k-4}\oplus\cdots\oplus\pi'_{4}\oplus\pi'_{2}\]
(with highest weight $4k-4,4k-6,\ldots,2,0$ respectively).
In particular there is exactly one copy of $\pi'_{2k}$ 
in this decomposition, with $w^\circ_{2k}$
being its lowest weight vector.

\begin{Lemma}\label{Choosew}
With $w^\circ_{2k}$ and $\mathbb{P}_{2k}$ defined in Lemma \ref{length},
we have \\(1) 
$\langle w^\circ_{2k}\otimes X_3^{2k-2-i}Y_3^i,\mathbb{P}_{2k}\rangle$
vanishes when $i\neq 0$, and
\[\langle w^\circ_{2k}\otimes X_3^{2k-2},\mathbb{P}_{2k}\rangle
=\langle w^\circ_{2k},w^\circ_{2k}\rangle.\]
(2)\[
\left\langle \int_{G'_\infty}
\langle\pi'_{2k}\otimes\pi'_{2k}(g,g)w^\circ_{2k},w^\circ_{2k}\rangle\pi'_{2k}(g)X^{2k-2}\ dg,
X^{2k-2-i}Y^i\right\rangle
\]
vanishes unless $i=0$, in which case it is equal to $\vol(G'_\infty)
\|w^\circ_{2k}\|^4/\|\mathbb{P}_{2k}\|^2$
\end{Lemma}

\begin{proof}
(1) Here $w^\circ_{2k}$ has nothing to do with $X_3$, $Y_3$. 
So only the terms in $\mathbb{P}_{2k}$
with $X_3^{2k-2-i}Y_3^i$ contribute to the inner product 
$\langle w^\circ_{2k}\otimes X_3^{2k-2-i}Y_3^i,\mathbb{P}_{2k}\rangle$,
i.e. the inner product is equal to
\[\begin{split}
&\ \left\langle \left(-Y_1Y_2\left|\begin{matrix}X_1&X_2\\Y_1&Y_2\end{matrix}\right|\right)^{k-1} X_3^{2k-2-i}Y_3^i,\right.\\
&\left.\phantom{\sum}
\left|\begin{matrix}X_1&X_2\\Y_1&Y_2\end{matrix}\right|^{k-1}X_3^{2k-2-i}Y_3^i
\sum_{r_1+r_2=i}
\binom{k-1}{r_1}(X_2)^{r_1}(-Y_2)^{k-1-r_1}\binom{k-1}{r_2}(Y_1)^{k-1-r_2}(-X_1)^{r_2}
\right\rangle\\
=&\ \left\langle \left(-Y_1Y_2\left|\begin{matrix}X_1&X_2\\Y_1&Y_2\end{matrix}\right|\right)^{k-1} \right.,\\
&\left.\phantom{\sum X_3^{2k-2-i}Y_3^i}
\left|\begin{matrix}X_1&X_2\\Y_1&Y_2\end{matrix}\right|^{k-1}
\sum_{r_1+r_2=i}
\binom{k-1}{r_1}(X_2)^{r_1}(-Y_2)^{k-1-r_1}\binom{k-1}{r_2}(Y_1)^{k-1-r_2}(-X_1)^{r_2}
\right\rangle\\
&\ \cdot\langle X_3^{2k-2-i}Y_3^i,X_3^{2k-2-i}Y_3^i\rangle.
\end{split}\]
Notice that the sum of exponents of $X_1,X_2$ in the first term 
$w^\circ_{2k}=\big(-Y_1Y_2(X_1Y_2-X_2Y_1)\big)^{k-1} $
is always $k-1$,
while that in the second term is always $k-1+i$.
So the inner product is 0 unless $i=0$.

When $i=0$, we see that 
\[\begin{split}
&\ \langle w^\circ_{2k}\otimes X_3^{2k-2},\mathbb{P}_{2k}\rangle\\
=&\ \left\langle \left(-Y_1Y_2\left|\begin{matrix}X_1&X_2\\Y_1&Y_2\end{matrix}\right|\right)^{k-1} ,\right.\\
&\left.\phantom{X_3^{2k-2-}Y_3^i}
\left|\begin{matrix}X_1&X_2\\Y_1&Y_2\end{matrix}\right|^{k-1}
\sum_{r_1+r_2=0}
\binom{k-1}{r_1}(X_2)^{r_1}(-Y_2)^{k-1-r_1}\binom{k-1}{r_2}(Y_1)^{k-1-r_2}(-X_1)^{r_2}
\right\rangle\\
&\ \cdot\langle X_3^{2k-2},X_3^{2k-2}\rangle\\
=&\ \left\langle \left(-Y_1Y_2\left|\begin{matrix}X_1&X_2\\Y_1&Y_2\end{matrix}\right|\right)^{k-1} ,
\left|\begin{matrix}X_1&X_2\\Y_1&Y_2\end{matrix}\right|^{k-1}
(-Y_2)^{k-1}(Y_1)^{k-1}\right\rangle\cdot 1=\langle w^\circ_{2k},w^\circ_{2k}\rangle.
\end{split}\]

(2) We have that
\[\begin{split}
& \ \left\langle \int_{G'_\infty}
\langle\pi'_{2k}\otimes\pi'_{2k}(g,g)w^\circ_{2k},w^\circ_{2k}\rangle\pi'_{2k}(g)X^{2k-2}\ dg,
X^{2k-2-i}Y^i\right\rangle\\
=&\ \int_{G'_\infty}
\langle\pi'_{2k}\otimes\pi'_{2k}(g,g)w^\circ_{2k},w^\circ_{2k}\rangle
\langle \pi'_{2k}(g)X^{2k-2},
X^{2k-2-i}Y^i\rangle\ dg\\
=&\ \int_{G'_\infty}
\langle{\pi'_{2k}}^{\otimes 3}\circ\Delta_3(g)w^\circ_{2k}\otimes X_3^{2k-2},
w^\circ_{2k}\otimes X_3^{2k-2-i}Y_3^i\rangle\ dg.\end{split}\]
Recall that $\mathbb{P}_{2k}$ is
the only $G'_\infty$-invariant vector in 
${\pi'_{2k}}^{\otimes 3}\circ\Delta_3$ 
up to a constant multiple. 
Hence, by Lemma \ref{Schur}, 
the above integral is equal to
\[\vol(G'_\infty)
\langle w^\circ_{2k}\otimes X_3^{2k-2},
\frac{\mathbb{P}_{2k}}{\|\mathbb{P}_{2k}\|}\rangle
\overline{\langle w^\circ_{2k}\otimes X_3^{2k-2-i}Y_3^i,
\frac{\mathbb{P}_{2k}}{\|\mathbb{P}_{2k}\|}\rangle}.\]
The value of
$\langle w^\circ_{2k}\otimes X_3^{2k-2-i}Y_3^i,\mathbb{P}_{2k}\rangle$
completes the proof.
\end{proof}

\subsection{\texorpdfstring{$L$}{L}-functions and adelic version of Main Theorem}
\label{section.L-functions}

Let $F$ be a local field.
According to the local Langlands correspondence,
for every irreducible admissible representation $\pi$ of $\GL(2,F)$,
there is a representation $\rho:W_F\to\GL(2,\CC)$ of the Weil group
such that $L(s,\rho)=L(s,\pi)$.
The triple product local $L$-factor can be defined by
(cf. \cite[Section 3.1]{watson2008rankin},
\cite[Section 2.4]{woodbury2012explicit})
\begin{gather*}
L(s,\pi_1\otimes\pi_2\otimes\pi_3)=
L(s,\rho_1\otimes\rho_2\otimes\rho_3),\\
L(s,\pi_1\otimes\pi_2\otimes\pi_3,\Ad)=
L(s,\oplus_i\Ad(\rho_i))=\prod_{i}L(s,\Ad(\rho_i))
=\prod_{i}L(s,\pi_i,\Ad),
\end{gather*}
where $\Ad(\rho_i):W_F\to\GL(3,\CC)$ is the adjoint representation.
For the cases at hand, we can define the local $L$-factors explicitly.

Let $F=\R$ or $\CC$ be an Archimedean local field.
Recall that, for $s\in\CC$,
\[\zeta_{\R}(s)=\pi^{-s/2}\Gamma(s/2),\quad
\zeta_{\CC}(s)=\zeta_{\R}(s)\zeta_{\R}(s+1)=2(2\pi)^{-s}\Gamma(s),\]
where $\Gamma(s)$ is the standard $\Gamma$-function.
For a character $\mu:F^\times\to\CC^\times$,
define
\[L(s,\mu)=\begin{cases}
\zeta_{\R}(s+r+m),&\text{when }F=\R,\
\mu(x)=|x|_{\R}^r\sgn^m(x),\ r\in\CC,\ m\in\{0,1\};\\
\zeta_{\CC}(s+r+|m|),&\text{when }F=\CC,\
\mu(z)=|z|_{\CC}^r(z/\bar z)^m,\ r\in\CC,\ m\in\frac 12 \Z.\\
\end{cases}\]
(In this paper we denote $\sgn^2(z)=z/\bar z$ for $z\in\CC$.)
For a discrete series representation $\pi_{\mathrm{dis}}^{2k}$
of $\GL(2,\R)$ with weight $2k$,
one can define
\begin{gather*}
L(s,\pi_{\mathrm{dis}}^{2k})=\zeta_\CC(s+k-\tfrac 12),\quad
L(s,\pi_{\mathrm{dis}}^{2k},\Ad)=\zeta_\R(s+1)\zeta_\CC(s+2k-1);\\
L(s,\pi_{\mathrm{dis}}^{2k}\otimes
\pi_{\mathrm{dis}}^{2k}\otimes \pi_{\mathrm{dis}}^{2k})
=\zeta_\CC(s+3k-\tfrac 32)\zeta_\CC(s+k-\tfrac 12)^3;\\
L(s,(\pi_{\mathrm{dis}}^{2k})_{\CC}\otimes\sgn^{2m})
=\zeta_{\CC}(s+|k+m-\tfrac 12|)\zeta_{\CC}(s+|-k+m+\tfrac 12|).
\end{gather*}

Now let $F$ be a non-Archimedean local field with uniformizer $\varpi$,
and $q=\#(\cO_F/(\varpi))$.
For an unramified character $\mu$
(perhaps with superscripts and subscripts),
\[L(s,\mu)=(1-\mu(\varpi)q^{-s})^{-1},\quad
\zeta_F(s)=L(s,\1_F)=(1-q^{-s})^{-1}.\]
For a spherical representation $\pi(\mu_1,\mu_2)$
with $\mu_1,\mu_2$ unramified,
\begin{gather*}L(s,\pi(\mu_1,\mu_2))=L(s,\mu_1)L(s,\mu_2)
=(1-\mu_1(\varpi)q^{-s})^{-1}(1-\mu_2(\varpi)q^{-s})^{-1};\\
L(s,\pi(\mu_1,\mu_2),\Ad)=\zeta_F(s)L(s,\mu_1\mu_2^{-1})L(s,\mu_1^{-1}\mu_2);\\
L(s,\pi(\mu_1^{(1)},\mu_2^{(1)})\otimes\pi(\mu_1^{(2)},\mu_2^{(2)})\otimes\pi(\mu_1^{(3)},\mu_2^{(3)}))
=\prod_{i_1,i_2,i_3\in\{1,2\}}L(s,\mu_{i_1}^{(1)}\mu_{i_2}^{(2)}\mu_{i_3}^{(3)}).\end{gather*}
For a special representation $\sigma_\mu$ with $\mu$ unramified,
\begin{gather*}L(s,\sigma_\mu)=L(s+\tfrac 12,\mu)
=(1-\mu(\varpi)q^{-s-1/2})^{-1};\\
L(s,\sigma_\mu,\Ad)=\zeta_F(s+1)=(1-q^{-s-1})^{-1};\\
L(s,\sigma_{\mu_1}\otimes\sigma_{\mu_2}\otimes\sigma_{\mu_3})
=L(s+\tfrac 32, \mu_1\mu_2\mu_3)L(s+\tfrac 12,\mu_1\mu_2\mu_3)^2.\end{gather*}
In this case the local root numbers are
\[\varepsilon(\tfrac 12,\sigma_\mu)=-\mu(\varpi),\quad
\varepsilon(\tfrac 12,\sigma_{\mu_1}\otimes\sigma_{\mu_2}
\otimes\sigma_{\mu_3})
=-\mu_1\mu_2\mu_3(\varpi).\]
For a quadratic extension $E/F$,
the base change $L$-factors can be defined in the same way as above,
noticing that (cf. \cite[Appendix E.6]{getz2012hilbert})
\[(\pi(\mu_1,\mu_2))_E=\pi(\mu_1\circ N_{E/F},\mu_2\circ N_{E/F}),
\quad (\sigma_\mu)_E=\sigma_{\mu\circ N_{E/F}}.\]

Globally, for a number field $F$, a Hecke character $\mu$ on $\A_F^\times$,
automorphic representations $\pi,\pi_1,\pi_2,\pi_3$ of $\GL(2,\A_F)$,
a quadratic extension $E/F$
and a character $\Omega:E^\times\backslash\A_E^\times/
\A_F^\times\to\CC^\times$,
we define the
completed $L$-functions
\begin{gather*}
\zeta_F^*(s),\quad
L(s,\mu),\quad
L(s,\pi),\quad
L(s,\pi_1\otimes\pi_2\otimes\pi_3),\\
L(s,\pi_E\otimes\Omega),\quad
L(s,\pi,\Ad),\quad
L(s,\pi_1\otimes\pi_2\otimes\pi_3,\Ad)
\end{gather*}
as Euler products for large $\Re(s)$
of corresponding local $L$-factors over all places of $F$.

These $L$-functions can also be defined classically
associated to cuspidal modular forms.
For example,
normalizing $f(z)=\sum_{n\geq 1} a_n(f)e^{2\pi inz},g,h\in\cF_{2k}(N)$
such that $a_1=1$,
the local factors of triple product $L$-functions are defined as follows.
When $p\nmid N$,
\[L_p(s,f\times g\times h)=\prod_{i_1,i_2,i_3\in\{1,2\}}
(1-\alpha_p^{(i_1)}(f)\alpha_p^{(i_2)}(g)\alpha_p^{(i_3)}(h)p^{-s})^{-1}\]
where $\alpha_p^{(1)}(f)$, $\alpha_p^{(2)}(f)$ are defined
to be the roots of $X^2-a_p(f)X+p^{2k-1}=0$;
when $p\mid N$,
noticing that $a_p(f)p^{-(k-1)}=\pm 1$,
we define
\[L_p(s,f\times g\times h)=
(1+\varepsilon_pp^{3(k-1)}p^{-s})^{-1}
(1+\varepsilon_pp^{3(k-1)}p^{1-s})^{-2}\]
where $\varepsilon_p=-a_p(f)a_p(g)a_p(h)p^{-3(k-1)}$.
Then $L_{\fin}(s,f\times g\times h)$ is absolutely convergent
in the half plane $\Re(s)>3k-\frac 12$.

We recall some facts in order to 
translate Theorem \ref{mainthm} to adelic language.
Let $\pi_f,\pi_g,\pi_h$ be 
the cuspidal automorphic representations of $\GL(2,\A)$
generated by $f,g,h$ respectively 
(see Theorem \ref{cuspformtorep}). 
One can check by direct calculation that
\begin{equation}\label{adelizedL}
L_{\fin}(\tfrac 12,\pi_f\otimes\pi_g\otimes\pi_h)
=L_{\fin}(3k-1,f\times g\times h), \quad\text{and}
\end{equation}
\begin{equation}\label{adelizedL2}
L_{\fin}(\tfrac 12,(\pi_h)_E\otimes\Omega)=
\begin{cases}
L_{\fin}(k, h)L_{\fin}(k, h\otimes\chi_{-d}),&\Omega=\1,\\
L_{\fin}(k+|m|+\frac 12, h\times \Theta_\Omega),&\Omega\neq \1
\end{cases}
\end{equation}
($\chi_{-d}$, $m$ and $\Theta_\Omega$ defined as 
in Theorem \ref{mainthm}).
Moreover, 
Shimura \cite[Eqn (2.5)]{shimura1976special} 
and Hida \cite[Theorem 5.1]{hida1981congruence}
proved an identity that relates
a special value of the adjoint $L$-function 
and the Petersson norm of a normalized newform.
Particularly when $f\in \cF_{2k}(N)$ has square-free level $N>2$,
\begin{equation}L(1,\pi_f,\Ad)=\frac {2^{2k}}{N}(f,f).\end{equation}
Here $(\cdot,\cdot)$ is the Petersson inner product on $\cF_{2k}(N)$
defined in \eqref{def.Petersson}.
One can also find a proof of this in \cite[Lemma 5]{watson2008rankin}
or \cite[Proposition 1.11]{cai2014explicit}.

With the above identities and the definition of Archimedean $L$-factors,
one can easily check that the following theorem
is equivalent to Theorem \ref{mainthm}.
Recall that for $h\in\cF_{2k}(N)$, when $p\mid N$,
$(\pi_h)_p\cong\sigma_{\delta_p}$ is a special representation
with \[\delta_p(p)=a_p(h)p^{-(k-1)}=
-\varepsilon_p(\tfrac 12,\pi_h)=\pm 1.\]

\begin{Theorem}[Main Theorem, adelic version]\label{mainadelic}
Let $N$ be a square-free integer with an odd number of prime factors,
and $\cF(N,2k)$ be the set of cuspidal automorphic representations
of $\PGL(2,\A)$ of level $N$ and weight $2k$.
For any $\pi_3\in\cF(N,2k)$,
\begin{multline}\label{maineqnadelic}
\frac{1}{2 N^2}
\sum_{\substack{\pi_1,\pi_2\in\cF(N,2k)\\
\varepsilon_p=-1,\ \forall p\mid N}}
\frac{L(\frac 12,\pi_1\otimes\pi_2\otimes\pi_3)}
{L(1,\pi_1\otimes\pi_2\otimes\pi_3,\Ad)}\\
=\left(1-\frac{24\delta(k)}{\varphi(N)}\right)
\frac{\Gamma(k)^{3}\Gamma(3k-1)}{2^{\omega(N)}\Gamma(2k)\Gamma(2k-1)^2}+
\frac {(2\pi)^{-2k}\Gamma(2k-1)}{N\cdot L(1,\pi_3,\Ad)} \\
\cdot \left( 4\cdot 2^{\ord_2(N)}
\prod_{p\mid N}\frac{1-\chi_{-4}(p)}{2}
\sum_{\Omega\in\widehat{[E_0^\times]}}
I_0(\Omega)\cdot L_{\fin}(\frac 12,(\pi_3)_{\chi_{-4}}\otimes \Omega)
\right.\\
\left.
+6\sqrt{3}\cdot 2^{\ord_3(N)}
\prod_{p\mid N}\frac{1-\chi_{-3}(p)}{2}
\sum_{\Omega\in\widehat{[E_1^\times]}}
I_1(\Omega)\cdot L_{\fin}(\frac 12,(\pi_3)_{\chi_{-3}}\otimes \Omega)
\right),
\end{multline}
where $I_0$ and $I_1$ 
(defined in Theorem \ref{maingeometric}) 
depend only on $k$ and $\Omega$.
\end{Theorem}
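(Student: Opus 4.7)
The plan is to prove Theorem \ref{mainadelic} by combining the three ingredients flagged in the introduction: the Jacquet--Langlands correspondence, Ichino's triple product formula, and a relative trace formula for the double coset space $G'(\Q)\backslash(G'\times G')(\Q)/G'(\Q)$, where $G' = Z\backslash D^\times$ and $D$ is the definite quaternion algebra of discriminant $N$. Working on $G'$ is essential: the quotient $G'(\Q)\backslash G'(\A)$ is compact, so the spectral decomposition is purely discrete and all orbital integrals converge absolutely.

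First, using Theorem \ref{factJL} I would bijectively replace the sum over $\pi_1,\pi_2\in\cF(N,2k)$ by a sum over $\pi_1',\pi_2'\in\cF'(N,2k)$, and then use an explicit form of Ichino's formula (Lemma \ref{explicitIchino}) to express the ratio $L(\tfrac 12,\pi_1\otimes\pi_2\otimes\pi_3)/L(1,\pi_1\otimes\pi_2\otimes\pi_3,\Ad)$ as a normalized period integral
\[
\frac{\bigl|\int_{[D^\times]}\phi_1'(h)\phi_2'(h)\phi_3'(h)\,dh\bigr|^2}{\prod_i\int_{[D^\times]}|\phi_i'(h)|^2\,dh},
\]
where each $\phi_i'$ is a new-line vector as in Lemma \ref{autoset}, times explicit archimedean and $p\mid N$ ratios of local $L$-factors.

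Next I set up the RTF with $H_1=H_2=G'$ diagonally embedded in $G'\times G'$ and with $\phi_{H_1}=\phi_{H_2}=\phi_3'$. The key is to choose $f\in C_c^\infty(G'(\A)\times G'(\A))$ adapted to select the desired representations: at $p\nmid N$ take $f_p$ to be essentially the characteristic function of $K_p\times K_p$; at $p\mid N$ take $f_p$ so that it vanishes on characters $\delta_p\circ N_{D_p}$ with $\delta_p(p)=+1$, thereby enforcing $\varepsilon_p=-1$; at $\infty$ take $f_\infty$ to be the matrix coefficient of $w^\circ_{2k}\in\pi'_{2k}\otimes\pi'_{2k}$ from Lemma \ref{length}. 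Lemma \ref{Choosew} then guarantees that only the copy of $\pi'_{2k}$ inside $\pi'_{2k}\otimes\pi'_{2k}$ survives in $I_\pi(f)$ at infinity. The spectral side of $I(f)$ therefore reconstructs the left-hand side of \eqref{maineqnadelic}, up to bookkeeping constants including $\vol(K_p)$ for $p\mid N$ (which produces the $N^2$ denominator) and the archimedean factor $\|w^\circ_{2k}\|^4/\|\mathbb{P}_{2k}\|^2$.

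On the geometric side, $I(f)$ decomposes over orbits $[\gamma]\in G'(\Q)\backslash(G'\times G')(\Q)/G'(\Q)$, parametrized by trace and norm via Lemma \ref{parametrization}. The support of $f$ together with the division algebra structure forces only finitely many orbits to contribute, and Lemma \ref{AnotherPresentation} restricts the nontrivial tori to $E_0=\Q(\sqrt{-1})$ and $E_1=\Q(\sqrt{-3})$. The identity orbit yields the main term with the $\delta(k)/\varphi(N)$ correction arising from the one-dimensional characters in $\cA_{res}(G')$ that contaminate the spectral side when $2k=2$. For each nontrivial orbit, $I_{[\gamma]}(f)$ reduces to an integral over $\A^\times E_\gamma^\times\backslash\A_{E_\gamma}^\times$; expanding in characters $\Omega\in\widehat{[E_\gamma^\times]}$ and applying Waldspurger's period formula converts each $\Omega$-component into $L_{\fin}(\tfrac 12,(\pi_3)_{E_\gamma}\otimes\Omega)/L(1,\pi_3,\Ad)$, producing the two sums in \eqref{maineqnadelic}. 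The local factors $\prod_{p\mid N}(1-\chi_{-d}(p))/2$ emerge from the vanishing of local orbital integrals at primes $p\mid N$ that split in $E_\gamma$. The main obstacle is the archimedean matching: Lemma \ref{Choosew} must reproduce \emph{exactly} the archimedean scalar appearing in Ichino's formula, and one has to track every local Haar volume, Petersson-norm conversion, and $L$-factor normalization so that the spectral and geometric constants reconcile without discrepancy. This is not conceptually deep but is the most error-prone step, relying crucially on the combinatorial identity of Lemma \ref{sum.binom} to evaluate $\|w^\circ_{2k}\|^2$ in closed form.
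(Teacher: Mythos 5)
Your outline matches the paper's architecture closely: Jacquet--Langlands to pass to $\cF'(N,2k)$, the RTF for $G'\backslash(G'\times G')/G'$ with the test function supported on $K_\infty\prod K_p$, spectral side identified via the explicit Ichino formula (Lemma \ref{explicitIchino}), geometric side reduced to the identity orbit plus the two torus orbits $E_0$, $E_1$, with Waldspurger converting the latter to base-change $L$-values, and the $\delta(k)/\varphi(N)$ term traced to the residual characters at weight~$2$. That is all correct and the same route as the paper.

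However, there is a genuine conceptual error in your choice of test function at $p\mid N$. You propose to take $f_p$ ``so that it vanishes on characters $\delta_p\circ N_{D_p}$ with $\delta_p(p)=+1$, thereby enforcing $\varepsilon_p=-1$.'' This does not work, and it is not what the paper does. The constraint $\varepsilon_p=-1$ is equivalent to $\delta_{1,p}(p)\delta_{2,p}(p)\delta_{3,p}(p)=1$ --- a condition on the \emph{product} of the three local signs --- not to each $\pi_i'$ individually having $\delta_{i,p}(p)=-1$. A test function that kills $\pi_{i,p}'=\delta_p\circ N_{D_p}$ whenever $\delta_p(p)=+1$ would produce a sum over pairs $(\pi_1,\pi_2)$ with $\delta_{1,p}(p)=\delta_{2,p}(p)=-1$, which is a different (and generally wrong) index set, and would also fail to match the fixed $\pi_3$ when $\delta_{3,p}(p)=+1$. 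In the paper the test function is simply $f_p=\1_{K_p}$ at \emph{all} finite $p$, including $p\mid N$; the restriction $\varepsilon_p=-1$ is not imposed by the test function but arises automatically from the local trilinear period, i.e.\ from Prasad's dichotomy as made explicit in Ichino's local factor: by Woodbury's computation $I_p=(1-\varepsilon_p)\tfrac 1p(1-\tfrac 1p)\zeta_p(2)^{-1}$ for $p\mid N$, so $\prod_v I_v$ vanishes unless $\varepsilon_p=-1$ for every $p\mid N$. You should remove the proposed modification of $f_p$ at ramified $p$ and instead let the selection happen on the Ichino side.

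Two smaller inaccuracies of the same flavor: the factor $N^{-2}$ on the left of \eqref{maineqnadelic} comes out of the product of Ichino's local factors (the $\varphi(N)/N^2$ in $\prod_v I_v$ combined with $\vol(K')=24/\varphi(N)$), not directly from ``$\vol(K_p)$ for $p\mid N$''; and the factors $\prod_{p\mid N}\tfrac{1-\chi_{-d}(p)}2$ record the global obstruction to embedding $E_\gamma$ into $D$ (Lemma \ref{AnotherPresentation} and Proposition \ref{F=Q}): the nontrivial orbit $[\gamma]$ is simply absent when some $p\mid N$ splits in $E_\gamma$, rather than its local orbital integral vanishing.
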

In the following three sections 
we are going to prove Theorem \ref{mainadelic} 
using the relative trace formula (RTF).


\section{Explicit Period Formulae}
\label{section.explicit}

Waldspurger's period formula and Ichino's formula
relate some period integrals and
central values of certain $L$-functions.
In this section we will 
establish an explicit version of these two formulas,
regarding the period integrals appearing 
in each side of the RTF.

\subsection{Ichino's period formula}
\label{section.Ichino}

Let $F$ be a number field, $E=F\times F\times F$,
$\pi_i$ be a unitary cuspidal automorphic representation
of $\GL(2,\A_F)$ for $i=1,2,3$.
Assume that the product of the central characters of $\pi_i$ is trivial.
Assume that $D$ is a quaternion algebra over $F$ so that
there exists an irreducible unitary automorphic representation
$\Pi'=\pi_1'\otimes\pi_2'\otimes\pi_3'$ of $(D^\times(\A_F))^3$
associated to $\Pi=\pi_1\otimes\pi_2\otimes\pi_3$
by the Jacquet--Langlands correspondence.
Let $[D^\times]=Z(\A_F)D^\times(F)\backslash D^\times(\A_F)$.

\begin{Theorem}[\cite{ichino2008trilinear}]\label{Ichino}
For $\Phi=\otimes_v \Phi_v\in\Pi'$,
\[\frac
{\left|\int_{[D^\times]}\Phi(g,g,g)\ dg\right|^2}
{\langle\Phi,\Phi\rangle}
=\frac{\zeta_F^*(2)^2}{2^3}
\frac{L(\frac 12,\pi_1\otimes\pi_2\otimes\pi_3)}
{L(1,\pi_1\otimes\pi_2\otimes\pi_3,\Ad)}
\prod_v I_v,\]
where $dg=\prod_v d g_v$
is the Tamagawa measure on $[D^\times]$,
\[I_v=\frac 1{\zeta_{F_v}(2)^2}
\frac{L_v(1,\Pi,\Ad)}{L_v(1/2,\Pi)}
\int_{Z(F_v)\backslash D^\times(F_v)}
\frac{\cB_v(\Pi_v'(g_v)\Phi_v,\overline{\Phi_v}) }
{\cB_v(\Phi_v,\overline{\Phi_v})}\ dg_v,\]
the $\cB_v$'s are $(D^\times(\A_F))^3$-invariant pairings
between $\Pi_v'$ and its contragredient $\tilde\Pi_v'$ so that
$\prod_v \cB_v(\Phi_v,\overline{\Phi_v'})$
coincides with the inner product
$\langle\Phi,\Phi'\rangle$
defined by
\[\iiint_{[D^\times]^3}\Phi(g_1,g_2,g_3)
\overline{\Phi'(g_1,g_2,g_3)}\ dg_1\ dg_2\ dg_3\]
and $\cB_v(\Phi_v,\overline{\Phi_v'})=1$ for almost all $v$.
Moreover, for $v < \infty$,
\[I_v=\vol(K_v;dg_v)=
\vol(\cO_{F_v}^\times\backslash\GL(2,\cO_{F_v}))\]
when $D_v\cong M(2, F_v)$, 
$\Pi'_v\cong\Pi_v$ is unramified,
and $\Phi_v\in \Pi_v$ is a unit spherical vector.
\end{Theorem}

Now we calculate the local constants $I_v$ for the following case.
Assume $F=\Q$, $D$ is definite, $N=\disc(D)$ is
square-free with an odd number of prime factors,
$\pi_1,\pi_2,\pi_3\in\cF(N,2k)$ be 
cuspidal automorphic representations of $\PGL(2, \A)$,
and $\pi_1',\pi_2',\pi_3'\in\cF'(N,2k)$ be their 
Jacquet--Langlands correspondences
on $G'(\A)=Z(\A)\backslash D^\times(\A)$.
Let $\Phi$ be of the form $\Phi_{\pi_1'\otimes \pi'_2}\otimes\phi_3$
which contributes to the sum
$\sum_{\pi_1',\pi_2'\in\cA_{cusp}(G')}I_{\pi_1',\pi_2'}(f)$
(cf. \eqref{specterms}), where 
\[\Phi_{\pi_1'\otimes \pi'_2}=
\frac{w^\circ_{2k}}{\|w^\circ_{2k}\|}\otimes
\big(\otimes_p(\phi_{1,p}\otimes\phi_{2,p})\big)\]
is the normalized vector in the $1$-dimensional space
$\CC w^\circ_{2k}\otimes 
(\pi'_{1,\fin})^{K_{\fin}}(\pi'_{2,\fin})^{K_{\fin}}$ 
(defined in Lemma \ref{spec.archi.term}),
and $\phi_3=\|\phi_3\|X_3^{2k-2}\otimes(\otimes_p\phi_{3,p})$ 
is the new-line vector in $\pi_3'$ 
(defined in Lemma \ref{autoset})\footnote{
In this paper we use $\phi_3$ as an automorphic form in $\pi_3'$,
instead of the $3$-adic part of $\phi$.}.

\begin{Theorem}[An explicit Ichino formula]
\label{explicitIchino}
With $\Phi=\Phi_{\pi_1'\otimes \pi'_2}\otimes\phi_3$ defined above,
\[\left|\int_{[D^\times]}
\Phi_{\pi_1'\otimes \pi'_2}(h)\phi_3(h)\ dh\right|^2\]
vanishes unless $\varepsilon_p=-1$ for every $p\mid N$,
in which case it equals
\[\|\phi_3\|^2\frac{\Gamma(2k)^{2}\Gamma(2k-1)}
{\Gamma(k)^{3}\Gamma(3k-1)}
\frac{2^{\omega(N)}\varphi(N)}{ 48 N^2}
\frac{L(\frac 12,\pi_1\otimes\pi_2\otimes\pi_3)}
{L(1,\pi_1\otimes\pi_2\otimes\pi_3,\Ad)}.\]
\end{Theorem}

\begin{proof}
Now we have
\begin{equation}
\label{choice.archi}
\Phi_\infty=\frac{w^\circ_{2k}}{\|w^\circ_{2k}\|}
\otimes \|\phi_3\|X_3^{2k-2}
\end{equation}
and on the non-Archimedean places $\Phi_p$ are
tensor products of three $K_p$-invariant unit vectors.

When $p\nmid N$, 
$I_p=\vol(K_p;dg_p)=\zeta_p(2)^{-1}$ according to
\cite[Lemma 2.2]{ichino2008trilinear}.

When $p\mid N$, $(\pi_i)_p$ is the special representation $\sigma_{\delta_i}$
for some unramified character
$\delta_i:\Q_p^\times\to\{\pm 1\}$.
The corresponding $\phi_{i,p}$ is the normalization of
$\delta_i\circ N_{D_p}$.
\cite[Proposition 5.5]{woodbury2012explicit} shows that
\[I_p=(1-\varepsilon_p)\tfrac 1p(1-\tfrac 1{p})\zeta_p(2)^{-1},\quad
p\mid N\]
with $\varepsilon_p = -(\delta_1\delta_2\delta_3)(p)
=\varepsilon_p(\frac 12,\Pi)$.
(Note that the measure in \cite{woodbury2012explicit}
differs by a factor $\zeta_p(2)$.)

At the Archimedean place, recall that 
$\cB_\infty(\cdot,\bar\cdot)=\langle\cdot,\cdot\rangle$
is the inner product defined in \eqref{innerproduct2}.
By \eqref{choice.archi},
$\langle\Phi_\infty,\Phi_\infty\rangle_\infty=\|\phi_3\|^2$,
and, applying Lemma \ref{Schur}, we have
\[\int_{G'_\infty}
\cB_\infty(\Pi_\infty'(g)\Phi_\infty,\overline{\Phi_\infty})\ dg
=\frac{\|\phi_3\|^2}{\|w^\circ_{2k}\|^2}\int_{G'_\infty}
\langle\pi_{2k}'^{\otimes^3}\circ\Delta_3(g)w^\circ_{2k}\otimes X_3^{2k-2},
w^\circ_{2k}\otimes X_3^{2k-2}\rangle
\ dg\]
Recall that $\mathbb{P}_{2k}$, as defined in Lemma \ref{length},
is the only $D^\times(\R)$-invariant vector
in $\pi_{2k}'^{\otimes^3}\circ\Delta_3$
up to a constant multiple.
Then Schur Orthogonality Relation (Lemma \ref{SchurOrtho})
implies that the above integral is equal to
\[\frac{\|\phi_3\|^2}{\|w^\circ_{2k}\|^2}\vol(G'_\infty)
\frac{|\langle w^\circ_{2k}\otimes X_3^{2k-2}, 
\mathbb{P}_{2k}\rangle|^2}{\|\mathbb{P}_{2k}\|^2}.\]
Lemma \ref{Choosew} shows that
$\langle w^\circ_{2k}\otimes X_3^{2k-2}, \mathbb{P}_{2k}\rangle
=\|w^\circ_{2k}\|^2$.
Therefore, with Lemma \ref{length} 
and the definitions of $L$-factors in Section \ref{section.L-functions}, 
one can check that
\[I_\infty=\frac{\Gamma(2k)^{2}\Gamma(2k-1)}
{\Gamma(k)^{3}\Gamma(3k-1)}.\]

Now we see that $\prod_vI_v=0$ unless $\varepsilon_p=-1$ for any $p\mid N$,
in which case
\[\prod_vI_v=\zeta^*_{\Q}(2)^{-1}\frac{\Gamma(2k)^{2}\Gamma(2k-1)}
{\Gamma(k)^{3}\Gamma(3k-1)}
\frac{2^{\omega(N)}\varphi(N)}{ \pi N^2}.\]
With $\zeta^*_{\Q}(2)=\pi/6$ and Theorem \ref{Ichino} we complete the proof.
\end{proof}

\subsection{Waldspurger's period formula}\label{section.waldspurger}

Let $F$ be a number field, 
$\pi$ be a unitary cuspidal automorphic representation
of $\GL(2,\A_F)$.
Assume that $D$ is a quaternion algebra over $F$ so that
there exists an irreducible unitary automorphic representation
$\pi'$ of $D^\times(\A_F)$
associated to $\pi$ by the Jacquet--Langlands correspondence.

Let $E/F$ be a quadratic field extension embedded in $D$,
with $\eta_{E/F}:
F^\times\backslash\A_F^\times\to\{\pm 1\}$ 
the quadratic character
associated to $E/F$ by class field theory.
Let $[E^\times]=\A_F^\times E^\times \backslash \A_E^\times$
and $\pi_E$ denote the base change of $\pi$ to $E$.

Assume that $\Omega$ is a unitary character 
on $E^\times\backslash\A_E^\times$
such that $\Omega^{-1}\mid_{\A_F^\times}$ 
agrees with the central character of $\pi$.
We define a period integral $P_\Omega:\pi'\to\CC$ by
\begin{equation}\label{def.periodintegral}
P_\Omega(\phi) =\int_{[E^\times]}
\phi(t)\Omega(t)\ dt,\quad \phi\in\pi',
\end{equation}
where the Haar measure gives 
total volume $2L(1,\eta_{E/F})$ on $[E^\times]$.

\begin{Theorem}[\cite{waldspurger1985valeurs,
yuan2013gross}]
\label{Waldspurger}
For any $\phi',\phi''\in\pi'$, we have
\[\frac{P_\Omega(\phi')\cdot P_{\Omega^{-1}}(\overline{\phi''})}
{\langle\phi',\phi''\rangle}
=\frac{\zeta_F^*(2)L(\frac 12,\pi_E\otimes \Omega)}{2L(1,\pi,\Ad)}
\prod_v\alpha_v(\phi'_v,\overline{\phi''_v};\Omega_v),\]
with \[\alpha_v=
\frac{L_v(1,\eta_{E/F})L_v(1,\pi,\Ad)}
{\zeta_{F_v}(2)L_v(\frac 12,\pi_E\otimes \Omega)}
\int_{E_v^\times/F_v^\times}
\frac{B_v(\pi_v'(t_v)\phi'_v,\overline{\phi''_v})}
{B_v(\phi'_v,\overline{\phi''_v})}\Omega_v(t_v)\ dt_v.\]
Here the $B_v$'s
are $D_v^\times$-invariant bilinear forms
on $\pi_v'\otimes\tilde\pi_v'$ such that
\[\prod_vB_v(\phi'_v,\overline{\phi''_v})
=\langle\phi',\phi''\rangle
=\int_{[D^\times]}\phi'(g)\overline{\phi''(g)}\ dg.\]
Moreover, for $v<\infty$,
\[\alpha_v(\phi'_v,\overline{\phi_v''};\Omega_v)
=\vol(\cO^\times_{E_v}/\cO^\times_{F_v};dt_v)\]
when $D_v\cong M(2,F_v)$, $E_v/F_v$ is unramified,
$\pi_v'\cong\pi_v$ and $\Omega_v$ are both unramified, and
$\phi_v'\in\pi_v$, $\overline{\phi_v''}\in\tilde\pi_v$ 
are unit spherical vectors.
\end{Theorem}

Again we calculate the local constants $\alpha_v$ for the following case.
Assume $F=\Q$, $D$ is definite, $N=\disc(D)$ is
square-free with an odd number of prime factors,
$\pi\in\cF(N,2k)$ be a
cuspidal automorphic representation of $\PGL(2, \A)$,
and $\pi'$ be its Jacquet--Langlands correspondences.
Take $\phi^*,\phi^{**}\in\pi'$
such that 
\begin{equation}\label{phistar.general}
\phi^*_{\fin}=\phi^{**}_{\fin}=\otimes_p\phi_p\in(\pi'_{\fin})^{K_{\fin}}
\end{equation}
is the product of normalized spherical vectors.

Let $E/\Q$ be an imaginary quadratic field extension embedded in $D$.
Assume that $h=\otimes h_v\in D^\times(\A_\Q)$
satisfies that $h_p\in E_p^\times \GL(2,\Z_p)$
whenever $v=p<\infty$ and $p\nmid \disc(D)$.
Under the assumptions of $\phi^*,\phi^{**}$ we have
\begin{equation}\label{eqn.innerproduct}
\langle R(h)\phi^*,R(h)\phi^{**}\rangle
=\langle\phi^*,\phi^{**}\rangle
=B_\infty(\phi^*_\infty,
\overline{\phi^{**}_\infty})
=B_\infty(\pi_{2k}'(h_\infty)\phi^*_\infty,
\overline{\pi_{2k}'(h_\infty)\phi^{**}_\infty}).
\end{equation}

\begin{Theorem}[An explicit Waldspurger formula]
\label{explicitWaldspurger}
With notations and assumptions above, 
$P_\Omega(R(h)\phi^*)\cdot P_{\Omega^{-1}}(\overline{R(h)\phi^{**}})$
vanishes unless all the following conditions hold for $\Omega$:
\begin{itemize}
\item $\Omega=\otimes_v\Omega_v$ is unramified at every finite place,
\item $\Omega_\infty(z)=\sgn^{2m}(z)=(z/\bar z)^m$ with $|m|\leq k-1$, and 
\item if $p\mid \disc(D)$ and $E_p/\Q_p$ is ramified,
$(\tau/\bar\tau)^m=-\varepsilon_p(\frac 12,\pi)$ 
for any $\tau\in E$ such that $\tau$ is a uniformizer in $E_p$. 
\end{itemize}
In this case it equals 
$\prod_{p\nmid N}\dfrac{\vol(\cO_{E_p}^\times)}{\vol(\Z_p^\times)}
\prod_{p\mid N} \vol(E_p^\times/\Q_p^\times)$ times
\[\frac{L_{\fin}(\frac 12,\pi_E\otimes \Omega)}{L(1,\pi,\Ad)}
\frac{\Gamma(2k)}{6(2\pi)^{2k}} \frac{\varphi(N)}{N}
\int_{\CC^\times/\R^\times}
B_\infty(\pi_{2k}'(th_\infty)\phi^*_\infty,
\overline{\pi_{2k}'(h_\infty)\phi^{**}_\infty})
\sgn^{2m}(t)\ dt.\]
Here the measures are given in Section \ref{section.measure},
and the integral over $\CC^\times/\R^\times$ 
is calculated in Lemma \ref{geom.archi.vanish}.

In particular, when the weight of $\pi$ is $2k=2$,
we notice that
$\pi_{2k}'=\Sym^{2k-2} V \otimes {\det}^{-k+ 1}$ 
is the trivial representation of $D^\times(\R)$
and $\Omega_\infty$ can only be trivial.
The Waldspurger formula becomes
\begin{multline*}
\frac{P_\Omega(R(h)\phi^*)\cdot P_{\Omega^{-1}}(\overline{R(h)\phi^{**}})}
{\langle\phi^*,\phi^{**}\rangle}
=\vol(\CC^\times/\R^\times)
\prod_{p\nmid N}\frac{\vol(\cO_{E_p}^\times)}{\vol(\Z_p^\times)}
\prod_{p\mid N} \vol(E_p^\times/\Q_p^\times)\\
\cdot \frac{L(\frac 12,\pi_E\otimes \Omega)}{L(1,\pi,\Ad)}
\frac{\varphi(N)}{24N}
\prod_{p\in\Ram(D)\cap\Ram(E)}
\frac{1-\varepsilon_p(\frac 12,\pi)}{2}.
\end{multline*}
\end{Theorem}

We prove this explicit formula 
by calculating place-by-place the local constants
\[\alpha_v=\alpha_v(\pi_v'(h_v)\phi_v^*,
\overline{\pi_v'(h_v)\phi_v^{**}};\Omega_v).\]

\subsubsection{}\label{section.Waldspurger.split}
For $v=p<\infty$ and $p\nmid \disc(D)$,
we can fix an embedding $E_p\hookrightarrow D_p\cong M(2,\Q_p)$
so that the maximal order $\cO_p\cong M(2,\Z_p)$ of $D_p$
satisfies $E_p\cap \cO_p=\cO_{E_p}$.
The embedding can be chosen in the following way (cf. \cite{martin2009central}):
\begin{itemize}
\item If $E_p$ is split, fix an $\Q_p$-algebra isomorphism 
$E_p=\Q_p\oplus\Q_p$ and embed $E_p$ as the diagonal matrices in $M(2,\Q_p)$.
\item If $E_p$ is a field, let $\tau_p$
be such that $\cO_{E_p}=\Z_p[\tau_p]$
and that $\tau_p$ is a uniformizer in $E_p$ if $E_p/\Q_p$ is ramified.
Embed $E_p$ into $M(2,\Q_p)$ by
\[a+b\tau_p\longmapsto
\left(\begin{array}{cc}a+b\Tr_{E_p/\Q_p}(\tau_p)&bN_{E_p/\Q_p}(\tau_p)\\
-b&a\end{array}\right).\]
\end{itemize}
In this case,
$\pi_p'\cong \pi_p$ and they are spherical, i.e., 
$\pi_p$ is a unitarizable unramified principal series representation 
$\pi(\mu_p,\mu_p^{-1})$ 
for some unramified character $\delta_p$.
Recall that, in the induced model, 
the $\GL(2,\Q_p)$-invariant bilinear form
on $\pi_p\otimes\tilde\pi_p$ can be defined by
\[B_p(\phi,\phi')=
\int_{\GL(2,\Z_p)}\phi(k)\phi'(k)\ dk,\quad
\phi\in\pi_p,\ \phi'\in\tilde\pi_p.\]

\begin{Lemma}\label{nonarchiunram2}
Let $\phi'_p=\phi''_p=\pi_p(h_p)\phi_p$
where $\phi_p$ is the normalized spherical vector in $\pi_p$.
For $h_p\in E_p^\times\GL(2,\Z_p)$,
we have
\[\alpha_p(\phi'_p,\overline{\phi''_p};\Omega_p)=\begin{cases}
\dfrac{\vol(\cO_{E_p}^\times)}{\vol(\Z_p^\times)},
&\Omega_p\text{ unramified;}\\0,&\Omega_p\text{ ramified.}\end{cases}\]
\end{Lemma}

\begin{proof}
Write $h=t_hu_h$
with $t_h\in E_p^\times$, $u_h\in\cO_p^\times$.
Then $h^{-1}th=u_h^{-1}t_h^{-1}t t_hu_h=u_h^{-1}tu_h$ 
for any $t\in E_p^\times$.
By the definition of $B_p$,
\[\begin{split}
&\ B_p(\pi_p(t)\phi'_p,\overline{\phi''_p})=
B_p(\pi_p(h^{-1}th)\phi_p,\overline{\phi_p})
=\int_{\cO_p^\times}\pi_p(h^{-1}th)\phi_p(k)\overline{\phi_p(k)}\ dk\\
=&\ \int_{\cO_p^\times}
\phi_p(kh^{-1}th)\overline{\phi_p(k)}\ dk
=\int_{\cO_p^\times}
\phi_p(ku_h^{-1}tu_h)\overline{\phi_p(k)}\ dk\\
=&\ \int_{\cO_p^\times}
\phi_p(ku_h^{-1}t)\overline{\phi_p(ku_h^{-1})}\ d(ku_h^{-1})
=\int_{\cO_p^\times}
\phi_p(kt)\overline{\phi_p(k)}\ dk
=B_p(\pi_p(t)\phi_p,\overline{\phi_p}).
\end{split}\]
So 
\begin{equation}\label{independence.of.hp}
\int_{E_p^\times/\Q_p^\times}
B_p(\pi_p(t_p)\phi'_p,\overline{\phi''_p})
\Omega_p(t_p)\ dt_p
=\int_{E_p^\times/\Q_p^\times}
B_p(\pi_p(t_p)\phi_p,\overline{\phi_p}) \Omega_p(t_p)\ dt_p,
\end{equation}
i.e., the integral is independent of $h_p$;
and therefore, by Theorem \ref{Waldspurger},
\[\alpha_p(\phi'_p,\overline{\phi''_p};\Omega_p)
=\alpha_p(\phi_p,\overline{\phi_p};\Omega_p)
=\vol(\cO_{E_p}^\times)/\vol(\Z_p^\times)\]
\emph{when $\Omega_p$ is unramified and $E_p/\Q_p$ is unramified.}

For the rest cases
(when $\Omega_p$ is ramified or $E_p/\Q_p$ is ramified),
\eqref{independence.of.hp} still holds and hence we only need 
to calculate $\alpha_p(\phi_p,\overline{\phi_p};\Omega_p)$.

(i) \emph{When $E_p/\Q_p$ is split and $\Omega_p$ is ramified,}
we have $E_p^\times=\Q_p^\times\times \Q_p^\times$.
Notice that
$E_p^\times/\Q_p^\times\cong\{(x,1):x\in\Q_p^\times\}$,
We can write
\[\int_{E_p^\times/\Q_p^\times}
B_p(\pi_p(t_p)\phi_p,\overline{\phi_p}) \Omega_p(t_p)\ dt_p
=\int_{\Q_p^\times}
B_p(\pi_p\left(\begin{array}{cc}x&\\&1\end{array}\right)
\phi_p,\overline{\phi_p}) \Omega_p(x,1)\ d^\times x.\]
For $\Omega_p(t_1,t_2)=\nu_p(t_1)\nu_p^{-1}(t_2)$
we have
\[\begin{split}
&\ \int_{E_p^\times/\Q_p^\times}
B_p(\pi_p(t_p)\phi_p,\overline{\phi_p}) \Omega_p(t_p)\ dt_p\\
=&\ \sum_{r\in\Z}\int_{v_p(x)=r}
B_p(\pi_p\left(\begin{array}{cc}x&\\&1\end{array}\right)
\phi_p,\overline{\phi_p}) \Omega_p(x,1)\ d^\times x\\
=&\ \sum_{r\in\Z}\int_{u\in\Z_p^\times}
B_p(\pi_p\left(\begin{array}{cc}p^r&\\&1\end{array}\right)
\left(\begin{array}{cc}u&\\&1\end{array}\right)
\phi_p,\overline{\phi_p}) \nu_p(p^ru)\ du\\
=&\ \sum_{r\in\Z}
B_p(\pi_p\left(\begin{array}{cc}p^r&\\&1\end{array}\right)
\phi_p,\overline{\phi_p})\nu_p(p)^r
\int_{\Z_p^\times} \nu_p(u)\ du=0\quad
\text{when $\nu_p$ is ramified}.
\end{split}\]

(ii) \emph{When $E_p/\Q_p$ is unramified non-split 
and $\Omega_p$ is ramified,}
we have
\[E_p^\times=\sqcup_{r\in\Z}\ p^r\cO_{E_p}^\times=\Q_p^\times\cO_{E_p}^\times.\]
Lemma \ref{productmeasure} implies,
for any integrable function $f$ defined on $E_p^\times/\Q_p^\times$,
\[\int_{E_p^\times/\Q_p^\times}f(t)\ dt=
\int_{\cO_{E_p}^\times/\Z_p^\times}f(t)\ dt.\]
Recall that $\cO_{E_p}=E_p\cap \cO_p$ 
and $\phi_p$ is spherical.
So we have
\begin{multline*}
\int_{E_p^\times/\Q_p^\times}
B_p(\pi_p(t_p)\phi_p,\overline{\phi_p}) \Omega_p(t_p)\ dt_p
=\int_{\cO_{E_p}^\times/\Z_p^\times}
B_p(\pi_p(t_p)\phi_p,\overline{\phi_p})\Omega_p(t_p)\ dt_p\\
=B_p(\phi_p,\overline{\phi_p})
\int_{\cO_{E_p}^\times/\Z_p^\times} \Omega_p(t_p)\ dt_p
=0 \quad \text{when $\Omega_p$ is ramified}.
\end{multline*}

(iii) \emph{When $E_p/\Q_p$ is ramified,}
we know $E_p^\times=\Q_p^\times\cO_{E_p}^\times
\sqcup\varpi_{E_p}\Q_p^\times\cO_{E_p}^\times$.
Then for any integrable function $f$ on $E_p^\times/\Q_p^\times$,
\[\int_{E_p^\times/\Q_p^\times}f(t)\ dt
=\int_{\Q_p^\times\cO_{E_p}^\times/\Q_p^\times}f(t)\ dt+
\int_{\Q_p^\times\cO_{E_p}^\times/\Q_p^\times}f(\varpi_{E_p}t)\ dt.\]
By Lemma \ref{productmeasure} we can write
\[\int_{E_p^\times/\Q_p^\times}f(t)\ dt
=\int_{\cO_{E_p}^\times/\Z_p^\times}(f(t)+f(\varpi_{E_p}t))\ dt.\]
Notice that $\cO_{E_p}^\times\subset\cO_p^\times$. Then
\[\begin{split}
&\ \int_{E_p^\times/\Q_p^\times}
B_p(\pi_p(t_p)\phi_p,\overline{\phi_p}) \Omega_p(t_p)\ dt_p\\
=&\ \int_{\cO_{E_p}^\times/\Z_p^\times}
B_p(\pi_p(t_p)\phi_p,\overline{\phi_p})\Omega_p(t_p)\ dt_p
+\int_{\cO_{E_p}^\times/\Z_p^\times}
B_p(\pi_p(\varpi_{E_p}t_p)\phi_p,\overline{\phi_p}) \Omega_p(\varpi_{E_p}t_p)\ dt_p\\
=&\ B_p(\phi_p,\overline{\phi_p})
\int_{\cO_{E_p}^\times/\Z_p^\times} \Omega_p(t_p)\ dt_p
+B_p(\pi_p(\varpi_{E_p})\phi_p,\overline{\phi_p})\Omega_p(\varpi_{E_p})
\int_{\cO_{E_p}^\times/\Z_p^\times} \Omega_p(t_p)\ dt_p\\
=&\ \begin{cases}\vol(\cO_{E_p}^\times/\Z_p^\times)
\Big(B_p(\phi_p,\overline{\phi_p})+
B_p(\pi_p(\varpi_{E_p})\phi_p,\overline{\phi_p})\Omega_p(\varpi_{E_p})\Big) ,
&\Omega_p\text{ unramified;}\\
0,&\Omega_p\text{ ramified.}\end{cases}
\end{split}\]

By our choice of embedding $E_p\hookrightarrow D_p\cong M(2,\Q_p)$,
the image of $\varpi_{E_p}$ is
\[\left(\begin{array}{cc}\Tr_{E_p/\Q_p}(\varpi_{E_p})&p\\
-1&0\end{array}\right)
=\left(\begin{array}{cc}p&-\Tr_{E_p/\Q_p}(\varpi_{E_p})\\
0&1\end{array}\right)
\left(\begin{array}{cc}0&1\\
-1&0\end{array}\right)
\in\cO_p^\times 
\left(\begin{array}{cc}p&\\&1\end{array}\right)\cO_p^\times.\]
For $\pi_p=\pi(\mu_p,\mu_p^{-1})$,
the Macdonald formula \cite[Theorem 4.6.6]{bump1998automorphic}
implies that
\[
\frac{B_p(\pi_p\left(\begin{array}{cc}p&\\&1\end{array}\right)
\phi_p,\overline{\phi_p})}{B_p(\phi_p,\overline{\phi_p})}
=\frac{p^{-1/2}}{1+p^{-1}}(\mu_p(p)+\mu_p(p)^{-1}).
\]
Hence by the definition of $\alpha_p$, 
for unramified $\Omega_p$,
\begin{multline*}
\alpha_p(\phi'_p,\overline{\phi''_p};\Omega_p)=
\frac{L_p(1,\eta_{E/\Q})L_p(1,\pi,\Ad)}
{\zeta_{\Q_p}(2)L_p(\frac 12,\pi_E\otimes \Omega)}
\int_{E_p^\times/\Q_p^\times}
\frac{B_p(\pi_p'(t_p)\phi'_p,\overline{\phi''_p})}
{B_p(\phi'_p,\overline{\phi''_p})}\Omega_p(t_p)\ dt_p\\
=\frac{L_p(1,\eta_{E/\Q})L_p(1,\pi,\Ad)}
{\zeta_{\Q_p}(2)L_p(\frac 12,\pi_E\otimes \Omega)}
\vol(\cO_{E_p}^\times/\Z_p^\times)
\left(1+\frac{p^{-1/2}}{1+p^{-1}}(\mu_p(p)+\mu_p(p)^{-1})
\Omega_p(\varpi_{E_p})\right).
\end{multline*}
Here $L_p(1,\eta_{E/\Q})=1$, $\zeta_{\Q_p}(2)=(1-p^{-2})^{-1}$,
\[L_p(1,\pi,\Ad)
=\frac{(1-p^{-1})^{-1}}{(1-\mu_p^2(p)p^{-1})(1-\mu_p^{-2}(p)p^{-1})};\]
the base change $(\pi_p)_{E_p}=
\pi(\mu_p\circ N_{E_p/F_p},\mu_p^{-1}\circ N_{E_p/F_p})$ and
\[L_p(\tfrac 12,\pi_E\otimes \Omega)=
(1-\mu_p(p)\Omega_p(\varpi_{E_p})p^{-1/2})^{-1}
(1-\mu_p^{-1}(p)\Omega_p(\varpi_{E_p})p^{-1/2})^{-1}.\]
Notice that $\Omega_p(\varpi_{E_p})^2=1$ since 
$\Omega_p$ is unramified and its restriction to $\Q_p^\times$ is trivial.
One can simplify that
$\alpha_p(\phi'_p,\overline{\phi''_p};\Omega_p)=
\vol(\cO_{E_p}^\times/\Z_p^\times)$.
\end{proof}

\subsubsection{}
For $v=p<\infty$ and $p\mid \disc(D)$,
with Proposition \ref{F=Q}
we have that $E_p/\Q_p$ is non-split.
When $E_p/\Q_p$ is unramified and non-split, 
the ramification index is $e=1$, and
the inertia degree is $f=2$;
when $E_p/\Q_p$ is ramified, 
$e=2$, $f=1$.

In this case, $\pi_p=\sigma_{\delta_p}$
is the special representation of $\GL(2,\Q_p)$
and $\pi_p'=\delta_p\circ N_{D_p}$ is a character of 
$G_p'=Z(\Q_p)\backslash D_p^\times$,
where $\delta_p$ is an unramified character
of $\Q_p^\times$ of order at most 2.
We can take
\[\phi'_p=\phi''_p=\pi_p'(h_p)\phi_p,\quad 
\phi_p=\delta_p\circ N_{D_p}\]
for any $h_p\in D^\times_p$, with 
$B_p(\phi_p,\overline{\phi_p})$ defined to be 1.

\begin{Lemma}\label{nonarchramify1}
When $p\mid \disc(D)$, $\pi_p=\sigma_{\delta_p}$,
we have $\alpha_p(\phi'_p,\overline{\phi''_p};\Omega_p)=0$ unless
$\Omega_p=\delta_p^{-1}\circ N_{D_p}$, in which case
\[\alpha_p(\phi'_p,\overline{\phi''_p};\delta_p^{-1}\circ N_{D_p})
=(1-p^{-1})\vol(E_p^\times/\Q_p^\times).\]
In particular, \[\alpha_p(\phi'_p,\overline{\phi''_p};\1)
=(1-p^{-1})\vol(E_p^\times/\Q_p^\times)
\cdot\begin{cases}1,
&p\notin\Ram(E);\\
\frac 12 (1-\varepsilon_p(\frac 12,\pi)) ,
&p\in\Ram(E).\end{cases}\]
\end{Lemma}

\begin{proof}
We calculate the local integral in $\alpha_p$:
\[\begin{split}
&\ \int_{E_p^\times/\Q_p^\times}
\frac{B_p(\pi_p'(t_p)\pi_p'(h_p)\phi_p,
\overline{\pi_p'(h_p)\phi_p})}
{B_p(\pi_p'(h_p)\phi_p,\overline{\pi_p'(h_p)\phi_p})}
\Omega_p(t_p)\ dt_p\\
=&\ \int_{E_p^\times/\Q_p^\times}
\frac{B_p(\delta_p( N_{D_p}(t_p))\pi_p'(h_p)\phi_p,
\overline{\pi_p'(h_p)\phi_p})}
{B_p(\pi_p'(h_p)\phi_p,\overline{\pi_p'(h_p)\phi_p})}
\Omega_p(t_p)\ dt_p\\
=&\ \int_{E_p^\times/\Q_p^\times}\delta_p( N_{D_p}(t_p))
\Omega_p(t_p)\ dt_p
=\begin{cases}\vol(E_p^\times/\Q_p^\times),&
\text{if }\Omega_p=\delta_p^{-1}\circ N_{D_p};\\
0,&\text{otherwise.}\end{cases}
\end{split}\]
So $\alpha_p(\phi'_p,\overline{\phi''_p};\Omega_p)=0$ unless
$\Omega_p=\delta_p^{-1}\circ N_{D_p}$, in which case
it equals
\[\begin{split}
&\begin{cases}\dfrac{(1-(-1)p^{-1})^{-1}}{(1-p^{-2})^{-1}}
\dfrac{L_p(1,\pi,\Ad)}{L_p(\frac 12,\pi_E\otimes \Omega)}\vol(E_p^\times/\Q_p^\times),&
\text{if $E_p/\Q_p$ is unramified non-split}\\
\dfrac{1}{(1-p^{-2})^{-1}}
\dfrac{L_p(1,\pi,\Ad)}{L_p(\frac 12,\pi_E\otimes \Omega)}\vol(E_p^\times/\Q_p^\times),&
\text{if $E_p/\Q_p$ is ramified}
\end{cases}\\
=\ &\vol(E_p^\times/\Q_p^\times)(1-p^{-e})
\frac{L_p(1,\pi,\Ad)}{L_p(\frac 12, \pi_E\otimes\Omega)}
=\vol(E_p^\times/\Q_p^\times)(1-p^{-1}).
\end{split}\]
Recall that for $\pi_p=\sigma_{\delta_p}$, one has
$(\pi_E)_p=\sigma_{\delta_p\circ N_{E_p/\Q_p}}$ and therefore
\[L_p(\tfrac 12, \pi_E\otimes\Omega)=(1-p^{-f})^{-1},\quad
L_p(1,\pi,\Ad)=(1-p^{-2})^{-1}.\]

For the rest of the lemma, notice that
$\delta_p$ is unramified and of order at most 2.
So when $E_p/\Q_p$ is unramified and non-split,
$N_{D_p}(\varpi_{E_p})
=N_{E_p/\Q_p}(p)=p^2$ and therefore
$\delta_p\circ N_{D_p}=\1_{E_p^\times}$ always holds.
But if $E_p/\Q_p$ is ramified we know $N_{D_p}(\varpi_{E_p})
=N_{E_p/\Q_p}(\varpi_{E_p})=p$;
so $\delta_p^{-1}\circ N_{D_p}(\varpi_{E_p})=\delta_p(p)$,
and then $\alpha_p(\phi'_p,\overline{\phi''_p};\1)=0$
unless $\delta_p(p)=1$.
Recall that we have $\varepsilon_p(\frac 12,\pi)=-\delta_p(p)$ in this case.
\end{proof}

\subsubsection{}

For $v=\infty$ Archimedean, 
under our assumption we have $E_\infty=\CC$.
Recall that every character of $\CC^\times/\R^\times$
is of the form $z\mapsto (z/\bar z)^m$.
Denote it by $\Omega_\infty=\sgn^{2m}$ where
$\sgn(z)=z/|z|$ is the ``sign'' of $z\in\CC^\times$
on the complex unit circle
(and hence $\sgn(z)^2=z/\bar z$).

We will give a condition when $\alpha_\infty$ vanishes.
Recall that
$\pi_\infty'\cong \pi'_{2k}$
can be realized on the space
of homogeneous polynomials in $X,Y$ of degree $2k-2$ 
with
\[\pi'_{2k} (g)P(X,Y)=P((X,Y)g)\det(g)^{1-k}\quad
\text{for} \quad g\in
\left\{\left(\begin{matrix}\alpha&-\beta\\
\bar\beta&\bar\alpha\end{matrix}\right)\in\GL(2,\CC)\right\}
\cong D^\times(\R).\]
(See Section \ref{RepresentationSU2}.) 
This representation is determined by the last isomorphism.
Fix an isomorphism so that $t\in E^\times_\infty=\CC^\times$ 
is embedded as the diagonal matrices
$(\begin{smallmatrix}t&\\&\bar t\end{smallmatrix})$ 
(see Section \ref{section.geo.archi} for an explicit construction). Then
\[\pi_{2k}'(t)X^{2k-2-n}Y^n=\left(t/\overline{t}\right)^{k-1-n}X^{2k-2-n}Y^n.\]
In general
\begin{equation}\label{weightvector}
\pi_{2k}'(t)\left(\sum_{n=0}^{2k-2}c_nX^{2k-2-n}Y^n\right)
=\sum_{n=0}^{2k-2}\sgn^{2(k-1-n)}(t)c_nX^{2k-2-n}Y^n.
\end{equation}
We fix the bilinear form $B_\infty$
such that $B_\infty(\cdot,\overline{\cdot})
=\langle\cdot,\cdot\rangle_{2k}$
is the inner product on $\pi_{2k}'$ defined as \eqref{innerproduct}.

\begin{Lemma}\label{geom.archi.vanish}
For any $\phi'_\infty,\phi''_\infty\in V_{\pi_{2k}'}$,
\[\int_{\CC^\times/\R^\times}
B_\infty(\pi_{2k}'(t)\phi'_\infty,\overline{\phi''_\infty})
\sgn^{2m}(t)\ dt=0\quad\text{unless }
m\in\Z,\ -(k-1)\leq m\leq k-1.\]
Therefore 
$\alpha_\infty(\phi'_\infty,\overline{\phi_\infty''};\Omega_\infty)=0$
for any $\phi'_\infty,\phi''_\infty\in V_{\pi_{2k}'}$,
unless $|m|\leq k-1$.
\end{Lemma}
\begin{proof}
Write
\[\phi'_\infty=\sum_{r=0}^{2k-2}c_r'X^{2k-2-r}Y^r,\quad
\phi''_\infty=\sum_{r=0}^{2k-2}c_r''X^{2k-2-r}Y^r.\]
Recall that $\{X^{2k-2-r}Y^r\}$
forms an orthogonal basis of $V_{\pi_{2k}'}$
which are eigenvectors under the action of $\CC^\times$. Then
\[\begin{split}
&\ \int_{\CC^\times/\R^\times}
B_\infty(\pi_{2k}'(t)\phi'_\infty,\overline{\phi''_\infty})
\sgn^{2m}(t)\ dt\\
=&\ \int_{\CC^\times/\R^\times}
B_\infty\left(\sum_{r=0}^{2k-2}\sgn^{2(k-1-r)}(t)c_r'X^{2k-2-r}Y^r,
\overline{\phi''_v}\right)
\sgn^{2m}(t)\ dt\\
=&\ \sum_{r=0}^{2k-2}c_r'\overline{c_r''}
B_\infty( X^{2k-2-r}Y^r,\overline{X^{2k-2-r}Y^r})
\int_{\CC^\times/\R^\times}\sgn^{2(k-1-r)}(t)\sgn^{2m}(t)\ dt\\
=&\ \begin{cases}c_{k-1+m}'\overline{c_{k-1+m}''}
\| X^{k-1-m}Y^{k-1+m}\|^2
\vol(\CC^\times/\R^\times),
&\text{if }m\in\Z,\ -(k-1)\leq m\leq k-1;\\
0,&\text{otherwise.}\end{cases}
\end{split}\]
Finally, with 
the definitions of $L$-factors in Section \ref{section.L-functions}, 
we notice that
\begin{equation}\label{architerm}
B_\infty(\phi'_\infty,\overline{\phi''_\infty})
\alpha_\infty
=\frac{2^2(2\pi)^{-1-2k}\Gamma(2k)}
{L_\infty(\frac 12,(\pi_3)_E\otimes \Omega)}
\int_{\CC^\times/\R^\times}
B_\infty(\pi_{2k}'(t)\phi'_\infty,\overline{\phi''_\infty})
\sgn^{2m}(t)\ dt.
\end{equation}
\end{proof}

In conclusion,
$\alpha_p=0$ unless $\Omega_p$ is unramified for any finite $p$
by Lemma \ref{nonarchramify1} and \ref{nonarchiunram2}.
By Lemma \ref{geom.archi.vanish} $\alpha_\infty=0$ unless 
$\Omega_\infty$ is of the form $\sgn^{2m}$ with $|m|\leq k-1$.
Moreover for $p\mid \disc(D)$,
$\alpha_p(\phi'_p,\overline{\phi''_p};\Omega_p)=0$ unless
$\Omega_p=\delta_p^{-1}\circ N_{D_p}$,
where $\delta_p$ is the character of $\Q_p^\times$ 
such that $\pi_p=\sigma_{\delta_p}$.
This is to say, besides the unramification of $\Omega_p$,
we also need that
\[\Omega_p(\varpi_{E_p})=\delta_p^{-1}(N_{D_p}(\varpi_{E_p}))
=\delta_p^{-1}(N_{E_p/\Q_p}(\varpi_{E_p})).\]
This gives no extra information when $E_p/\Q_p$ is unramified.
But when $E_p/\Q_p$ is ramified,
this leads to more restriction for $\Omega_\infty$,
noticing that $\Omega$ is trivial on $E^\times$:
if $\tau\in E^\times$ is a uniformizer in $E_p$,
then
\[\Omega_\infty(\tau)\Omega_p(\tau)=1\Rightarrow
\Omega_\infty(\tau)=\Omega_p(\tau)^{-1}=\delta_p(N_{E_p/\Q_p}(\tau))=\delta_p(p).\]
For $\Omega_\infty=\sgn^{2m}$ 
this means $(\tau/\bar\tau)^m=\delta_p(p)=-\varepsilon_p(\frac 12,\pi)$.

This observation, along with
Lemmas \ref{nonarchiunram2}, \ref{nonarchramify1}, 
\ref{geom.archi.vanish} and \eqref{architerm},
completes the proof of Theorem \ref{explicitWaldspurger}.


\section{Spectral Side of the RTF}\label{section.spectral}

Let $D$ be the definite quaternion algebra over $\Q$ with discriminant $N$.
Let $G'$ be the algebraic group defined over $\Q$
with $G'(\Q) =Z(\Q)\backslash D^\times(\Q)$.
We consider the RTF introduced in Section \ref{RTF} for the case
$G'\backslash(G'\times G')/G'$.

Let $f\in C^\infty_c(G'(\A)\times G'(\A))$.
Integrating $f$ against the action of $G'(\A)\times G'(\A)$ gives a linear map
\[R(f) : L^2(G'(\Q)\times G'(\Q)\backslash G'(\A)\times G'(\A)) \to
 L^2(G'(\Q)\times G'(\Q)\backslash G'(\A)\times G'(\A))\]
defined by
\[(R(f)\Phi)(x_1,x_2) = \int_{G'(\A)}\int_{G'(\A)}
f(g_1,g_2) \Phi(x_1g_1,x_2g_2) \ dg_1\ dg_2.\]
From the spectral decomposition of
$L^2(G'(\Q)\times G'(\Q)\backslash G'(\A)\times G'(\A))$
one sees that $R(f)$ is an integral operator with kernel
\[K_f(x_1,x_2;y_1, y_2)
=\sum_{\pi_1'\otimes\pi_2'\in\cA(G'\times G')}
\sum_{\Phi\in\ONB(\pi_1'\otimes\pi_2')}
((\pi_1'\otimes\pi_2')(f)\Phi)(x_1,x_2)
\overline{\Phi(y_1,y_2)},\]
where $\ONB(\pi)$ denotes an orthonormal basis of the space $V_{\pi}$ of $\pi$.
Since $G'=Z\backslash D^\times$ is anisotropic,
for any test function $f\in C^\infty_c(G'(\A)\times G'(\A))$,
the operator $R(f)$ is of trace class,
that means the sums in the kernel $K_f$ are absolutely convergent,
independent of the choice of basis.

Having fixed the diagonal embedding $G' \hookrightarrow G'\times G'$
we get an injection $G'(\A)\hookrightarrow G'(\A)\times G'(\A)$.
Let $\pi_3'\in\cF'(N,2k)$ be an automorphic representation of $G'$.
Fixing an automorphic form
$\phi_3\in \CC X_{3}^{2k-2} \otimes(\pi_{3,\fin}')^{K_{\fin}}$ on $G'$
(a new-line vector defined in Lemma \ref{autoset}),
we define a distribution
\begin{equation}\label{I(f)}
I(f) = \int_{G'(\Q) \backslash G'(\A)} \int_{G'(\Q) \backslash G'(\A)}
K_f(h_1,h_1;h_2, h_2) \phi_3(h_1)\overline{\phi_3(h_2)} \ dh_1 \ dh_2.
\end{equation}
The spectral expansion for the kernel $K_f(x_1,x_2;y_1, y_2)$ gives
\[I(f) = \sum_{\pi_1'\otimes\pi_2'}I_{\pi_1',\pi_2'}(f) \]
where for each $\pi_1',\pi_2'\in\cA(G')$ we define
\begin{multline*}I_{\pi_1',\pi_2'}(f) =
\sum_{\Phi\in\ONB(\pi_1'\otimes\pi_2')}
\int_{G'(\Q) \backslash G'(\A)}
((\pi_1'\otimes\pi_2')(f)\Phi)(h_1,h_1)\phi_3(h_1) \ dh_1 \\
\cdot\overline{\int_{G'(\Q) \backslash G'(\A)}
\Phi(h_2,h_2)\phi_3(h_2)\ dh_2}.
\end{multline*}

Recall that $\cA(G') = \cA_{cusp}(G') \sqcup \cA_{res}(G')$
where $\cA_{res}(G')$ is the set of characters of $G'(\Q)\backslash G'(\A)$.
The automorphic representations of $G'\times G'$
are of the form $\pi_1'\otimes\pi_2'$ where
\begin{itemize}
	\item $\pi_1',\pi_2'\in \cA_{cusp}(G')$;
	\item one in $\cA_{cusp}(G')$ and another in $\cA_{res}(G')$; or
	\item $\pi_1',\pi_2'\in \cA_{res}(G')$.
\end{itemize}
Correspondingly we can decompose $I(f)$ as
\begin{equation}\label{specdecomp}
I(f)=\sum_{\pi_1',\pi_2'\in \cA_{cusp}(G')}I_{\pi_1',\pi_2'}(f)
+2\sum_{\substack{\pi_1'\in \cA_{res}(G')\\\pi_2'\in \cA_{cusp}(G')}}
I_{\pi_1',\pi_2'}(f)
+\sum_{\pi_1',\pi_2'\in \cA_{res}(G')}I_{\pi_1',\pi_2'}(f).
\end{equation}

Our goal is to choose a suitable test function
$f\in C^\infty_c(G'(\A)\times G'(\A))$
such that $R(f)$ kills all
$\pi_1'\otimes\pi_2'\in\cA_{cusp}(G')\otimes \cA_{cusp}(G')$
unless $\pi_1', \pi_2' \in \cF'(N, 2k)$.
Next we will show that, 
for $\pi_3'\in\cF'(N,2k)$,
with the test function defined by \eqref{def.testfunction},
\begin{equation}\label{specterms}
\left\{\begin{aligned}
\frac{\sum\limits_{\pi_1',\pi_2'\in\cA_{cusp}(G')}I_{\pi_1',\pi_2'}(f)}
{\vol(K')^2}
&=\left(\frac 1{2k-1}\right)^2
\sum_{\pi_1',\pi_2'\in\cF'(N,2k)}
\left|\int_{G'(\Q) \backslash G'(\A)}
\Phi_{\pi_1'\otimes \pi'_2}(h)\phi_3(h)\ dh\right|^2.\\
\frac{\sum\limits_{\substack{\pi_1'\in\cA_{res}(G')\\\pi_2'\in\cA_{cusp}(G')}}
I_{\pi_1',\pi_2'}(f)}
{\vol(K')^2}
&
=\begin{cases}
\dfrac 12\langle \phi_3,\phi_3 \rangle,
&\text{if }2k=2;\\
0,&\text{otherwise}.\end{cases}\\
\frac{\sum\limits_{\pi_1',\pi_2'\in\cA_{res}(G')}I_{\pi_1',\pi_2'}(f)}
{\vol(K')^2}&=0.\end{aligned}\right.\end{equation}
Here $\Phi_{\pi_1'\otimes \pi'_2}$
is defined in Lemma \ref{spec.archi.term};
$K'=G'_\infty \prod_{p<\infty} K_p$ is an open subgroup of $G'(\A)$,
and therefore
\[\vol(K')=\vol(G'_\infty;\ dg_\infty)\prod_p\vol(K_p;\ dg_p)
=\frac{24}{\varphi(N)}.\]
Combining \eqref{specdecomp} \eqref{specterms} 
and the explicit Ichino formula (Theorem \ref{explicitIchino}), 
we get that:


\begin{Theorem}[Main Theorem, spectral side]\label{mainspectral}
Let $N$ be a square-free product of an odd number of primes.
For $\pi_3'\in\cF'(N,2k)$, the distribution
$I(f)$ is equal to $\langle \phi_3,\phi_3 \rangle\vol(K')$ times
\[\frac{\Gamma(2k-1)^3}
{\Gamma(k)^{3}\Gamma(3k-1)}
\frac{2^{\omega(N)}}{2 N^2}
\sum_{\substack{\pi_1,\pi_2\in\cF(N,2k)\\
\varepsilon_p=-1,\ \forall p\mid N}}
\frac{L(\frac 12,\pi_1\otimes\pi_2\otimes\pi_3)}
{L(1,\pi_1\otimes\pi_2\otimes\pi_3,\Ad)}+
\begin{cases}
24/\varphi(N),&\text{if }2k=2;\\
0,&\text{otherwise.}
\end{cases}\]
\end{Theorem}

\subsection{Test function}\label{ChooseTest}

For $v=\infty$, recall that $\pi'_{2k}$
(defined in Section \ref{RepresentationSU2})
corresponds to $\pi_{\mathrm{dis}}^{2k}$
via the local Jacquet--Langlands correspondence.
Let $\langle \ , \ \rangle$ be a $G'_\infty$-invariant
inner product of $\pi'_{2k}\otimes\pi'_{2k}$.
By Lemma \ref{autoset} we fix $\phi_{3,\infty}=\|\phi_3\|X_3^{2k-2}$,
and Lemma \ref{Choosew} shows that
there exists a vector $w^\circ_{2k}\in \pi'_{2k}\otimes\pi'_{2k}$
such that
\[\int_{G'_\infty}
\langle\pi'_{2k}\otimes\pi'_{2k}(g,g)w^\circ_{2k},w^\circ_{2k}\rangle\pi'_{2k}(g)X_3^{2k-2}\ dg\]
is a constant multiple of $X_3^{2k-2}$.
We fix such a nonzero vector $w^\circ_{2k} \in \pi'_{2k}\otimes\pi'_{2k}$
as in Lemma \ref{length}
and define $f_\infty\in C^\infty_c(G'_\infty\times G'_\infty)$ by
\[ f_\infty(g_1,g_2) = \overline{
\langle \pi'_{2k}\otimes\pi'_{2k}(g_1,g_2) w^\circ_{2k}, w^\circ_{2k}\rangle}
/\langle w^\circ_{2k}, w^\circ_{2k}\rangle.\]

For $v=p<\infty$, fix a maximal order $\cO_p$ of $D_p=D(\Q_p)$.
Let $K_p$ be the image of $Z(\Q_p) \cO_p^\times $ in $G_p'$.
Clearly $f_0=\prod_{p<\infty}f_p$
is a function in $C_c^\infty(G'(\A_{\fin}))$,
where $f_p=\1_{K_p}$.
We define the test function on $G'(\A)\times G'(\A)$
by $f=f_\infty\times (f_0\otimes f_0)$,
i.e. \begin{equation}\label{def.testfunction}
f(g_1,g_2)=f_\infty(g_{1,\infty},g_{2,\infty})
\prod_{p<\infty}\1_{K_p}(g_{1,p})\1_{K_p}(g_{2,p}).\end{equation}
In particular when $2k=2$, $\pi'_{2k}$ is trivial and so is $f_\infty$.
In this case the test function is simply
\[f(g_1,g_2)= f_0\otimes f_0=
\prod_{p<\infty}\1_{K_p}(g_{1,p})\1_{K_p}(g_{2,p}).\]

When choosing $\Phi\in\ONB(\pi_1'\otimes\pi_2')$
we take $\Phi=\Phi_\infty\cdot \phi_{1,\fin}\otimes\phi_{2,\fin}$,
where $\Phi_\infty$ is a unit vector 
in $\pi_{1,\infty}'\otimes\pi_{2,\infty}'$,
and $\phi_{1,\fin}=\phi_{2,\fin}=\otimes_p\phi_p$
are functions in $\pi_{1,\fin}',\pi_{2,\fin}'$ respectively, 
as the finite parts of the new-line vectors 
in $\pi_{1}',\pi_{2}'$ defined in Lemma \ref{autoset}.
(Here the spherical unit vectors $\phi_p$ are realized in the induced model
when $p\nmid N$.)
With this test function we have
\[\begin{split}
(R(f)\Phi)(x_1,x_2)=&\ 
((\pi_1'\otimes\pi_2')(f_\infty\cdot f_0\otimes f_0)
\Phi)(x_1,x_2)\\
=&\ 
(\pi_{1,\infty}'\otimes\pi_{2,\infty}'(f_\infty)\Phi_\infty)\cdot
(R(f_0\otimes f_0)(\phi_{1,\fin}\otimes\phi_{2,\fin}))(x_{1,\fin},x_{2,\fin})
\end{split}\]
and
\[\begin{split}
&\ (R(f_0\otimes f_0)(\phi_{1,\fin}\otimes\phi_{2,\fin}))(x_{1,\fin},x_{2,\fin}) \\
=& \ \int_{G'(\A_{\fin})}\int_{G'(\A_{\fin})}
f_0(g_1)f_0(g_2) \phi_{1,\fin}(x_{1,\fin}g_1)\phi_{2,\fin}(x_{2,\fin}g_2) \ dg_1\ dg_2\\
=&\ (R(f_0)\phi_{1,\fin})(x_{1,\fin})\cdot(R(f_0)\phi_{2,\fin})(x_{2,\fin})
=(\pi_{1,\fin}'(f_0)\phi_{1,\fin})(x_{1,\fin})\cdot(\pi_{2,\fin}'(f_0)\phi_{2,\fin})(x_{2,\fin}).
\end{split}\]
Then the distribution $I_{\pi_1',\pi_2'}$ becomes
\begin{multline*}
\begin{aligned}&\ I_{\pi_1',\pi_2'}(f_\infty\cdot f_0\otimes f_0) \\= &
\sum_{\substack{\Phi\in\ONB(\pi_1'\otimes\pi_2')\\
\Phi=\Phi_\infty\cdot \phi_{1,\fin}\otimes\phi_{2,\fin}}}
\int_{G'(\Q) \backslash G'(\A)}
\Big(\pi_{1,\infty}'\otimes\pi_{2,\infty}'(f_\infty)\Phi_\infty\Big)
\Big(\pi_{1,\fin}'(f_0)\phi_{1,\fin}\Big)
\Big(\pi_{2,\fin}'(f_0)\phi_{2,\fin}\Big)
\phi_3(h_1) \ dh_1 \end{aligned}\\
\cdot \overline{\int_{G'(\Q) \backslash G'(\A)}
\Phi(h_2,h_2)\phi_3(h_2)\ dh_2}.\end{multline*} 

Next, we will answer the question that,
with the test function defined by \eqref{def.testfunction},
which representations would contribute to the
spectral decomposition \eqref{specdecomp}.

\subsection{Cusp \texorpdfstring{$\otimes$}{x} Cusp}

For an irreducible admissible representation $\sigma$ of $G'_v$
acting on the space $V_{\sigma}$ and for $f_v\in C_c^\infty(G_v')$,
we define $\sigma(f_v):V_{\sigma}\to V_{\sigma}$ by
\[\sigma(f_v)w=\int_{G_v'}f_v(g_v)\sigma(g_v)w\ dg_v.\]

On the non-Archimedean places we have the following lemma.
\begin{Lemma}[{\cite[Lemma 3.2, 3.3]{feigon2009averages}}]
\label{local.cuspcusp}
For $v=p<\infty$, let $f_p=\1_{K_p}$ as above.
Let $\sigma$ be an irreducible unitary representation of $G'_p$.
Then $\sigma(f_p)$ kills the orthogonal complement of $\sigma^{K_p}$ in $V_\sigma$,
and $\sigma(f_p)w=\vol(K_p)w$ for $w\in\sigma^{K_p}$.
\end{Lemma}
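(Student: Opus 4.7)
\medskip

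The plan is to analyze the operator $P := \sigma(f_p)/\vol(K_p)$ and show it is the orthogonal projection of $V_\sigma$ onto $V_\sigma^{K_p}$; both claims of the lemma will follow.

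First I would unwind the definition: since $f_p = \1_{K_p}$,
\[
\sigma(f_p) w = \int_{G'_p} \1_{K_p}(g)\,\sigma(g) w \ dg_p = \int_{K_p} \sigma(k) w \ dk.
\]
A change of variables $k \mapsto k_0 k$ (using that $K_p$ is a subgroup and the Haar measure is left-invariant) shows $\sigma(k_0)\,\sigma(f_p) w = \sigma(f_p) w$ for every $k_0 \in K_p$, so $\sigma(f_p)$ maps $V_\sigma$ into $V_\sigma^{K_p}$. If furthermore $w \in V_\sigma^{K_p}$, then $\sigma(k) w = w$ throughout the integral, giving $\sigma(f_p) w = \vol(K_p)\, w$; this already establishes the second assertion and shows $P^2 = P$, so $P$ is an idempotent with image exactly $V_\sigma^{K_p}$.

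Next I would verify that $P$ is self-adjoint with respect to the $G'_p$-invariant inner product on the unitary representation $\sigma$. For $u, v \in V_\sigma$,
\[
\langle P u, v \rangle = \frac{1}{\vol(K_p)}\int_{K_p} \langle \sigma(k) u, v\rangle \ dk
= \frac{1}{\vol(K_p)}\int_{K_p} \langle u, \sigma(k^{-1}) v\rangle \ dk,
\]
and substituting $k \mapsto k^{-1}$ (valid since $K_p$ is unimodular) turns this into $\langle u, P v\rangle$. Alternatively, this is exactly the content of Lemma \ref{Schur} applied to the restriction of $\sigma$ to $K_p$, which identifies $P$ with the averaging projection onto the $K_p$-invariants.

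Since a self-adjoint idempotent on a Hilbert space is the orthogonal projection onto its image, any $w \perp V_\sigma^{K_p}$ satisfies $Pw = 0$, hence $\sigma(f_p) w = 0$. This proves the first assertion. The only subtlety worth flagging is that $\sigma$ may be infinite-dimensional, so one uses the unitarity to perform the change of variables and the self-adjointness argument on the pre-Hilbert completion; this is routine and presents no real obstacle.
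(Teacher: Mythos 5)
Your proof is correct and follows essentially the same route as the paper: unwind $\sigma(f_p)$ to the averaging integral over $K_p$, then recognize this (up to the factor $\vol(K_p)$) as the orthogonal projection onto $V_\sigma^{K_p}$. The paper delegates the projection step entirely to its Lemma~\ref{Schur}; you prove it directly by checking idempotence and self-adjointness, which is just the content of that lemma spelled out.
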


On the Archimedean place we have: 
\begin{Lemma}[cf. {\cite[Lemma 3.4]{feigon2009averages}}]
Let $\sigma=\sigma_1\otimes\sigma_2$ be an irreducible
unitary representation of $G'_\infty\times G'_\infty$.
Then, with the definition of $f_\infty$ as above,
$\sigma(f_\infty)$ kills the space $V_\sigma$ unless
$\sigma\cong \pi'_{2k}\otimes \pi'_{2k}$.
Furthermore for $\sigma=\pi'_{2k}\otimes\pi'_{2k}$,
$\sigma(f)$ kills the orthogonal complement of $\CC w^\circ_{2k}$ in $V_\sigma$,
and  \[\pi'_{2k}\otimes \pi'_{2k}(f_\infty)w^\circ_{2k}
=\left(\frac{\vol(G'_\infty)}{2k-1}\right)^2 w^\circ_{2k}. \]
\end{Lemma}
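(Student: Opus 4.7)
The plan is to recognize $f_\infty$ as a scalar multiple of (the complex conjugate of) a matrix coefficient of the irreducible unitary representation $\pi'_{2k}\otimes\pi'_{2k}$ of the compact group $K:=G'_\infty\times G'_\infty$, and then apply the Schur Orthogonality Relations (Lemma \ref{SchurOrtho}). Since $K$ is compact, every irreducible unitary representation of $K$ factors as an external tensor product of irreducible representations of $G'_\infty$, and $\pi'_{2k}\otimes\pi'_{2k}$ viewed as such an external tensor is irreducible of dimension $(2k-1)^2$.

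First I would unwind the definition of $\sigma(f_\infty)$ by pairing it against a test vector $w'\in V_\sigma$:
\[
\langle \sigma(f_\infty)w,\, w'\rangle
= \frac{1}{\langle w^\circ_{2k}, w^\circ_{2k}\rangle}\int_K \overline{\langle (\pi'_{2k}\otimes\pi'_{2k})(g)\, w^\circ_{2k},\, w^\circ_{2k}\rangle}\,\langle \sigma(g)w,\, w'\rangle\, dg.
\]
Lemma \ref{SchurOrtho} applied to the pair $(\sigma,\, \pi'_{2k}\otimes\pi'_{2k})$ shows that this integral vanishes identically in $w,w'$ whenever $\sigma\not\cong \pi'_{2k}\otimes\pi'_{2k}$, which proves the first assertion.

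For the remaining case $\sigma = \pi'_{2k}\otimes\pi'_{2k}$, the same lemma yields
\[
\langle \sigma(f_\infty)w,\, w'\rangle = \frac{\vol(K)}{(2k-1)^2\, \|w^\circ_{2k}\|^2}\,\langle w, w^\circ_{2k}\rangle\,\overline{\langle w', w^\circ_{2k}\rangle}.
\]
Varying $w'$, this identifies $\sigma(f_\infty)$ as the rank-one operator annihilating the orthogonal complement of $\CC w^\circ_{2k}$ and sending $w^\circ_{2k}$ to $\vol(K)/(2k-1)^2 \cdot w^\circ_{2k}$. Since $\vol(K)=\vol(G'_\infty)^2$, this is exactly the claimed eigenvalue $(\vol(G'_\infty)/(2k-1))^2$.

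I do not expect any real obstacle; the only items to verify are that the inner product built from \eqref{innerproduct} on $V_{\pi'_{2k}}\otimes V_{\pi'_{2k}}$ is $K$-invariant (so that Lemma \ref{SchurOrtho} applies) and the dimension count $\deg(\pi'_{2k}\otimes\pi'_{2k}) = (2k-1)^2$, both of which are routine.
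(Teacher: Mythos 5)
Your proposal is correct and follows essentially the same route as the paper's proof: both recognize $f_\infty$ as a scalar multiple of (the conjugate of) a matrix coefficient of $\pi'_{2k}\otimes\pi'_{2k}$, invoke the Schur Orthogonality Relations (Lemma \ref{SchurOrtho}) to get vanishing for $\sigma\not\cong\pi'_{2k}\otimes\pi'_{2k}$, and then the same rank-one formula with eigenvalue $\vol(G'_\infty)^2/(2k-1)^2$ on $\CC w^\circ_{2k}$.
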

\begin{proof}
Since $f_\infty$ is a matrix coefficient,
Schur Orthogonality Relations (Lemma \ref{SchurOrtho}) show that
$\sigma(f_\infty)$ kills the space $V_\sigma$ unless
$\sigma\cong \pi'_{2k}\otimes  \pi'_{2k}$, and
\[\begin{split}
&\ \langle \pi'_{2k}\otimes \pi'_{2k}(f_\infty)w_1,w_2\rangle=
\int_{G'_\infty\times G'_\infty}f_\infty(g)
\langle\pi'_{2k}\otimes \pi'_{2k}(g)w_1,w_2\rangle\ dg\\
=&\ 
\int_{G'_\infty\times G'_\infty}\langle\pi'_{2k}\otimes \pi'_{2k}(g)w_1,w_2\rangle
\frac{\overline{\langle \pi'_{2k}\otimes \pi'_{2k}(g)w^\circ_{2k},w^\circ_{2k}\rangle}}
{\langle w^\circ_{2k},w^\circ_{2k}\rangle}\ dg\\
=&\ \frac{\vol(G'_\infty\times G'_\infty)}{\dim \pi'_{2k}\otimes \pi'_{2k}}
\frac{\langle w_1,w^\circ_{2k}\rangle\overline{\langle w_2,w^\circ_{2k}\rangle}}
{\langle w^\circ_{2k},w^\circ_{2k}\rangle}
=\begin{cases}\left(\dfrac{\vol(G'_\infty)}{2k-1}\right)^2
\|w^\circ_{2k}\|^2&\text{when }w_1=w_2=w^\circ_{2k};\\
0&\text{when }w_1\text{ or }w_2\in (\CC w^\circ_{2k})^\perp.
\end{cases}
\end{split}\]
\end{proof}

Now we apply the lemmas for $\pi_{1,v}'\otimes\pi_{2,v}'$ and
work on $\pi_1'\otimes\pi_2'(f)\Phi$
for $\Phi\in\ONB(\pi_1'\otimes\pi_2')$.

\begin{Lemma}\label{spec.archi.term}
For cuspidal representations $\pi_1',\pi_2'$ of $G'$,
\[\sum_{\Phi\in\ONB(\pi_1'\otimes\pi_2')}
\Big(\pi_1'\otimes\pi_2'(f)\Phi\Big)(x)
=\begin{cases}
\Phi_{\pi_1'\otimes\pi_2'}(x)\left(\dfrac{\vol(K')}{2k-1}\right)^2,&
\pi_1',\pi_2'\in\cF'(N,2k);\\
0,&\text{otherwise}.
\end{cases}\]
Here $K'=G'_\infty \prod_{p<\infty} K_p$, and
$\Phi_{\pi_1'\otimes\pi_2'}$ is an orthonormal basis of
the $1$-dimensional subspace
\[W_{\pi_1'\otimes \pi_2'}=\CC w^\circ_{2k}
\otimes (\pi'_{1,\fin})^{ K_{\fin}}(\pi'_{2,\fin})^{ K_{\fin}}
\subseteq V_{\pi_1'\otimes\pi_2'}.\]
\end{Lemma}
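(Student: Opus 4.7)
The plan is to exploit the product structure of the test function $f = f_\infty \cdot (f_0 \otimes f_0)$, with $f_0 = \prod_{p<\infty} \1_{K_p}$, to factor the operator $\pi'_1 \otimes \pi'_2(f)$ across places, and then apply the two local lemmas proved just above. Specifically, I would choose an orthonormal basis of $V_{\pi'_1 \otimes \pi'_2}$ compatible with the restricted tensor product decomposition, so that every basis element has the form $\Phi = \Phi_\infty \otimes \bigotimes_{p<\infty}(\phi_{1,p} \otimes \phi_{2,p})$ with $\Phi_\infty$ drawn from an ONB of $V_{\pi'_{1,\infty}} \otimes V_{\pi'_{2,\infty}}$ and each $\phi_{i,p}$ drawn from an ONB of $V_{\pi'_{i,p}}$ containing the distinguished spherical vector at almost every place. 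Under this decomposition, $\pi'_1 \otimes \pi'_2(f)\Phi$ factors as a product of local actions, each of which the local lemmas describe explicitly.

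Next I would invoke the local inputs. At each finite place, Lemma \ref{local.cuspcusp} applied to each tensor factor shows that $\pi'_{i,p}(\1_{K_p})$ annihilates the orthogonal complement of $(\pi'_{i,p})^{K_p}$ and acts as the scalar $\vol(K_p)$ on this subspace. At the Archimedean place, the preceding lemma shows that $\pi'_{1,\infty}\otimes\pi'_{2,\infty}(f_\infty)$ vanishes identically unless $\pi'_{1,\infty} \cong \pi'_{2,\infty} \cong \pi'_{2k}$, in which case it is $(\vol(G'_\infty)/(2k-1))^2$ times the orthogonal projection onto $\CC w^\circ_{2k}$.

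Combining these local statements gives the full dichotomy. If either $\pi'_1$ or $\pi'_2$ fails to lie in $\cF'(N,2k)$, then by Theorem \ref{factJL} there is at least one place $v$ at which the local representation has no $K_v$-fixed vector (either at infinity, if the Archimedean component is not $\pi'_{2k}$, or at some finite prime), so the corresponding local factor of the operator is identically zero and the sum vanishes independently of basis choice. If instead both $\pi'_i \in \cF'(N,2k)$, then by Lemma \ref{autoset} each $(\pi'_{i,p})^{K_p}$ is one-dimensional, so the global operator has rank one with image exactly $W_{\pi'_1 \otimes \pi'_2} = \CC w^\circ_{2k} \otimes (\pi'_{1,\fin})^{K_\fin}(\pi'_{2,\fin})^{K_\fin}$ and eigenvalue equal to the product of local scalars
\[\left(\frac{\vol(G'_\infty)}{2k-1}\right)^2 \prod_{p<\infty}\vol(K_p)^2 = \left(\frac{\vol(K')}{2k-1}\right)^2.\]
Choosing the ONB to contain the unit vector $\Phi_{\pi'_1 \otimes \pi'_2}$ spanning $W_{\pi'_1 \otimes \pi'_2}$, all other basis elements are annihilated and only this one term contributes, giving the stated identity. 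There is no genuine obstacle here: the analytic content sits entirely in the two preceding lemmas, and what remains is the bookkeeping of choosing the right basis and multiplying local eigenvalues.
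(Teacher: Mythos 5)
Your proof is correct and follows the same route as the paper's: factor the test function and the representation across places, invoke Lemma \ref{local.cuspcusp} at finite places and the preceding Archimedean lemma (Schur orthogonality) at infinity, and then observe that the tensor product of these rank-one local operators acts on the one-dimensional space $W_{\pi_1'\otimes\pi_2'}$ by the product of local eigenvalues. (One small wording slip: at the Archimedean place the vanishing for $\pi'_{i,\infty}\not\cong\pi'_{2k}$ comes from Schur orthogonality of matrix coefficients, not from absence of $K_\infty$-fixed vectors, but you clearly have the right mechanism in mind.)
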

\begin{proof}
It follows from the previous two lemmas
that $R(f)$ kills the orthogonal complement of $\Phi_{\pi_1'\otimes\pi_2'}$
in $V_{\pi_1'\otimes\pi_2'}$ and
\[\Big(\pi_1'\otimes\pi_2'(f)\Phi_{\pi_1'\otimes\pi_2'}\Big)(x_1,x_2)
=\Phi_{\pi_1'\otimes\pi_2'}(x_1,x_2)\left(\frac{\vol(G'_\infty)}{2k-1}\prod_{p<\infty}\vol(K_p)\right)^2.\]
\end{proof}

This lemma implies that, for $\pi_3'\in\cF'(N,2k)$,
\[
\sum_{\pi_1',\pi_2'\in\cA_{cusp}(G')}I_{\pi_1',\pi_2'}(f)
=\left(\frac {\vol(K')}{2k-1}\right)^2
\sum_{\pi_1',\pi_2'\in\cF'(N,2k)}
\left|\int_{G'(\Q) \backslash G'(\A)}
\Phi_{\pi_1'\otimes \pi'_2}(h)\phi_3(h)\ dh\right|^2.\]
This is the first case in \eqref{specterms}.

\subsection{Res \texorpdfstring{$\otimes$}{x} Res}
Every $\pi'\in\cA_{res}(G')$
is a character $\delta\circ N_D$
for some $\delta:\Q^\times\backslash\A^\times\to\{\pm 1\}$.
We can take $\phi\in\ONB(\pi')$ to be the normalization of $\delta\circ N_D$.

\begin{Lemma}\label{resres}
$R(f)(\delta_1\circ N_D\otimes \delta_2\circ N_D)\equiv 0$
unless $2k=2$
and $\delta_1,\delta_2$ are unramified everywhere,
in which case (with $K'=G'_\infty \prod_{p<\infty} K_p$)
\[R(f)(\delta_1\circ N_D\otimes \delta_2\circ N_D)(x_1,x_2)
=\delta_1(N_D(x_1))\delta_2(N_D(x_2))\vol(K')^2.\]
\end{Lemma}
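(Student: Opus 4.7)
The plan is to exploit that $\Phi := (\delta_1 \circ N_D) \otimes (\delta_2 \circ N_D)$ is a character of $G'(\A) \times G'(\A)$, so that
\[R(f)\Phi(x_1, x_2) = \Phi(x_1, x_2) \int_{G'(\A)^2} f(g_1, g_2) \Phi(g_1, g_2) \ dg_1 \ dg_2.\]
Since $f = f_\infty \cdot (f_0 \otimes f_0)$ factors over places, the remaining integral decomposes into local factors indexed by $v$ and $i \in \{1,2\}$, each of which I will show either vanishes or contributes an explicit volume.

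At a finite place $p$, the local factor is $\int_{K_p} (\delta_{i,p} \circ N_D)(g) \ dg$. I would first check that $\delta_{i,p} \circ N_D$ descends to a well-defined character on $K_p$: any two lifts of $g \in K_p$ to $\cO_p^\times$ differ by an element of $\Q_p^\times$, whose norm is a square, which is killed by the quadratic character $\delta_{i,p}$. By Lemma \ref{normgp} and Proposition \ref{maxorder}, the norm surjects $\cO_p^\times \twoheadrightarrow \Z_p^\times$ in both the split and ramified cases, so this character is trivial precisely when $\delta_{i,p}$ is unramified. If $\delta_{i,p}$ is ramified at some finite $p$, integrating a nontrivial character over the compact group $K_p$ gives zero, forcing $R(f)\Phi \equiv 0$; otherwise it gives $\vol(K_p)$.

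For the Archimedean factor, $f_\infty$ is (up to complex conjugation and normalization) a matrix coefficient of the irreducible representation $\pi'_{2k}\otimes\pi'_{2k}$ of $G'_\infty \times G'_\infty$, while $(\delta_{1,\infty} \circ N_D) \otimes (\delta_{2,\infty} \circ N_D)$ is one-dimensional. Schur orthogonality (Lemma \ref{SchurOrtho}) then forces the integral to vanish unless the two representations are isomorphic; since $\dim \pi'_{2k} = 2k-1$, this can only happen when $2k = 2$. In that case $\pi'_2$ is trivial, $w^\circ_2 = 1$, and $f_\infty \equiv 1$; moreover, $N_D(\bH^\times) = \R_{>0}$ by Lemma \ref{normgp}, and since $\delta_{i,\infty}$ is quadratic, $\delta_{i,\infty} \circ N_D$ is automatically trivial on $G'_\infty$, so the Archimedean integral collapses to $\vol(G'_\infty)^2$. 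Multiplying the surviving local factors in the $2k=2$, everywhere-unramified case yields $\vol(G'_\infty)^2 \prod_p \vol(K_p)^2 = \vol(K')^2$, matching the claim. The main subtlety is the finite-place bookkeeping at ramified primes $p \mid N$, where one must verify that $N_D: \cO_p^\times \to \Z_p^\times$ is still surjective; this follows from the description $\cO_p = \{x : N_D(x) \in \Z_p\}$ in Proposition \ref{maxorder} combined with the surjectivity of $N_D : D_p^\times \to \Q_p^\times$.
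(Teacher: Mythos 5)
Your proposal is correct and takes essentially the same approach as the paper's proof: factor the integral over places, use surjectivity of the reduced norm on $\cO_p^\times$ at finite places to detect ramification of $\delta_{i,p}$, and at the archimedean place use the fact that $f_\infty$ is a matrix coefficient of $\pi'_{2k}\otimes\pi'_{2k}$ together with Schur orthogonality (the paper first pulls out the trivial factor $\delta_{i,\infty}\circ N_{D_\infty}\equiv 1$ via Lemma~\ref{normgp} and then integrates $f_\infty$ alone; you invoke Schur directly against the one-dimensional character, which amounts to the same thing). The bookkeeping with $\vol(K_p)$ and $\vol(G'_\infty)$ matches.
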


\begin{proof}
By definition we have
\begin{multline*}\begin{aligned}
&\ R(f)(\delta_1\circ N_D\otimes \delta_2\circ N_D)(x_1,x_2)\\
=&\ \int_{G'(\A)\times G'(\A)}f(g_1,g_2)
\delta_1(N_D(x_{1}g_1))\delta_2(N_D(x_{2}g_2))~dg_1~dg_2\\
=&\ \int_{G'_\infty\times G'_\infty}f_\infty(g_{1},g_{2})
\delta_{1,\infty}(N_{D_\infty}(x_{1,\infty}g_{1}))
\delta_{2,\infty}(N_{D_\infty}(x_{2,\infty}g_{2}))~dg_{1}~dg_{2}
\end{aligned}\\
\cdot \prod_{p<\infty}
\int_{G'_p}\1_{K_p}(g)\delta_v(N_{D_p}(x_{1,p}g))~dg
\int_{G'_p}\1_{K_p}(g)\delta_p(N_{D_p}(x_{2,p}g))~dg.
\end{multline*}

When $p<\infty$,
since the norm map $N_{D_p}:\cO_p^\times\to\Z_p^\times$ is surjective
(whether $D_p$ is split or not, 
cf. \cite[Lemma 13.4.9]{voight2017quaternion}), we have
\[ \begin{split}
\int_{G'_p}\1_{K_p}(g)\delta_p(N_{D_p}(xg))~dg  &
=\delta_p(N_{D_p}(x))\int_{K_p}\delta_p(N_{D_p}(g))~dg \\&
=\begin{cases}
0,&\text{$\delta_p$ ramified;}\\
\delta_p(N_{D_p}(x))\vol(K_p),&\text{$\delta_p$ unramified.}
\end{cases}\end{split}
\]

When $v=\infty$, $N_{D_\infty}(D_\infty^\times)=\R^\times_{>0}$
by \cite[Lemma 13.4.9]{voight2017quaternion}.
Then $\delta_\infty(N_{D_\infty}(g))=1$
for all $g\in D_\infty^\times$
since $\delta_\infty$ is quadratic. Thus
\[\begin{split}
&\ \int_{G_\infty'\times G_\infty'}f_\infty(g_{1},g_{2})
\delta_{1,\infty}(N_{D_\infty}(x_{1}g_{1}))
\delta_{2,\infty}(N_{D_\infty}(x_{2}g_{2}))~dg_{1}~dg_{2}\\
=&\ \delta_{1,\infty}(N_{D_\infty}(x_{1}))\delta_{2,\infty}(N_{D_\infty}(x_{2}))
\frac{\overline{\int_{G_\infty'\times G_\infty'}
\langle \pi'_{2k}\otimes \pi'_{2k}(g_{1},g_{2}) w^\circ_{2k}, w^\circ_{2k}\rangle
~d(g_1,g_2)}}
{\langle w^\circ_{2k}, w^\circ_{2k}\rangle}\\
=&\ \begin{cases}
0,&\text{if $2k>2$;}\\
\delta_{1,\infty}(N_D(x_{1}))\delta_{2,\infty}(N_D(x_{2}))
\vol(G'_\infty)^2,&\text{if }\pi'_{2k}\otimes \pi'_{2k}\cong \id.\text{, i.e. }2k=2.\end{cases}
\end{split}\]

Putting these local calculations together shows the statement.
\end{proof}

For any global field $F$,
we define $X^{\text{un}}(F)$ to be the set of Hecke characters
$\delta:F^\times\backslash\A_F^\times\to\{\pm 1\}$
that are unramified everywhere.
Notice that 
\[\|\delta\circ N_D\|^2
=\int_{G'(\Q)\backslash G'(\A)}|\delta( N_D(h))|^2\ dh
=\vol(G'(\Q)\backslash G'(\A)).\]
Then
\[\frac{\sum\limits_{\pi_1',\pi_2'\in\cA_{res}(G')}I_{\pi_1',\pi_2'}(f)}
{\vol(K')^2}
=\begin{cases}
\sum\limits_{\delta_1,\delta_2\in X^\text{un}(\Q)}
\left|\int_{G'(\Q) \backslash G'(\A)}
\dfrac{\delta_1\delta_2(N_D(h))\phi_3(h)}{\vol(G'(\Q)\backslash G'(\A))}\ dh\right|^2,
&\text{if }2k=2;\\
0,&\text{otherwise}.\end{cases}\]
Recall that $\vol(G'(\Q)\backslash G'(\A))=\vol([D^\times])=2$.
Moreover, Lemma \ref{char.unram} shows that any character
$\delta:\Q^\times\backslash\A^\times\to\{\pm 1\}$
that is unramified everywhere can only be trivial,
i.e. $X^{\text{un}}(\Q)=\{\1\}$.
Then, if $2k=2$, \[\frac{\sum\limits_{\pi_1',\pi_2'\in\cA_{res}(G')}
I_{\pi_1',\pi_2'}(f)}
{\vol(K')^2}
=\frac 14\left|\int_{G'(\Q) \backslash G'(\A)}\phi_3(h)\ dh\right|^2
=\frac 14|\langle \phi_3,\1 \rangle|^2.\]
By orthogonality,
$\langle \phi_3,\1 \rangle=0$.
So, for the last case in \eqref{specterms},
\[\sum_{\pi_1',\pi_2'\in\cA_{res}(G')}I_{\pi_1',\pi_2'}(f)=0.\]

\begin{Lemma}\label{char.unram}
Let $F$ be $\Q$ or
an imaginary quadratic extension of $\Q$ with class number 1.
Then the set $X^{\text{un}}(F)$ of all characters
$\delta:F^\times\backslash\A_F^\times\to\CC^\times$
which are unramified everywhere
is parameterized by
$\widehat{F_\infty^\times/\cO_F^\times}$,
the set of characters of
$F_\infty^\times$ which are invariant under $\cO_F^\times$.
Here $\cO_F^\times$ is the group of units in
the ring $\cO_F$ of algebraic integers,
which is actually the set of roots of unity in $F$.
\end{Lemma}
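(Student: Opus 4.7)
The plan is to reduce everything to the standard strong-approximation-style identification of the narrow class group as $F^\times \backslash \A_F^\times / (F_\infty^\times \prod_{v<\infty}\cO_{F_v}^\times)$, and then to observe that class number one makes this quotient trivial, leaving only the archimedean component modulo the global units.

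Concretely, I would proceed in three short steps. First, since $\delta$ is unramified at every finite place, it is trivial on $U_{\fin} := \prod_{v<\infty}\cO_{F_v}^\times$, so $\delta$ factors through the double quotient $F^\times \backslash \A_F^\times / U_{\fin}$. Second, I would invoke the class number one hypothesis in the form
\[
\A_F^\times \;=\; F^\times \cdot F_\infty^\times \cdot U_{\fin},
\]
which holds for $F=\Q$ (trivially, via $v_p$) and for an imaginary quadratic field of class number one (this is just the statement that the finite idèle class group modulo $U_{\fin}$ is the class group). This lets every idèle be written, modulo $F^\times U_{\fin}$, as a purely archimedean one. Third, I would identify the kernel of the induced map $F_\infty^\times \to F^\times \backslash \A_F^\times / U_{\fin}$: if $x_\infty \in F_\infty^\times$ satisfies $(x_\infty, 1, 1, \ldots) = y \cdot (u_v)_{v<\infty}$ with $y\in F^\times$ and $u_v\in\cO_{F_v}^\times$, then $y = u_v^{-1}\in\cO_{F_v}^\times$ for all finite $v$, so $y\in\cO_F^\times$ and $x_\infty = y$ under the diagonal embedding. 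Thus the kernel is exactly $\cO_F^\times$, and we obtain the isomorphism
\[
F_\infty^\times / \cO_F^\times \;\iso\; F^\times \backslash \A_F^\times / U_{\fin}.
\]

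Dualizing gives the claimed parametrization: the everywhere-unramified characters $\delta$ correspond bijectively to characters of $F_\infty^\times$ trivial on $\cO_F^\times$, i.e.\ invariant under the group of roots of unity of $F$ (which exhausts $\cO_F^\times$ for $F=\Q$ and for imaginary quadratic $F$). The only real content here is the class number one hypothesis, which is what makes step two work; everything else is formal manipulation of the strong approximation decomposition. I do not anticipate a serious obstacle, since for both cases at hand ($F=\Q$, and $F=\Q(\sqrt{-d})$ with $d\in\{1,2,3,7,11,19,43,67,163\}$) the required decomposition of $\A_F^\times$ is standard.
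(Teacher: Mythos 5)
Your proof is correct and is essentially the same argument as the paper's: reduce to the archimedean part modulo global units via the class-number-one hypothesis, then dualize, with Dirichlet's unit theorem identifying $\cO_F^\times$ with the roots of unity. The paper compresses this into a single cited isomorphism attributed to strong approximation, whereas your version is slightly cleaner --- you make the kernel computation explicit and correctly recognize that the decomposition $\A_F^\times = F^\times \cdot F_\infty^\times \cdot \prod_{v<\infty}\cO_{F_v}^\times$ is precisely the class-number-one condition, not strong approximation (which fails for the id\`ele group).
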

\begin{proof}
This is a direct corollary of the strong approximation theorem:
\[\cO_F^\times\backslash F_\infty^\times\times
\prod_{v<\infty}\cO_{F_v}^\times\cong F^\times\backslash\A_F^\times;\]
and the Dirichlet's unit theorem.
\end{proof}

\subsection{Res \texorpdfstring{$\otimes$}{x} Cusp}

In the same way as in the previous section we can show that
\begin{Lemma}
For $\pi_2'\in\cA_{cusp}(G')$,
$R(f)(\delta\circ N_D\otimes \phi_2)\equiv 0$
unless $\delta$ is unramified everywhere,
$2k=2$, $\pi_2'\in\cF'(N,2)$ and $\phi_2\in(\pi_2')^{ K_{\fin}}$,
in which case
\[R(f)(\delta\circ N_D\otimes \phi_{2})(x_1,x_2)
=\delta(N_D(x_1))\phi_{2}(x_2)\vol(K')^2,\]
where $K'=G'_\infty \prod_{p<\infty} K_p$.
\end{Lemma}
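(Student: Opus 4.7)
The plan is to mirror the proofs of Lemma \ref{resres} and Lemma \ref{spec.archi.term}, using the factorization $f = f_\infty \cdot f_0 \otimes f_0$ to split
\[R(f)(\delta \circ N_D \otimes \phi_2)(x_1, x_2) = \int_{G'(\A)^2} f(g_1, g_2)\, \delta(N_D(x_1 g_1))\, \phi_2(x_2 g_2)\, dg_1\, dg_2\]
into a product of local integrals and then to analyze each place separately. The first slot will behave as in the Res $\otimes$ Res case and the second as in the Cusp $\otimes$ Cusp case.

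At each finite prime $p$, the first slot is $\int_{G'_p} \1_{K_p}(g)\, \delta_p(N_{D_p}(x_{1,p} g))\, dg$, which by the argument in the proof of Lemma \ref{resres} vanishes when $\delta_p$ is ramified and equals $\delta_p(N_{D_p}(x_{1,p}))\vol(K_p)$ otherwise; hence $\delta$ must be unramified everywhere. The second slot is $(\pi_{2,p}'(\1_{K_p})\phi_{2,p})(x_{2,p})$, which by Lemma \ref{local.cuspcusp} kills the orthogonal complement of $(\pi_{2,p}')^{K_p}$ and multiplies vectors in $(\pi_{2,p}')^{K_p}$ by $\vol(K_p)$; this forces $\phi_2 \in (\pi_2')^{K_{\fin}}$.

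At the archimedean place, Lemma \ref{normgp} shows $\delta_\infty \circ N_{D_\infty} \equiv 1$ (since $\delta_\infty$ is quadratic and $N_{D_\infty}(D_\infty^\times) = \R^\times_{>0}$), so up to the prefactor $\delta_\infty(N_{D_\infty}(x_{1,\infty}))$ the integral reduces to $\int_{G'_\infty \times G'_\infty} f_\infty(g_1, g_2)\, \phi_{2,\infty}(x_{2,\infty} g_2) \, dg_1 \, dg_2$. Performing the $g_1$ integration first and invoking Schur orthogonality (Lemma \ref{SchurOrtho}) on the matrix coefficient $f_\infty$ of $\pi'_{2k} \otimes \pi'_{2k}$ shows that this vanishes whenever $\pi'_{2k}$ has no $G'_\infty$-invariants in its first tensor factor, i.e.\ whenever $2k > 2$; moreover, for the $g_2$ integration to produce a nonzero cuspidal output, $\pi'_{2,\infty}$ must itself contain a $G'_\infty$-fixed vector, forcing $\pi'_{2,\infty} \cong \1$ and hence $\pi_2' \in \cF'(N, 2)$ by Theorem \ref{factJL}.

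In the surviving case $2k = 2$, $\pi'_2$ is the trivial representation, $w^\circ_2$ is a nonzero scalar, and $f_\infty \equiv 1$, so the archimedean integral collapses to $\vol(G'_\infty)^2 \delta_\infty(N_{D_\infty}(x_{1,\infty}))\phi_{2,\infty}(x_{2,\infty})$. Multiplying the finite and archimedean contributions and using $\vol(K') = \vol(G'_\infty) \prod_p \vol(K_p)$ yields the claimed $\delta(N_D(x_1))\, \phi_2(x_2)\, \vol(K')^2$. No substantial obstacle arises; the argument is a routine hybrid of the two preceding lemmas, the only care needed being the asymmetric treatment of the two slots.
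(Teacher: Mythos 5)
Your proof is correct and takes essentially the same route as the paper: factoring the test function and reducing the non-Archimedean places to Lemmas \ref{local.cuspcusp} and \ref{resres}, then treating the Archimedean place by Schur-orthogonality arguments on the matrix coefficient $f_\infty$. The paper's own proof is terser (it simply cites the two preceding lemmas and says to argue as in Lemma \ref{resres}), and your write-up fills in exactly those details, correctly integrating over $g_1$ first to isolate the factor that must vanish when $\pi'_{2k}$ is nontrivial.
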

\begin{proof}
The proof on the non-Archimedean places
has been done in the proof of Lemmas \ref{local.cuspcusp}
and \ref{resres}.
On the Archimedean place, one can apply a similar proof
as in Lemma \ref{resres} to show that,
for $\pi_{1,\infty}'=\delta_\infty\circ N_{D_\infty}$,
$\pi_{2,\infty}'=\pi'_{2k'}$,
\[\pi_{1,\infty}'\otimes\pi_{2,\infty}'(f_\infty)
(\delta_\infty\circ N_{D_\infty}\otimes \phi_{2,\infty})= 0\]
unless $2k=2k'=2$,
in which case $\phi_{2,\infty}=\1$ and
\[\pi_{1,\infty}'\otimes\pi_{2,\infty}'(f_\infty)
(\delta_\infty\circ N_{D_\infty}\otimes \phi_{2,\infty})
=(\delta_\infty\circ N_{D_\infty}\otimes \phi_{2,\infty})\vol(G'_\infty)^2.\]
\end{proof}

For $\pi'\in\cA_{cusp}(G')$, let
$\phi_{\pi'}\in\CC X^{2k-2}\otimes(\pi'_{\fin})^{ K_{\fin}}$
be the normalized new-line vector as defined in Lemma \ref{autoset}.
Then we can write
\begin{multline*}
\frac{\sum\limits_{\pi_1'\in\cA_{res}(G')}
\sum\limits_{\pi_2'\in\cA_{cusp}(G')}
I_{\pi_1',\pi_2'}(f)}
{\vol(K')^2}\\
=\begin{cases}
\sum\limits_{\delta_1\in X^\text{un}(\Q)}
\sum\limits_{\pi_2'\in\cF'(N,2)}
\left|\int_{G'(\Q) \backslash G'(\A)}
\dfrac{\delta_1(N_D(h))}{\vol(G'(\Q)\backslash G'(\A))^{1/2}}
\phi_{\pi'_2}(h)\phi_3(h)\ dh\right|^2,
&\text{if }2k=2;\\
0,&\text{otherwise}.\end{cases}
\end{multline*}
Here $X^{\text{un}}(\Q)=\{\1\}$. So when $2k=2$,
\[\begin{split}
\frac{\sum\limits_{\pi_1'\in\cA_{res}(G')}\sum\limits_{\pi_2'\in\cA_{cusp}(G')}
I_{\pi_1',\pi_2'}(f)}
{\vol(K')^2}
=\ &\frac 12\sum_{\pi_2'\in\cF'(N,2)}
\left|\int_{G'(\Q) \backslash G'(\A)}
\phi_{\pi'_2}(h)\phi_3(h)\ dh\right|^2\\
=\ &\frac 12\sum_{\pi_2'\in\cF'(N,2)}
\left|\langle \phi_3,\overline{\phi_{\pi'_2}} \rangle \right|^2.
\end{split}\]
By orthogonality,
$\langle \phi_3,\overline{\phi_{\pi'_2}} \rangle\neq 0$
only when $\phi_{\pi'_2}=\overline{\phi_3}/\|\phi_3\|$.
So
\[\frac{\sum\limits_{\pi_1'\in\cA_{res}(G')}\sum\limits_{\pi_2'\in\cA_{cusp}(G')}
I_{\pi_1',\pi_2'}(f)}
{\vol(K')^2}
=\frac 12\left|\langle
\phi_3,\frac{\phi_3}{\|\phi_3\|}\rangle\right|^2
=\frac 12\langle \phi_3,\phi_3 \rangle.\]
This completes the proof of \eqref{specterms}.


\section{Geometric Side of the RTF}\label{section.geometric}

Recall that $G'=Z\backslash D^\times$,
$K_p$ is the image of $Z_p\cO_p^\times$ in $G'_p$;
\begin{gather*}
f(g_1,g_2)=f_\infty(g_1,g_2)
\prod_{p<\infty}\1_{K_p}(g_1)\1_{K_p}(g_2),\\
f_\infty(g_1,g_2) = \overline{
\langle \pi'_{2k}\otimes\pi'_{2k}(g_1,g_2) w^\circ_{2k}, w^\circ_{2k}\rangle}/
\langle w^\circ_{2k}, w^\circ_{2k}\rangle,\quad
w^\circ_{2k}=\left(-Y_1Y_2\left|\begin{matrix}X_1&X_2\\
Y_1&Y_2\end{matrix}\right|\right)^{k-1};\\
\mathbb{P}_{2k}=
\left|\begin{matrix}X_1&X_2\\Y_1&Y_2\end{matrix}\right|^{k-1}\otimes
\left|\begin{matrix}X_2&X_3\\Y_2&Y_3\end{matrix}\right|^{k-1}\otimes
\left|\begin{matrix}X_3&X_1\\Y_3&Y_1\end{matrix}\right|^{k-1};
\end{gather*}
$\phi_3$ is a new-line vector in $\pi_3'\in\cF'(N,2k)$
with $\phi_{3,\infty}$ a highest weight vector of $\pi_{2k}'$,
as defined in Lemma \ref{autoset}.
In Section \ref{proofoforbdecomp} we will prove the following theorem
which gives the orbital expansion of the distribution
\[I(f) = \int_{G'(\Q) \backslash G'(\A)} \int_{G'(\Q) \backslash G'(\A)}
K_f(h_1,h_1;h_2, h_2) \phi_3(h_1)\overline{\phi_3(h_2)} \ dh_1 \ dh_2.\]

\begin{Theorem}\label{orbdecomp}
Let $N$ be a square-free integer which has an odd number of prime factors.
Let $D$ be the quaternion algebra $\Q$ which is ramified
precisely at $\infty$ and the primes dividing $N$.
Then
\[I(f)=I_{[1]}+I_{[\gamma_0]}+I_{[\gamma_1]}\]
where $\gamma_0$, $\gamma_1\in D^\times(\Q)$ such that
\[\Tr_D(\gamma_0)=0,\quad N_D(\gamma_0)=1;\quad
\Tr_D(\gamma_1)=N_D(\gamma_1)=1,\]
with \begin{gather*}
I_{[1]}=\langle \phi_3,\phi_3\rangle
\frac{\vol(K')}{2k-1},\\
I_{[\gamma_0]}=\frac 12\vol(K')
\int_{\A_{E_0}^\times \backslash D^\times(\A)}\varphi_{\gamma_0}(h)
\int_{\A^\times E_0^\times \backslash \A_{E_0}^\times}
(R(h)\phi^*)(t)\overline{(R(h)\phi^{**})(t)}\ dt\ dh,\\
I_{[\gamma_1]}=\vol(K')
\int_{\A_{E_1}^\times \backslash D^\times(\A)}\varphi_{\gamma_1}(h)
\int_{\A^\times E_1^\times \backslash \A_{E_1}^\times}
(R(h)\phi^*)(t)\overline{(R(h)\phi^{**})(t)}\ dt\ dh.
\end{gather*}
Here $K'=G'_\infty\prod_{p<\infty} K_p$;
$E=\Q(\gamma)$ is the quadratic extension of $\Q$
which can be embedded in $D$ when $\gamma$ exists
(in particular $E_0=\Q(\gamma_0)=\Q(\sqrt{-1})$,
$E_1=\Q(\gamma_1)=\Q(\sqrt{-3})$);
$\phi^*$, $\phi^{**}\in\pi_3'$ such that
\begin{equation}\label{def.e_gamma}
\begin{gathered}
\phi^*=\phi_3;\quad \phi^{**}=\otimes\phi^{**}_v,\quad
\phi^{**}_\infty=\frac{\|\phi_3\|}{\|\mathbb{P}_{2k}\|^2}e_{\gamma},\quad
\phi^{**}_p=\phi_{3,p};\\
e_\gamma=\sum_{i=0}^{2k-2}
\binom{2k-2}{i}
\langle \pi'_{2k}\otimes\pi'_{2k}\otimes\pi'_{2k}
(h^{-1}\gamma h,1,1) \mathbb{P}_{2k},
w^\circ_{2k}\otimes X_3^{2k-2-i}Y_3^{i}\rangle
X_3^{2k-2-i}Y_3^{i};\end{gathered}
\end{equation}
$\varphi_\gamma$ is the characteristic function of the set
\[\{h\in G'_\gamma(\A) \backslash G'(\A): h_{p}^{-1}\gamma h_{p}\in K_p
\text{ for all primes }p\}.\]
And \begin{itemize}
\item $I_{[\gamma_0]}=0$ if
$N$ has a prime factor of the form $4n+1$
(in this case $\gamma_0$ does not exist);
\item $I_{[\gamma_1]}=0$
if $N$ has a prime factor of the form $3n+1$
(in this case $\gamma_1$ does not exist).
\end{itemize}
(These two primes do not have to be distinct.)
\end{Theorem}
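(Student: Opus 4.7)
The plan is to apply the RTF formalism of Section~\ref{RTF} to the case at hand, classify the contributing orbits, and compute each orbital integral separately. First, expanding $K_f$ geometrically and gathering terms by the diagonal double coset $\Delta G'(\Q)\backslash G'(\Q)^2/\Delta G'(\Q)$, each class is represented uniquely (up to conjugacy in $G'(\Q)$) by some $\gamma$, with stabilizer the diagonal copy of $G'_\gamma(\Q)$. After the substitution $h_2 = h_1 h$ and using the factorization $f = f_\infty\cdot\prod_p \1_{K_p}\otimes \1_{K_p}$, the non-Archimedean integration forces $h_1^{-1}\gamma h_1 \in K_p$ for every finite prime. By Proposition~\ref{maxorder} and Lemma~\ref{parametrization} this requires $N_D(\gamma)\in\Z^\times$ and $\Tr_D(\gamma)\in\Z$; definiteness of $D$ (so $\Tr_D(\gamma)^2 < 4N_D(\gamma)$) then restricts $\gamma$ to three classes modulo sign: the trivial $[1]$, the trace-$0$ class of $\gamma_0$, and the trace-$1$ class of $\gamma_1$. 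The latter two exist in $D$ exactly when the imaginary quadratic fields $E_0=\Q(\sqrt{-1})$, $E_1=\Q(\sqrt{-3})$ embed in $D$, which by Lemma~\ref{AnotherPresentation} is the prime-factorization condition stated.

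For the trivial orbit, sphericality of $\phi_{3,\fin}$ reduces the integral to
\[
I_{[1]}(f) = \vol(K_{\fin})\int_{G'_\infty} f_\infty(h,h)\,\langle\phi_3, R(h)\phi_3\rangle\,dh.
\]
Applying Lemma~\ref{Choosew}(2) to the Archimedean integrand and using the identity $\|\mathbb{P}_{2k}\|^2 = (2k-1)\|w^\circ_{2k}\|^2$ from Lemma~\ref{length} delivers the stated value $\langle\phi_3,\phi_3\rangle\vol(K')/(2k-1)$.

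For $[\gamma_0]$ and $[\gamma_1]$, Lemma~\ref{productmeasure} decomposes the $h_1$-integration as a compact torus integration over $G'_\gamma(\Q)\backslash G'_\gamma(\A)=\A^\times E_\gamma^\times\backslash\A_{E_\gamma}^\times$ followed by the coset integration over $G'_\gamma(\A)\backslash G'(\A)=\A^\times_{E_\gamma}\backslash D^\times(\A)$. Since $t$ commutes with $\gamma$, $h_1^{-1}\gamma h_1$ depends only on the coset representative; the finite-place integration again collapses to $\vol(K_{\fin})$ on the support $\varphi_\gamma(h)=1$, and $K_{\fin}$-invariance of $\phi_{3,\fin}$ removes the finite-place components of $h$ from the torus integrand. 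Rewriting $\pi'_{2k}\otimes\pi'_{2k}(h_\infty,g h_\infty)=(1\otimes\pi'_{2k}(g))\circ\pi'_{2k}\otimes\pi'_{2k}(h_\infty,h_\infty)$ recasts the Archimedean integral as one against a matrix coefficient of the triple tensor ${\pi'_{2k}}^{\otimes 3}\circ\Delta_3$ evaluated on the vector $\pi'_{2k}\otimes\pi'_{2k}\otimes\pi'_{2k}(h^{-1}\gamma h,1,1)\mathbb{P}_{2k}$ paired with $w^\circ_{2k}\otimes\phi_{3,\infty}$. Lemma~\ref{Schur}, combined with the uniqueness of the $G'_\infty$-invariant $\mathbb{P}_{2k}$ in this triple tensor, then replaces the integral by $\vol(G'_\infty)\|\mathbb{P}_{2k}\|^{-2}$ times the contraction of the displayed vector against $w^\circ_{2k}$ in the first two slots; the dual-basis expansion \eqref{innerproduct} in the third slot rebuilds the result as $e_\gamma$, and the factor $\|\phi_3\|/\|\mathbb{P}_{2k}\|^2$ matches the definition of $\phi^{**}_\infty$ in \eqref{def.e_gamma}. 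Finally, Lemma~\ref{centralizer} shows that the $G'$-centralizer of the trace-zero $\gamma_0$ strictly contains $E_0^\times/\Q^\times$ with index $2$ (the extra coset consists of pure imaginary elements conjugating $\gamma_0$ to $-\gamma_0$); writing the integral over the smaller subgroup thus overcounts by a factor of $2$, producing the $\tfrac 12$ in $I_{[\gamma_0]}$. The centralizer of the trace-one $\gamma_1$ coincides with $E_1^\times/\Q^\times$, so no such correction appears.

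The main obstacle is the bookkeeping in the last archimedean manipulation: one must verify that after averaging against $\mathbb{P}_{2k}$ the triple-tensor contraction really reproduces $e_\gamma$ as defined in \eqref{def.e_gamma}, and that the numerical constants involving $\vol(G'_\infty)$, $\|w^\circ_{2k}\|$, $\|\mathbb{P}_{2k}\|$, and $\|\phi_3\|$ combine into the $\vol(K')\cdot\|\phi_3\|/\|\mathbb{P}_{2k}\|^2$ factor appearing in the theorem.
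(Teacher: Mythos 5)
Your proposal is correct and follows essentially the same route as the paper's own argument (double coset decomposition, restriction of orbits via trace and norm, computation of $I_{[1]}$ through the Archimedean inner product, and the torus/coset splitting plus Schur orthogonality for the nontrivial orbits). Two small points worth tightening: (i) the displayed equalities $G'_\gamma(\Q)\backslash G'_\gamma(\A)=\A^\times E_\gamma^\times\backslash\A_{E_\gamma}^\times$ and $G'_\gamma(\A)\backslash G'(\A)=\A^\times_{E_\gamma}\backslash D^\times(\A)$ are false for $\gamma_0$ (the full centralizer is $E_0^\times\cdot\{1,j\}$, not just $E_0^\times$); the paper instead invokes Lemma~\ref{productmeasure} to replace the \emph{inner} integral over the centralizer quotient by one over the torus quotient, and the index-$2$ discrepancy in the \emph{outer} integral is where the factor $\tfrac12$ comes from --- you recover this at the end but the intermediate equalities as written contradict it; (ii) Lemma~\ref{AnotherPresentation} only yields the \emph{sufficiency} direction of the prime-factorization criterion for the existence of $\gamma_0$, $\gamma_1$; the converse (if $N$ has a prime factor $\equiv 1\pmod 4$ then no trace-$0$, norm-$1$ element exists, etc.) is what Proposition~\ref{F=Q} and its appendix proof supply, and that is the reference you actually need for the ``exactly when.''
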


For some particular $N$ only the trivial orbit appears
in the orbital decomposition. In fact we have:
\begin{Corollary}\label{maingeometric2k}
With assumptions and notations as before,
if in addition $N$ has a prime factor $\equiv 1\pmod 4$
and one $\equiv 1\pmod 3$,
we have
\[\frac{I(f)}{\vol(K')}=
\frac{\langle \phi_3,\phi_3\rangle}{2k-1}.\]
With Theorem \ref{mainspectral} we have that
\[\frac{2^{\omega(N)}}{2 N^2}
\sum_{\substack{\pi_1,\pi_2\in\cF(N,2k)\\
\varepsilon_p=-1,\ \forall p\mid N}}
\frac{L(\frac 12,\pi_1\otimes\pi_2\otimes\pi_3)}
{L(1,\pi_1\otimes\pi_2\otimes\pi_3,\Ad)}
=\begin{cases}
1-\dfrac{24}{\varphi(N)},&\text{if }2k=2;\\
\dfrac{\Gamma(k)^{3}\Gamma(3k-1)}
{\Gamma(2k-1)^2\Gamma(2k)}
,&\text{otherwise.}
\end{cases}\]
\end{Corollary}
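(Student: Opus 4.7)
The corollary is a direct consequence of combining the orbital (geometric) decomposition in Theorem \ref{orbdecomp} with the spectral identity in Theorem \ref{mainspectral}. The plan has essentially two ingredients: killing the nontrivial orbital terms, and matching the resulting constant against the spectral formula.

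First, I would invoke Theorem \ref{orbdecomp} to write $I(f)=I_{[1]}+I_{[\gamma_0]}+I_{[\gamma_1]}$. The hypothesis assigns to $N$ a prime divisor $p_0\equiv 1\pmod 4$ and a prime divisor $p_1\equiv 1\pmod 3$. At such a $p_i$, the local quadratic field $E_{i,p_i}=\Q_{p_i}(\sqrt{-1})$ (resp.\ $\Q_{p_i}(\sqrt{-3})$) splits (since $-1$ is a square mod $p_0$, and $-3$ is a square mod $p_1$), and therefore cannot embed into the division algebra $D_{p_i}$. Hence $\gamma_0$ (resp.\ $\gamma_1$) does not exist in $D^\times(\Q)$, and the last clause of Theorem \ref{orbdecomp} gives $I_{[\gamma_0]}=I_{[\gamma_1]}=0$. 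Only the identity orbit contributes, so
\[
\frac{I(f)}{\vol(K')}=\frac{I_{[1]}}{\vol(K')}=\frac{\langle\phi_3,\phi_3\rangle}{2k-1}.
\]

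Next, I would substitute this into the spectral identity of Theorem \ref{mainspectral}. Dividing the above by $\langle\phi_3,\phi_3\rangle$ and equating with the spectral side yields
\[
\frac{1}{2k-1}=\frac{\Gamma(2k-1)^3}{\Gamma(k)^3\Gamma(3k-1)}\cdot\frac{2^{\omega(N)}}{2N^2}\sum_{\substack{\pi_1,\pi_2\in\cF(N,2k)\\ \varepsilon_p=-1,\ \forall p\mid N}}\frac{L(\tfrac12,\pi_1\otimes\pi_2\otimes\pi_3)}{L(1,\pi_1\otimes\pi_2\otimes\pi_3,\Ad)}+\begin{cases}\frac{24}{\varphi(N)},&2k=2,\\ 0,&\text{otherwise.}\end{cases}
\]
(For the case $2k=2$ the residual contribution is $\frac{24}{\varphi(N)}=\vol(K')$, matching what was computed in the Res$\otimes$Cusp and Res$\otimes$Res parts of Section \ref{section.spectral}.) Solving for the average, one uses the elementary identity $(2k-1)\Gamma(2k-1)=\Gamma(2k)$ to convert $\frac{\Gamma(k)^3\Gamma(3k-1)}{(2k-1)\Gamma(2k-1)^3}$ into $\frac{\Gamma(k)^3\Gamma(3k-1)}{\Gamma(2k-1)^2\Gamma(2k)}$, and finally the binomial rewriting
\[
\frac{\Gamma(k)^3\Gamma(3k-1)}{\Gamma(2k-1)^2\Gamma(2k)}=\binom{2k-2}{k-1}^{-2}\binom{3k-2}{k-1}
\]
(already appearing in the proof of Lemma \ref{length}) gives the stated closed form. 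When $2k=2$ the numerical prefactor collapses to $1$ (all three $\Gamma$-values equal $1$) and one recovers $1-\frac{24}{\varphi(N)}$.

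There is no real obstacle here: the heavy lifting has been done upstream (the orbital decomposition in Theorem \ref{orbdecomp}, including the non-existence criteria for $\gamma_0,\gamma_1$, and the spectral expansion in Theorem \ref{mainspectral} relying on Ichino's formula). The only minor care needed is the separate bookkeeping when $2k=2$, because of the Res$\otimes$Cusp term which is the sole reason one gets the extra $-\frac{24}{\varphi(N)}$ correction in weight~$2$ but no analogous term for higher weight.
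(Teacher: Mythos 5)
Your proposal is correct and takes essentially the same route the paper does: kill $I_{[\gamma_0]}$ and $I_{[\gamma_1]}$ via the last clause of Theorem~\ref{orbdecomp} (resting on Proposition~\ref{F=Q}), keep only the identity orbit $I_{[1]}=\langle\phi_3,\phi_3\rangle\vol(K')/(2k-1)$, then equate with Theorem~\ref{mainspectral} and simplify using $\Gamma(2k)=(2k-1)\Gamma(2k-1)$. One small but worthwhile observation: you (correctly) wrote the residual term in the spectral identity as $\frac{24}{\varphi(N)}$ rather than $\langle\phi_3,\phi_3\rangle\frac{24}{\varphi(N)}$ as printed in Theorem~\ref{mainspectral}; tracing through \eqref{specterms} and the decomposition \eqref{specdecomp} confirms your version is the right one (the extra $\langle\phi_3,\phi_3\rangle$ in the statement of Theorem~\ref{mainspectral} is a typo), and this is precisely what makes the $2k=2$ case of the Corollary come out to $1-\frac{24}{\varphi(N)}$.
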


This corollary and \eqref{adelizedL} \eqref{adelizedL2}
together give a proof of \eqref{maineq2}.

After proving Theorem \ref{orbdecomp} we will use
Waldspurger's formula (Theorem \ref{Waldspurger})
to compute $I_{[\gamma_0]}$ and $I_{[\gamma_1]}$
in Section \ref{section.harmonic}.
This combined with the calculations 
in Sections \ref{section.nonarchi.ram}, \ref{section.nonarchi.unram}
and \ref{section.geo.archi}
gives a proof of the following theorem.


\begin{Theorem}[Main Theorem, geometric side]\label{maingeometric}
Let $N$ be a square-free product of an odd number of primes.
For $\pi_3'\in\cF'(N,2k)$,
\[I(f)=I_{[1]}+I_{[\gamma_0]}+I_{[\gamma_1]}\]
with \[\begin{split}
\frac{I_{[1]}}
{\langle \phi_3,\phi_3\rangle\vol(K')}=&\ \frac{1}{2k-1};\\
\frac{I_{[\gamma_0]}}
{\langle \phi_3,\phi_3\rangle\vol(K')}=&\ 4
\frac{\Gamma(2k-1)^4}{(2\pi)^{2k}\Gamma(k)^3\Gamma(3k-1)}
\frac {2^{\omega(N)}}{N\cdot L(1,\pi_3,\Ad)} \\
&\ \cdot 2^{\ord_2(N)}
\prod_{p\mid N}\frac{1-\chi_{-4}(p)}{2}
\sum_{\Omega\in\widehat{[E_0^\times]}}
I_0(\Omega)\cdot L_{\fin}(\frac 12,(\pi_3)_{E_0}\otimes \Omega),\\
\frac{I_{[\gamma_1]}}
{\langle \phi_3,\phi_3\rangle\vol(K')}=&\ 6\sqrt{3}
\frac{\Gamma(2k-1)^4}{(2\pi)^{2k}\Gamma(k)^3\Gamma(3k-1)}
\frac {2^{\omega(N)}}{N\cdot L(1,\pi_3,\Ad)} \\
&\ \cdot 2^{\ord_3(N)}
\prod_{p\mid N}\frac{1-\chi_{-3}(p)}{2}
\sum_{\Omega\in\widehat{[E_1^\times]}}
I_1(\Omega)\cdot L_{\fin}(\frac 12,(\pi_3)_{E_1}\otimes \Omega).
\end{split}\]
Here $E_0=\Q(\sqrt{-1})$, $E_1=\Q(\sqrt{-3})$,
$I_0$ and $I_1$ are constants depending only on $k$ and $\Omega$
(and on $a_2(h)$, $a_3(h)$ when $2$ or $3\mid N$ respectively).
More precisely,
$I_0$ and $I_1$ vanish unless $\Omega$ satisfies 
the restrictions in Lemma \ref{nonvanishOmega} respectively,
in which case we define
\[I_0(\Omega)=\mathbb{I}_{2k}^{(m)}(\gamma_0),\quad 
I_1(\Omega)=\mathbb{I}_{2k}^{(m)}(\gamma_1).\]
Here $\mathbb{I}_{2k}^{(m)}(\gamma)$ is defined by
\[\mathbb{I}_{2k}^{(m)}(\gamma)=\binom{2k-2}{k-1+m}^{-1}
\sum_{\substack{0\leq i,j\leq 2k-2\\i+j=2(k-1)-m}}\gamma^{2(k-1-i)}
\binom{2k-2}i^{-1}\binom{2k-2}j^{-1}|C_{i,j,k-1+m}|^2,\]
with $C_{i,j,r}$ the coefficient 
of $X_1^{2k-2-i}Y_1^i\otimes X_2^{2k-2-j}Y_2^j
\otimes X_3^{2k-2-r}Y_3^{r}$
in \[\mathbb{P}_{2k}=
\left|\begin{matrix}X_1&X_2\\Y_1&Y_2\end{matrix}\right|^{k-1}\otimes
\left|\begin{matrix}X_2&X_3\\Y_2&Y_3\end{matrix}\right|^{k-1}\otimes
\left|\begin{matrix}X_3&X_1\\Y_3&Y_1\end{matrix}\right|^{k-1}.\]
In particular for $\Omega=\1$ the trivial character,
\[\begin{split}
I_0(\1)&=
\begin{cases}
\mathbb{I}_{2k}^{(0)}(\gamma_0)=
\dfrac{\Gamma(k)^2}{\Gamma(2k-1)}
\sum\limits_{i=0}^{2k-2}\gamma_0^{2(k-1-i)}
\binom{2k-2}i^{-2}|C_{i,2k-2-i,k-1}|^2,&\text{if }2\nmid N,\\
\frac 12(1-\varepsilon_2(\frac 12, \pi_3))
\mathbb{I}_{2k}^{(0)}(\gamma_0),&\text{if }2\mid N;\\
\end{cases}\\
I_1(\1)&=
\begin{cases}
\mathbb{I}_{2k}^{(0)}(\gamma_1)=
\dfrac{\Gamma(k)^2}{\Gamma(2k-1)}
\sum\limits_{i=0}^{2k-2}\gamma_1^{2(k-1-i)}
\binom{2k-2}i^{-2}|C_{i,2k-2-i,k-1}|^2,&\text{if }3\nmid N,\\
\frac 12(1-\varepsilon_3(\frac 12, \pi_3))
\mathbb{I}_{2k}^{(0)}(\gamma_1),&\text{if }3\mid N.\\
\end{cases}
\end{split}\]

\end{Theorem}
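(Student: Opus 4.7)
The trivial-orbit identity
$I_{[1]} = \langle \phi_3,\phi_3\rangle \vol(K')/(2k-1)$
is already the statement in Theorem \ref{orbdecomp}, so the work lies entirely in unfolding $I_{[\gamma_0]}$ and $I_{[\gamma_1]}$. For each $j\in\{0,1\}$ the plan is to factorize the double integral
\[I_{[\gamma_j]} = c_j\vol(K')\int_{\A_{E_j}^\times\backslash D^\times(\A)}\varphi_{\gamma_j}(h)
\int_{\A^\times E_j^\times\backslash\A_{E_j}^\times}
(R(h)\phi^*)(t)\overline{(R(h)\phi^{**})(t)}\,dt\,dh\]
(with $c_0=\tfrac12$, $c_1=1$) into a product of local integrals, then apply Waldspurger's formula fibrewise to convert the inner torus integral into central $L$-values.

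First, I would insert a spectral expansion on the torus: write the function $t\mapsto (R(h)\phi^*)(t)\overline{(R(h)\phi^{**})(t)}$ as a Fourier series over characters $\Omega\in\widehat{[E_j^\times]}$, so that the inner integral becomes $\sum_\Omega P_\Omega^*(h)\,\overline{P_\Omega^{**}(h)}$, where $P_\Omega^\bullet(h) := \int_{\A^\times E_j^\times\backslash\A_{E_j}^\times}(R(h)\phi^\bullet)(t)\Omega(t)^{-1}dt$ is a twisted toric period. By Waldspurger's period formula \cite{waldspurger1985valeurs}, $|P_\Omega^*|^2$ factorizes into $L_{\fin}(\tfrac12,(\pi_3)_{E_j}\otimes\Omega)\cdot L(1,\pi_3,\Ad)^{-1}$ times local matrix coefficients; the hybrid quantity $P_\Omega^*\overline{P_\Omega^{**}}$ is handled in the same way because $\phi^{**}$ differs from $\phi^*$ only at infinity by a fixed element $\tfrac{\|\phi_3\|}{\|\mathbb P_{2k}\|^2}e_{\gamma_j}$ of $\pi_{2k}'$, so the non-archimedean factors are identical and one only needs to insert an archimedean coefficient that absorbs $e_{\gamma_j}$. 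This step is where Lemma \ref{nonvanishOmega} enters to constrain which $\Omega$ contribute; the restriction to $\Omega_\infty=\sgn^{2m}$ and unramified-everywhere characters comes from demanding the local factors at $\infty$ and at primes $p\nmid N$ not vanish.

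The outer integral against $\varphi_{\gamma_j}$ factorizes as an Euler product over the places of $\Q$ because $\varphi_{\gamma_j}$ is a product of local characteristic functions; I would then evaluate the local orbital integrals one place at a time. At primes $p\nmid N\cdot\disc(E_j)$, the calculation in Section \ref{section.nonarchi.unram} should give the normalized spherical orbital integral equal to $1$ (this is the spherical Plancherel computation and is standard). At primes $p\mid N$, Section \ref{section.nonarchi.ram} computes the ramified orbital integral, which is nonzero only when $p$ is inert or ramified in $E_j$ — yielding the factors $\prod_{p\mid N}(1-\chi_{-4}(p))/2$ and $\prod_{p\mid N}(1-\chi_{-3}(p))/2$ as well as the $2^{\ord_2(N)}$, $2^{\ord_3(N)}$ powers coming from the embedding count. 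At $p=2$ or $p=3$ dividing $N$ one gets the extra factor $\tfrac12(1-\varepsilon_p(\tfrac12,\pi_3))$ seen in the formulas for $I_0(\1),I_1(\1)$, coming from the local root number of the special representation matching the norm residue of $\gamma_j$. The archimedean factor will produce the coefficient $\mathbb I_{2k}^{(m)}(\gamma_j) = I_j(\Omega)$: expanding $e_{\gamma_j}$ in the basis $X_3^{2k-2-i}Y_3^i$ and pairing against the $\Omega_\infty$-isotypic projector yields the explicit sum $\sum_i\gamma_j^{2(k-1-i)}\binom{2k-2}{i}^{-2}|C_{i,2k-2-i,k-1}|^2\cdot\Gamma(k)^2/\Gamma(2k-1)$ when $\Omega=\1$, by a direct Clebsch--Gordan expansion of $\mathbb P_{2k}$ and Lemma \ref{Choosew}.

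Finally, the ratios of archimedean $L$-factors and normalization constants — $\zeta_\R(2)$, $\vol(K')$, $L(1,\pi_{\mathrm{dis}}^{2k},\Ad)$, and $\|\mathbb P_{2k}\|^2$ from Lemma \ref{length} — assemble into the prefactor $\frac{\Gamma(2k-1)^4}{(2\pi)^{2k}\Gamma(k)^3\Gamma(3k-1)}\cdot\frac{2^{\omega(N)}}{N\,L(1,\pi_3,\Ad)}$ with the overall constants $4$ and $6\sqrt 3$ arising from $\vol([E_0^\times])=2L(1,\chi_{-4})=\pi/2$ and $\vol([E_1^\times])=2L(1,\chi_{-3})=2\pi/(3\sqrt 3)$ together with the fixed $c_0=\tfrac12$, $c_1=1$. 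The main obstacle I anticipate is the archimedean calculation: unpacking the Clebsch--Gordan coefficients $C_{i,j,r}$ inside $\mathbb P_{2k}$, computing the matrix coefficient $\langle \pi_{2k}'^{\otimes 3}\circ\Delta_3(h^{-1}\gamma_j h,1,1)\mathbb P_{2k}, w_{2k}^\circ\otimes X_3^{2k-2-i}Y_3^i\rangle$, and integrating against the archimedean Hecke character $\Omega_\infty=\sgn^{2m}$ over $\R^\times\backslash\CC^\times$ to extract $\mathbb I_{2k}^{(m)}(\gamma_j)$ in closed form requires careful bookkeeping, and is precisely where the identification $I_j(\Omega)=\mathbb I_{2k}^{(m)}(\gamma_j)$ of Lemma \ref{geom.archi.coeff} must be invoked.
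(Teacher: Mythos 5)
Your proposal follows the paper's proof essentially step for step: apply the Fourier expansion \eqref{harmonicanalysis} on $[E^\times]$ to the inner torus integral, invoke Waldspurger's formula (Theorem \ref{Waldspurger}) with $\phi'=R(h)\phi^*$ and $\phi''=R(h)\phi^{**}$, constrain $\Omega$ via Lemma \ref{nonvanishOmega}, then evaluate the resulting Euler product of local orbital integrals using Sections \ref{section.nonarchi.ram}--\ref{section.nonarchi.unram} at the finite places and Lemma \ref{geom.archi.coeff} at $\infty$ to identify $I_j(\Omega)=\mathbb I_{2k}^{(m)}(\gamma_j)$. This is exactly the route taken in Section \ref{section.waldspurger}, down to the observation that $\phi^{**}$ agrees with $\phi^*$ away from $\infty$, so the argument is sound; the only caveats are small bookkeeping points (your stated values $\vol([E_0^\times])=\pi/2$, $\vol([E_1^\times])=2\pi/(3\sqrt3)$ use the classical Dirichlet $L$-value rather than the completed $L(1,\eta)$ convention of \eqref{quadextvol}, and the unramification constraint on $\Omega_p$ for $p\mid N$ comes from Lemma \ref{nonarchramify1}, not just from $p\nmid N$), neither of which affects the structure of the argument.
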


Theorem \ref{mainspectral} and \ref{maingeometric}
together imply
the Main Theorem \ref{mainadelic}.

\subsection{Orbital decomposition}\label{proofoforbdecomp}

We apply the geometric side of the RTF to the distribution $I(f)$.
When $G=G'\times G'$ and $H_1=H_2=G'$ ($H_1,H_2\hookrightarrow G$ diagonally),
the representatives $[(\gamma_1,\gamma_2)]$
in $G'(\Q)\backslash G'(\Q)\times G'(\Q)/G'(\Q)$
can be chosen such that $\gamma_2=1$
and $[\gamma_1]$ runs through all conjugacy classes of $G'(\Q)$.
For $\theta_1,\theta_2\in G'(\Q)$,
$\theta_1^{-1}(\gamma,1)\theta_2=(\gamma,1)$ if and only if
$\theta_1=\theta_2\in G'_\gamma(\Q)$, the centralizer of $\gamma$ in $G'(\Q)$.
By the orbital expansion of the RTF, we have
\[\begin{split}
I(f) =& \ \int_{G'(\Q) \backslash G'(\A)} \int_{G'(\Q) \backslash G'(\A)}
K_f(h_1,h_1; h_2,h_2) \phi_3(h_1)\overline{\phi_3(h_2)} \ dh_1 \ dh_2\\
=&\ \iint_{(G'(\Q) \backslash G'(\A))^2}
\sum_{\gamma_1,\gamma_2\in G'(\Q)}f(h_1^{-1}\gamma_1 h_2,h_1^{-1}\gamma_2 h_2)
\phi_3(h_1)\overline{\phi_3(h_2)} \ dh_1 \ dh_2\\
=&\ \iint_{(G'(\Q) \backslash G'(\A))^2}
\sum_{\substack{[\gamma_1,\gamma_2]=[\gamma,1]\\\gamma\in[G'(\Q)]}}
\sum_{(\theta_1,\theta_2)}
f(h_1^{-1}\theta_1^{-1}\gamma\theta_2h_2,h_1^{-1}\theta_1^{-1}\theta_2h_2)
\phi_3(h_1)\overline{\phi_3(h_2)} \ dh_1 \ dh_2
\end{split}\]
Here $(\theta_1,\theta_2)$ runs through \[G'(\Q)\times G'(\Q)/
\{(\theta_1,\theta_2):\theta_1=\theta_2\in G'_\gamma(\Q)\}
\cong (G'_\gamma(\Q)\backslash G'(\Q))\times G'(\Q),\]
i.e. for a fixed $\theta_1\in G'_\gamma(\Q)\backslash G'(\Q)$,
$\theta_2$ runs through $G'(\Q)$.
So we have
\[I(f) =\sum_{[\gamma]}\sum_{\theta_1,\theta_2}
\iint_{(G'(\Q) \backslash G'(\A))^2}
f\big((\theta_1h_1)^{-1}\gamma (\theta_2 h_2),(\theta_1h_1)^{-1}(\theta_2 h_2)\big)
\phi_3(\theta_1h_1)\overline{\phi_3(\theta_2h_2)} \ d\theta_1h_1 \ d\theta_2h_2\]
where $[\gamma]$ runs through conjugacy classes of $G'(\Q)$.
Let $\theta_i h_i$ be the new $h_i$ ($i=1,2$). Then
\[I(f) =\sum_{[\gamma]}\int_{G'_\gamma(\Q) \backslash G'(\A)}
\left(\int_{G'(\A)}
f(h_1^{-1}\gamma h_2,h_1^{-1}h_2) \overline{\phi_3(h_2)} \ dh_2
\right)\phi_3(h_1) \ dh_1.\]
We split $h_1=th$ with $h\in G'_\gamma(\A) \backslash G'(\A)$
and $t\in G'_\gamma(\Q) \backslash G'_\gamma(\A)$,
and let $g=t^{-1}h_2$. Then
\begin{equation}\label{def.I_gamma}\begin{split}
I(f)=& \ \sum_{[\gamma]}\int_{G'_\gamma(\Q) \backslash G'_\gamma(\A)}
\int_{G'_\gamma(\A) \backslash G'(\A)}
\left(\int_{G'(\A)}
f((th)^{-1}\gamma (tg),(th)^{-1}tg) \overline{\phi_3(tg)} \ d(tg)
\right)\phi_3(th) \ dh\ dt\\
=&\ \sum_{[\gamma]}\int_{G'_\gamma(\Q) \backslash G'_\gamma(\A)}
\int_{G'_\gamma(\A) \backslash G'(\A)}
\left(\int_{G'(\A)}
f(h^{-1}\gamma g,h^{-1}g) \overline{\phi_3(tg)} \ dg
\right)\phi_3(th) \ dh\ dt.
\end{split}\end{equation}
Denote the summand as $I_{[\gamma]}(f)$.

Recall that the new-line vector
$\phi_3=\otimes\phi_{3,v}\in \CC X_{3}^{2k-2}
\otimes \pi_{3,\fin}'^{ K_{\fin}}$
can be written such that
$\phi_{3,\infty}=\|\phi_3\|X_{3}^{2k-2}$ and $\phi_{3,p}$'s
are $K_p$-invariant unit vectors for $p<\infty$
(see Lemma \ref{autoset}).
Let $f=f_\infty\cdot (f_0\otimes f_0)$
be the test function defined in \eqref{def.testfunction}.
Then

\begin{Lemma}
$\int_{G'(\A)} \overline{
f(h^{-1}\gamma g,h^{-1}g)} (R(g)\phi_3) \ dg$ is a pure tensor in $\pi_3'$.
\end{Lemma}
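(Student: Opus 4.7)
The plan is to exploit the fact that both the test function $f$ and the new-line vector $\phi_3$ factor as pure tensors over the places of $\Q$, so that the adelic integral decomposes as a tensor product of local integrals. By construction \eqref{def.testfunction}, for each fixed $h\in G'(\A)$ the integrand factors place-by-place as
\[
\overline{f(h^{-1}\gamma g,h^{-1}g)}
= \overline{F_\infty(g_\infty)}\prod_{p<\infty}\overline{F_p(g_p)},
\]
where $F_\infty(g_\infty):=f_\infty(h_\infty^{-1}\gamma g_\infty,\,h_\infty^{-1}g_\infty)$ and $F_p(g_p):=\1_{K_p}(h_p^{-1}\gamma g_p)\,\1_{K_p}(h_p^{-1}g_p)$. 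On the other side, since $\phi_3=\otimes_v\phi_{3,v}$ is a pure tensor, so is
\[
R(g)\phi_3=\bigotimes_v \pi'_{3,v}(g_v)\phi_{3,v}.
\]

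Next I would justify interchanging the adelic integral with the tensor product. At each finite place $F_p$ is supported on the compact set $h_pK_p\cap h_p\gamma^{-1}K_p$, and $f_\infty$ is by definition compactly supported on $G'_\infty\times G'_\infty$, so the integrand is compactly supported in $g\in G'(\A)$. Fubini therefore yields
\[
\int_{G'(\A)} \overline{f(h^{-1}\gamma g,h^{-1}g)}\,(R(g)\phi_3)\ dg
= \bigotimes_v\int_{G'_v} \overline{F_v(g_v)}\,\pi'_{3,v}(g_v)\phi_{3,v}\ dg_v,
\]
and each local integral is a vector in $V_{\pi'_{3,v}}$.

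Finally, to confirm that the right-hand side makes sense as an element of the restricted tensor product $\pi_3'$, I would observe that $h_p\in K_p$ and $\gamma\in K_p$ for all but finitely many $p$ (the former because $h\in G'(\A)$, the latter because the fixed element $\gamma\in G'(\Q)$ lies in $\cO_p^\times$ at almost every $p$). At every such $p$ one has $F_p=\1_{K_p}$, and Lemma \ref{local.cuspcusp} gives
\[
\int_{G'_p}\1_{K_p}(g_p)\,\pi'_{3,p}(g_p)\phi_{3,p}\ dg_p
= \vol(K_p)\,\phi_{3,p},
\]
so almost all local factors are scalar multiples of the distinguished spherical vector $\phi_{3,p}$. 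Hence the output is a well-defined pure tensor in $\pi_3'$. The only subtle point is bookkeeping at the finitely many "bad" places (those dividing $N$ or where $h_p\notin K_p$); at each of these the local integrand is a compactly supported continuous function of $g_v$ and $\pi'_{3,v}$ acts continuously, so the local integral is automatically a well-defined vector in $V_{\pi'_{3,v}}$, and no additional convergence issue arises.
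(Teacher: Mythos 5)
Your proposal is correct and follows essentially the same line as the paper: factor $f$ and $\phi_3$ place-by-place, observe that the adelic integral decomposes into a tensor product of local integrals, and note that Lemma~\ref{local.cuspcusp} gives a scalar multiple of $\phi_{3,p}$ at almost every $p$, so the result lies in the restricted tensor product. The only difference is that the paper also records the explicit value of each local integral (needed in what follows, via $\varphi_{\gamma,p}$ and Lemma~\ref{def.edelta}) rather than merely verifying well-definedness; and note the small slip that the support of $F_p$ in $g_p$ is $h_pK_p\cap\gamma^{-1}h_pK_p$, not $h_pK_p\cap h_p\gamma^{-1}K_p$, though this is harmless for the compactness argument.
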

\begin{proof} For $f=f_\infty\cdot (f_0\otimes f_0)$,
\begin{multline*}
\int_{G'(\A)}
\overline{f(h^{-1}\gamma g,h^{-1}g)}(R(g)\phi_3) \ dg
=\|\phi_3\|\int_{G'_\infty}\overline{f_\infty(h_\infty^{-1}\gamma g_\infty,h_\infty^{-1}g_\infty)}
(\pi'_{2k}(g_\infty)X_3^{2k-2})\ dg_\infty\\
\cdot \prod\limits_{p<\infty}\int_{G'_p}
\overline{\1_{K_p}(h_{p}^{-1}\gamma g_p)\ \1_{K_p}(h_{p}^{-1}g_p) }
R(g_p)\phi_{3,p}\ dg_p.
\end{multline*}

For $v=p< \infty$ the local test function
$\1_{K_p}(h_{p}^{-1}\gamma h_{2,p})\ \1_{K_p}(h_{p}^{-1}h_{2,p})$
is nonzero only if both
$h_{p}^{-1}\gamma g_p, \ h_{p}^{-1} g_p\in K_p$,
and hence
\[\int_{G'_p} \overline{\1_{K_p}(h_{p}^{-1}\gamma g_p)\
\1_{K_p}(h_{p}^{-1}g_p) }
R(g_p)\phi_{3,p} \ dg_p
=\int_{h_{p}K_p\cap \gamma^{-1}h_{p}K_p}
R(g_p)\phi_{3,p} \ dg_p.\]
These two left cosets either coincide or are disjoint,
and $h_{p}K_p= \gamma^{-1}h_{p}K_p$ if and only if
$h_{p}^{-1}\gamma h_{p}\in K_p$.
Let $\varphi_\gamma=\prod\varphi_{\gamma,p}$ and
$\varphi_{\gamma,p}:G'_\gamma(\Q_p) \backslash G'_p\to\CC$ be
the characteristic function of the set $\{h_p: h_{p}^{-1}\gamma h_{p}\in K_p\}$.
Since $\phi_{3,p}$ is $K_p$-invariant, we have
\begin{multline}\label{simplify.I_gamma.nonarch}
\int_{G'_p} \overline{\1_{K_p}(h_{p}^{-1}\gamma g_p)\
\1_{K_p}(h_{p}^{-1}g_p) }
R(g_p)\phi_{3,p} \ dg_p\\
=\varphi_{\gamma,p}(h_p)\int_{h_{p}K_p}
R(g_p)\phi_{3,p} \ dg_p
=\vol(K_p)\varphi_{\gamma,p}(h_p)R(h_p)\phi_{3,p}.
\end{multline}

For $v=\infty$, by the following lemma we have
\begin{equation}\label{simplify.I_gamma.arch}
\int_{G'_\infty}
\overline{f_\infty(h_\infty^{-1}\gamma g_\infty,h_\infty^{-1}g_\infty)}
\pi'_{2k}(g_\infty)X_3^{2k-2}\ dg_\infty
=\frac{\vol(G'_\infty)}{\|\mathbb{P}_{2k}\|^2}\pi'_{2k}(h_{\infty})e_{\gamma},
\end{equation}
where $e_\gamma$ is defined in \eqref{def.e_gamma}.
\end{proof}

\begin{Lemma}\label{def.edelta}
For $\gamma\in G'(\Q)$, $h\in G'_\gamma(\R) \backslash G'_\infty$,
with $e_{\gamma}$ defined in \eqref{def.e_gamma},
\[\int_{G'_\infty}
\langle \pi'_{2k}\otimes\pi'_{2k}
(h^{-1}\gamma g,h^{-1}g) w^\circ_{2k}, w^\circ_{2k}\rangle
\pi'_{2k}(g)X_3^{2k-2}\ dg\\
=\vol(G'_\infty)\frac{\|w^\circ_{2k}\|^2}{\|\mathbb{P}_{2k}\|^2}
\pi'_{2k}(h)e_{\gamma}.\]
\end{Lemma}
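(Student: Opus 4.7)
The plan is to establish the claimed vector identity in $V_{\pi'_{2k}}$ by pairing both sides against an arbitrary $w_3'\in V_{\pi'_{2k}}$ and reducing the resulting scalar integral to an application of Schur orthogonality on the triple tensor representation ${\pi'_{2k}}^{\otimes 3}\circ\Delta_3$, whose space of $G'_\infty$-invariants is exactly the line $\CC\mathbb{P}_{2k}$.

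First I would combine the two inner products appearing in the integrand into a single inner product on $V_{\pi'_{2k}}^{\otimes 3}$, producing
\[\int_{G'_\infty}\bigl\langle{\pi'_{2k}}^{\otimes 3}(h^{-1}\gamma g,h^{-1}g,g)(w^\circ_{2k}\otimes X_3^{2k-2}),\ w^\circ_{2k}\otimes w_3'\bigr\rangle\ dg.\]
Using the factorization $(h^{-1}\gamma g,h^{-1}g,g)=(h^{-1}\gamma,h^{-1},1)\cdot\Delta_3(g)$ and moving the left factor to the right argument via unitarity (adjoints become inverses), the integrand rewrites as
\[\bigl\langle{\pi'_{2k}}^{\otimes 3}\circ\Delta_3(g)(w^\circ_{2k}\otimes X_3^{2k-2}),\ (\pi'_{2k}(\gamma^{-1}h)\otimes\pi'_{2k}(h)\otimes\id)(w^\circ_{2k}\otimes w_3')\bigr\rangle,\]
which is a matrix coefficient of the diagonal representation.

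Next I would invoke Lemma \ref{Schur} to collapse the integral to the inner product of the projections of the two arguments onto $\CC\mathbb{P}_{2k}$. Lemma \ref{Choosew}(1) provides $\langle w^\circ_{2k}\otimes X_3^{2k-2},\mathbb{P}_{2k}\rangle=\|w^\circ_{2k}\|^2$, so the paired value reduces to
\[\vol(G'_\infty)\frac{\|w^\circ_{2k}\|^2}{\|\mathbb{P}_{2k}\|^2}\,\overline{\bigl\langle(\pi'_{2k}(\gamma^{-1}h)\otimes\pi'_{2k}(h)\otimes\id)(w^\circ_{2k}\otimes w_3'),\ \mathbb{P}_{2k}\bigr\rangle}.\]
Unwinding the conjugate via unitarity and then exploiting the $\Delta_3$-invariance $\mathbb{P}_{2k}={\pi'_{2k}}^{\otimes 3}\circ\Delta_3(h)\mathbb{P}_{2k}$ to absorb a $\Delta_3(h)$ into $\mathbb{P}_{2k}$, the operator acting on $\mathbb{P}_{2k}$ becomes $\pi'_{2k}(h^{-1}\gamma h)\otimes\id\otimes\pi'_{2k}(h)$. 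Shifting the last $\pi'_{2k}(h)$ over to $w_3'$ yields $\bigl\langle{\pi'_{2k}}^{\otimes 3}(h^{-1}\gamma h,1,1)\mathbb{P}_{2k},\ w^\circ_{2k}\otimes\pi'_{2k}(h^{-1})w_3'\bigr\rangle$, which by the defining expansion of $e_\gamma$ (and the duality relation $\langle e_\gamma,X_3^{2k-2-i}Y_3^i\rangle=\binom{2k-2}{i}^{-1}c_i$ corresponding to the $\binom{2k-2}{i}$ in the definition) equals $\langle e_\gamma,\pi'_{2k}(h^{-1})w_3'\rangle=\langle\pi'_{2k}(h)e_\gamma,w_3'\rangle$. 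Since $w_3'$ was arbitrary, the claim follows.

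The only delicate step is the invariance manipulation that converts $\pi'_{2k}(h^{-1}\gamma)\otimes\pi'_{2k}(h^{-1})\otimes\id$ acting on $\mathbb{P}_{2k}$ into $\pi'_{2k}(h^{-1}\gamma h)\otimes\id\otimes\pi'_{2k}(h)$; a misplaced inverse here would produce $h^{-1}\gamma^{-1}h$ rather than $h^{-1}\gamma h$, so one must carefully track which operators sit on the left and which on the right of the pairing before invoking unitarity.
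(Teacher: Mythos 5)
Your proposal is correct and follows essentially the same route as the paper: pairing against vectors in $V_{\pi'_{2k}}$, combining the two inner products into a matrix coefficient of ${\pi'_{2k}}^{\otimes 3}\circ\Delta_3$, invoking Lemma~\ref{Schur} to project onto $\CC\mathbb{P}_{2k}$, using Lemma~\ref{Choosew} to evaluate $\langle w^\circ_{2k}\otimes X_3^{2k-2},\mathbb{P}_{2k}\rangle$, and then exploiting $\Delta_3$-invariance of $\mathbb{P}_{2k}$ to reshuffle the group elements into $(h^{-1}\gamma h,1,1)$ and identify $e_\gamma$. The only cosmetic difference is that you pair against an arbitrary $w_3'$ where the paper pairs against the transformed basis vectors $\pi'_{2k}(h)X_3^{2k-2-i}Y_3^i$ and then sums over $i$; the underlying computation is identical.
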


\begin{proof}
First we consider the inner product
\[\begin{split}
&\ \left\langle\int_{G'_\infty}
\langle \pi'_{2k}\otimes\pi'_{2k} (h^{-1}\gamma g,h^{-1}g) w^\circ_{2k}, w^\circ_{2k}\rangle
\pi'_{2k}(g)X_3^{2k-2}\ dg,\pi'_{2k}(h)X_3^{2k-2-i}Y_3^{i}
\right\rangle\\
=&\ \int_{G'_\infty}
\langle \pi'_{2k}\otimes\pi'_{2k} (g,g) w^\circ_{2k},
\pi'_{2k}\otimes\pi'_{2k} (\gamma^{-1} h,h)w^\circ_{2k}\rangle
\langle\pi'_{2k}(g)X_3^{2k-2},\pi'_{2k}(h)X_3^{2k-2-i}Y_3^{i}\rangle\ dg\\
=&\ \int_{G'_\infty}
\langle \pi'_{2k}\otimes\pi'_{2k}\otimes\pi'_{2k} (g,g,g) w^\circ_{2k}\otimes X_3^{2k-2},
\pi'_{2k}\otimes\pi'_{2k}\otimes\pi'_{2k}
(\gamma^{-1} h,h,h)w^\circ_{2k}\otimes X_3^{2k-2-i}Y_3^{i}\rangle\ dg.
\end{split}\]
By Lemma \ref{Schur}
the above integral is equal to
\[\vol(G'_\infty)
\langle w^\circ_{2k}\otimes X_3^{2k-2},
\frac{\mathbb{P}_{2k}}{\|\mathbb{P}_{2k}\|}\rangle
\overline{\langle
\pi'_{2k}\otimes\pi'_{2k}\otimes\pi'_{2k}
(\gamma^{-1} h,h,h)w^\circ_{2k}\otimes X_3^{2k-2-i}Y_3^{i},
\frac{\mathbb{P}_{2k}}{\|\mathbb{P}_{2k}\|}\rangle}.\]
Recall that $\mathbb{P}_{2k}$ is $G'_\infty$-invariant, and we have
$\langle w^\circ_{2k}\otimes X_3^{2k-2},\mathbb{P}_{2k}\rangle
=\|w^\circ_{2k}\|^2$
by Lemma \ref{Choosew}.
Now the above integral is equal to 
$\vol(G'_\infty)\frac{\|w^\circ_{2k}\|^2}{\|\mathbb{P}_{2k}\|^2}$ times
\[\begin{split}
&\ \langle \pi'_{2k}\otimes\pi'_{2k}\otimes\pi'_{2k}
(\gamma,1,1) \mathbb{P}_{2k},
\pi'_{2k}\otimes\pi'_{2k}\otimes\pi'_{2k}
(h,h,h)w^\circ_{2k}\otimes X_3^{2k-2-i}Y_3^{i}\rangle\\
=&\ \langle \pi'_{2k}\otimes\pi'_{2k}\otimes\pi'_{2k}
(\gamma h,h,h) \mathbb{P}_{2k},
\pi'_{2k}\otimes\pi'_{2k}\otimes\pi'_{2k}
(h,h,h)w^\circ_{2k}\otimes X_3^{2k-2-i}Y_3^{i}\rangle\\
=&\ \langle \pi'_{2k}\otimes\pi'_{2k}\otimes\pi'_{2k}
(h^{-1}\gamma h,1,1) \mathbb{P}_{2k},
w^\circ_{2k}\otimes X_3^{2k-2-i}Y_3^{i}\rangle.
\end{split}\]

Recall that, for a fixed $h$,
$\{\pi'_{2k}(h)X_3^{2k-2-i}Y_3^{i}\}$ 
forms an orthogonal basis of $V_{\pi'_{2k}}$,
with the inner product defined as \eqref{innerproduct}.
So we have
\begin{small}
\[\begin{split}
&\ \int_{G'_\infty}
\langle \pi'_{2k}\otimes\pi'_{2k} (h^{-1}\gamma g,h^{-1}g) w^\circ_{2k}, w^\circ_{2k}\rangle
\pi'_{2k}(g)X_3^{2k-2}\ dg\\
=&\ \sum_{i=0}^{2k-2}\frac{\left\langle\int_{G'_\infty}
\langle \pi'_{2k}\otimes\pi'_{2k} (h^{-1}\gamma g,h^{-1}g) w^\circ_{2k}, w^\circ_{2k}\rangle
\pi'_{2k}(g)X_3^{2k-2}\ dg,\pi'_{2k}(h)X_3^{2k-2-i}Y_3^{i}
\right\rangle}
{\langle\pi'_{2k}(h)X_3^{2k-2-i}Y_3^{i},\pi'_{2k}(h)X_3^{2k-2-i}Y_3^{i}\rangle}
\pi'_{2k}(h)X_3^{2k-2-i}Y_3^{i}\\
=&\ \sum_{i=0}^{2k-2}\frac{\vol(G'_\infty)
\frac{\|w^\circ_{2k}\|^2}{\|\mathbb{P}_{2k}\|^2}
\langle \pi'_{2k}\otimes\pi'_{2k}\otimes\pi'_{2k}
(h^{-1}\gamma h,1,1) \mathbb{P}_{2k},
w^\circ_{2k}\otimes X_3^{2k-2-i}Y_3^{i}\rangle}
{\langle X_3^{2k-2-i}Y_3^{i}, X_3^{2k-2-i}Y_3^{i}\rangle}
\pi'_{2k}(h)X_3^{2k-2-i}Y_3^{i}\\
=&\ \vol(G'_\infty)\frac{\|w^\circ_{2k}\|^2}{\|\mathbb{P}_{2k}\|^2}
\sum_{i=0}^{2k-2}\binom{2k-2}{i}
\langle \pi'_{2k}\otimes\pi'_{2k}\otimes\pi'_{2k}
(h^{-1}\gamma h,1,1) \mathbb{P}_{2k},
w^\circ_{2k}\otimes X_3^{2k-2-i}Y_3^{i}\rangle
\pi'_{2k}(h)X_3^{2k-2-i}Y_3^{i}.
\end{split}\]\end{small}
\end{proof}

Take $\phi^*,\phi^{**}\in\pi_3'$ as in \eqref{def.e_gamma}.
By \eqref{simplify.I_gamma.nonarch} and \eqref{simplify.I_gamma.arch}
we can write $I_{[\gamma]}(f)$ (defined in \eqref{def.I_gamma}) as
\[\begin{split}
I_{[\gamma]}(f)=&\ \int_{G'_\gamma(\Q) \backslash G'_\gamma(\A)}
\int_{G'_\gamma(\A) \backslash G'(\A)}
\overline{\vol(K')(R(h)\phi^{**})(t)
\prod_{p<\infty}\varphi_{\gamma,p}(h_p)}
(R(h)\phi^*)(t) \ dh\ dt\\
=&\ \vol(K')
\int_{G'_\gamma(\A) \backslash G'(\A)}\varphi_{\gamma}(h)
\int_{G'_\gamma(\Q) \backslash G'_\gamma(\A)}
(R(h)\phi^*)(t)\overline{(R(h)\phi^{**})(t)}\ dt\ dh,
\end{split}\]
with $K'$ and $\varphi_{\gamma}$ defined in Theorem \ref{orbdecomp}.

When $\gamma=1$, the centralizer $G_\gamma'=G'$.
According to Lemma \ref{Choosew},
\[e_{\gamma}=\|w^\circ_{2k}\|^2 X_3^{2k-2}.\]
Then by Lemma \ref{length},
\[\begin{split}
\frac{I_{[1]}(f)}{\vol(K')}
=&\ \int_{G'(\Q) \backslash G'(\A)}
\phi^*(t)\overline{\phi^{**}(t)}\ dt
=\langle\phi^{*},\phi^{**}\rangle
=\prod_{v}\langle\phi^{*}_v,\phi^{**}_v\rangle_v\\
=&\ \|\phi_3\|^2\langle X_3^{2k-2},
\frac{1}{\|\mathbb{P}_{2k}\|^2}e_{\gamma}\rangle_\infty
=\|\phi_3\|^2
\frac{\|w^\circ_{2k}\|^2}{\|\mathbb{P}_{2k}\|^2}
=\frac{\|\phi_3\|^2}{2k-1}.
\end{split}\]

Now we study the property for the other $[\gamma]$'s
such that $\varphi_\gamma$ is not identically 0.
Instead of $\Q$, we consider it over an arbitrary
number field $F$.

\begin{Lemma}\label{nonzeroterms}
$\varphi_\gamma=0$
unless $\Tr_D(\gamma)\in\{\pm 1\}\backslash\cO_F$
and $N_D(\gamma)\in \cO_F^\times/(\cO_F^\times)^2$.
In particular, when $F=\Q$, $\varphi_\gamma=0$
unless $\Tr_D(\gamma')\in  \Z_{\geq 0}$
and $N_D(\gamma')=\pm 1$ for 
some representative $\gamma'\in D^\times(F)$ of $[\gamma]$.
\end{Lemma}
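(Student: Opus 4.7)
The plan is to convert the condition $\varphi_\gamma\neq 0$ into local integrality constraints at each finite prime, and then globalize via elementary valuation-theoretic arguments.

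First I would work place-by-place. Suppose $\varphi_{\gamma,p}(h_p)\neq 0$ for some $h_p$; lifting to $D_p^\times$, this means $h_p^{-1}\gamma h_p \in Z(F_p)\cO_p^\times$, so there exists $\lambda_p \in F_p^\times$ with $\lambda_p h_p^{-1}\gamma h_p \in \cO_p^\times$. Since the reduced trace and norm are invariant under conjugation, taking them yields $\lambda_p \Tr_D(\gamma) \in \cO_{F_p}$ and $\lambda_p^2 N_D(\gamma) \in \cO_{F_p}^\times$. The norm condition forces $v_p(N_D(\gamma))$ to be even, with $v_p(\lambda_p) = -v_p(N_D(\gamma))/2$; substituting back into the trace condition gives $v_p(\Tr_D(\gamma)^2) \geq v_p(N_D(\gamma))$. (When $D_p$ is a division algebra, Proposition \ref{maxorder} characterizes $\cO_p = \{x : N_D(x) \in \cO_{F_p}\}$, so trace integrality is automatic from norm integrality; in the split case it follows because any element of $\cO_p^\times$ is conjugate to an element of $\GL(2,\cO_{F_p})$ and therefore has integral characteristic polynomial.)

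Next I would globalize. The requirement that $v_p(N_D(\gamma))$ be even at every $p$ means the square-free part of $N_D(\gamma)$ lies in $\cO_F^\times$; explicitly, $N_D(\gamma) = c^2 u$ for some $c \in F^\times$ and $u \in \cO_F^\times$. Replacing $\gamma$ by $c^{-1}\gamma$ (which represents the same class $\bar\gamma\in G'(F)$) arranges $N_D(\gamma) = u \in \cO_F^\times$. With $v_p(N_D(\gamma)) = 0$ everywhere, the inequality from the local step becomes $\Tr_D(\gamma) \in \cO_{F_p}$ at each $p$, hence $\Tr_D(\gamma) \in \cO_F$.

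Finally I would account for the remaining scaling freedom. A further change of representative $\gamma \mapsto \mu\gamma$ preserving $N_D(\gamma) \in \cO_F^\times$ requires $\mu^2 \in \cO_F^\times$; since $\mu$ is then a root of $t^2 - \mu^2 \in \cO_F[t]$ and $\mu^{-1}= \mu/\mu^2 \in \cO_F$, we conclude $\mu \in \cO_F^\times$. Thus $N_D(\gamma)$ is well-defined in $\cO_F^\times/(\cO_F^\times)^2$. After fixing a representative of this square class, only $\mu\in\cO_F^\times$ with $\mu^2 = 1$ remains, i.e.\ $\mu=\pm 1$, so $\Tr_D(\gamma)$ is well-defined in $\{\pm 1\}\backslash \cO_F$. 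Specializing to $F = \Q$, where $\cO_F^\times = \{\pm 1\}$, one obtains $N_D(\gamma) \in \{\pm 1\}$ and $\Tr_D(\gamma) \in \{\pm 1\}\backslash \Z \cong \Z_{\geq 0}$. The main subtlety lies in the first step, where one must extract both the trace and norm integrality simultaneously at every prime; the rest is routine manipulation of valuations together with standard facts about integral elements over a Dedekind domain.
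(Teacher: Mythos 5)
Your argument is correct and follows essentially the same route as the paper: at each finite place you lift the nonvanishing of $\varphi_\gamma$ to $\lambda_v h_v^{-1}\gamma h_v\in\cO_v^\times$, take reduced trace and norm, and globalize the resulting valuation constraints. The only cosmetic difference is that the paper normalizes the representative of $\gamma$ so that $N_D(\gamma)$ lies in a ``square-free'' set $\Sigma$ from the outset (invoking Lemma \ref{parametrization}), whereas you deduce the local constraints for an arbitrary representative and then rescale by $c^{-1}$ at the end; both formulations quietly assume the ideal square root of $(N_D(\gamma))$ is principal, which is automatic for $F=\Q$, the only case actually used.
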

\begin{proof}
Fix a set $\Sigma\subset\cO_F-\{0\}$ of representatives in $F^\times/(F^\times)^2$.
We can choose $\Sigma$ to be the set of ``square-free'' integers. More precisely,
the factorization of the principal ideal of $\cO_F$ generated by any number in $\Sigma$
has exactly one factor for each prime ideal that appears in it.
Then as in Lemma \ref{parametrization}
we can fix a representative of $[\gamma]$ in $D^\times(F)$ (also denoted $\gamma$)
so that $N_D(\gamma)\in\Sigma$.
Under this assumption we see that $\ord_v(N_D(\gamma))$ is either 0 or 1.

Suppose that $\varphi_\gamma(h)\neq 0$.
This means $h_{v}^{-1}\gamma h_{v}\in K_v$
for all $v<\infty$.

When $v\notin\Ram(D)$, $K_v=\PGL(2,\cO_{F_v})$.
We can say that,
fixing a representative of $\gamma$ in $\GL(2,F_v)$
there is an $h_v\in \GL(2,F_v)$
such that $\lambda_v h_v^{-1}\gamma h_v\in \GL(2,\cO_{F_v})$
for some $\lambda_v\in F_v^\times$.
As a matrix in $\GL(2,\cO_{F_v})$,
we have that \[\Tr_{D_v}(\lambda_v h_v^{-1}\gamma h_v)\in \cO_{F_v},\quad
N_{D_v}(\lambda_v h_v^{-1}\gamma h_v)\in \cO_{F_v}^\times.\]
Conjugate matrices have the same norm and trace, so
\[\ord_v(\Tr_{D_v}(\lambda_v\gamma))=\ord_v(\lambda_v\Tr_{D}(\gamma))\geq 0,\quad
\ord_v(N_{D_v}(\lambda_v\gamma))=\ord_v(\lambda_v^2 N_{D}(\gamma))=0.\]
With the assumption of $\Sigma$, one can imply that $\ord_v(N_{D}(\gamma))=0$,
$\lambda_v\in\cO_{F_v}^\times$ is a unit,
and then $\ord_v(\Tr_{D}(\gamma))\geq 0$.

When $v\in\Ram(D)$ and $v<\infty$,
$D_v$ is a division algebra and it has only one maximal order
\[\cO_v=\{x\in D_v: N_{D_v}(x)\in\cO_{F_v}\}.\]
We still know that,
there is an $h_v\in D_v^\times$
such that $\lambda_v h_v^{-1}\gamma h_v\in \cO_v^\times$
for some $\lambda_v\in F_v^\times$.
With the assumption of $\Sigma$,
as in the previous case,
one can imply that $\ord_v(N_{D}(\gamma))=0$,
i.e. $\gamma\in\cO_v^\times$.
Moreover, Proposition \ref{maxorder}
shows that $\gamma$ is $\cO_{F_v}$-integral and
then $\ord_v(\Tr_{D}(\gamma))\geq 0$.

Globally we see that, fixing $\gamma$ such that $N_D(\gamma)\in\Sigma$,
for any place $v$ of $F$, $N_D(\gamma)$ cannot be divisible by $v$,
and the order of $v$ in the ideal decomposition of $\Tr_D(\gamma)\in F$
has to be non-negative.
In other words $N_D(\gamma)$ has to be a unit and $\Tr_D(\gamma)$ in $\cO_F$.
With Lemma \ref{parametrization} we get the conclusion.
\end{proof}

With this lemma, we only need to
consider the summand $I_{[\gamma]}(f)$
with $N_D(\gamma)$ a ``square-free'' unit in $\cO_F^\times$
and $\Tr_D(\gamma)\in\{\pm 1\}\backslash\cO_F$.
Going back to the case when $F=\Q$,
we only need to consider the summand $I_{[\gamma]}(f)$
with $N_D(\gamma)=1$ and $\Tr_D(\gamma)\in\Z_{\geq 0}$
(obviously $N_D(\gamma)$ cannot be $-1$ when $D$ is definite).
Now we consider the infinite place $\Q_\infty=\R$.
When $D$ is definite, $D(\R)\cong\mathbb{H}$ is non-split.
That means the characteristic polynomial \[X^2-\Tr_D(x)X+N_D(x)\]
of any $x\in D^\times(\Q)$ is irreducible over $\R$
if and only if $x\notin \R\cap D^\times(\Q)=\Q^\times$.
For $\gamma\in G'(\Q)=Z(\Q)\backslash D^\times(\Q)$,
if $\gamma\neq 1$,
$X^2-\Tr_D(\gamma)X+N_D(\gamma)$ is irreducible over $\R$,
which implies that $\Tr_D(\gamma)^2<4N_D(\gamma)$.
Now $N_D(\gamma)=1$, we only need $\Tr_D(\gamma)=0$ or $1$.

\begin{Proposition}
[{\cite[Proposition 14.6.7]{voight2017quaternion}}]\label{F=Q}
Let $D$ be a quaternion algebra over $\Q$
and $E$ be a quadratic field extension of $\Q$.
Then $E$ can be embedded in $D$ if and only if 
every $v\in\Ram(D)$ does not split in $E$,
i.e. $E_v$ is a field for all $v\in\Ram(D)$.
In particular,
\begin{enumerate}[(1)]
\item there is an $x\in D$ with $\Tr_D(x)=0$, $N_D(x)=1$ 
(i.e. $\sqrt{-1}$ can be embedded in $D$) 
if and only if
$\disc(D)$ has no prime factor of the form $4n+1$.
\item there is an $x\in D$ with $\Tr_D(x)=1$, $N_D(x)=1$ 
(i.e. $\frac{1+\sqrt{-3}}2$ can be embedded in $D$) 
if and only if
$\disc(D)$ has no prime factor of the form $3n+1$.
\end{enumerate}
\end{Proposition}

Lemma \ref{nonzeroterms}, Proposition \ref{F=Q},
and the following lemma together imply Theorem \ref{orbdecomp}.

\begin{Lemma} \label{nontrivialorbits}
With notations in Theorem \ref{orbdecomp},
\[\frac{I_{[\gamma_0]}(f)}{\vol(K') }=
\frac 12\int_{\A_{E_0}^\times \backslash D^\times(\A)}\varphi_{\gamma_0}(h)\left(
\int_{\A^\times E_0^\times \backslash \A_{E_0}^\times}
(R(h)\phi^*)(t)\overline{(R(h)\phi^{**})(t)}\ dt
\right)\ dh,\]
\[\frac{I_{[\gamma_1]}(f)}{\vol(K') }=
\int_{\A_{E_1}^\times \backslash D^\times(\A)}\varphi_{\gamma_1}(h)\left(
\int_{\A^\times E_1^\times \backslash \A_{E_1}^\times}
(R(h)\phi^*)(t)\overline{(R(h)\phi^{**})(t)}\ dt
 \right)\ dh.\]
Here $E_0=\Q(\sqrt{-1})$,
$E_1=\Q(\sqrt{-3})$
are quadratic extensions of $\Q$
which can be embedded in $D$ when $\gamma_0,\gamma_1$ exist respectively.
\end{Lemma}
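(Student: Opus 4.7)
My plan is to take the general formula
\[
I_{[\gamma]}(f) = \vol(K')\int_{G'_\gamma(\A) \backslash G'(\A)}\varphi_{\gamma}(h) \int_{G'_\gamma(\Q) \backslash G'_\gamma(\A)} (R(h)\phi^*)(t)\overline{(R(h)\phi^{**})(t)}\ dt\ dh
\]
already established earlier in this section and specialize it at $\gamma=\gamma_0$ and $\gamma=\gamma_1$. The only content of the lemma is to identify the centralizer groups $G'_{\gamma_0}$ and $G'_{\gamma_1}$ explicitly, and then track down how the measures and indices transform when we rewrite the two quotients as integrals over $\A_{E}^{\times}\backslash D^\times(\A)$ and $\A^\times E^\times\backslash \A_{E}^\times$ for $E=E_0,E_1$.

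For $\gamma_1$ I would proceed directly. Since $\Tr_D(\gamma_1)=1\neq 0$, Lemma \ref{centralizer} puts us in its ``$x=1+\beta i+\gamma j+\delta k\neq 1$'' case, so $G'_{\gamma_1}(\Q)$ is the image of $E_1^\times=\Q(\gamma_1)^\times$ in $G'(\Q)=Z(\Q)\backslash D^\times(\Q)$, i.e.\ $\Q^\times\backslash E_1^\times$. The analogous statement holds adelically: $G'_{\gamma_1}(\A)=\A^\times\backslash \A_{E_1}^\times$. Plugging these identifications into the formula above gives outer quotient $\A_{E_1}^\times\backslash D^\times(\A)$ and inner quotient $\A^\times E_1^\times\backslash \A_{E_1}^\times$, with no extra factor; this is exactly the second identity. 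The case $N$ having a prime factor $\equiv 1\pmod 3$ is vacuous because then $\gamma_1$ does not exist in $D^\times(\Q)$ by Proposition \ref{F=Q}.

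The case $\gamma_0$ is the one that produces the extra $\tfrac{1}{2}$, and it is where I would spend most of the care. Because $\Tr_D(\gamma_0)=0$, the last bullet of Lemma \ref{centralizer} shows that the centralizer $G'_{\gamma_0}(\Q)$ contains, besides $\Q^\times\backslash E_0^\times$, the image of an additional coset of elements $\theta$ satisfying $\theta\gamma_0+\gamma_0\theta=0$, i.e.\ $\theta\gamma_0\theta^{-1}=-\gamma_0$; this is still central in $G'$ since $-1\in\Q^\times$. Concretely one fixes $j_0\in D^\times(\Q)$ representing this coset, and obtains a (non-split) short exact sequence
\[
1\to \A^\times\backslash \A_{E_0}^\times \to G'_{\gamma_0}(\A)\to \Z/2\Z\to 1,
\]
with the same picture rationally. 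By a representative argument every $G'_{\gamma_0}(\Q)$-coset in $G'_{\gamma_0}(\A)$ has a unique representative in the torus $\A^\times\backslash \A_{E_0}^\times$ (the extra rational coset exactly cancels the extra adelic one), so the inner integral reduces to one over $\A^\times E_0^\times\backslash \A_{E_0}^\times$ with no multiplicative factor. On the other hand, the natural map $\A_{E_0}^\times\backslash D^\times(\A)\twoheadrightarrow G'_{\gamma_0}(\A)\backslash G'(\A)$ is a $2$-to-$1$ covering, and the integrand $\varphi_{\gamma_0}(h)\cdot(\text{inner integral})(h)$ is by construction left $G'_{\gamma_0}(\A)$-invariant, so it pulls back cleanly to the cover and $\int_{G'_{\gamma_0}(\A)\backslash G'(\A)}=\tfrac{1}{2}\int_{\A_{E_0}^\times\backslash D^\times(\A)}$. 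This yields the stated formula for $I_{[\gamma_0]}(f)$.

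The main obstacle will be the bookkeeping of measures. I would apply Lemma \ref{productmeasure} to the sequence $1\to T\to N\to \Z/2\to 1$ with $T=\A^\times\backslash\A_{E_0}^\times$ and $N=G'_{\gamma_0}(\A)$, choosing the extension of Haar measures so that $\vol(N(\A))=2\vol(T(\A))$ and the quotient measure on $G'_{\gamma_0}(\A)\backslash G'(\A)$ agrees with the fixed Tamagawa-style quotient on $\A_{E_0}^\times\backslash D^\times(\A)$ up to the factor $2$. Once this compatibility is checked on each side of the cover, the rest is purely formal substitution, and the cases where $\gamma_0$ fails to exist (for instance when $N$ has a prime factor $\equiv 1\pmod 4$) reduce to the vacuous case via Proposition \ref{F=Q}.
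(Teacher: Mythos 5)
Your proposal follows the paper's proof step for step: both identify the centralizers via Lemma~\ref{centralizer}, observe that for $\gamma_0$ the centralizer is the index-two normalizer $T(\Q)\cdot\{1,j\}$ of the torus while for $\gamma_1$ it is the torus itself, reduce the inner integral from $G'_\gamma(\Q)\backslash G'_\gamma(\A)$ to $T(\Q)\backslash T(\A)$ with no factor via Lemma~\ref{productmeasure} (your ``extra rational coset cancels the extra adelic one''), and extract the factor $\tfrac12$ from the passage $G'_{\gamma_0}(\A)\backslash G'(\A)\leftarrow T(\A)\backslash G'(\A)$. The ``short exact sequence / two-to-one covering'' language is a repackaging of the same coset and measure manipulations, not a different method.
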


\begin{proof}
(1) When $N$ has no prime factor of the form $4n+1$,
we can write $D = (\frac{-1,-N}{\Q})$ by Lemma \ref{AnotherPresentation}
and take $\gamma_0=i_D$ (the $i$ in the $\Q$-basis $\{1,i,j,k\}$ of $D$).
Then
\[I_{[\gamma_0]}=I_{[i_D]}
=\vol(K')
\int_{G'_{i_D}(\A) \backslash G'(\A)}\varphi_{i_D}(h)\left(
\int_{G'_{i_D}(\Q \backslash \A)}(R(h)\phi^*)(t)\overline{(R(h)\phi^{**})(t)}\ dt
\right)\ dh,\]
where $\varphi_{i_D}$ is the characteristic function of the set
\[\{h\in G'_{i_D}(\A) \backslash G'(\A): h_{p}^{-1}i_D h_{p}\in K_p
\text{ for all primes }p\}.\]
Lemma \ref{centralizer} shows that $G'_{i_D}(\Q)$ is the image in $G'(\Q)$ of
$\{1,j_D\}\cdot \Q(i_D)^\times$.
Instead of using $G'_{i_D}$
we can simply consider a subgroup $T$ in $G'$ such that
$T(\Q)$ is the image in $G'(\Q)$ of $\Q(i_D)^\times$.
By taking $H=T(\A)$ and $K=G'_{i_D}(\Q)$ in Lemma \ref{productmeasure},
for $f\in C_c(G'_{i_D}(\Q) \backslash G_{i_D}'(\A))$,
\[
\int_{G'_{i_D}(\Q) \backslash G_{i_D}'(\A)}f(g)\ dg
=\int_{G'_{i_D}(\Q) \backslash G'_{i_D}(\Q)T(\A)}f(g)\ dg 
=\int_{(G'_{i_D}(\Q)\cap T(\A)) \backslash T(\A)}f(t)\ dt
=\int_{T(\Q) \backslash T(\A)}f(t)\ dt 
\]
and hence
\[\int_{G'_{i_D}(\Q \backslash \A)}(R(h)\phi^*)(t)\overline{(R(h)\phi^{**})(t)}\ dt
=\int_{T(\Q \backslash \A)}(R(h)\phi^*)(t)\overline{(R(h)\phi^{**})(t)}\ dt.\]
Therefore
\[\begin{split}
\frac{I_{[{i_D}]}}{\vol(K')}
=&\ \int_{G'_{i_D}(\A) \backslash G'(\A)}\varphi_{i_D}(h)\left(
\int_{T(\Q \backslash \A)}(R(h)\phi^*)(t)\overline{(R(h)\phi^{**})(t)}\ dt
 \right)\ dh\\
=&\ \frac 12\int_{T(\A) \backslash G'(\A)}\varphi_{i_D}(h)\left(
\int_{T(\Q \backslash \A)}(R(h)\phi^*)(t)\overline{(R(h)\phi^{**})(t)}\ dt
\right)\ dh.
\end{split}\]

(2) When $N$ has no prime factor of the form $3n+1$,
we can write $D = (\frac{-3,-N}{\Q})$ by Lemma \ref{AnotherPresentation}
and take $\gamma_1=\frac 12(1+i_D)$.
Then
\[I_{[\gamma_1]}=I_{[{1+i_D}]}
=\vol(K')
\int_{G'_{1+i_D}(\A) \backslash G'(\A)}\varphi_{1+i}(h)\left(
\int_{G'_{1+i_D}(\Q \backslash \A)}(R(h)\phi^*)(t)\overline{(R(h)\phi^{**})(t)}\ dt
\right)\ dh,\]
where $\varphi_{1+i_D}$ is the characteristic function of the set
\[\{h\in G'_{1+i_D}(\A) \backslash G'(\A): h_{p}^{-1}(1+i_D) h_{p}\in K_p
\text{ for all primes }p\}.\]
Lemma \ref{centralizer} shows that $G'_{1+i_D}(\Q)$ is the image in $G'(\Q)$ of
$\Q(i_D)^\times$.
Again we denote by $T$ the subgroup in $G'$ such that
$T(\Q)$ is the image in $G'(\Q)$ of $\Q(i_D)^\times$
(notice that this $i_D=\sqrt{-3}$ is different 
with the $i_D=\sqrt{-1}$ in the previous case). So
\[\frac{I_{[1+i_D]}}{\vol(K') }
=\int_{T(\A) \backslash G'(\A)}\varphi_{1+i_D}(h)\left(
\int_{T(\Q \backslash \A)}(R(h)\phi^*)(t)\overline{(R(h)\phi^{**})(t)}\ dt
\right)\ dh.\]
\end{proof}

\subsection{Nontrivial orbits and Waldspurger's formula}\label{section.harmonic}

For any two forms $\phi',\phi''\in\pi_3'$,
as functions in $L^2([D^\times])$,
we already have a Petersson inner product $\langle\cdot,\cdot\rangle$ defined as
\begin{equation}\label{innerproduct.D}
\langle\phi',\phi''\rangle=\int_{[D^\times]}
\phi'(g)\overline{\phi''(g)}\ dg.
\end{equation}
We can also consider them as functions in $L^2([E^\times])$,
in which we have an inner product
\[\langle\phi',\phi''\rangle_E:
=\int_{[E^\times]}\phi'(t)\overline{\phi''(t)}\ dt.\]
Here $E/\Q$ is a quadratic field extension as in the above lemma,
embedded in $D$ by $E=\Q(i_D)$.
We can regard the set of all characters
on $E^\times\backslash\A_E^\times$
(whose restrictions on $\A^\times$ are trivial)
as an orthogonal basis of $L^2([E^\times])$ and
decompose $\phi|_{[E^\times]}$ by
\[\phi|_{[E^\times]}=\sum_{\Omega\in\widehat{[E^\times]}}
\frac{\langle \phi|_{[E^\times]},\Omega\rangle_E}
{\langle \Omega,\Omega\rangle_E}\Omega
=\sum_{\Omega\in\widehat{[E^\times]}}
\frac{P_{\Omega^{-1}}(\phi)}
{\langle \Omega,\Omega\rangle_E}\Omega.\]
Here the period integral $P_\Omega:\pi'\to\CC$ 
is defined by \eqref{def.periodintegral}.
Then
\[\begin{split}
&\int_{[E^\times]}\phi'(t)\overline{\phi''(t)}\ dt
=\langle \phi'|_{[E^\times]},\phi''|_{[E^\times]}\rangle_E\\
=\ &\left\langle \sum_{\Omega\in\widehat{[E^\times]}}
\frac{P_{\Omega^{-1}}(\phi')}{\langle \Omega,\Omega\rangle_E}\Omega,
\sum_{\Omega\in\widehat{[E^\times]}}
\frac{P_{\Omega^{-1}}(\phi'')}{\langle \Omega,\Omega\rangle_E}\Omega
\right\rangle_E
=\sum_{\Omega\in\widehat{[E^\times]}}
\frac{P_{\Omega^{-1}}(\phi')}{\langle \Omega,\Omega\rangle_E}
\overline{\left(\frac{P_{\Omega^{-1}}(\phi'')}
{\langle \Omega,\Omega\rangle_E}\right)}
\langle \Omega,\Omega\rangle_E.
\end{split}\]
Noticing $\langle \Omega,\Omega\rangle_E=\vol([E^\times])$,
we have
\begin{equation}\label{harmonicanalysis}
\int_{[E^\times]}\phi'(t)\overline{\phi''(t)}\ dt
=\frac 1{\vol([E^\times])}\sum_{\Omega\in\widehat{[E^\times]}}
P_{\Omega^{-1}}(\phi')P_{\Omega}(\overline{\phi''}).
\end{equation}
The product of two period integrals is related via
a period formula of Waldspurger (Theorem \ref{Waldspurger})
to the central value of a base change $L$-function.

Now we go back to the nontrivial orbits in our RTF.
Lemma \ref{nontrivialorbits} shows that
\[\frac{I_{[\gamma]}(f)}{\vol(K') }=c_\gamma
\int_{\A_{E}^\times \backslash D^\times(\A)}\varphi_\gamma(h)\left(
\int_{\A^\times E^\times \backslash \A_{E}^\times}
(R(h)\phi^*)(t)\overline{(R(h)\phi^{**})(t)}\ dt
\right)\ dh.\]
Here $c_\gamma=\frac 12$ when $\gamma=\gamma_0$, and
$c_\gamma=1$ when $\gamma=\gamma_1$.
Apply \eqref{harmonicanalysis} and Theorem \ref{Waldspurger}
with $\phi'=R(h)\phi^*$, $\phi''=R(h)\phi^{**}$, and we have
\begin{multline*}
\frac{\int_{[E^\times]}
(R(h)\phi^*)(t)\overline{(R(h)\phi^{**})(t)}\ dt}
{\langle R(h)\phi^{*},R(h)\phi^{**}\rangle}
=\frac 1{\vol([E^\times])}
\frac{\zeta^*_{\Q}(2)}{2L(1,\pi_3,\Ad)}\\
\cdot\sum_{\Omega\in\widehat{[E^\times]}}
L(\frac 12,(\pi_3)_E\otimes \Omega)
\prod_v\alpha_v(\pi_{3,v}'(h_v)\phi^*_{v},
\overline{\pi_{3,v}'(h_v)\phi^{**}_{v}};\Omega_v).
\end{multline*}
Theorem \ref{explicitWaldspurger}
gives some conditions for $\Omega$ such that 
$\prod_{v}\alpha_v$ does not vanish.
But for $E=E_0=\Q(\sqrt{-1})$
or $E=E_1=\Q(\sqrt{-3})$ (which has class number 1),
the conditions can be more specific.

Lemma \ref{char.unram} shows that, if
$\Omega:E^\times\backslash\A_E^\times\to\CC^\times$
is unramified everywhere,
it is determined by its Archimedean factor, i.e.
a character of $\CC^\times$ which is invariant under $\cO_E^\times$.
Here \[\begin{array}{lll}
\cO_{E_0}^\times=\{\pm 1,\pm i\}\quad
&\text{for }E_0=\Q(\gamma_0)=\Q(\sqrt{-1}),\
&i=\sqrt{-1};\\
\cO_{E_1}^\times=\{\zeta_6^r\}_{r=0}^5,\quad
&\text{for }E_1=\Q(\gamma_1)=\Q(\sqrt{-3}),\
&\zeta_6=\frac{1}{2}(1+\sqrt{-3}).
\end{array}\]
Then the sum over $\Omega\in\widehat{[E^\times]}$
becomes that over $\Omega_\infty\in
\widehat{\cO_E^\times\backslash\CC^\times/\R^\times}$.
Recall that every character of $\CC^\times/\R^\times$
is of the form $z\mapsto \sgn^{2m}(z)=(z/\bar z)^m$.
Obviously 
\[
\begin{array}{c}
\sgn^{2m}\text{ is $\cO_{E_0}^\times$-invariant}
\Leftrightarrow m\in\Z,\ 2\mid m;\\
\sgn^{2m}\text{ is $\cO_{E_1}^\times$-invariant}
\Leftrightarrow m\in\Z,\ 3\mid m.\\
\end{array} 
\]
These are the $\Omega_\infty$ that may appear in the sum.

Moreover we can rewrite the last condition in Theorem \ref{explicitWaldspurger}:
\begin{itemize}
\item For $E=\Q(\sqrt{-1})$ and $p=2\mid N$, 
$\tau=1+\sqrt{-1}$ is a uniformizer of $E_p$.
For $\Omega_\infty=\sgn^{2m}$ with $2\mid m$,
the last condition in Theorem \ref{explicitWaldspurger} implies
\[\sgn^{2m}(1+i)=-\varepsilon_2(\frac 12,\pi_3)=\begin{cases}
1&\Rightarrow m\equiv 0\pmod 4,\\
-1&\Rightarrow m\equiv 2\pmod 4.
\end{cases}\]
\item For $E=\Q(\sqrt{-3})$ and $p=3\mid N$, 
$\tau=\sqrt{-3}$ is a uniformizer of $E_p$.
And similarly for $\Omega_\infty=\sgn^{2m}$ with $3\mid m$,
we have
\[\sgn^{2m}(\sqrt{-3})=-\varepsilon_3(\frac 12,\pi_3)=\begin{cases}
1&\Rightarrow m\equiv 0\pmod 6,\\
-1&\Rightarrow m\equiv 3\pmod 6.
\end{cases}\]
\end{itemize}

In conclusion we have the following restrictions on $\Omega$.
\begin{Lemma}\label{nonvanishOmega}
Let $\ell=2$ or $3$ be the ramified place of $E=E_0$ or $E_1$.
With notations in Theorem \ref{orbdecomp} and Theorem \ref{Waldspurger},
\[\prod_v\alpha_v(\pi_v'(h_v)\phi_{v}^*,
\overline{\pi_v'(h_v)\phi_{v}^{**}};\Omega_v)=0\] unless
$\Omega_v$ is unramified for all finite $v$ and
$\Omega_\infty(z)=\sgn^{2m}(z)=(z/\bar z)^m$ where
\[m\in \ell\Z,\quad |m|\leq k-1,\quad\text{and} \quad
(-1)^{\ord_2(m/\ell)}=-\varepsilon_\ell(\frac 12,\pi_3)\quad
\text{if }\ell\mid N.\]
In particular, when the weight is $2k=2$ or $4$,
$m$ can only be $0$, i.e. $\Omega$ can only be a trivial character. 
\end{Lemma}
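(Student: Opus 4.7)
The argument is essentially assembled from the three local vanishing inputs already introduced just before the statement (Lemmas \ref{nonarchramify1}, \ref{nonarchiunram2}, and \ref{geom.archi.vanish}), combined with the global classification of everywhere-unramified Hecke characters in Lemma \ref{char.unram}. My plan is to process the finite, ramified, and Archimedean constraints in turn and then read off the conditions on $m$.

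First, at each finite prime $p\nmid\disc(D)$, Lemma \ref{nonarchiunram2} forces $\alpha_p=0$ unless $\Omega_p$ is unramified. At each $p\mid N$, the stronger Lemma \ref{nonarchramify1} requires the exact matching $\Omega_p=\overline{\delta_p\circ N_{D_p}}$, where $\pi_p=\sigma_{\delta_p}$; since $\delta_p$ is itself unramified, $\Omega_p$ is unramified here too. Thus $\prod_v\alpha_v=0$ unless $\Omega$ is unramified at every finite place. Lemma \ref{char.unram}, applied to $E_0=\Q(\sqrt{-1})$ and $E_1=\Q(\sqrt{-3})$ (both imaginary quadratic of class number one), then pins such an $\Omega$ down to its Archimedean component, which must be an $\cO_E^\times$-invariant character of $\CC^\times/\R^\times$. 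Every such character has the form $\sgn^{2m}$ with $m\in\Z$, and invariance under $\cO_{E_0}^\times=\{\pm 1,\pm i\}$ (respectively $\cO_{E_1}^\times=\langle\zeta_6\rangle$) imposes $2\mid m$ (respectively $3\mid m$), i.e.\ $p\mid m$ with $p\in\{2,3\}$ the prime ramified in $E/\Q$.

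Second, when $p\mid N$, I would combine the triviality of $\Omega$ on $E^\times$ with the local matching $\Omega_p(\varpi_{E_p})=\overline{\delta_p(N_{E_p/\Q_p}(\varpi_{E_p}))}$ supplied by Lemma \ref{nonarchramify1}. Choosing the global uniformizers $1+i$ of $(E_0)_2$ and $\sqrt{-3}$ of $(E_1)_3$ (so that $\Omega_\infty$ at this element and $\Omega_p$ at this element multiply to $1$), the identity becomes $\sgn^{2m}(1+i)=\delta_2(2)$ respectively $\sgn^{2m}(\sqrt{-3})=\delta_3(3)$. From $\sgn(1+i)=e^{i\pi/4}$ and $\sgn(\sqrt{-3})=i$ a direct computation shows each left-hand side equals $(-1)^{\ord_2(m/p)}$, while the right-hand side equals $\delta_p(p)=-\varepsilon_p(\tfrac12,\pi)$; this is precisely the congruence stated in the lemma.

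Finally, the Archimedean bound $|m|\leq k-1$ is the content of Lemma \ref{geom.archi.vanish}: only weights appearing in the $G'_\infty$-isotypic decomposition of $\pi'_{2k}\otimes\pi'_{2k}$ that are compatible with the matrix coefficient of $w^\circ_{2k}$ contribute, and these are indexed by $m$ with $|m|\leq k-1$. Combining the three constraints gives the lemma, with the small-weight special cases $2k=2,4$ then forcing $m=0$ and hence $\Omega=\1$. The only step where any real computation is needed is the evaluation of $\sgn^{2m}(1+i)$ and $\sgn^{2m}(\sqrt{-3})$ in terms of $\ord_2(m/p)$; everything else is bookkeeping on top of the cited local lemmas.
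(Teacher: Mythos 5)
Your proposal is correct and follows essentially the same route as the paper: Lemmas \ref{nonarchiunram2} and \ref{nonarchramify1} kill $\alpha_p$ at finite places unless $\Omega$ is everywhere unramified, Lemma \ref{char.unram} then reduces $\Omega$ to its Archimedean factor $\sgn^{2m}$, $\cO_E^\times$-invariance forces $p\mid m$, Lemma \ref{geom.archi.vanish} gives $|m|\le k-1$, and the matching condition at the ramified prime $p\mid N$ (obtained by evaluating $\Omega$ on the global elements $1+i$ or $\sqrt{-3}$, using that $\Omega$ is trivial on $E^\times$) produces the congruence on $m/p$.

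One small point worth flagging: your direct computation gives $\sgn^{2m}(1+i)=(-1)^{m/2}$ and $\sgn^{2m}(\sqrt{-3})=(-1)^{m/3}$, i.e.\ $(-1)^{m/p}$, which records the \emph{parity} of $m/p$. Writing this as $(-1)^{\ord_2(m/p)}$ (with $\ord_2$ the $2$-adic valuation) does not agree in general — take $m=4$, $p=2$: $(-1)^{m/p}=1$ while $(-1)^{\ord_2(m/p)}=-1$. You are simply reproducing the notation from the lemma's statement, but the content of the constraint should be read as $(-1)^{m/p}=\delta_p(p)=-\varepsilon_p(\tfrac12,\pi)$, equivalently $m\equiv 0$ or $2p\pmod{4p}$ according to the sign of $\delta_p(p)$; that is what your derivation actually produces, and it is what the paper's surrounding discussion also derives. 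With that caveat the proof is sound.
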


Now we can write $I_{[\gamma]}(f)$ as
\begin{multline*}
\frac{I_{[\gamma]}(f)}{\vol(K') }=
\langle \phi^{*},\phi^{**}\rangle\frac {c_\gamma}{\vol([E^\times])}
\frac{\zeta^*_{\Q}(2)}{2L(1,\pi_3,\Ad)}\\
\cdot \sum_{\Omega}
L(\frac 12,(\pi_3)_E\otimes \Omega)
\prod_v\int_{E_v^\times \backslash D_v^\times}
\alpha_v(\pi_{3,v}'(h_v)\phi^*_{v},
\overline{\pi_{3,v}'(h_v)\phi^{**}_{v}};\Omega_v)
\varphi_{\gamma,v}(h_v)\ dh_v,
\end{multline*}
where the sum is over all $\Omega$ satisfying 
the conditions in Lemma \ref{nonvanishOmega}.
Denote $\phi'=R(h)\phi^*$, $\phi''=R(h)\phi^{**}$.
Notice that $B_\infty(\phi'_\infty,
\overline{\phi''_\infty})
=\langle\phi^*,\phi^{**}\rangle$.
We can write 
\begin{multline}\label{nontriv.orbit}
\frac{I_{[\gamma]}(f)}{\vol(K') }=
\frac {c_\gamma\zeta^*_{\Q}(2)}{2L(1,\pi_3,\Ad)\vol([E^\times])}\\
\cdot \sum_{\Omega}
L(\frac 12,(\pi_3)_E\otimes \Omega)
\int_{\CC^\times \backslash D_\infty^\times}
B_\infty(\phi'_\infty,\overline{\phi''_\infty})
\alpha_\infty\ dh_\infty
\prod_p
\int_{E_p^\times \backslash D_p^\times}\varphi_{\gamma,p}(h_p)
\alpha_p\ dh_p.
\end{multline}

We will show in the next sections 
the calculations of the above local integrals.
Lemma \ref{geom.archi.coeff} together with \eqref{architerm} shows
\[\int_{\CC^\times \backslash D_\infty^\times}
B_\infty(\phi'_\infty,\overline{\phi''_\infty})
\alpha_\infty\ dh_\infty=
\frac{\vol(G'_\infty)\|\phi_3\|^2}
{L_\infty(\frac 12,(\pi_3)_E\otimes \Omega)}
\frac{2^2\Gamma(2k-1)^4}{(2\pi)^{2k+1}\Gamma(k)^3\Gamma(3k-1)}
\mathbb{I}_{2k}^{(m)}(\gamma)\]
for some constant $\mathbb{I}_{2k}^{(m)}(\gamma)$
depending only on $\gamma$, $k$ and $m$.
For the integrals at finite places,
Lemmas \ref{nonarchiunram2}, \ref{nonarchiunram1}
and \eqref{nonarchramify3} imply that
\[\int_{E_p^\times \backslash D_p^\times}\varphi_{\gamma,p}
\alpha_p\ dh_p=\vol(K_p)\cdot\begin{cases}
1,& p\nmid\disc(D);\\
2(1-p^{-1}),& p\mid\disc(D)
\end{cases}\]
when $\alpha_p$ does not vanish.
Recall that $\zeta^*_{\Q}(2)=\pi/6$, 
$\vol([E^\times])=2L(1,\eta)$.
With the above results, we can rewrite \eqref{nontriv.orbit} as
\[
\frac{I_{[\gamma]}(f)}{\vol(K') }=
\frac {c_\gamma\|\phi_3\|^2\vol(K')\pi/6}{L(1,\pi_3,\Ad)L(1,\eta)} 
2^{\omega(N)}\frac{\varphi(N)}N
\frac{\Gamma(2k-1)^4}{(2\pi)^{2k+1}\Gamma(k)^3\Gamma(3k-1)}
\cdot \sum_{\Omega}
L_{\fin}(\frac 12,(\pi_3)_E\otimes \Omega)
\mathbb{I}_{2k}^{(m)}(\gamma).
\]
Here we have 
\[\vol(K')=\frac {24}{\varphi(N)},\quad
c_\gamma=\begin{cases}\frac 12&\\1&\end{cases},\quad
\eta\text{ corresponds to }
\begin{cases}\chi_{-4},&\gamma=\gamma_0;\\
\chi_{-3},&\gamma=\gamma_1.\end{cases}\]
Therefore
\[
\frac{I_{[\gamma]}(f)}{\vol(K') }=\frac{2c_\gamma}{L(1,\eta)}
\frac {\|\phi_3\|^2 }{L(1,\pi_3,\Ad)} 
\frac{2^{\omega(N)}}N
\frac{\Gamma(2k-1)^4}{(2\pi)^{2k}\Gamma(k)^3\Gamma(3k-1)}
\sum_{\Omega}
L_{\fin}(\frac 12,(\pi_3)_E\otimes \Omega)
\mathbb{I}_{2k}^{(m)}(\gamma);
\]
in particular we have $L(1,\chi_{-4})=1/4$, 
$L(1,\chi_{-3})=1/3\sqrt 3$
by the Dirichlet class number formula, and then
\begin{equation}\label{nontrivialorbiteqns}
\begin{split}\frac{I_{[\gamma_0]}(f)}{\vol(K') }&=
\frac {4\|\phi_3\|^2 }{L(1,\pi_3,\Ad)} 
\frac{2^{\omega(N)}}N
\frac{\Gamma(2k-1)^4}{(2\pi)^{2k}\Gamma(k)^3\Gamma(3k-1)}
\sum_{\Omega}
L_{\fin}(\frac 12,(\pi_3)_{E_0}\otimes \Omega)
\mathbb{I}_{2k}^{(m)}(\gamma_0),\\
\frac{I_{[\gamma_1]}(f)}{\vol(K') }&=
\frac {6\sqrt{3}\|\phi_3\|^2 }{L(1,\pi_3,\Ad)} 
\frac{2^{\omega(N)}}N
\frac{\Gamma(2k-1)^4}{(2\pi)^{2k}\Gamma(k)^3\Gamma(3k-1)}
\sum_{\Omega}
L_{\fin}(\frac 12,(\pi_3)_{E_1}\otimes \Omega)
\mathbb{I}_{2k}^{(m)}(\gamma_1).
\end{split}
\end{equation}

With Theorem \ref{orbdecomp} and \eqref{nontrivialorbiteqns},
we can get Theorem \ref{maingeometric}, 
the orbital decomposition of $I(f)$.

\subsection{Local calculation 
on ramified quaternion algebras}\label{section.nonarchi.ram}

In the following two sections we will explicitly
calculate the local integrals in \eqref{nontriv.orbit}
at non-Archimedean places.
The local constants $\alpha_p$ have been studied in
Lemmas \ref{nonarchiunram2} and \ref{nonarchramify1}.

\begin{Lemma}\label{nonarchramify2}
For $p\mid \disc(D)$,
\[\int_{E_{p}^\times \backslash D_p^\times}\varphi_{\gamma,p}(h_p) \ dh_p
=\int_{E_{p}^\times \backslash D_p^\times}\ dh_p
=\vol(E_{p}^\times \backslash D_p^\times).\]
\end{Lemma}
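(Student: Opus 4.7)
The plan is to show that $\varphi_{\gamma,p}$ is identically $1$ on $E_p^\times \backslash D_p^\times$; the claim then follows by integrating the constant $1$.

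First I would unpack the definition: $\varphi_{\gamma,p}$ is the characteristic function of those $h_p \in E_p^\times \backslash D_p^\times$ with $h_p^{-1} \gamma h_p \in K_p$, where $K_p$ is the image of $Z(\Q_p)\cO_p^\times$ in $G'_p$. Since $p \mid \disc(D)$, Proposition \ref{maxorder} tells us $D_p$ is a division algebra with the unique maximal order
\[\cO_p = \{x \in D_p : N_{D_p}(x) \in \Z_p\},\]
which, being intrinsically defined, is stable under conjugation by any element of $D_p^\times$. Hence $\cO_p^\times$ is also stable under conjugation, and it suffices to check that $\gamma$ lies in $\cO_p^\times$.

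For this, recall from Lemma \ref{nonzeroterms} that the only orbits contributing are those with $\Tr_D(\gamma) \in \Z$ and $N_D(\gamma) = 1$; concretely this holds for both $\gamma_0$ and $\gamma_1$. Thus $\gamma$ satisfies $X^2 - \Tr_D(\gamma)X + 1 = 0$ with coefficients in $\Z_p$, so $N_{D_p}(\gamma) = 1 \in \Z_p^\times$, which by the characterization above places $\gamma \in \cO_p^\times$. For any $h_p \in D_p^\times$ we then have $h_p^{-1}\gamma h_p \in \cO_p^\times \subset Z(\Q_p)\cO_p^\times$, so its image in $G'_p$ lies in $K_p$, i.e. $\varphi_{\gamma,p}(h_p) = 1$.

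Consequently the integrand is constantly $1$ on $E_p^\times \backslash D_p^\times$ and
\[\int_{E_p^\times \backslash D_p^\times} \varphi_{\gamma,p}(h_p)\,dh_p = \vol(E_p^\times \backslash D_p^\times),\]
as claimed. There is no real obstacle here; the only thing to double-check is that $K_p$ as defined is large enough to absorb any conjugate of $\gamma$, which is exactly what uniqueness of the maximal order in the ramified case gives us.
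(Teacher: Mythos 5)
Your proof is correct and in essence the same as the paper's: both reduce to the single observation that $N_{D_p}(h_p^{-1}\gamma h_p) = N_{D_p}(\gamma) = 1$ lies in $\Z_p^\times$, which by the norm-criterion for the unique maximal order of a division algebra forces $h_p^{-1}\gamma h_p \in \cO_p^\times$ for every $h_p$, so $\varphi_{\gamma,p} \equiv 1$. The only difference is that you phrase this by first noting $\cO_p$ is conjugation-stable because it is unique, then checking $\gamma \in \cO_p^\times$, whereas the paper applies the norm criterion directly to $h_p^{-1}\gamma h_p$; these are the same fact packaged two ways. One small point worth making explicit: Proposition \ref{maxorder} gives $\cO_p = \{x : N_{D_p}(x) \in \Z_p\}$, and to pass to $\cO_p^\times$ you need the additional (standard) step that $x \in \cO_p^\times$ iff $N_{D_p}(x) \in \Z_p^\times$ — you invoke this implicitly via $N_{D_p}(\gamma)=1$, but it is worth stating rather than attributing to "the characterization above," which literally describes $\cO_p$, not its unit group.
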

\begin{proof}
In this case $D_p$
is a division algebra.
Recall that
$\cO_p^\times=\{x\in D_p^\times: v_p(N_{D_p}(x))=0\}$
by Proposition \ref{maxorder}.
Clearly for any $h_p\in D_p$,
$N_{D_p}(h_{p}^{-1}\gamma h_{p})=N_{D_p}(\gamma)=1$
for $\gamma=\gamma_0$ or $\gamma_1$.
So the condition $h_{p}^{-1}\gamma h_{p}\in \cO_p^\times$ is trivial,
i.e. $\varphi_{\gamma,p}(h_p)\equiv 1$.
\end{proof}

Together with Lemma \ref{nonarchramify1} we know, 
when $\alpha_p\neq 0$,
\[\int_{E_{p}^\times \backslash D_p^\times}
\varphi_{\gamma,p}(h_p) \alpha_p\ dh_p
=(1-p^{-1})\vol(\Q_{p}^\times \backslash E_p^\times)
\vol(E_{p}^\times \backslash D_p^\times)
=(1-p^{-1})\vol(G'_p).\]
Notice that $N_{D_p}(\Q_{p}^\times)=(\Q_p^\times)^2$
and $N_{D_p}(D_{p}^\times)=\Q_p^\times$ 
(cf. \cite[Lemma 13.4.9]{voight2017quaternion}).
Since in this case $\cO_p^\times=\{g_p\in D_p^\times:
v_p(N_{D_p}(g_p))=0\}$,
we can write
\[\begin{split}G_p'&=\Q_{p}^\times \backslash D_p^\times
=\Q_{p}^\times \backslash \Big(\{g_p: 2\mid v_p(N_{D_p}(g_p))\}
\sqcup\{g_p: 2\nmid v_p(N_{D_p}(g_p))\}\Big)\\
&=\Q_{p}^\times \backslash \Q_p^\times \cO_p^\times
\sqcup\Q_{p}^\times \backslash j\Q_p^\times \cO_p^\times
=K_p\sqcup jK_p.\end{split}\]
(Here $v_p(N_{D_p}(j))=v_p(N)=1$ for $j\in D_p=(\frac{-q,-N}{\Q_p})$.
Recall that $D_p=(\frac{-q,-N}{\Q_p})$
for $q=1$ or $3$, by Lemma \ref{AnotherPresentation}.)
And therefore
$\vol(G_p')
=2\vol(K_p)$.

In conclusion, for $p\mid N$ and $\chi_{-d}(p)\neq 1$,
\begin{equation}\label{nonarchramify3}
\alpha_p\int_{E_{p}^\times \backslash D_p^\times}
\varphi_{\gamma,p}(h_p) \ dh_p
=\begin{cases}
2(1-p^{-1})\vol(K_p),&\Omega_p=\overline{\delta_p\circ N_{D_p}};\\
0,&\text{otherwise}.
\end{cases}\end{equation}
In particular if $\Omega_p=\1$ then
\begin{equation}\label{nonarchramify4}
\alpha_p\int_{E_{p}^\times \backslash D_p^\times}
\varphi_{\gamma,p}(h_p) \ dh_p\\
=(1-\chi_{-d}(p))(1-p^{-1})\vol(K_p)\cdot\begin{cases}
1,&\chi_{-d}(p)=-1;\\
(1-\varepsilon(\frac 12,(\pi_3)_p)),&\chi_{-d}(p)=0.
\end{cases}\end{equation}

\subsection{Local calculation 
on split quaternion algebras}\label{section.nonarchi.unram}

When $p\nmid\disc(D)$,
we have $D_p\cong M(2,\Q_p)$ and
$\pi'_{3,p}\cong\pi_{3,p}$ are both unramified.
Take the maximal order $\cO_p$ to be the preimage of $M(2,\Z_p)$.
Recall that in Section \ref{section.waldspurger} 
we fix an isomorphism $D_p\cong M(2,\Q_p)$
so that $\cO_{E_p}= E_p\cap \cO_p$ for $E=\Q(\gamma)$
(as defined in Theorem \ref{orbdecomp}).

Here $h_p$ satisfies
$h_p^{-1}\gamma h_p\in K_p$ for all $p$.
(Our choice of $\gamma$ has $N_{D}(\gamma)=1$, so
$h_p^{-1}\gamma h_p\in \GL(2,\Z_p)$.)
Let $\phi_p$ be the normalized spherical vector 
in $\pi_{3,p}$.
For $\phi'_p=\phi''_p=\pi_p(h_p)\phi_p$
we care about
\begin{itemize}
	\item $\int_{E_p^\times/\Q_p^\times}
	B_p(\pi_p(t_p)\phi'_p,\overline{\phi''_p})
	\Omega_p(t_p)\ dt_p$
	for $h_p^{-1}\gamma h_p\in K_p$; and
	\item $\int_{E_p^\times\backslash D_p^\times }
	\1_{K_p}(h_p^{-1}\gamma h_p) \ dh_p$.
\end{itemize}
For the second integral we have the following lemma.
Notice that with this lemma the first integral has been studied 
in Lemma \ref{nonarchiunram2}.

\begin{Lemma}\label{nonarchiunram1}
Assume that $p\nmid\disc(D)$.
Let $\gamma\in E$ be such that 
$\gamma=\gamma_0=\sqrt{-1}$ or $\gamma=\gamma_1=\frac{1+\sqrt{-3}}2$.
\begin{enumerate}[(1)]
\item For $h\in D_p^\times$,
\[\text{(i)} \quad h\in E_p^\times\GL(2,\Z_p)\quad
\text{if and only if}\quad \text{(ii)}\quad h^{-1}\gamma h\in\GL(2,\Z_p).\]
\item
\[\int_{E_p^\times\backslash D_p^\times }
	\1_{K_p}(h_p^{-1}\gamma h_p) \ dh_p
=\frac{\vol(\cO_p^\times)}{\vol(\cO_{E_p}^\times)}.\]
\end{enumerate}
\end{Lemma}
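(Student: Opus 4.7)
The plan is to first prove (1) and then deduce (2) via the quotient-measure formula of Lemma \ref{productmeasure}.

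For the $\Rightarrow$ direction of (1), writing $h = t k$ with $t \in E_p^\times$ and $k \in \cO_p^\times$ gives $h^{-1}\gamma h = k^{-1}\gamma k$, because $t$ commutes with $\gamma \in E_p$. Since $\gamma$ is a root of unity in $E$, both $\gamma$ and $\gamma^{-1}$ lie in $\cO_{E_p} = E_p \cap \cO_p$ (by Lemma \ref{admissibleorder}), so $\gamma \in \cO_p^\times = \GL(2,\Z_p)$, and hence $k^{-1}\gamma k \in \GL(2,\Z_p)$.

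For the $\Leftarrow$ direction, a determinant/norm computation first yields $h^{-1}\gamma h \in \cO_p^\times$, since $N_D(h^{-1}\gamma h) = N_D(\gamma) = 1 \in \Z_p^\times$. Next I claim that the map $\iota_h : \cO_{E_p} \hookrightarrow \cO_p$ sending $\gamma \mapsto h^{-1}\gamma h$ is an \emph{optimal} embedding: one verifies directly that $\Z_p[\gamma] = \cO_{E_p}$ for both $\gamma = \gamma_0$ and $\gamma = \gamma_1$ (by comparing residue fields/uniformizers case by case in the split, inert, and ramified cases, including the possibly delicate $p = 2$ and $p = 3$), and then the $\Z_p$-order $\Q_p(h^{-1}\gamma h) \cap \cO_p$ is an order in the quadratic $\Q_p$-algebra $\Q_p(h^{-1}\gamma h) \cong E_p$ containing the maximal order $\iota_h(\cO_{E_p})$, forcing equality. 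Invoking the local optimal-embedding theorem for the split quaternion algebra $D_p = M(2,\Q_p)$ (all optimal embeddings of $\cO_{E_p}$ into $M(2,\Z_p)$ form a single $\cO_p^\times$-orbit) yields $k \in \cO_p^\times$ with $k^{-1}\gamma k = h^{-1}\gamma h$. Then $hk^{-1}$ centralizes $\gamma$ in $D_p^\times$, and since $E_p = \Q_p[\gamma]$ is a maximal commutative subalgebra of $D_p$, its centralizer in $D_p^\times$ is $E_p^\times$, so $hk^{-1} \in E_p^\times$ and $h \in E_p^\times \cO_p^\times$; passing to $G_p'$ gives $h \in E_p^\times K_p$.

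For (2), part (1) identifies $\varphi_{\gamma, p}$ with the characteristic function of $E_p^\times \backslash E_p^\times K_p$ inside $E_p^\times \backslash D_p^\times$. Applying Lemma \ref{productmeasure} with $G = E_p^\times K_p$, $K = E_p^\times$, and $H = K_p$ reduces the integral to $\vol(K_p)/\vol(E_p^\times \cap K_p)$. Using $E_p \cap \cO_p = \cO_{E_p}$, one identifies $E_p^\times \cap K_p = \cO_{E_p}^\times/\Z_p^\times$ inside $G_p' = \Q_p^\times \backslash D_p^\times$, while $K_p = \cO_p^\times/\Z_p^\times$, so the volumes of $\Z_p^\times$ cancel and the ratio simplifies to $\vol(\cO_p^\times)/\vol(\cO_{E_p}^\times)$. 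The main obstacle is the $\Leftarrow$ direction of (1): verifying the optimality of $\iota_h$ and invoking the optimal-embedding theorem in the correct form, uniformly across the possible behaviors (split, inert, ramified) of $p$ in $E$.
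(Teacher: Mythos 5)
Your proposal is correct, and the $\Leftarrow$ direction of (1) takes a genuinely different route from the paper's. The paper proves the equivalence by elementary matrix calculations: it fixes an explicit isomorphism $D_p\iso M(2,\Q_p)$, applies Iwasawa decomposition, and works out case by case (split, inert, and the ramified $p=2$, $p=3$ cases) exactly which upper-triangular cosets $\bigl(\begin{smallmatrix}y & x\\ 0 & 1\end{smallmatrix}\bigr)$ can arise, ultimately identifying the extra ramified coset with $\varpi_{E_p}\Z_p^\times\cO_p^\times$ by a direct check. You instead observe that $\Z_p[\gamma]=\cO_{E_p}$ (the paper's Lemma \ref{localquadext}), so the conjugated embedding $\iota_h:E_p\hookrightarrow D_p$, $x\mapsto h^{-1}xh$, is automatically optimal once $h^{-1}\gamma h\in\cO_p$, and then the local optimal-embedding theorem for the maximal order $M(2,\Z_p)$ says it is $\cO_p^\times$-conjugate to the fixed embedding (which is optimal by Lemma \ref{admissibleorder}); the centralizer argument finishes. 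This is more conceptual and unifies all three ramification behaviors of $p$ in $E$ under one principle, at the cost of importing the optimal-embedding theorem as a black box. The paper's longer computation is self-contained and has a side benefit: the explicit Iwasawa representative it produces for $\varpi_{E_p}$ (namely that $\varpi_{E_p}\in\bigl(\begin{smallmatrix}p & *\\ 0 & 1\end{smallmatrix}\bigr)\cO_p^\times$ in the ramified case) is reused directly in the proof of Lemma \ref{nonarchiunram2} when invoking Macdonald's formula; your abstract argument would leave that representative to be re-derived. Part (2) and the $\Rightarrow$ direction match the paper's argument.
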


\begin{proof}
The second statement can be shown 
by applying $H=\cO_p^\times$ and $K=E_p^\times$ in Lemma \ref{productmeasure}:
\[\int_{E_p^\times\backslash D_p^\times }
	\1_{K_p}(h_p^{-1}\gamma h_p) \ dh_p
=\int_{E_p^\times\backslash E_p^\times \cO_p^\times}\ dh_p
=\vol((\cO_p^\times\cap E_p^\times)\backslash\cO_p^\times)
=\frac{\vol(\cO_p^\times)}{\vol(\cO_{E_p}^\times)}.\]
For the first statement,
(i)$\Rightarrow$(ii) is obvious
since $\gamma\in\GL(2,\Z_p)$ and $\gamma$ commutes with $E_p^\times$,
and (ii)$\Rightarrow$(i) will be shown
case by case as follows.
\end{proof}

\subsubsection{}

When $E_p/\Q_p$ is split, 
$E_p=\Q_p(\gamma)\cong\Q_p\oplus\Q_p$ 
is embedded in $M(2,\Q_p)$
as the set of all the diagonal matrices.
In particular, $\gamma=\gamma_0$ (or $\gamma_1$)
can be embedded in $\GL(2,\Q_p)$
as $(\begin{smallmatrix}\gamma&\\&\bar \gamma\end{smallmatrix})$.

The Iwasawa decomposition gives that
\[h=c_h\left(\begin{array}{cc}p^r& \\ &1\end{array}\right)
\left(\begin{array}{cc}1&x_h\\ &1\end{array}\right)k_h,\quad
c_h\in\Q_p^\times,\ r\in\Z,\ x_h\in\Q_p,\ k_h\in\GL(2,\Z_p).\]
One can simplify that
\[h^{-1}\gamma h=
k_h^{-1}\left(\begin{array}{cc}\gamma&\left(\gamma-\bar\gamma\right)x_h\\
&\bar \gamma\end{array}\right)k_h.\]
Therefore $h^{-1}\gamma h\in\GL(2,\Z_p)$
if and only if $\left(\gamma-\bar \gamma\right)x_h\in\Z_p$.
This implies $v_p(x_h)\geq 0$ when $p\neq 2$ (or $p\neq 3$ respectively).
Then $h\in c_h(\begin{smallmatrix}p^r& \\ &1\end{smallmatrix})\GL(2,\Z_p)$.
This proves Lemma \ref{nonarchiunram1} when $E_p/\Q_p$ is split.

\subsubsection{}

When $E_p/\Q_p$ is non-split and unramified, 
it is well-known that 
$\cO_{E_p}=\Z_p[\gamma]$ holds for both $E=E_0$ and $E_1$.
Then 
Lemma \ref{nonarchiunram1} can be shown by 
taking $\tau=\gamma$ in the following lemma. 

\begin{Lemma}
Suppose that $E_p/\Q_p$ is a quadratic field extension which is unramified,
and that $\cO_{E_p}=\Z_p[\tau]$ for some $\tau\in\cO_{E_p}^\times$.
Assume that either $p$ is an odd prime, or $p=2$ but $\Tr_{E_p/\Q_p}(\tau)\notin 2\Z_2$.
Fix an embedding $E_p\hookrightarrow M(2,\Q_p)$ by
\[a+b\tau\longmapsto
\left(\begin{array}{cc}a+b\Tr_{E_p/\Q_p}(\tau)&bN_{E_p/\Q_p}(\tau)\\
-b&a\end{array}\right)\]
(which is the same as in Section \ref{section.waldspurger}).
Then we have, 
for $h\in \GL(2,\Q_p)$,
\[h\in E_p^\times\GL(2,\Z_p)\quad
\text{if and only if} \quad h^{-1}\tau h\in\GL(2,\Z_p).\]
(In fact it is equivalent to say $h\in \Q_p^\times\GL(2,\Z_p)$.)
\end{Lemma}

\begin{proof}
The necessity is easy to check.
To show the sufficiency,
write $h=\left(\begin{smallmatrix}y&x\\ &1\end{smallmatrix}\right)k_h$
for some $k_h\in Z_p\GL(2,\Z_p)$
using Iwasawa decomposition.
Then $h^{-1}\tau h\in\GL(2,\Z_p)$ implies
\begin{multline*}
\left(\begin{array}{cc}y&x\\ &1\end{array}\right)^{-1}
\left(\begin{array}{cc}\Tr_{E_p/\Q_p}(\tau)&N_{E_p/\Q_p}(\tau)\\-1&0\end{array}\right)
\left(\begin{array}{cc}y&x\\ &1\end{array}\right)\\
=\left(\begin{array}{cc}\Tr_{E_p/\Q_p}(\tau)+x&
(x^2+x\Tr_{E_p/\Q_p}(\tau)+N_{E_p/\Q_p}(\tau))y^{-1}\\
-y&-x\end{array}\right)
\in\GL(2,\Z_p),\end{multline*}
and hence 
\begin{equation}\label{conjugatecondition}
x,y\in\Z_p,\quad
v_p(x^2+x\Tr_{E_p/\Q_p}(\tau)+N_{E_p/\Q_p}(\tau))\geq v_p(y).
\end{equation}
But we notice that $x^2+x\Tr_{E_p/\Q_p}(\tau)+N_{E_p/\Q_p}(\tau)=0$
has two solutions $x=-\tau$ and $-\bar\tau$ 
(here $\bar\cdot$ is the nontrivial element in $\operatorname{Gal}(E_p/\Q_p)$) 
in $E_p=\Q_p(\tau)$
and therefore has no solution in $\Q_p$.
Using Hensel's Lemma 
(this is where the assumption that $\Tr_{E_p/\Q_p}(\tau)\notin 2\Z_2$ for $p=2$ works),
$x^2+x\Tr_{E_p/\Q_p}(\tau)+N_{E_p/\Q_p}(\tau)\in\Z_p^\times$ for any $x\in\Z_p$.
Then \eqref{conjugatecondition} implies $y\in\Z_p^\times$, 
that is $h\in Z_p\GL(2,\Z_p)\subseteq E_p^\times\GL(2,\Z_p)$.
\end{proof}

\subsubsection{}

When $p$ ramifies in $E$, i.e. 
when $p=2$ for $\gamma=\gamma_0$
or $p=3$ for $\gamma=\gamma_1$,
the necessity can be seen by direct calculation.
To show the sufficiency,
we again use Iwasawa decomposition and
write $h=\left(\begin{smallmatrix}y&x\\ &1\end{smallmatrix}\right)k_h$
for some $k_h\in Z_p\GL(2,\Z_p)$.

For $\gamma=\gamma_0$, $E_0=\Q(\sqrt{-1})$ and $p=2$,
we choose $\tau_2=\varpi_2=1+\sqrt{-1}$
to be the uniformizer in $E_p$ so that 
$\cO_{E_p}=\Z_p[\tau_p]$.
By our choice of embedding $E_p\hookrightarrow M(2,\Q_p)$
defined in Section \ref{section.waldspurger},
\[\varpi_2\mapsto
\left(\begin{array}{cc}2&2\\
-1&0\end{array}\right),
\quad \gamma_0=-1+\varpi_2\mapsto
\left(\begin{array}{cc}1&2\\
-1&-1\end{array}\right).\]
Then 
\[h^{-1}\gamma_0 h=k_h^{-1}\left(\begin{array}{cc}1+x&((x+1)^2+1)y^{-1}\\
-y&-x-1\end{array}\right)k_h
\in\GL(2,\Z_2)\]
implies that $x,y\in \Z_2$ and
$v_2((x+1)^2+1)\geq v_2(y)$.

Notice that 
\[v_2((x+1)^2+1)=0\Leftrightarrow x\in\Z_2^\times 
\quad\text{and}\quad 
v_2((x+1)^2+1)=1\Leftrightarrow x\in 2\Z_2.\]
If $x\in\Z_2$ and $y\in\Z_2^\times$ 
we directly get  
$h\in Z_p\GL(2,\Z_p)\subseteq E_p^\times\GL(2,\Z_p)$.
The only case that is not covered above is 
when $v_2(y)=1$ (and then $v_2(x)\geq 1$).
In this case 
\[h=\left(\begin{array}{cc}2&2\\
-1&0\end{array}\right)
\left(\begin{array}{cc}0&-1\\
\frac y2&\frac x2+1\end{array}\right)k_h
\in \varpi_2 Z_p\GL(2,\Z_p)
\subseteq E_p^\times\GL(2,\Z_p).\]
These give a proof of Lemma \ref{nonarchiunram1} 
when $E=E_0$ and $p=2$.

For $\gamma=\gamma_1$, $E_1=\Q(\sqrt{-3})$ and $p=3$,
we choose $\tau_3=\varpi_3=\sqrt{-3}$ 
(one can check that $\cO_{\Q(\sqrt{-3})_3}=\Z_3[\sqrt{-3}]$).
The proof is similar, noticing that
\[\varpi_3\mapsto
\left(\begin{array}{cc}0&3\\-1&0\end{array}\right),
\quad \gamma_1=\frac {1+\varpi_3}2\mapsto
\frac 12\left(\begin{array}{cc}1&3\\-1&1\end{array}\right)\]
under our embedding $E_p\hookrightarrow M(2,\Q_p)$.

\subsection{Compatibility of two nontrivial orbits}
\label{section.compatibility}

When $N$ has no prime factor $\equiv 1\pmod 4$ and
none $\equiv 1\pmod 3$
(i.e. all prime factors of $N$ are either $2$, $3$,
or $\equiv 11\pmod {12}$),
$\gamma_0$ and $\gamma_1$
both appear in the quaternion algebra $D$,
i.e. two quadratic fields $E_0$ and $E_1$
both can be embedded in $D$.
Locally, for $p\nmid \disc(D)$,
the embeddings are given in 
Section \ref{section.Waldspurger.split}.
But different embeddings in this case 
would lead to different isomorphisms $D_p\cong M(2,\Q_p)$,
and moreover lead to local maximal orders 
that differ by conjugation by an element in $\GL(2,\Q_p)$.
But the spherical vector $\phi_{3,p}$
in the definition of the distribution $I(f)$
depends on the choice of maximal order of $D_p$.

Recall that we choose $\phi_3$ in Lemma \ref{autoset} so that
\[\phi_3\in \CC X_{3}^{2k-2} \otimes(\pi_{3,\fin}')^{K_{\fin}}\]
is a new-line vector in $\pi_3'$.
It depends on the definition of $X_{3}^{2k-2}$
(i.e. the way we construct $\pi_{3,\infty}'$,
which will be fixed in Section \ref{section.geo.archi})
and of $K_{\fin}=\prod_{p<\infty} K_p$
(i.e. the choice of maximal orders $\cO_p$ at every finite place).
For $p\mid \disc(D)$ there is
a unique maximal order $\cO_p$ of $D_p$
and $\phi_{3,p}$ is a constant multiple of
$\delta_p\circ N_{D_p}$ for both cases.

For each $p\nmid \disc(D)$, 
let $\cO_p$ be the maximal order given by the preimage 
of $M(2,\Z_p)$ under the fixed isomorphism $D_p\cong M(2,\Q_p)$.
We choose $\phi_{p}$ to be the normalized $\cO_p^\times$-invariant vector in
the spherical representation $\pi_{p}$.
Lemma \ref{nonarchiunram1} and \ref{nonarchiunram2} show that
\[\int_{E_p^\times \backslash D_p^\times}
\alpha_p\varphi_{\gamma,p}(h)\ dh=
\begin{cases}\vol(K_p),
&\Omega_p\text{ unramified,}\\0,&\Omega_p\text{ ramified}
\end{cases}\]
holds for 
\[\alpha_p=\alpha_p(\pi_p(h)\phi_{p},
\overline{\pi_p(h)\phi_{p}};\Omega_p),\quad 
\varphi_{\gamma,p}(h)=\1_{K_p}(h^{-1}\gamma h)\quad
\text{with }K_p=Z(\Q_p)\cO_p^\times.\]

Clearly the local constant $\alpha_p$ coming from Waldspurger's formula
depends on the choice of maximal order
(essentially on the embedding $E_p\hookrightarrow M(2,\Q_p)$).
But in this section we show that the above integral is independent 
of the choice of the maximal order $\cO_p$.
That is to say:

\begin{Lemma}\label{lemma.compatibility}
When $p\nmid \disc(D)$, 
let $\cO_p'$ be another maximal order of $D_p\cong M(2,\Q_p)$,
and $\phi_{p}'$ be the normalized $(\cO_p')^\times$-invariant vector in $\pi_{p}$.
Then 
\[\int_{E_p^\times \backslash D_p^\times}
\alpha_p'
\varphi_{\gamma,p}'(h)\ dh
=\int_{E_p^\times \backslash D_p^\times}
\alpha_p\varphi_{\gamma,p}(h)\ dh\]
with \[\alpha_p'=\alpha_p(\pi_p(h)\phi_{p}',
\overline{\pi_p(h)\phi_{p}'};\Omega_p),\quad
\varphi_{\gamma,p}'(h)=\1_{K_p'}(h^{-1}\gamma h),\]
$K_p'=Z(\Q_p)(\cO_p')^\times$, and $\alpha_p$, $\varphi_{\gamma,p}$
defined as above.
\end{Lemma}

\begin{proof}
By Proposition \ref{maxorder}
there exists $T\in\GL(2,\Q_p)$ such that 
$\cO_p'=T\cO_p T^{-1}$.

On one hand, Lemma \ref{nonarchiunram1} shows that
$h\in E_p^\times \cO_p^\times$
if and only if $h^{-1}\gamma h\in \cO_p^\times$; and
\[\int_{E_p^\times\backslash D_p^\times }
	\1_{K_p}(h_p^{-1}\gamma h_p) \ dh_p
=\frac{\vol(\cO_p^\times)}{\vol(\cO_{E_p}^\times)}.\]
We work on another maximal order $\cO_p'=T\cO_pT^{-1}$:
\[h^{-1}\gamma h\in(\cO_p')^\times\Leftrightarrow
T^{-1}h^{-1}\gamma hT\in\cO_p^\times\Leftrightarrow
hT\in E_p^\times\cO_p^\times\Leftrightarrow
h\in E_p^\times\cO_p^\times T^{-1};\]
moreover
\[\int_{E_p^\times\backslash D_p^\times }
	\1_{K_p'}(h^{-1}\gamma h) \ dh
=\int_{E_p^\times\backslash D_p^\times }
\1_{E_p^\times\cO_p^\times T^{-1}}(h_p) \ dh_p
=\frac{\vol(\cO_p^\times T^{-1})}{\vol(\cO_{E_p}^\times)}
=\frac{\vol(\cO_p^\times)}{\vol(\cO_{E_p}^\times)}.\]

On the other hand,
one can check that
$\pi_p(T^{-1})\phi_p'$ is $\cO_p^\times$-invariant.
By Lemma \ref{nonarchiunram2},
\[\alpha_p'=
\alpha_p(\pi_p(hT)(\pi_p(T^{-1})\phi_{p}'),
\overline{\pi_p(hT)(\pi_p(T^{-1})\phi_{p}')};
\Omega_p)=\begin{cases}
\frac{\vol(\cO_{E_p}^\times)}{\vol(\Z_p^\times)},
&\Omega_p\text{ unramified,}\\0,&\Omega_p\text{ ramified,}
\end{cases}\]
whenever $hT\in E_p^\times\cO_p^\times$.
This completes the proof.
\end{proof}

\subsection{Local calculation: Archimedean}\label{section.geo.archi}

Recall that when $v=\infty$,
in Section \ref{RepresentationSU2} we realize
$\pi_v'\cong \pi'_{2k}$ on the space
of homogeneous polynomials in $X,Y$ of degree $2k-2$ 
with
\[\pi'_{2k} (g)P(X,Y)=P((X,Y)g)\det(g)^{1-k}
\quad \text{for}\quad g\in
\left\{\left(\begin{matrix}\alpha&-\beta\\\bar\beta&\bar\alpha\end{matrix}\right)\in\GL(2,\CC)\right\}
\cong D^\times(\R).\]
This representation is determined by the last isomorphism.
To make the action on $E_0^\times$ and $E_1^\times$ consistent
(so that the two nontrivial orbits are compatible
as we mentioned in the previous section),
we fix the following isomorphism for $q=1$ and $3$:
\begin{equation}\label{archi.embed}\begin{gathered}
D(\R)=\left(\frac{-q,-N}{\R}\right)\iso
\left\{\left(\begin{matrix}\alpha&-\beta\\
\bar\beta&\bar\alpha\end{matrix}\right)\in M(2,\CC)\right\},\\
i_D\mapsto\left(\begin{matrix}\sqrt{-q}&\\&-\sqrt{-q}\end{matrix}\right),\quad
j_D\mapsto\left(\begin{matrix}&-\sqrt{N}\\\sqrt{N}&\end{matrix}\right).
\end{gathered}\end{equation}
Then the image of $E_\infty=\R(\gamma)$ are the same
if $\left(\frac{-1,-N}{\Q}\right)$ and $\left(\frac{-3,-N}{\Q}\right)$
express the same quaternion algebra.

The last lemma gives a formula to calculate 
the integral in \eqref{nontriv.orbit} at infinity.
In particular, it is a number 
depending only on $\gamma$, $k$ and $m$. 

\begin{Lemma}\label{geom.archi.coeff}
For $\phi'_\infty=\|\phi_3\|\pi_{2k}'(h)X^{2k-2}$, 
$\phi''_\infty=\dfrac{\|\phi_3\|}{\|\mathbb{P}_{2k}\|^2}
\pi_{2k}'(h)e_{\gamma}\in\pi_{2k}'$, $|\gamma|=1$,
define $\mathbb{I}_{2k}^{(m)}(\gamma)$ by 
\[
\int_{\CC^\times\backslash D^\times_\infty}\int_{\CC^\times/\R^\times}
B_\infty(\pi_{2k}'(t)\phi'_\infty,\overline{\phi''_\infty})\sgn^{2m}(t)\ dt\ dh
=\vol(G'_\infty)\frac{\|\phi_3\|^2}{2k-1}
\frac{\Gamma(2k-1)^3}{\Gamma(k)^3\Gamma(3k-1)}\cdot \mathbb{I}_{2k}^{(m)}(\gamma).
\]
Then we have, \begin{enumerate}[(i)]
\item for $0\leq r\leq 2k-2$,
\[\mathbb{I}_{2k}^{(r-k+1)}(\gamma)=\binom{2k-2}r^{-1}
\sum_{\substack{0\leq i,j\leq 2k-2\\i+j=3(k-1)-r}}\gamma^{2(k-1-i)}
\binom{2k-2}i^{-1}\binom{2k-2}j^{-1}|C_{i,j,r}|^2,\]
where $C_{i,j,r}$ is the coefficient in $\mathbb{P}_{2k}$ of
$X_1^{2k-2-i}Y_1^i\otimes X_2^{2k-2-j}Y_2^j
\otimes X_3^{2k-2-r}Y_3^{r}$.
In particular
\[\mathbb{I}_{2k}^{(0)}(\gamma)=
\frac{\Gamma(k)^2}{\Gamma(2k-1)}
\sum_{i=0}^{2k-2}\gamma^{2(k-1-i)}
\binom{2k-2}i^{-2}|C_{i,2k-2-i,k-1}|^2.\]
\item $\mathbb{I}_{2k}^{(m)}(\gamma)$ does not depend on $N$.
Moreover 
\[\sum_{m=-(k-1)}^{k-1}|\mathbb{I}_{2k}^{(m)}(\gamma)|
\leq\|\mathbb{P}_{2k}\|^2.\]
\end{enumerate}
\end{Lemma}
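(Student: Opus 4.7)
The plan is to carry out the inner $t$-integration first (which Fourier-analyzes the weight decomposition of $\pi'_{2k}$ against $\sgn^{2m}$), then reduce the remaining $h$-integral over $\CC^\times\backslash D^\times_\infty$ to one over $G'_\infty$ that can be handled by Schur orthogonality. Writing $\pi'_{2k}(h)X^{2k-2}=\sum_r a_r(h) X^{2k-2-r}Y^r$ and $\pi'_{2k}(h)e_\gamma=\sum_r b_r(h) X^{2k-2-r}Y^r$, the argument of Lemma \ref{geom.archi.vanish} collapses the inner integral (for $|m|\leq k-1$) to a single term indexed by $n:=k-1+m$:
\[\int_{\CC^\times/\R^\times}\langle\pi'_{2k}(t)\phi'_\infty,\phi''_\infty\rangle\sgn^{2m}(t)\,dt=\frac{\|\phi_3\|^2}{\|\mathbb{P}_{2k}\|^2}\vol(\CC^\times/\R^\times)\binom{2k-2}{n}^{-1}a_n(h)\overline{b_n(h)}.\]

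Next I would unpack $b_n(h)$. An application of the $G'_\infty$-invariance of $\mathbb{P}_{2k}$ to \eqref{def.e_gamma} yields
\[\langle\pi'_{2k}(h)e_\gamma,v\rangle=\langle{\pi'_{2k}}^{\otimes 3}(\gamma,1,1)\mathbb{P}_{2k},{\pi'_{2k}}^{\otimes 2}(h,h)w^\circ_{2k}\otimes v\rangle,\]
and since $|\gamma|=1$ gives $\pi'_{2k}(\gamma)X^{2k-2-i}Y^i=\gamma^{2(k-1-i)}X^{2k-2-i}Y^i$, expanding $\mathbb{P}_{2k}=\sum_{i,j,r}C_{i,j,r}X_1^{2k-2-i}Y_1^i\otimes X_2^{2k-2-j}Y_2^j\otimes X_3^{2k-2-r}Y_3^r$ rewrites $b_n(h)$ as a sum over $(i,j)$ in which $\gamma^{2(k-1-i)}C_{i,j,n}$ multiplies a matrix coefficient of ${\pi'_{2k}}^{\otimes 2}\circ\Delta_2$ at $w^\circ_{2k}$. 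Combining with $a_n(h)$ (a matrix coefficient of $\pi'_{2k}$ at $X_3^{2k-2}$) and using that a product of matrix coefficients on disjoint tensor factors is itself a matrix coefficient on the tensor product, $a_n(h)\overline{b_n(h)}$ becomes a linear combination of matrix coefficients of ${\pi'_{2k}}^{\otimes 3}\circ\Delta_3$ applied to $w^\circ_{2k}\otimes X_3^{2k-2}$. The decisive observation is that $w^\circ_{2k}\otimes X_3^{2k-2}$ lies in the weight-zero subspace for the $\CC^\times$-action on $V_{\pi'_{2k}}^{\otimes 3}$ (its two factors carry opposite weights $\mp 2(k-1)$), so the matrix coefficient is $\CC^\times$-invariant in $h$ precisely when $i+j+n=3(k-1)$; this is exactly the support of $C_{i,j,r}$, which is forced by the $G'_\infty$-invariance of $\mathbb{P}_{2k}$. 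On the surviving indices, Lemma \ref{productmeasure} converts $\int_{\CC^\times\backslash D^\times_\infty}$ into $\vol(\CC^\times/\R^\times)^{-1}\int_{G'_\infty}$, and Lemma \ref{Schur} applied to ${\pi'_{2k}}^{\otimes 3}\circ\Delta_3$---whose unique $G'_\infty$-invariant line is $\CC\mathbb{P}_{2k}$ (see Section \ref{RepresentationSU2})---evaluates the $G'_\infty$-integral via inner products against $\mathbb{P}_{2k}$. Lemma \ref{Choosew}(1) gives $\langle w^\circ_{2k}\otimes X_3^{2k-2},\mathbb{P}_{2k}\rangle=\|w^\circ_{2k}\|^2$, while the pairing of the monomial against $\mathbb{P}_{2k}$ extracts $C_{i,j,n}\binom{2k-2}{i}^{-1}\binom{2k-2}{j}^{-1}\binom{2k-2}{n}^{-1}$. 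Substituting $\|w^\circ_{2k}\|^2=\|\mathbb{P}_{2k}\|^2/(2k-1)$ from Lemma \ref{length} and collecting constants yields (i); the $m=0$ case then follows by setting $r=k-1$ and $j=2k-2-i$, using $\binom{2k-2}{k-1}=\Gamma(2k-1)/\Gamma(k)^2$.

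For (ii), independence of $N$ is immediate from (i), since the $C_{i,j,r}$ and binomial coefficients depend only on $k$. For the norm bound, the triangle inequality together with $|\gamma|^{2(k-1-i)}=1$ gives
\[|\mathbb{I}_{2k}^{(r-k+1)}(\gamma)|\leq\binom{2k-2}{r}^{-1}\sum_{i+j=3(k-1)-r}|C_{i,j,r}|^2\binom{2k-2}{i}^{-1}\binom{2k-2}{j}^{-1},\]
and summing over $r=0,\ldots,2k-2$ reassembles $\sum_{i+j+r=3(k-1)}|C_{i,j,r}|^2\binom{2k-2}{i}^{-1}\binom{2k-2}{j}^{-1}\binom{2k-2}{r}^{-1}$, which equals $\|\mathbb{P}_{2k}\|^2$ upon expanding the norm-squared in the monomial basis. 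The main technical obstacle is the weight-zero alignment that underlies the $\CC^\times\backslash D^\times_\infty\to G'_\infty$ reduction---making precise the restriction of summation to $i+j+n=3(k-1)$ where the matrix coefficients become $\CC^\times$-invariant---together with the bookkeeping of binomial normalizations needed to deliver the cleanly-stated formula.
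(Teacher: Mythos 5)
Your proposal follows the paper's proof essentially step for step: expanding the inner $t$-integral against the weight basis (the paper's equations \eqref{1}--\eqref{2}), unpacking $\phi''_\infty$ via the definition of $e_\gamma$ and the $G'_\infty$-invariance of $\mathbb{P}_{2k}$ (the paper's \eqref{3}--\eqref{4}, which your identity $\langle\pi'_{2k}(h)e_\gamma,v\rangle=\langle\pi'^{\otimes 3}_{2k}(\gamma,1,1)\mathbb{P}_{2k},\pi'^{\otimes 2}_{2k}(h,h)w^\circ_{2k}\otimes v\rangle$ packages slightly more cleanly), reducing $\int_{\CC^\times\backslash D^\times_\infty}$ to $\vol(\CC^\times/\R^\times)^{-1}\int_{G'_\infty}$, applying Lemma \ref{Schur} against the invariant line $\CC\mathbb{P}_{2k}$, and invoking Lemma \ref{Choosew} and Lemma \ref{length}; your proof of (ii) by expanding $\|\mathbb{P}_{2k}\|^2$ directly in the monomial basis is a mild shortcut over the paper's route through a $\CC^\times/\R^\times$-integral but is otherwise the same argument.
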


\begin{proof}
(i) By the proof of Lemma \ref{geom.archi.vanish} we have
\begin{equation}\label{1}
\int_{\CC^\times/\R^\times}
\langle\pi_{2k}'(t)\phi'_v,\phi''_v\rangle_{2k}
\sgn^{-2(k-1-r)}(t)\ dt
=c_{r}'\overline{c_{r}''}
\| X^{2k-2-r}Y^{r}\|^2
\vol(\CC^\times/\R^\times)\end{equation}
with
\begin{equation}\label{2}
c_{r}'\overline{c_{r}''}
=\frac{\langle \phi'_\infty,X^{2k-2-r}Y^{r}\rangle_{2k}
\overline{\langle \phi''_\infty,X^{2k-2-r}Y^{r}\rangle_{2k}}}
{\| X^{2k-2-r}Y^{r}\|^4}.\end{equation}

We first deal with $\langle \phi''_\infty,X^{2k-2-r}Y^{r}\rangle$.
With the proposition of $e_{\gamma}$ shown in Lemma \ref{def.edelta}, 
we write $\phi''_\infty$ back as an integral:
\[\phi''_\infty=\frac {\|\phi_3\|}{\vol(G'_\infty)\|w^\circ_{2k}\|^2}
\int_{G'_\infty}
\langle \pi'_{2k}\otimes\pi'_{2k}
(h^{-1}\gamma g,h^{-1}g) w^\circ_{2k}, w^\circ_{2k}\rangle
\pi'_{2k}(g)X^{2k-2}\ dg.\]
Then we have
\begin{multline}\label{3}
\langle \phi''_\infty,X^{2k-2-r}Y^{r}\rangle=
\frac {\|\phi_3\|}{\vol(G'_\infty)\|w^\circ_{2k}\|^2}\\
\cdot\int_{G'_\infty}
\langle \pi'_{2k}\otimes\pi'_{2k} (\gamma g,g) w^\circ_{2k},
\pi'_{2k}\otimes\pi'_{2k}(h,h) w^\circ_{2k}\rangle
\langle\pi'_{2k}(g)X^{2k-2}
,X^{2k-2-r}Y^{r}\rangle\ dg.
\end{multline}
Noticing that $\{X_1^{2k-2-i}Y_1^i\otimes X_2^{2k-2-j}Y_2^j\}$
forms an orthogonal basis of $\pi'_{2k}\otimes\pi'_{2k}$,
the first matrix coefficient
$\langle \pi'_{2k}\otimes\pi'_{2k} (\gamma g,g) w^\circ_{2k},
\pi'_{2k}\otimes\pi'_{2k}(h,h) w^\circ_{2k}\rangle$
is equal to
\[\begin{split}
&\ \sum_{0\leq i,j\leq 2k-2}
\langle\pi'_{2k}\otimes\pi'_{2k} (\gamma g,g) w^\circ_{2k},
\frac{X_1^{2k-2-i}Y_1^i\otimes X_2^{2k-2-j}Y_2^j}
{\|X_1^{2k-2-i}Y_1^i\|\|X_2^{2k-2-j}Y_2^j\|}\rangle\\
&\phantom{\sum_{0\leq i,j\leq 2k-2}\frac{X_1^{2k-2-i}Y_1^i\otimes X_2^{2k-2-j}Y_2^j}{\|X_1^{2k-2-i}Y_1^i\|\|X_2^{2k-2-j}Y_2^j\|}}
\cdot\overline{\langle\pi'_{2k}\otimes\pi'_{2k}(h,h) w^\circ_{2k},
\frac{X_1^{2k-2-i}Y_1^i\otimes X_2^{2k-2-j}Y_2^j}
{\|X_1^{2k-2-i}Y_1^i\|\|X_2^{2k-2-j}Y_2^j\|}\rangle}\\
=&\ \sum_{0\leq i,j\leq 2k-2}
\langle\pi'_{2k}\otimes\pi'_{2k}
(g,g) w^\circ_{2k},\pi'_{2k}\otimes\pi'_{2k}(\gamma^{-1},1)
(X_1^{2k-2-i}Y_1^i\otimes X_2^{2k-2-j}Y_2^j)\rangle\\
&\phantom{\sum_{0\leq i,j\leq 2k-2}}
\cdot\overline{\langle\pi'_{2k}\otimes\pi'_{2k}(h,h) w^\circ_{2k},
X_1^{2k-2-i}Y_1^i\otimes X_2^{2k-2-j}Y_2^j\rangle}
\|X_1^{2k-2-i}Y_1^i\|^{-2}\|X_2^{2k-2-j}Y_2^j\|^{-2}.
\end{split}\]
By \eqref{weightvector}, when $|\gamma|=1$,
\[\pi'_{2k}(\gamma^{-1}) X_1^{2k-2-i}Y_1^i
=\gamma^{-2(k-1-i)}X_1^{2k-2-i}Y_1^i.\]
Therefore
\begin{multline}\label{4}
\langle \pi'_{2k}\otimes\pi'_{2k}
(\gamma g,g) w^\circ_{2k},\pi'_{2k}\otimes\pi'_{2k}(h,h) w^\circ_{2k}\rangle
=\sum_{0\leq i,j\leq 2k-2}
\|X_1^{2k-2-i}Y_1^i\|^{-2}\|X_2^{2k-2-j}Y_2^j\|^{-2}
\gamma^{-2(k-1-i)}\\
\cdot\langle\pi'_{2k}\otimes\pi'_{2k}
(g,g) w^\circ_{2k},X_1^{2k-2-i}Y_1^i\otimes X_2^{2k-2-j}Y_2^j\rangle
\overline{\langle\pi'_{2k}\otimes\pi'_{2k}(h,h) w^\circ_{2k},
X_1^{2k-2-i}Y_1^i\otimes X_2^{2k-2-j}Y_2^j\rangle}.
\end{multline}

Combining \eqref{1} \eqref{2} \eqref{3} \eqref{4}
we have
\[\begin{split}
&\ \int_{\CC^\times/\R^\times}
\langle\pi_{2k}'(t)\phi'_\infty,\phi''_\infty\rangle_{2k}
\sgn^{-2(k-1-r)}(t)\ dt\\
=&\ \frac{\vol(\CC^\times/\R^\times)}{\vol(G'_\infty)}
\frac {\|\phi_3\|^2}{\|w^\circ_{2k}\|^2}\| X^{2k-2-r}Y^{r}\|^{-2}
\sum_{0\leq i,j\leq 2k-2}\gamma^{2(k-1-i)}
\| X^{2k-2-i}Y^{i}\|^{-2}\| X^{2k-2-j}Y^{j}\|^{-2}\\
&\phantom{\sum}\cdot\langle \pi'_{2k}(h)X^{2k-2},X^{2k-2-r}Y^{r}\rangle_{2k}
\langle\pi'_{2k}\otimes\pi'_{2k}(h,h) w^\circ_{2k},
X_1^{2k-2-i}Y_1^i\otimes X_2^{2k-2-j}Y_2^j\rangle\\
&\phantom{\sum\sum}\cdot\int_{G'_\infty} \overline{
\langle\pi'_{2k}\otimes\pi'_{2k}
(g,g) w^\circ_{2k},X_1^{2k-2-i}Y_1^i\otimes X_2^{2k-2-j}Y_2^j\rangle
\langle\pi'_{2k}(g)X^{2k-2},X^{2k-2-r}Y^{r}\rangle_{2k}}\ dg\\
=&\ \frac{\vol(\CC^\times/\R^\times)}{\vol(G'_\infty)}
\frac {\|\phi_3\|^2}{\|w^\circ_{2k}\|^2}\| X^{2k-2-r}Y^{r}\|^{-2}
\sum_{0\leq i,j\leq 2k-2}\gamma^{2(k-1-i)}
\| X^{2k-2-i}Y^{i}\|^{-2}\| X^{2k-2-j}Y^{j}\|^{-2}\\
&\phantom{\sum}\cdot
\langle\big({\pi'_{2k}}^{\otimes 3}\circ \Delta_3(h) \big)
w^\circ_{2k}\otimes X_3^{2k-2},
X_1^{2k-2-i}Y_1^i\otimes X_2^{2k-2-j}Y_2^j
\otimes X_3^{2k-2-r}Y_3^{r}\rangle\\
&\phantom{\sum\sum}\cdot\int_{G'_\infty} \overline{
\langle{\big(\pi'_{2k}}^{\otimes 3}\circ \Delta_3(g)\big)
w^\circ_{2k}\otimes X_3^{2k-2},
X_1^{2k-2-i}Y_1^i\otimes X_2^{2k-2-j}Y_2^j
\otimes X_3^{2k-2-r}Y_3^{r}\rangle}\ dg.
\end{split}\]
Here we denote by $\Delta_3$
the diagonal embedding from $G'_\infty$
to three copies of it.
By the definition of quotient measure on
$\CC^\times\backslash D^\times_\infty\cong
(\R^\times\backslash \CC^\times)\backslash G'_\infty$,
we have
\begin{multline*}
\int_{\CC^\times\backslash D^\times_\infty}
\int_{\CC^\times/\R^\times}
\langle\pi_{2k}'(t)\phi'_\infty,\phi''_\infty\rangle_{2k}
\sgn^{2m}(t)\ dt\ dh\\
=\frac{1}{\vol(\CC^\times/\R^\times)}
\int_{G'_\infty}
\int_{\CC^\times/\R^\times}
\langle\pi_{2k}'(t)\phi'_\infty,\phi''_\infty\rangle_{2k}
\sgn^{2m}(t)\ dt\ dh,
\end{multline*}
and then
\begin{multline*}\begin{aligned}
&\ \int_{\CC^\times\backslash D^\times_\infty}
\int_{\CC^\times/\R^\times}
\langle\pi_{2k}'(t)\phi'_\infty,\phi''_\infty\rangle_{2k}
\sgn^{-2(k-1-r)}(t)\ dt\ dh\\
=&\ \frac{1}{\vol(G'_\infty)}
\frac {\|\phi_3\|^2}{\|w^\circ_{2k}\|^2}\| X^{2k-2-r}Y^{r}\|^{-2}
\sum_{0\leq i,j\leq 2k-2}\gamma^{2(k-1-i)}
\| X^{2k-2-i}Y^{i}\|^{-2}\| X^{2k-2-j}Y^{j}\|^{-2}\end{aligned}\\
\cdot\left|\int_{G'_\infty}
\langle\big({\pi'_{2k}}^{\otimes 3}\circ \Delta_3(g)\big)
 w^\circ_{2k}\otimes X_3^{2k-2},
X_1^{2k-2-i}Y_1^i\otimes X_2^{2k-2-j}Y_2^j
\otimes X_3^{2k-2-r}Y_3^{r}\rangle\ dg\right|^2.
\end{multline*}
Recall that
$\mathbb{P}_{2k}$ is the only
$G'_\infty$-invariant vector
in $(\pi'_{2k})^{\otimes 3}\cdot\Delta_3$
up to a constant multiple.
Then Lemma \ref{Schur}
gives that
\begin{multline*}
\int_{G'_\infty}
\langle\big({\pi'_{2k}}^{\otimes 3}\circ \Delta_3(g)\big)
w^\circ_{2k}\otimes X_3^{2k-2},
X_1^{2k-2-i}Y_1^i\otimes X_2^{2k-2-j}Y_2^j
\otimes X_3^{2k-2-r}Y_3^{r}\rangle\ dg\\
=\vol(G'_\infty)
\langle w^\circ_{2k}\otimes X_3^{2k-2},\frac{\mathbb{P}_{2k}}
{\|\mathbb{P}_{2k}\|}\rangle
\overline{\langle X_1^{2k-2-i}Y_1^i\otimes X_2^{2k-2-j}Y_2^j
\otimes X_3^{2k-2-r}Y_3^{r},\frac{\mathbb{P}_{2k}}
{\|\mathbb{P}_{2k}\|}\rangle}.
\end{multline*}
Lemma \ref{Choosew} shows that
$\langle w^\circ_{2k}\otimes X_3^{2k-2},\mathbb{P}_{2k}\rangle
=\|w^\circ_{2k}\|^2$.
So \[\begin{split}
&\ \int_{\CC^\times\backslash D^\times_\infty}
\int_{\CC^\times/\R^\times}
\langle\pi_{2k}'(t)\phi'_\infty,\phi''_\infty\rangle_{2k}
\sgn^{-2(k-1-r)}(t)\ dt\ dh\\
=&\ \vol(G'_\infty)
\frac {\|\phi_3\|^2\|w^\circ_{2k}\|^2}{\|\mathbb{P}_{2k}\|^4}\| X^{2k-2-r}Y^{r}\|^{-2}
\sum_{0\leq i,j\leq 2k-2}\gamma^{2(k-1-i)}
\| X^{2k-2-i}Y^{i}\|^{-2}\| X^{2k-2-j}Y^{j}\|^{-2}\\
&\phantom{\vol(G'_\infty)\|\phi_3\|^2\|w^\circ_{2k}\|^2\| X^{2k-2-r}Y^{r}\|^{-2}\sum}
\cdot\left|
\langle \mathbb{P}_{2k},X_1^{2k-2-i}Y_1^i\otimes X_2^{2k-2-j}Y_2^j
\otimes X_3^{2k-2-r}Y_3^{r}\rangle\right|^2\\
=&\ \vol(G'_\infty)
\frac {\|\phi_3\|^2\|w^\circ_{2k}\|^2}{\|\mathbb{P}_{2k}\|^4}\| X^{2k-2-r}Y^{r}\|^{2}
\sum_{\substack{0\leq i,j\leq 2k-2\\i+j=3(k-1)-r}}\gamma^{2(k-1-i)}
\| X^{2k-2-i}Y^{i}\|^{2}\| X^{2k-2-j}Y^{j}\|^{2}
|C_{i,j,r}|^2,
\end{split}\]
where $C_{i,j,r}$ is the coefficient in $\mathbb{P}_{2k}$ of 
$X_1^{2k-2-i}Y_1^i\otimes X_2^{2k-2-j}Y_2^j
\otimes X_3^{2k-2-r}Y_3^{r}$.
With \eqref{innerproduct}
and Lemma \ref{length} we 
complete the proof.

(ii) The above result implies that, for $|\gamma|=1$,
\begin{equation}\label{eqn.estimate}
\sum_{m=-(k-1)}^{k-1}|\mathbb{I}_{2k}^{(m)}(\gamma)|
\leq\sum_{\substack{0\leq i,j,r\leq 2k-2\\i+j+r=3(k-1)}}
\binom{2k-2}i^{-1}\binom{2k-2}j^{-1}\binom{2k-2}r^{-1}|C_{i,j,r}|^2.
\end{equation}
To show the right hand side is equal to $\|\mathbb{P}_{2k}\|^2$, 
we write
\[\mathbb{P}_{2k}=\sum_{i,j,r}C_{i,j,r}
X_1^{2k-2-i}Y_1^i\otimes X_2^{2k-2-j}Y_2^j\otimes X_3^{2k-2-r}Y_3^r.\]
Recall that $\{X^{2k-2-r}Y^r\}$ 
forms an orthogonal basis of $V_{\pi_{2k}'}$
which are eigenvectors under the action of $\CC^\times$. 
Analogous to what we have done in Lemma \ref{geom.archi.vanish},
\[\begin{split}
&\ \int_{\CC^\times/\R^\times}
\langle\pi_{2k}'\otimes\pi_{2k}'\otimes\pi_{2k}'(t)\mathbb{P}_{2k},
\mathbb{P}_{2k}\rangle
\sgn^{2m}(t)\ dt\\
=&\ \int_{\CC^\times/\R^\times}
\langle\sum_{i,j,r}^{2k-2}
\sgn^{2(k-1-i)}(t)\sgn^{2(k-1-j)}(t)\sgn^{2(k-1-r)}(t)\\
&\hspace{5cm}
\cdot C_{i,j,r}
X_1^{2k-2-i}Y_1^i\otimes X_2^{2k-2-j}Y_2^j\otimes X_3^{2k-2-r}Y_3^r,
\mathbb{P}_{2k}\rangle
\sgn^{2m}(t)\ dt\\
=&\ \sum_{i,j,r}|C_{i,j,r}|^2
\|X_1^{2k-2-i}Y_1^i\otimes X_2^{2k-2-j}Y_2^j\otimes X_3^{2k-2-r}Y_3^r\|^2
\int_{\CC^\times/\R^\times}\sgn^{2(3(k-1)-i-j-r)}(t)\sgn^{2m}(t)\ dt\\
=&\ \begin{cases}
\sum\limits_{\substack{0\leq i,j,r\leq 2k-2\\i+j+r=3(k-1)-m}}|C_{i,j,r}|^2
\binom{2k-2}i^{-1}\binom{2k-2}j^{-1}\binom{2k-2}r^{-1}
\vol(\CC^\times/\R^\times),
\quad\text{if }m\in\Z,\ |m|\leq 3(k-1);&\\
0,\qquad\text{otherwise.}&\end{cases}
\end{split}\]

On the other hand, $\mathbb{P}_{2k}$ is 
$G_\infty'$-invariant and hence $\CC^\times/\R^\times$-invariant.
Then 
\[\int_{\CC^\times/\R^\times}
\langle\pi_{2k}'\otimes\pi_{2k}'\otimes\pi_{2k}'(t)\mathbb{P}_{2k},
\mathbb{P}_{2k}\rangle
\sgn^{2m}(t)\ dt
=\begin{cases}
\vol(\CC^\times/\R^\times)
\langle \mathbb{P}_{2k},\mathbb{P}_{2k}\rangle,&\text{if }m=0;\\
0,&\text{otherwise.}
\end{cases}\]
Therefore
\[\sum\limits_{\substack{0\leq i,j,r\leq 2k-2\\i+j+r=3(k-1)-m}}
\binom{2k-2}i^{-1}\binom{2k-2}j^{-1}\binom{2k-2}r^{-1}|C_{i,j,r}|^2
=\begin{cases}
\langle \mathbb{P}_{2k},\mathbb{P}_{2k}\rangle&\text{if }m=0;\\
0,&\text{otherwise.}
\end{cases}\]
In particular we show the right hand side of \eqref{eqn.estimate}
is equal to $\|\mathbb{P}_{2k}\|^2$, which completes the proof.
\end{proof}


\section{Examples of small weights}\label{examples246}

In this section 
we simplify our Main Theorem,
particularly $I_0$ and $I_1$, 
for small weights.

\subsection{Weight 2 or 4}\label{ExampleWeight24}
In both cases the only character $\Omega$ contributing 
to the sum on the right hand side of the Main Theorem 
is the trivial character $\1$,
according to Lemma \ref{nonvanishOmega}.
When $2k=2$, the polynomial $\mathbb{P}_2=1$ is trivial,
and hence $\mathbb{I}_{2}^{(0)}(\gamma)=1$ 
as defined in Lemma \ref{geom.archi.coeff}. 
Following the notations in Theorem \ref{maingeometric},
we have
\[I_0(\1)=
\begin{cases}
1,&\text{if }2\nmid N,\\
\frac 12(1+a_2(h))
,&\text{if }2\mid N;\\
\end{cases}\quad
I_1(\1)=
\begin{cases}
1,&\text{if }3\nmid N,\\
\frac 12(1+a_3(h))
,&\text{if }3\mid N.\\
\end{cases}\]
The Main Theorem simplifies to the following identity.
\begin{Example}\label{example.maineqwt2}
For any $h\in\cF_{2}(N)$,
\begin{multline}\label{maineqwt2}
\frac{N}{2^{8} \pi^{5}}
\sum_{\substack{f,g\in\cF_{2}(N)\\
\varepsilon_p=-1,\ \forall p\mid N}}
\frac{L_{\fin}(2,f\times g\times h)}
{(f,f)(g,g)(h,h)}
=\frac{1-24/\varphi(N)}
{2^{\omega(N)}}
+\frac{1}{16\pi^{2}(h,h)}\\
\cdot\left(4L_0 \cdot 2^{\ord_2(N)}
\prod\limits_{p\mid N}\frac{1-\chi_{-4}(p)}{2}
+6\sqrt 3 L_1 \cdot 2^{\ord_3(N)}
\prod\limits_{p\mid N}\frac{1-\chi_{-3}(p)}{2}\right)
,
\end{multline}
where
\[\begin{split}L_0&=
L_{\fin}(1, h)L_{\fin}(1, h\otimes\chi_{-4})\cdot
\begin{cases}
1,&\text{if }2\nmid N,\\
\frac 12(1+a_2(h))
,&\text{if }2\mid N;\\
\end{cases},\\
L_1&=
L_{\fin}(1, h)L_{\fin}(1, h\otimes\chi_{-3})\cdot
\begin{cases}
1,&\text{if }3\nmid N,\\
\frac 12(1+a_3(h))
,&\text{if }3\mid N.\\
\end{cases}\end{split}\]
\end{Example}
Recall that when $p\mid N$, 
$a_p(h)p^{-(k-1)}=
-\varepsilon_p(\frac 12,\pi_h)=\pm 1$ for $h\in\cF_{2k}(N)$.
So the terms $\frac 12(1+a_2(h))$ and $\frac 12(1+a_3(h))$
can either be $0$ or $1$.

When $N$ is a prime with $N=11$ or $N>13$, this reproves
\cite[Theorem 1.1]{feigon2010exact}.
(The Petersson inner product $(f,f)$,
defined in \eqref{def.Petersson}, is normalized 
differently in \cite{feigon2010exact},
so the main formula there is differed by some constants 
comparing with \eqref{maineqwt2}.)

When $2k=4$, \[\begin{split}
\mathbb{P}_{4}&=(X_1Y_2-X_2Y_1)(X_2Y_3-X_3Y_2)(X_3Y_1-X_1Y_3)\\
&=(-Y_1^2X_2^2+X_1^2Y_2^2)X_3Y_3+\text{other terms}.
\end{split}\]
The coefficients $C_{i,2-i,1}$ in $\mathbb{P}_{4}$ of
$X_1^{2-i}Y_1^i\otimes X_2^{i}Y_2^{2-i}
\otimes X_3Y_3$ are given by
\[C_{0,2,1}=1,\quad C_{1,1,1}=0,\quad C_{2,0,1}=-1.\]
Applying Lemma \ref{geom.archi.coeff}, we have
\[\mathbb{I}_{4}^{(0)}(\gamma)
=\frac 12 (\gamma^{2}+\gamma^{-2})
=\begin{cases}
-1,&\gamma=\gamma_0=\sqrt{-1};\\
-\frac 12,&\gamma=\gamma_1=\frac 12(1+\sqrt{-3}).
\end{cases}\]
Using the notations in Theorem \ref{maingeometric},
we have
\[I_0(\1)=
\begin{cases}
-1,&\text{if }2\nmid N,\\
-\frac 12(1+\frac{a_2(h)}2)
,&\text{if }2\mid N;\\
\end{cases}\quad
I_1(\1)=
\begin{cases}
-\frac 12,&\text{if }3\nmid N,\\
-\frac 14(1+\frac{a_3(h)}3)
,&\text{if }3\mid N.\\
\end{cases}\]
The Main Theorem now becomes the following. 
\begin{Example}
For any $h\in\cF_{4}(N)$,
\begin{multline}\label{maineqwt4}
\frac{3N}{2^{17} \pi^{11}}
\sum_{\substack{f,g\in\cF_{4}(N)\\
\varepsilon_p=-1,\ \forall p\mid N}}
\frac{L_{\fin}(5,f\times g\times h)}
{(f,f)(g,g)(h,h)}
=\frac{1}{2^{\omega(N)}}+
\frac{1}{(4\pi)^{4}(h,h)}\\
\cdot\left(
8L_0 \cdot 2^{\ord_2(N)}
\prod\limits_{p\mid N}\frac{1-\chi_{-4}(p)}{2}
+12\sqrt 3 L_1 \cdot 2^{\ord_3(N)}
\prod\limits_{p\mid N}\frac{1-\chi_{-3}(p)}{2}
\right),
\end{multline}
where
\[\begin{split}L_0&=
-L_{\fin}(2, h)L_{\fin}(2, h\otimes\chi_{-4})\cdot
\begin{cases}
1,&\text{if }2\nmid N,\\
\frac 12(1+\frac{a_2(h)}2)
,&\text{if }2\mid N;\\
\end{cases},\\
L_1&=
-\frac 12 L_{\fin}(2, h)L_{\fin}(2, h\otimes\chi_{-3})\cdot
\begin{cases}
1,&\text{if }3\nmid N,\\
\frac 12(1+\frac{a_3(h)}3)
,&\text{if }3\mid N.\\
\end{cases}\end{split}\]
\end{Example}
Notice that $L_0$, $L_1$ are non-positive now, 
unlike that in the weight $2$ case.

\subsection{Weight 6}\label{ExampleWeight6}
In this case nontrivial Hecke characters 
$\Omega$ of $E_0=\Q(\sqrt{-1})$ might 
contribute to the orbital decomposition.
Let $\xi$ be the character 
on $E_0^\times\backslash \A_{E_0}^\times$
which is trivial on $\A_{\Q}^\times$,
unramified everywhere,
and satisfies $\xi_\infty(z)=(z/\bar z)^2$.
With the notations in Theorem \ref{orbdecomp} 
and \eqref{nontrivialorbiteqns},
Lemma \ref{nonvanishOmega} shows that
only $\xi$, $\xi^{-1}$ and the trivial character $\1$
contribute to the nontrivial orbit $I_{[\gamma_0]}$,
and only $\1$ contributes to the orbit $I_{[\gamma_1]}$.

Recall that
\[\mathbb{P}_{6}=(X_1Y_2-X_2Y_1)^2
(X_2Y_3-X_3Y_2)^2(X_3Y_1-X_1Y_3)^2.\]
The coefficient $C_{i,j,r}$ in $\mathbb{P}_{6}$ of
$X_1^{4-i}Y_1^i\otimes X_2^{4-j}Y_2^{j}
\otimes X_3^{4-r}Y_3^r$ vanishes unless $i+j+r=6$, in which case
\begin{center}\begin{tabular}{lllll}
 & & $C_{0,4,2}=1$, & $C_{0,3,3}=-2$, & $C_{0,2,4}=1$, \\
 & $C_{1,4,1}=-2$, & $C_{1,3,2}=2$, & $C_{1,2,3}=2$, & $C_{1,1,4}=-2$, \\
$C_{2,4,0}=1$,  & $C_{2,3,1}=2$, & 
$C_{2,2,2}=-6$, & $C_{2,1,3}=2$, & $C_{2,0,4}=1$, \\
$C_{3,3,0}=-2$,  & $C_{3,2,1}=2$, & $C_{3,1,2}=2$, & $C_{3,0,3}=-2$, & \\
$C_{4,2,0}=1$,  & $C_{4,1,1}=-2$, & $C_{4,0,2}=1$. & & \\
\end{tabular}\end{center}
Applying Lemma \ref{geom.archi.coeff}, we have
\[\mathbb{I}_6^{(0)}(\gamma)=6^{-1}(\gamma^4+ \gamma^0+\gamma^{-4}+4^{-1}
(\gamma^2 +\gamma^{-2}))
=\begin{cases}\frac 5{12},&\gamma=\gamma_0=\sqrt{-1};\\
-\frac 1{24},&\gamma=\gamma_1=\frac 12(1+\sqrt{-3}).\end{cases}
\]
And for $\gamma=\gamma_0$ we have 
\[\mathbb{I}_6^{(-2)}(\gamma_0)=
6^{-1}(\gamma_0^0+\gamma_0^{-4})+
4^{-1}\gamma_0^{-2}=\frac 1{12};
\]
and $\mathbb{I}_6^{(2)}(\gamma_0)=\frac 1{12}$.

With the notations in Theorem \ref{maingeometric},
we have
\begin{gather*}
I_0(\1)=
\begin{cases}
\frac 5{12},&\text{if }2\nmid N,\\
\frac 5{24}(1+\frac{a_2(h)}{2^2})
,&\text{if }2\mid N;\\
\end{cases}\quad
I_1(\1)=
\begin{cases}
-\frac 1{24},&\text{if }3\nmid N,\\
-\frac 1{48}(1+\frac{a_3(h)}{3^2})
,&\text{if }3\mid N;\\
\end{cases}\\
I_0(\xi)=I_0(\xi^{-1})=
\begin{cases}
\frac 1{12},&\text{if }2\nmid N,\\
\frac 1{24}(1-\frac{a_2(h)}{2^2})
,&\text{if }2\mid N.\\
\end{cases}
\end{gather*}
The Main Theorem now becomes, 
\begin{Example}
Let $E_0=\Q(\sqrt{-1})$, 
$\xi$ be the character 
on $E_0^\times\backslash \A_{E_0}^\times$
which is trivial on $\A_{\Q}^\times$,
unramified everywhere,
and satisfies $\xi_\infty(z)=(z/\bar z)^2$.
For any $h\in\cF_{6}(N)$,
\begin{multline}\label{maineqwt6}
\frac{N}{2^{23} \pi^{17}}
\sum_{\substack{f,g\in\cF_{6}(N)\\
\varepsilon_p=-1,\ \forall p\mid N}}
\frac{L_{\fin}(8,f\times g\times h)}
{(f,f)(g,g)(h,h)}
=\frac{1}{2^{\omega(N)}135}
+\frac 1{2^{7}105\pi^{6}(h,h)}\\
\cdot\left(
4L_0 \cdot 2^{\ord_2(N)}
\prod\limits_{p\mid N}\frac{1-\chi_{-4}(p)}{2}
+6\sqrt 3 L_1 \cdot 2^{\ord_3(N)}
\prod\limits_{p\mid N}\frac{1-\chi_{-3}(p)}{2}\right),
\end{multline}
where
\[\begin{split}
L_0 &= 
\begin{cases}
\frac 5{12} L_{\fin}(3, h)L_{\fin}(3, h\otimes\chi_{-4})
+\frac 1{12}
\left(L_{\fin}(\frac {11}2, h\times \Theta_\xi)
+L_{\fin}(\frac {11}2, h\times \Theta_{\xi^{-1}})
\right),\quad
\text{if }2\nmid N,&\\
\frac 5{12}L_{\fin}(3, h)L_{\fin}(3, h\otimes\chi_{-4}),
\hspace{2.6cm} 
\text{if }2\mid N,\ a_2(h)=2^2,&\\
\frac 1{12}
\left(L_{\fin}(\frac {11}2, h\times \Theta_\xi)
+L_{\fin}(\frac {11}2, h\times \Theta_{\xi^{-1}})
\right),
\quad \text{if }2\mid N,\ a_2(h)=-2^2;&
\end{cases}\\
L_1&= -\frac 1{24}L_{\fin}(3, h)L_{\fin}(3, h\otimes\chi_{-3})
\begin{cases}
1,&\text{if }3\nmid N,\\
\frac 12(1+\frac{a_3(h)}{3^2})
,&\text{if }3\mid N;
\end{cases}
\end{split}\]
and $\Theta_\xi$, $\Theta_{\xi^{-1}}$
(defined before Theorem \ref{mainthm})
are the CM modular forms 
arise from Hecke characters $\xi$, $\xi^{-1}$ correspondingly.
\end{Example}


\section{Applications}

\subsection{Sum over three forms}\label{section.sumover3}

The size of $\cF_{2k}(N)$ is given by \cite{martin2005dimensions}:
For any integer $k\geq 1$ and $N$
a square-free integer with an odd number of prime factors, the dimension
of the space of weight $2k$ newforms on $\Gamma_0(N)$ is
\begin{equation}\label{eqn:numberofnewforms}
\#\cF_{2k}(N)=\frac{2k-1}{12}\varphi(N)
-c_2(2k)\prod_{p\mid N}(1-\chi_{-4}(p))
-c_3(2k)\prod_{p\mid N}(1-\chi_{-3}(p))
- \delta(k),
\end{equation}
where $\delta(k)=\begin{cases}1,&\text{if }k=1,\\
0,&\text{otherwise;}\end{cases}$ and
$c_2,c_3$ are defined by
\[c_2(n)=\frac 14+\lfloor\frac n4\rfloor-\frac n4
=\begin{cases}1/4,&n\equiv 0\pmod 4, \\
-1/4,&n\equiv 2\pmod 4;\end{cases}\]
\[c_3(n)=\frac 13+\lfloor\frac n3\rfloor-\frac n3
=\begin{cases}1/3,&n\equiv 0\pmod 3, \\
0,&n\equiv 1\pmod 3, \\
-1/3,&n\equiv 2\pmod 3.\end{cases}\]
In particular,
\[\#\cF_{2}(N)=\frac{\varphi(N)}{12}
+\frac 14\prod_{p\mid N}(1-\chi_{-4}(p))
+\frac 13\prod_{p\mid N}(1-\chi_{-3}(p))
- 1;\]
if $N$ has a prime factor $\equiv 1\pmod 4$ and one $\equiv 1\pmod 3$,
\[\#\cF_{2k}(N)=\frac{2k-1}{12}\varphi(N)- \delta(k).\]

The next result is a direct corollary of Theorem \ref{mainthm} 
by taking the average over $h\in\cF_{2k}(N)$,
and of the size of $\cF_{2k}(N)$:

\begin{Corollary}
Let $N$ be a square-free integer with an odd number of prime factors.
If $N$ has a prime factor $\equiv 1\pmod 4$
and one $\equiv 1\pmod 3$,
we have
\[\frac{N}{ 2^8\pi^{5}}\sum_{\substack{f,g,h\in\cF_{2}(N)\\\varepsilon_p=-1, \forall p\mid N}}
\frac{L_{\fin}(2,f\times g\times h)}{(f,f)(g,g)(h,h)}
=\frac{(\varphi(N)-24)(\varphi(N)-12)}{2^{\omega(N)}12\varphi(N)};\]
\[\frac{N}{2^{12k-4}\pi^{6k-1}}
\sum_{\substack{f,g,h\in\cF_{2k}(N)\\
\varepsilon_p=-1,\ \forall p\mid N}}
\frac{L_{\fin}(3k-1,f\times g\times h)}{(f,f)(g,g)(h,h)}=
\frac{\varphi(N)}
{2^{\omega(N)}12\Gamma(2k-1)^3},\quad
\text{if }2k>2.\]
\end{Corollary}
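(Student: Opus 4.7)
The plan is to deduce this corollary as a direct specialization of Corollary \ref{sumover3}, using the explicit dimension formula for $\cF_{2k}(N)$ recorded just after that corollary. Under the hypothesis on $N$, the products $\prod_{p\mid N}(1-\chi_{-4}(p))$ and $\prod_{p\mid N}(1-\chi_{-3}(p))$ both vanish, so on the one hand the trace contributions at $E_0=\Q(\sqrt{-1})$ and $E_1=\Q(\sqrt{-3})$ in Corollary \ref{sumover3} drop out, giving
\[
\frac{N}{2^{12k-4}\pi^{6k-1}}\!\!\sum_{\substack{f,g,h\in\cF_{2k}(N)\\\varepsilon_p=-1,\ \forall p\mid N}}\!\!
\frac{L_{\fin}(3k-1,f\times g\times h)}{(f,f)(g,g)(h,h)}
=\frac{1-24\delta(k)/\varphi(N)}{2^{\omega(N)}\Gamma(2k-1)^2\Gamma(2k)}\,\#\cF_{2k}(N),
\]
and on the other hand the dimension formula collapses to
\[
\#\cF_{2k}(N)=\frac{2k-1}{12}\varphi(N)-\delta(k).
\]
So the entire task is to substitute the second identity into the first and simplify.

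For $2k>2$ we have $\delta(k)=0$, so the prefactor $1-24\delta(k)/\varphi(N)$ is $1$ and $\#\cF_{2k}(N)=(2k-1)\varphi(N)/12$. Using $(2k-1)\Gamma(2k-1)=\Gamma(2k)$, I would rewrite
\[
\frac{2k-1}{\Gamma(2k-1)^2\Gamma(2k)}=\frac{1}{\Gamma(2k-1)^3},
\]
which produces the claimed right-hand side $\varphi(N)/\bigl(2^{\omega(N)}\cdot 12\cdot\Gamma(2k-1)^3\bigr)$.

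For $2k=2$ we have $k=1$, $\delta(1)=1$, and $\Gamma(1)^2\Gamma(2)=1$. Then $1-24\delta(k)/\varphi(N)=(\varphi(N)-24)/\varphi(N)$ and $\#\cF_2(N)=(\varphi(N)-12)/12$, and multiplying these factors gives precisely $(\varphi(N)-24)(\varphi(N)-12)/\bigl(2^{\omega(N)}\cdot 12\,\varphi(N)\bigr)$. There is no genuine obstacle here: the real work has been done in proving Corollary \ref{sumover3} (which in turn rests on Theorem \ref{mainthm} together with the Feigon--Whitehouse average formula), and the deduction of this final corollary is purely a matter of bookkeeping with $\Gamma$-factors and the known dimension formula.
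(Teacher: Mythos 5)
Your proposal is correct and matches the paper's intended derivation: you specialize the ``in particular'' formula of Corollary \ref{sumover3} (which already assumes the Type 1 hypothesis so that the $I_0$, $I_1$ contributions vanish), substitute the collapsed dimension formula $\#\cF_{2k}(N)=\tfrac{2k-1}{12}\varphi(N)-\delta(k)$, and simplify the $\Gamma$-factors via $(2k-1)\Gamma(2k-1)=\Gamma(2k)$. Both the $2k=2$ and $2k>2$ computations check out exactly.
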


When all prime factors of $N$ are $\equiv 3\pmod 4$ 
(not equal to $3$) or all prime factors $\equiv 5\pmod 6$,
the right hand side of \eqref{maineq1} contains some weighted averages for 
central values of twisted quadratic base change $L$-functions:
\[\sum_{h\in\cF_{2k}(N)}\frac{L_{\fin}(k,h)L_{\fin}(k,h,\chi_{d})}{(h,h)}\ 
(d=-4\text{ or }-3)
\quad\text{and}\quad
\sum_{h\in\cF_{2k}(N)}
\frac{L_{\fin}(k+|m|+\frac 12,h\times\Theta_{\Omega})}{(h,h)},\]
which have been calculated
by Feigon and Whitehouse in \cite[Theorem 6.10]{feigon2009averages}. 
Notice that, when $2,3\nmid N$,
$I_0$ and $I_1$ (defined in Theorem \ref{maingeometric})
do not depend on $h\in\cF_{2k}(N)$.
Then one can obtain 
an exact average formula of $L_{\fin}(3k-1,f\times g\times h)$
as all three forms run through $\cF_{2k}(N)$.

\subsection{The nonvanishing problems}\label{section.nonvanishing}

When the weight $2k=2$,
we can get a similar result of the nonvanishing problem
as \cite[Corollary 5.2]{feigon2010exact}.
\begin{Corollary}
Let $N$ be the product of an odd number of distinct primes
such that $\varphi(N)>24$.
For each $h\in\cF_2(N)$, there exist $f,g\in\cF_2(N)$
such that $L_{\fin}(2,f\times g\times h)\neq 0$;
moreover,
\[\#\{(f,g)\in\cF_{2}(N)\times\cF_{2}(N):
L_{\fin}(2,f\times g\times h)\neq 0\}\gg_\epsilon N^{3/4-\epsilon}.\]
\end{Corollary}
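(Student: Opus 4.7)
The plan is to combine Ichino's formula with the weight-$2$ average \eqref{maineqwt2} and a convexity bound on the triple product $L$-function. I will first establish nonnegativity of every summand on the left-hand side of \eqref{maineqwt2}. Since $N$ has an odd number of prime factors and $\varepsilon_p=-1$ for all $p\mid N$ (together with $\varepsilon_\infty=-1$), the global root number of $\pi_f\otimes\pi_g\otimes\pi_h$ equals $+1$. Ichino's formula (Theorem \ref{Ichino}) then expresses the ratio $L_{\fin}(2,f\times g\times h)/L(1,\pi_f\otimes\pi_g\otimes\pi_h,\Ad)$ as a nonnegative multiple of the squared period $|\int_{[D^\times]}\phi_f\phi_g\phi_h|^2$, where $D$ is the definite quaternion algebra of discriminant $N$; since the adjoint $L$-value at $s=1$ is strictly positive, $L_{\fin}(2,f\times g\times h)\geq 0$.

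Next, I would verify strict positivity of the right-hand side of \eqref{maineqwt2} under the hypothesis $\varphi(N)>24$. The main term $(1-24/\varphi(N))/2^{\omega(N)}$ is strictly positive. The error terms contain factors $L_{\fin}(1,h)L_{\fin}(1,h\otimes\chi_{-d})$: if either factor has root number $-1$ the corresponding central value vanishes by the functional equation, and otherwise each factor is individually nonnegative by Waldspurger's positivity theorem for central values of weight-$2$ $L$-functions. The remaining prefactors $\frac12(1+a_p(h))\in\{0,1\}$ and $\prod_{p\mid N}\frac{1-\chi_{-d}(p)}{2}\in\{0,1\}$ are clearly nonnegative. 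Hence the whole right-hand side is strictly positive, and since the left-hand sum consists of nonnegative terms, at least one pair $(f,g)$ must satisfy $L_{\fin}(2,f\times g\times h)\neq 0$, yielding the existence assertion.

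For the quantitative lower bound, I would apply a convexity estimate. The triple product $L$-function $L(s,\pi_f\otimes\pi_g\otimes\pi_h)$ has generic degree $8$; at each prime $p\mid N$ all three local components are special, and the local $L$-factor $L_p(s+\tfrac32,\mu_1\mu_2\mu_3)L_p(s+\tfrac12,\mu_1\mu_2\mu_3)^2$ has degree $3$, contributing $p^{8-3}=p^5$ to the arithmetic conductor. The finite conductor is therefore $N^5$, and the standard convexity bound at the center yields $L_{\fin}(2,f\times g\times h)=L_{\fin}(\tfrac12,\pi_f\otimes\pi_g\otimes\pi_h)\ll_\epsilon N^{5/4+\epsilon}$. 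Combined with $(f,f)\gg N^{1-\epsilon}$, which follows from the identity $L(1,\pi_f,\Ad)=4(f,f)/N$ together with the unconditional lower bound $L(1,\Ad)\gg(\log N)^{-c}$, each individual summand is $\ll_\epsilon N^{-7/4+\epsilon}$. Since the left-hand side of \eqref{maineqwt2} is $\gg 2^{-\omega(N)}/N\gg N^{-1-\epsilon}$ (from the positive main term alone, as $2^{\omega(N)}\ll N^{o(1)}$), a trivial comparison then gives $\#\{(f,g):L_{\fin}(2,f\times g\times h)\neq 0\}\gg_\epsilon N^{3/4-\epsilon}$.

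The main obstacle is justifying the convexity bound with the correct conductor exponent $N^5$: one must carefully identify the degree-$8$ triple product $L$-function's local factors at the ramified primes and cite a suitable version of the convexity principle (cf. Iwaniec--Sarnak) for automorphic $L$-functions of this degree. A minor technical point is ensuring that the test vectors implicit in Ichino's formula make the local factors $I_v$ nonnegative so that the pointwise nonnegativity of $L_{\fin}(2,f\times g\times h)$ is genuinely delivered; this is standard given our choice of $\phi_v$.
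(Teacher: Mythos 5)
Your proof is correct and follows essentially the same strategy as the paper's: lower-bound the left side of the weight-$2$ average formula \eqref{maineqwt2} by using positivity of the main term when $\varphi(N)>24$ together with nonnegativity of the $L_0,L_1$ corrections, invoke the Hoffstein--Lockhart lower bound on $(f,f)$ and Robin's bound $\omega(N)\ll\log N/\log\log N$, and compare against the convexity bound $L_{\fin}(2,f\times g\times h)\ll_\epsilon N^{5/4+\epsilon}$. The one difference is cosmetic: you make explicit (via Ichino's formula and the root-number condition) the nonnegativity of the individual triple product central values, which the paper uses implicitly in the final counting step.
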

\begin{proof}
The first statement comes from Example \ref{example.maineqwt2} and
the non-negativity of $L(1,h)L(1,h,\chi_{-d})$.

For $f\in\cF_{2}(N)$ we know
from Hoffstein--Lockhart \cite{hoffstein1994coefficients} that
$(f,f)\gg N(\log N)^{-1}$.
Applying this to \eqref{maineqwt2}
together with the non-negativity of $L(1,h)L(1,h,\chi_{-d})$,
we have
\[\sum_{f,g\in\cF_{2}(N)}L_{\fin}(2,f\times g\times h)\gg
\sum_{\substack{f,g\in\cF_{2}(N)\\\varepsilon_p=-1, \forall p\mid N}}
L_{\fin}(2,f\times g\times h)
\gg \frac{1-24/\varphi(N)}{2^{\omega(N)}}N^2(\log N)^{-3}.\]
Robin \cite{robin1983estimation} shows that, for $N>2$,
\begin{equation}\label{omega}
\omega(N)=\sum_{p\mid N}1\ll \log N/\log\log N.
\end{equation}
So $2^{\omega(N)}\ll N^{C/\log\log N}$.
When $\varphi(N)>24$ we have
\[\sum_{f,g\in\cF_{2}(N)}L_{\fin}(2,f\times g\times h)\gg
N^{2-C/\log\log N}(\log N)^{-3}.\]
Moreover, for any weight $2k$ we have the convexity bound
$L_{\fin}(3k-1,f\times g\times h)\ll_{\epsilon} (N^5k^8)^{1/4+\epsilon}$
(cf. \cite{iwaniec2004analytic}). Therefore
\[
\#\{(f,g)\in\cF_{2}(N)\times\cF_{2}(N):
L_{\fin}(2,f\times g\times h)\neq 0\}
\gg_\epsilon
\frac{N^{3/4-\epsilon}}{N^{C/\log\log N}(\log N)^{3}}
\gg_\epsilon N^{3/4-\epsilon}.
\]

\end{proof}

Analogously we can get a nonvanishing result for higher weight.
Now the coefficients on the right hand side
are not always non-negative 
(see \eqref{maineqwt4} and \eqref{maineqwt6}),
we need more work on the nontrivial orbits on the geometric side.

Recall that, 
when $\Omega$ is unramified everywhere,
the convexity bound of central value of 
base change $L$-function is 
\[L_{\fin}(\frac 12, \pi_E,\Omega)\ll_{\epsilon}(N^2k^4)^{1/4+\epsilon}.\]
Followed by \eqref{nontrivialorbiteqns} we have
\[\begin{split}
\frac{I_{[\gamma]}(f)}{\vol(K')\|\phi_3\|^2 }
=&\ \frac{2c_\gamma}{L(1,\eta)}
\frac{2^{\omega(N)}}{N\cdot L(1,\pi_3,\Ad)} 
\frac{\Gamma(2k-1)^4}{(2\pi)^{2k}\Gamma(k)^3\Gamma(3k-1)}
\cdot \sum_{\Omega}
L_{\fin}(\frac 12,(\pi_3)_E\otimes \Omega)
\mathbb{I}_{2k}^{(m)}(\gamma)\\
\ll_{\epsilon}&\ 
\frac{2^{\omega(N)}}{N\cdot L(1,\pi_3,\Ad)} 
\frac{\Gamma(2k-1)^4}{(2\pi)^{2k}\Gamma(k)^3\Gamma(3k-1)}
\cdot \sum_{\Omega}(N^2k^4)^{1/4+\epsilon}
|\mathbb{I}_{2k}^{(m)}(\gamma)|.
\end{split}\]
Lemma \ref{geom.archi.coeff} and Lemma \ref{length} show that
\[\sum_\Omega |\mathbb{I}_{2k}^{(m)}(\gamma)|\leq 
\|\mathbb{P}_{2k}\|^2=
\frac{\Gamma(k)^3\Gamma(3k-1)}{\Gamma(2k-1)^3}.\] 
Then
\begin{equation}\label{estnontrivorb}
\frac{I_{[\gamma]}(f)}{\vol(K') \|\phi_3\|^2}
\ll_{\epsilon} \frac{2^{\omega(N)}N^{-1/2+2\epsilon}}{L(1,\pi_3,\Ad)} 
\frac{k^{1+4\epsilon}\Gamma(2k-1)}{(2\pi)^{2k}}.
\end{equation}
Recall that $2^{\omega(N)}\ll N^{C/\log\log N}$ and
$L_\infty(1,\pi_3,\Ad)=\frac 2{\pi}(2\pi)^{-2k}\Gamma(2k)$.
For $\pi_3\in\cF(N,2k)$ we know
from Hoffstein--Lockhart \cite{hoffstein1994coefficients} 
and Nelson \cite{nelson2011equidistribution} that
\[L(1,\pi_3,\Ad)\gg \frac{\Gamma(2k)}{(2\pi)^{2k}\log(Nk)}.\]
So 
\begin{equation}\label{eqn.estimate.Igammaf}
\frac{I_{[\gamma]}(f)}{\vol(K') \|\phi_3\|^2}
\ll_{\epsilon} N^{C/\log\log N}N^{-1/2+2\epsilon}
\frac{k^{1+4\epsilon}}{2k-1} \log(Nk)
\ll_{\epsilon}N^{-\frac 12}(Nk)^\epsilon.
\end{equation}

Now we can show that:
\begin{Corollary}
Let $N$ be the product of an odd number of distinct primes.
For $h \in \cF_{2k}(N)$,
\[\#\{(f,g)\in\cF_{2k}(N)\times\cF_{2k}(N):
L_{\fin}(3k-1,f\times g\times h)\neq 0\}
\gg_{k,\epsilon} N^{3/4-\epsilon}.\]
In particular, if $N$ has a prime factor $\equiv 1\pmod 4$ and
one $\equiv 1\pmod 3$,
then for each $h\in\cF_{2k}(N)$, there exist $f,g\in\cF_{2k}(N)$
such that $L_{\fin}(3k-1,f\times g\times h)\neq 0$.
\end{Corollary}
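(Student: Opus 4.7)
The plan is to mimic the weight-$2$ argument, but now the nontrivial orbital terms $I_{[\gamma_0]}, I_{[\gamma_1]}$ can have either sign, so an unconditional positivity argument like the one for $L(1,h)L(1,h,\chi_{-d})$ in weight $2$ is no longer available. The key observation is that the estimate \eqref{estnontrivorb}, derived from the convexity bound for base change $L$-functions together with Hoffstein--Lockhart for $L(1,\pi_3,\Ad)$, already shows
\[
\frac{I_{[\gamma_0]}(f)+I_{[\gamma_1]}(f)}{\vol(K')\|\phi_3\|^2}
\ll_{k,\epsilon} N^{-1/2+C/\log\log N+\epsilon},
\]
so these contributions are negligible compared to the main term coming from $I_{[1]}$.

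First I would combine Theorem \ref{mainspectral} and Theorem \ref{maingeometric} applied to $\pi_3=\pi_h$ to obtain, for $2k\geq 4$,
\[
\frac{2^{\omega(N)}}{2N^2}
\sum_{\substack{\pi_1,\pi_2\in\cF(N,2k)\\ \varepsilon_p=-1,\ \forall p\mid N}}
\frac{L(\tfrac12,\pi_1\otimes\pi_2\otimes\pi_3)}{L(1,\pi_1\otimes\pi_2\otimes\pi_3,\Ad)}
= \frac{\Gamma(k)^3\Gamma(3k-1)}{2^{\omega(N)}\Gamma(2k-1)^2\Gamma(2k)}
+ O_{k,\epsilon}\!\left(N^{-1/2+C/\log\log N+\epsilon}\right),
\]
where the main term is bounded below by $c_k/2^{\omega(N)}$ with $c_k>0$ depending only on $k$. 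Since $2^{\omega(N)}\ll N^{C/\log\log N}$ by \eqref{omega}, for $N$ sufficiently large (depending on $k$) the right-hand side is $\gg_k 2^{-\omega(N)}$, and hence the unnormalized sum satisfies
\[
\sum_{\substack{\pi_1,\pi_2\in\cF(N,2k)\\ \varepsilon_p=-1,\ \forall p\mid N}}
\frac{L(\tfrac12,\pi_1\otimes\pi_2\otimes\pi_3)}{L(1,\pi_1\otimes\pi_2\otimes\pi_3,\Ad)}
\gg_{k}\ N^{2-C/\log\log N}.
\]

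Next I would translate this into a lower bound for the classical sum using the identity $L(1,\pi_f,\Ad)=2^{2k}(f,f)/N$ and the Hoffstein--Lockhart lower bound $(f,f)\gg N(\log N)^{-1}$ (which gives $L(1,\pi_f,\Ad)\gg (\log N)^{-1}$ once the Archimedean factor is absorbed into $k$). This yields
\[
\sum_{f,g\in\cF_{2k}(N)} L_{\fin}(3k-1,f\times g\times h)
\gg_{k} N^{2-C/\log\log N}(\log N)^{-3}.
\]
Finally, invoking the convexity bound $L_{\fin}(3k-1,f\times g\times h)\ll_{k,\epsilon} N^{5/4+\epsilon}$ termwise, the number of pairs with nonzero central $L$-value is at least the sum divided by the pointwise maximum, so
\[
\#\{(f,g):L_{\fin}(3k-1,f\times g\times h)\neq 0\}
\gg_{k,\epsilon} \frac{N^{2-C/\log\log N}}{N^{5/4+\epsilon}(\log N)^{3}}
\gg_{k,\epsilon} N^{3/4-\epsilon},
\]
which gives the main lower bound. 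The special case when $N$ has a prime factor $\equiv 1\pmod 4$ and one $\equiv 1\pmod 3$ is immediate from Corollary \ref{maingeometric2k}: there the nontrivial orbits vanish and the right-hand side of \eqref{maineq2} is strictly positive, forcing at least one summand on the left to be nonzero.

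The main obstacle is not the estimation of the nontrivial orbits---this has essentially been done in deriving \eqref{estnontrivorb}---but rather ensuring that the main-term argument is uniform in the way $\omega(N)$ enters. Specifically, the factor $2^{\omega(N)}$ appears both in the main term and in the bound for the nontrivial orbits, so one must check that the ratio $I_{[\gamma]}/I_{[1]}$ goes to zero as $N\to\infty$; this is where the savings $N^{-1/2+\epsilon}$ from the convexity bound for the base change $L$-functions is essential, and it must beat the $2^{\omega(N)}$ factor, which it does since $2^{\omega(N)}\ll N^{o(1)}$. Beyond this there is no serious difficulty.
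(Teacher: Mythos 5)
Your proposal is correct and follows essentially the same route as the paper's own proof: combine the spectral and geometric decompositions, use the bound \eqref{estnontrivorb} together with the Hoffstein--Lockhart/Nelson lower bound for $L(1,\pi_3,\Ad)$ to show the nontrivial orbit contributions are $\ll_\epsilon N^{-1/2}(Nk)^\epsilon$ and hence dominated by the $I_{[1]}$ term, then convert to a lower bound for $\sum L_{\fin}(3k-1,f\times g\times h)$, and finally divide by the convexity bound $N^{5/4+\epsilon}$ and Robin's $2^{\omega(N)}\ll N^{C/\log\log N}$. One small algebra slip: in your intermediate display the $2^{\omega(N)}$ appears both multiplying the left-hand sum and dividing the main term on the right, which is one factor too many (after multiplying the Main Theorem by $2^{\omega(N)}$, the main term on the right is just $\Gamma(k)^3\Gamma(3k-1)/\Gamma(2k-1)^2\Gamma(2k)$ with no $2^{\omega(N)}$ denominator); this is immaterial for the final bound since $2^{\omega(N)}=N^{o(1)}$, but worth fixing. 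Your treatment of the "in particular" case via Corollary \ref{maingeometric2k} is also exactly the paper's intended argument.
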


\begin{proof}
Recall that 
\[L(1/2,\pi_{\mathrm{dis}}^{2k}\otimes \pi_{\mathrm{dis}}^{2k}
\otimes \pi_{\mathrm{dis}}^{2k})=
2^{4}(2\pi)^{1-6k}\Gamma(k)^3\Gamma(3k-1).\]
For $2k>2$,
rewrite \eqref{maineqnadelic} as
\[2^{4}\pi
\frac{2^{\omega(N)}}{N^2}\Gamma(2k-1)^3
\sum_{\substack{f,g\in\cF_{2k}(N)\\
\varepsilon_p=-1,\ \forall p\mid N}}
\frac{L_{\fin}(3k-1,f\times g\times h)}
{(2\pi)^{6k}L(1,\pi_f\otimes\pi_g\otimes\pi_h,\Ad)}
=\frac 1{2k-1}+\frac{I_{[\gamma_0]}+I_{[\gamma_1]}}
{\|\phi_3\|^2 \vol(K')}
\]
using the notations in Theorem \ref{maingeometric}.
By the above calculation we know that
the right hand side
$\gg_{k,\epsilon}1$ and
$(2\pi)^{2k}L(1,\pi_3,\Ad)\gg \Gamma(2k)\log(Nk)^{-1}$.
Therefore
\[\sum_{\substack{f,g\in\cF_{2k}(N)\\
\varepsilon_p=-1,\ \forall p\mid N}}
L_{\fin}(3k-1,f\times g\times h)\gg_{k,\epsilon}
\frac{N^2}{2^{\omega(N)}}(\log N)^{-3}
\gg N^{2-C/\log\log N}(\log N)^{-3}.\]
The rest of proof is the same as in the weight $2$ case,
while the convexity bound of $L_{\fin}(3k-1,f\times g\times h)$
does not change with respect to $N$.
\end{proof}

\begin{remark}
A nonvanishing result involving $k$
cannot be derived following the above proof.
For $2k>2$,
rewrite \eqref{maineqnadelic} as
\begin{multline*}2^{4}\pi
\frac{2^{\omega(N)}}{N^2}\Gamma(2k-1)^3
\sum_{\substack{f,g\in\cF_{2k}(N)\\
\varepsilon_p=-1,\ \forall p\mid N}}
\frac{L_{\fin}(3k-1,f\times g\times h)}
{(2\pi)^{4k}L(1,\pi_f\otimes\pi_g,\Ad)}\\
=\frac {(2\pi)^{2k}L(1,\pi_h,\Ad)}{2k-1}+
(2\pi)^{2k}L(1,\pi_h,\Ad)
\frac{I_{[\gamma_0]}+I_{[\gamma_1]}}
{\|\phi_3\|^2 \vol(K')}
\end{multline*}
By \eqref{estnontrivorb} we have 
\[(2\pi)^{2k}L(1,\pi_h,\Ad)
\frac{I_{[\gamma]}(f)}{\vol(K') \|\phi_3\|^2}
\ll_{\epsilon} 2^{\omega(N)}N^{-1/2+2\epsilon}
k^{1+4\epsilon}\Gamma(2k-1)\]
But the main term 
\[\frac {(2\pi)^{2k}L(1,\pi_h,\Ad)}{2k-1}\gg
\frac{\Gamma(2k-1)}{\log(Nk)}\]
is not big enough.

But, 
if $N$ has a prime factor $\equiv 1\pmod 4$ and
one $\equiv 1\pmod 3$, we have
\[2^{4}\pi
\frac{2^{\omega(N)}}{N^2}\Gamma(2k-1)^3
\sum_{\substack{f,g\in\cF_{2k}(N)\\
\varepsilon_p=-1,\ \forall p\mid N}}
\frac{L_{\fin}(3k-1,f\times g\times h)}
{(2\pi)^{6k}L(1,\pi_f\otimes\pi_g\otimes\pi_h,\Ad)}
=\frac 1{2k-1}\]
for $2k>2$.
We can get that
\[\begin{split}\sum_{\substack{f,g\in\cF_{2k}(N)\\
\varepsilon_p=-1,\ \forall p\mid N}}
L_{\fin}(3k-1,f\times g\times h)&\gg_{\epsilon}
\frac{N^2}{2^{\omega(N)}}
\frac 1{\Gamma(2k-1)^3}
\Gamma(2k)^3\log(Nk)^{-3}\frac 1{2k-1}\\
&=\frac{N^2}{2^{\omega(N)}}
\frac {(2k-1)^2}{\log(Nk)^{3}}
\gg \frac{N^{2-C/\log\log N}k^2}{\log(Nk)^{3}}.
\end{split}\]
The convexity bound
$L_{\fin}(3k-1,f\times g\times h)\ll_{\epsilon} (N^5k^8)^{1/4+\epsilon}$
gives
\[
\#\{(f,g)\in\cF_{2k}(N)\times\cF_{2k}(N):
L_{\fin}(3k-1,f\times g\times h)\neq 0\}
\gg_\epsilon N^{3/4-\epsilon}k^{-\epsilon}.
\]
A subconvexity bound of weight aspect 
for $L_{\fin}(3k-1,f\times g\times h)$
might have some application.
More precisely, if $L_{\fin}(3k-1,f\times g\times h)
\ll_{N,\epsilon} k^{2-\delta+\epsilon}$
for some $\delta>0$, we can show that
\[\#\{(f,g)\in\cF_{2k}(N)\times\cF_{2k}(N):
L_{\fin}(3k-1,f\times g\times h)\neq 0\}
\gg_{N,\epsilon} k^{\delta-\epsilon}.\]
\end{remark}


Even though the nonvanishing result involving $k$
cannot be shown,
we can still derive a Lindel\"of-on-average upper bound,
with respect to both the level $N$ and the weight $2k$,
for some
$\sum_{f,g}L_{\fin}(3k-1,f\times g\times h)$.

It is well known that $\varphi(N)\gg N/\log\log N$ 
(cf. \cite[Theorem 6.26]{leveque1996fundamentals}) while $\varphi(N)\leq N$.
By \eqref{eqn:numberofnewforms} we know
\[\frac {kN}{\log\log N}\ll\#\cF_{2k}(N)<\frac{2kN}{12}
.\]
The Generalized Lindel\"of Hypothesis on average
gives an upper bound 
\[\sum_{f,g\in\cF_{2k}(N)}
L_{\fin}(3k-1,f\times g\times h)\ll_{\epsilon}
(kN)^{2+\epsilon}.\]

\begin{Corollary}\label{LindelofOnAverage}
Let $N$ be prime.
Then \[\sum_{f,g\in\cF_{2k}(N)}
L_{\fin}(3k-1,f\times g\times h)
\ll_{\epsilon}k^{3+\epsilon}N^{2+\epsilon}.\]
In particular, if $N$ is a prime $\equiv 1\pmod {12}$,
\[\sum_{f,g\in\cF_{2k}(N)}
L_{\fin}(3k-1,f\times g\times h)
\ll_{\epsilon}(kN)^{2+\epsilon},\]
where the upper bound is 
of the same quality as the Generalized Lindel\"of Hypothesis on average.
\end{Corollary}

\begin{proof}
By the discussion in Remark \ref{remark.mainthm},
when $N$ is a prime, we can remove the condition 
of root numbers $\varepsilon_p=-1$ in the sum
and rewrite \eqref{maineqnadelic} as
\[\frac{2^{2}\pi^4}{N^2 (2k-1)^3}
\sum_{f,g\in\cF_{2k}(N)}
\frac{L_{\fin}(3k-1,f\times g\times h)}
{L_{\fin}(1,\pi_f\otimes\pi_g\otimes\pi_h,\Ad)}
=\frac 1{2k-1}+\frac{I_{[\gamma_0]}+I_{[\gamma_1]}}
{\|\phi_3\|^2 \vol(K')}
\]
for $2k>2$.

We have shown in \eqref{eqn.estimate.Igammaf} that
\[
\frac{I_{[\gamma]}(f)}{\vol(K') \|\phi_3\|^2}
\ll_{\epsilon}N^{-\frac 12}(kN)^\epsilon
\]
for $\gamma=\gamma_0$ or $\gamma_1$.
Then the right hand side (RHS) of the above identity is $\ll_{\epsilon}(kN)^\epsilon$.
Recall that Iwaniec \cite[Theorem 2]{iwaniec1990small} shows 
$L_{\fin}(1,\pi_f,\Ad)\ll_\epsilon (kN)^\epsilon$ for $f\in\cF_{2k}(N)$.
Then we have that
\[\sum_{f,g\in\cF_{2k}(N)}
L_{\fin}(3k-1,f\times g\times h)
\ll_{\epsilon}
N^2 (2k-1)^3(kN)^\epsilon\cdot\text{RHS}
\ll_{\epsilon}N^2k^3(kN)^\epsilon.\]

When $N$ is a prime $\equiv 1\pmod {12}$, 
there is no nontrivial orbit in the geometric side, 
and we can simply replace the RHS with $\frac{1}{2k-1}$
to complete the proof.

\end{proof}

At last we study the nonvanishing modulo suitable primes $p$
of the algebraic part of triple product $L$-values.
Given $f,g,h\in\cF_{2k}(N)$, we define
\[L^{\alg}(3k-1,f\times g\times h)=\Gamma(3k-1)\Gamma(k)^3
\frac {2^{\omega(N)}N}{2^{8k-3}
\pi^{6k-1}}
\frac{L_{\fin}(3k-1,f\times g\times h)}
{(f,f)(g,g)(h,h)}.\]
According to \cite[Theorem 5.7]{bocherer1993central}
(a revised version of this theorem can be found
in \cite{bocherer2003arithmetic},
in the proof of Proposition 2.1),
$L^{\alg}(3k-1,f\times g\times h)$ lies in the subfield of $\CC$
generated by the Fourier coefficients of $f$, $g$ and $h$
and hence is algebraic.

\begin{Corollary}\label{nonvanishmodp}
Let $p$ be a prime such that $p\geq 3k-1$ and $p\neq 2$,
and $\fp$ be a place in $\overline{\Q}$ above $p$.
Let $N$ be a square-free integer with an odd number of prime factors
which has a prime factor $\equiv 1\pmod 4$ and one $\equiv 1\pmod 3$.
Then, when $2k>2$, for any $h\in\cF_{2k}(N)$,
there exist $f,g\in\cF_{2k}(N)$ such that
\[L^{\alg}(3k-1,f\times g\times h)\not\equiv 0\pmod{\fp}.\]
This holds for $2k=2$ too if in addition $p\nmid \varphi(N)-24$.
\end{Corollary}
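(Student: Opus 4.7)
The plan is to combine the simplified Main Theorem \eqref{maineq2}, which applies under the hypothesis that $N$ has a prime factor $\equiv 1\pmod 4$ and one $\equiv 1\pmod 3$, with a $\fp$-adic analysis. In this setting the nontrivial orbital contributions vanish, so the average formula collapses to an explicit rational identity whose $\fp$-valuation can be read off directly.

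First I would translate \eqref{maineq2} into algebraic normalization. Using the identity
\[
\frac{\Gamma(3k-1)\Gamma(k)^3}{\Gamma(2k-1)^2\Gamma(2k)}
=\frac{\binom{3k-2}{k-1}}{\binom{2k-2}{k-1}^2}
\]
and multiplying \eqref{maineq2} through by the constants appearing in the definition of $L^{\alg}$, one obtains
\[
\sum_{\substack{f,g\in\cF_{2k}(N)\\ \varepsilon_p=-1,\ \forall p\mid N}}
L^{\alg}(3k-1,f\times g\times h)
= \frac{2^{4k-1}\binom{3k-2}{k-1}}{\binom{2k-2}{k-1}^{2}}
\left(1-\frac{24\delta(k)}{\varphi(N)}\right).
\]

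Next I would analyze the right-hand side modulo $\fp$. For $2k>2$ one has $\delta(k)=0$, and the right-hand side is $2^{4k-1}\binom{3k-2}{k-1}\binom{2k-2}{k-1}^{-2}$. The hypothesis $p\geq 3k-1$ implies that every integer appearing in the numerators of $\binom{3k-2}{k-1}$ and $\binom{2k-2}{k-1}$ is strictly less than $p$, so neither binomial is divisible by $p$; combined with $p\neq 2$, this shows the right-hand side is a $\fp$-adic unit. For $2k=2$ the right-hand side equals $8(\varphi(N)-24)/\varphi(N)$; multiplying by $\varphi(N)$ gives $\varphi(N)\sum L^{\alg}(1,f\times g\times h)=8(\varphi(N)-24)$, which is a $\fp$-adic unit under the additional hypothesis $p\nmid\varphi(N)-24$ (using $p\neq 2$, and noting that $p\mid 24$ forces $p=3$, in which case $p\nmid\varphi(N)-24$ is equivalent to $p\nmid\varphi(N)$).

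To conclude, I would invoke the $\fp$-integrality of each individual value $L^{\alg}(3k-1,f\times g\times h)$: for $p\geq 3k-1$ and $p\neq 2$, the algebraic triple-product $L$-values, whose algebraicity is established by B\"ocherer--Schulze-Pillot \cite{bocherer1993central,bocherer2003arithmetic}, lie in the $\fp$-adic integers of the relevant coefficient field (after fixing the embedding above $\fp$). A sum of $\fp$-adic integers whose total is a $\fp$-adic unit must contain at least one summand that is a $\fp$-adic unit, yielding the desired $(f,g)$ with $L^{\alg}(3k-1,f\times g\times h)\not\equiv 0\pmod{\fp}$.

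The main obstacle is pinning down a clean $\fp$-integrality statement for individual $L^{\alg}(3k-1,f\times g\times h)$ in the required range. The combinatorial and archimedean analysis is routine once \eqref{maineq2} is in hand: the hypothesis $p\geq 3k-1$ is exactly what is needed to guarantee the binomial factors are coprime to $p$, and the auxiliary condition on $\varphi(N)-24$ in the weight-$2$ case is precisely the obstruction produced by the $\delta(k)$ term. Controlling the denominators of $L^{\alg}$ at primes $p\geq 3k-1$, either via Deligne-type rationality and $p$-stabilization arguments or directly from the explicit formulas for Petersson norms at primes of good reduction, is the technical heart of the argument.
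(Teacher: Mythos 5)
Your proposal is correct and takes essentially the same approach as the paper: the paper simply quotes the identity
\[
\sum_{\substack{f,g\in\cF_{2k}(N)\\ \varepsilon_p=-1,\ \forall p\mid N}}
L^{\alg}(3k-1,f\times g\times h)
= \Bigl(1-\frac{24\delta(k)}{\varphi(N)}\Bigr)\,2^{4k-1}\,
\frac{\Gamma(3k-1)\Gamma(k)^3}{\Gamma(2k-1)^2\Gamma(2k)}
\]
and calls the corollary ``obvious,'' leaving the $\fp$-adic analysis implicit. You make the same calculation but spell out why the hypothesis $p\geq 3k-1$, $p\neq 2$ makes the constant a unit: the binomial rewriting $\Gamma(3k-1)\Gamma(k)^3/(\Gamma(2k-1)^2\Gamma(2k))=\binom{3k-2}{k-1}\binom{2k-2}{k-1}^{-2}$ involves only integers $<p$, so the ratio is coprime to $p$. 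That part is fine. Two small remarks. First, in the $2k=2$ case your parenthetical about ``$p\mid 24$ forces $p=3$'' addresses only $p=3$; for $p>3$ the hypothesis $p\nmid\varphi(N)-24$ does not by itself rule out $p\mid\varphi(N)$, so clearing the denominator by $\varphi(N)$ is not immediately conclusive. The repair is the one you already hint at: granting $\fp$-integrality of each $L^{\alg}$, the sum has nonnegative $\fp$-valuation, and then $\varphi(N)\sum L^{\alg}=8(\varphi(N)-24)$ being a unit forces both $p\nmid\varphi(N)$ and the sum to be a unit. Second, you flag $\fp$-integrality of the individual $L^{\alg}$ as the technical heart; the paper does not address this either (it only cites B\"ocherer--Schulze-Pillot for algebraicity). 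For the weaker conclusion that \emph{some} $L^{\alg}(3k-1,f\times g\times h)\not\equiv 0\pmod\fp$, a valuation argument already suffices: if every summand lay in $\fp\cO_\fp$ the sum would too, contradicting $v_\fp$ of the right side being $\leq 0$. So $\fp$-integrality is only needed if one wants the stronger statement that the nonvanishing value is actually a $\fp$-adic unit, but for the corollary as stated it is not essential. Your reading of the argument is otherwise faithful to the paper's.
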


\begin{proof}
The corollary is obvious,
noting from \eqref{maineq2} that,
for any $h\in\cF_{2k}(N)$,
\[\begin{split}
\sum_{\substack{f,g\in\cF_{2k}(N)\\
\varepsilon_p=-1,\ \forall p\mid N}}
L^{alg}(3k-1,f\times g\times h)&=
(1-\frac{24}{\varphi(N)}\delta(k))2^{4k-1}
\frac{\Gamma(3k-1)\Gamma(k)^3}
{\Gamma(2k-1)^2\Gamma(2k)}\\
&=\begin{cases}
2^3\dfrac{\varphi(N)-24}{\varphi(N)},&2k=2;\\
2^{4k-1}\dfrac{\Gamma(3k-1)\Gamma(k)^3}
{\Gamma(2k-1)^2\Gamma(2k)},&2k>2.\end{cases}
\end{split}\]
\end{proof}



\section*{Acknowledgements}

This paper contains the main result of
the author's doctoral dissertation \cite{guan2020averages}
at CUNY the Graduate Center. 
The cases with weight larger than 4 are completed 
during his time at Shandong University, 
while the author was supported by 
the National Key Research and Development Program of China (No. 2021YFA1000700).

The author would like to thank his advisor,
Professor Brooke Feigon 
for her patient guidance and encouragement,
without whom the completion of this work 
would not have been possible.
The author is also thankful to 
Shih--Yu Chen, Bingrong Huang, Peter Humphries, 
Krzysztof Klosin, Carlos Moreno,
Yiannis Sakellaridis, Lawrence Vu, and Liyang Yang
for many helpful discussions and comments
regarding this work.
At last the author thanks the anonymous referees 
for making very detailed and helpful comments on an earlier version
which led to improvement of the exposition.

\bibliographystyle{alpha}
\bibliography{Averages_Nonvanishing_Triple_L}
\end{document}